\DeclareMathAlphabet{\mathscr}{T1}{pzc}{m}{it}
\titleformat{\section}[block]{\scshape\filcenter\Large}{\thesection.}{.5em}{}
\titleformat{\subsection}[block]{\bfseries\filcenter\large}{\thesubsection.}{.5em}{\medskip}
\titleformat{\subsubsection}[runin]{\bfseries}{\thesubsubsection.}{.5em}{}[.]
\titlespacing{\subsubsection}{0pt}{10pt}{.5em}
\newtheoremstyle{ntheorem}%
	{\topsep}{\topsep}{\itshape}{0pt}{\bfseries}{.}{.5em}%
	{\thmnumber{#2.\hspace{.5em}}\thmname{#1}\thmnote{ (#3)}}
\newtheoremstyle{ndefinition}%
	{\topsep}{\topsep}{\normalfont}{0pt}{\bfseries}{.}{.5em}%
	{\thmnumber{#2.\hspace{.5em}}\thmname{#1}\thmnote{ (#3)}}
\newtheoremstyle{nremark}%
	{\topsep}{\topsep}{\normalfont}{0pt}{\itshape}{.}{.5em}%
	{\thmnumber{}\thmname{#1}\thmnote{ (#3)}}
\theoremstyle{ntheorem}
  	\newtheorem{theorem}[subsubsection]{Theorem}
  	\newtheorem{proposition}[subsubsection]{Proposition}
	\newtheorem{lemma}[subsubsection]{Lemma}
  	\newtheorem{corollary}[subsubsection]{Corollary}
\theoremstyle{ndefinition}
	\newtheorem{example}[subsubsection]{Example}
\theoremstyle{nremark}
	\newtheorem{remark}{Remark}
	\edef\Drop@@{%
		\dimen@=#1\relax
		\dimen@=.5\dimen@
		\A@=-\sinDirection\dimen@
		\B@=\cosDirection\dimen@
		\setboxz@h{%
			\setbox2=\hbox{\kern3\A@\raise3\B@\copy\z@}%
			\dp2=\z@ \ht2=\z@ \wd2=\z@ \box2
			\setbox2=\hbox{\kern\A@\raise\B@\copy\z@}%
			\dp2=\z@ \ht2=\z@ \wd2=\z@ \box2
			\setbox2=\hbox{\kern-\A@\raise-\B@\copy\z@}%
			\dp2=\z@ \ht2=\z@ \wd2=\z@ \box2
			\setbox2=\hbox{\kern-3\A@\raise-3\B@ \noexpand\boxz@}%
			\dp2=\z@ \ht2=\z@ \wd2=\z@ \box2
		}%
		\ht\z@=\z@ \dp\z@=\z@ \wd\z@=\z@ \noexpand\styledboxz@
	}%
\xydef@\Tttip@{\kern2pt \vrule height2pt depth2pt width\z@
	\Tttip@@ \kern2pt \egroup
	\U@c=0pt \D@c=0pt \L@c=0pt \R@c=0pt \Edge@c={\circleEdge}%
	\def\Leftness@{.5}\def\Upness@{.5}%
	\def\Drop@@{\styledboxz@}\def\Connect@@{\straight@{\dottedSpread@\jot}}}
\xydef@\Tttip@@{%
	\dimen@=.25\dimen@
 	\B@=\cosDirection\dimen@
	\setboxz@h\bgroup\reverseDirection@\line@ \wdz@=\z@ \ht\z@=\z@ \dp\z@=\z@
	{\vDirection@(1,-1)\xydashl@ \xyatipfont\char\DirectionChar}%
	{\vDirection@(1,+1)\xydashl@ \xybtipfont\char\DirectionChar}%
}
\xydef@\ar@form{
	\ifx \space@\next \expandafter\DN@\space{\xyFN@\ar@form}%
	\else\ifx ^\next \DN@ ^{\xyFN@\ar@style}\edef\arvariant@@{\string^}%
	\else\ifx _\next \DN@ _{\xyFN@\ar@style}\edef\arvariant@@{\string_}%
	\else\ifx 0\next \DN@ 0{\xyFN@\ar@style}\def\arvariant@@{0}%
	\else\ifx 1\next \DN@ 1{\xyFN@\ar@style}\def\arvariant@@{1}%
	\else\ifx 2\next \DN@ 2{\xyFN@\ar@style}\def\arvariant@@{2}%
	\else\ifx 3\next \DN@ 3{\xyFN@\ar@style}\def\arvariant@@{3}%
	\else\ifx 4\next \DN@ 4{\xyFN@\ar@style}\def\arvariant@@{4}%
	\else\ifx \bgroup\next \let\next@=\ar@style
	\else\ifx [\next \DN@[##1]{\ar@modifiers{[##1]}}
	\else\ifx *\next \DN@ *{\ar@modifiers}%
	\else\addLT@\ifx\next \let\next@=\ar@slide
	\else\ifx /\next \let\next@=\ar@curveslash
	\else\ifx (\next \let\next@=\ar@curveinout 
	\else\addRQ@\ifx\next \addRQ@\DN@{\ar@curve@}%
	\else\addLQ@\ifx\next \addLQ@\DN@{\xyFN@\ar@curve}%
	\else\addDASH@\ifx\next \addDASH@\DN@{\defarstem@-\xyFN@\ar@}%
	\else\addEQ@\ifx\next \addEQ@\DN@{\def\arvariant@@{2}\defarstem@-\xyFN@\ar@}%
	\else\addDOT@\ifx\next \addDOT@\DN@{\defarstem@.\xyFN@\ar@}%
	\else\ifx :\next \DN@:{\def\arvariant@@{2}\defarstem@.\xyFN@\ar@}%
	\else\ifx ~\next \DN@~{\defarstem@~\xyFN@\ar@}%
	\else\ifx !\next \DN@!{\dasharstem@\xyFN@\ar@}%
	\else\ifx ?\next \DN@?{\ar@upsidedown\xyFN@\ar@}%
	\else \let\next@=\ar@error
	\fi\fi\fi\fi\fi\fi\fi\fi\fi\fi\fi\fi\fi\fi\fi\fi\fi\fi\fi\fi\fi\fi\fi \next@}
\newcommand{\fl}{\to}
\newcommand{\dfl}{\Rightarrow}
\newcommand{\tfl}{\Rrightarrow}
\newcommand{\ens}[1]{\left\{#1\right\}}
\DeclareMathOperator{\id}{Id}
\newcommand{\tens}{\otimes}
\newcommand{\cl}[1]{\overline{#1}}
\newcommand{\iar}[1]{\left\lfloor#1\right\rfloor}
\newcommand{\ie}{\emph{i.e.}}
\renewcommand{\phi}{\varphi}
\renewcommand{\epsilon}{\varepsilon}
\newcommand{\Nb}{\mathbb{N}}
\newcommand{\Zb}{\mathbb{Z}}
\newcommand{\dr}{\partial}
\newcommand{\Br}{\EuScript{B}}
\newcommand{\Cr}{\EuScript{C}}
\newcommand{\Dr}{\EuScript{D}}
\newcommand{\Er}{\EuScript{E}}
\def\c{\mathbf{C}}
\newcommand{\Ab}{\mathbf{Ab}}
\newcommand{\Mod}[1]{\mathbf{Mod}(#1)}
\newcommand{\Cat}[1]{\mathbf{Cat}_{#1}}
\newcommand{\Ct}[1]{\mathbf{C}{#1}}
\newcommand{\ho}{\mathrm{H}}
\renewcommand{\ker}{\mathrm{Ker}\;}
\newcommand{\Tor}{\mathrm{Tor}}
\definecolor{orange}{rgb}{1,0.55,0}
\newcommand{\pdf}[1]{\texorpdfstring{$#1$}{1}}
\def\catego#1{{\bf{\sf #1}}}
\renewcommand{\Ab}{\catego{Ab}}
\renewcommand{\Cat}{\catego{Cat}}
\renewcommand{\mod}{\catego{Mod}}
\renewcommand{\Mod}{\catego{Mod}}
\newcommand{\Nat}{\catego{Nat}}
\newcommand{\Epi}{\mathbf{Epi}}
\def\op#1{#1^{o}}
\def\iar#1{\lfloor #1 \rfloor}
\def\cl#1{\overline{#1}}
\renewcommand{\Nb}{\mathbb{N}}
\renewcommand{\Zb}{\mathbb{Z}}
\newcommand{\C}{\mathbf{C}}
\newcommand{\D}{\mathbf{D}}
\newcommand{\F}{\mathbf{F}}
\newcommand{\G}{\mathbf{G}}
\newcommand{\M}{\mathbf{M}}
\DeclareMathOperator{\N}{N}
\newcommand{\As}{\mathrm{As}}
\newcommand{\fnat}[2]{F_{#1}[#2]}
\newcommand{\tck}[1]{#1^{\top}}
\newcommand{\ab}[1]{#1_{\text{ab}}}
\newcommand{\abtck}[1]{\ab{\tck{#1}}}
\newcommand{\poldim}[1]{d_{\mathrm{pol}}(#1)}
\newcommand{\cohdim}[1]{\mathrm{cd}(#1)}
\renewcommand{\tilde}[1]{\widetilde{#1}}
\newcommand{\rep}[1]{\widehat{#1}}
\renewcommand{\Ct}[1]{\mathrm{Ct}(#1)}
\DeclareMathOperator{\h}{h}
\DeclareMathOperator{\crit}{c}
\DeclareMathOperator{\Aut}{Aut}
\DeclareMathOperator{\FDT}{FDT}
\DeclareMathOperator{\FDTAB}{FDT_{\mathrm{ab}}}
\DeclareMathOperator{\FP}{FP}
\DeclareMathOperator{\Lan}{Lan}
\newcommand{\sm}{\scriptstyle}
\newcommand{\ifl}{\rightarrowtail}
\newcommand{\pfl}{\twoheadrightarrow}
\newcommand{\defmap}[5]{
\begin{array}{c c c c c}
#1 & \::\: & #2 & \:\longrightarrow\: & #3
\\ && #4 &\longmapsto& #5
\end{array}
}
\newcommand{\qfl}{\xymatrix@1@C=10pt{\ar@4 [r] &}}
\begin{document}

\thispagestyle{empty}
\begin{center}

\begin{Large}
\begin{uppercase}
{Higher-dimensional normalisation strategies \\ \medskip for acyclicity}
\end{uppercase}
\end{Large}

\bigskip
\hrule height 1.5pt

\bigskip
\begin{large}
\begin{uppercase}
{Yves Guiraud -- Philippe Malbos}
\end{uppercase}
\end{large}

\end{center}

\vspace*{\stretch{2}}
\begin{center}
\begin{minipage}{12cm}
\begin{small}
\noindent \textbf{Abstract --}
We introduce \emph{acyclic polygraphs}, a notion of complete categorical cellular model for (small) categories, containing generators, relations and higher-dimensional globular syzygies. We give a rewriting method to construct explicit acyclic polygraphs from \emph{convergent} presentations. For that, we introduce \emph{higher-dimensional normalisation strategies}, defined as homotopically coherent ways to relate each cell of a polygraph to its normal form, then we prove that acyclicity is equivalent to the existence of a normalisation strategy. Using acyclic polygraphs, we define a higher-dimensional homotopical finiteness condition for higher categories which extends Squier's finite derivation type for monoids. We relate this homotopical property to a new homological finiteness condition that we introduce here.

\bigskip
\noindent \textbf{Keywords --} rewriting; polygraphic resolution; homology of small categories; identities among relations. 

\bigskip
\noindent \textbf{M.S.C. 2000 --} 18C10; 18D05; 18G10; 18G20; 68Q42.

\bigskip
\noindent \textbf{Affiliation --} INRIA and Université de Lyon, Institut Camille Jordan, CNRS UMR 5208, Université Claude Bernard Lyon~1, 43 boulevard du 11 novembre 1918, 69622 Villeurbanne cedex, France.
\end{small}
\end{minipage}
\end{center}

\vspace*{\stretch{2}}
\begin{center}
\begin{minipage}{12cm}
\begin{small}
\setcounter{tocdepth}{1}
\tableofcontents
\end{small}
\end{minipage}
\end{center}
\vspace*{\stretch{1}}

\newpage
\section{Introduction}

\subsection{An overview of Squier's theory}

In the eighties, Squier has established a link between some computational, homological and homotopical properties of monoids,~\cite{Squier87,Squier94}. This allowed him to answer an open question: does a finitely generated monoid with a decidable word problem always admit a finite convergent presentation? 

\subsubsection{The word problem and rewriting theory}

Given a monoid $\M$, a generating set $\Sigma_1$ for $\M$ provides a way to represent the elements of $\M$ in the free monoid $\Sigma_1^*$, \ie, as finite words written with the elements of $\Sigma_1$. But, if $\M$ is not free, there is no reason for an element of $\M$ to have a single representative in~$\Sigma_1^*$. The \emph{word problem} for $\M$ consists in finding a generating set $\Sigma_1$ and an algorithm that can determine whether or not any two elements of $\Sigma_1^*$ represent the same element of $\M$.

One way to solve the word problem is to exhibit a finite presentation $(\Sigma_1,\Sigma_2)$ of $\M$ with a good computational property, called \emph{convergence} in rewriting theory. There, one studies presentations where the relations in $\Sigma_2$ are not seen as equalities between the words in $\Sigma_1^*$, such as $u=v$, but, instead, as rewriting rules that can only be applied in one direction, like $u\dfl v$, simulating a non-reversible computational process. Convergence is defined as the conjunction of the following two conditions:
\begin{itemize}
\item \emph{termination}, \ie, all the computations end eventually,
\item \emph{confluence}, \ie, different computations on the same input lead to the same result.
\end{itemize}
A finite convergent presentation $(\Sigma_1,\Sigma_2)$ of $\M$ gives a solution to the word problem: the \emph{normal form algorithm}. Given an element $u$ in $\Sigma_1^*$, convergence ensures that all the applications of directed relations to~$u$, in any possible manner, will eventually produce a unique result: an element $\rep{u}$ of $\Sigma_1^*$ where no directed relation applies anymore, called the \emph{normal form} of $u$. And, by construction, two elements $u$ and $v$ of $\Sigma_1^*$ represent the same element of $\M$ if, and only if, their normal forms are equal in $\Sigma_1^*$. Finally, finiteness ensures that one can determine if an element of $\Sigma_1^*$ is a normal form, by examining all the relations. (As far as rewriting is concerned, this article is self-contained, but this wider mathematical field is covered in more details by Book and Otto,~\cite{BookOtto93}, Baader and Nipkow,~\cite{BaaderNipkow98}, and the group Terese,~\cite{Terese03}.)

Thus, if a monoid admits a finite convergent presentation, it has a decidable word problem. In the middle of the eighties, it was still unknown if the converse implication held. In~\cite{KapurNarendran85}, Kapur and Narendran had exhibited a monoid that admits a finite generating set for which the word problem was solvable, but that do not admit a finite convergent presentation with the same generators. However, this did not answer the original question, the generating set having been fixed.

\subsubsection{From computational to homological properties}

At that time, Squier linked the existence of a finite convergent presentation to a homological invariant of the monoid, the homological type left-$\FP_3$, that is independent of the choice of a presentation of $M$ and, in particular, of a generating set. A monoid~$\M$ has homological type left-$\FP_3$ when there exists an exact sequence
\[
\xymatrix{
P_3
	\ar [r]  
& P_2
	\ar [r] 
& P_1
	\ar [r] 
& P_0
	\ar [r] 
& \Zb
	\ar [r]
& 0
}
\]
of (left) modules over $\M$, where $\Zb$ denotes the trivial $\M$-module and each $P_i$ is projective and finitely generated.

From a presentation $(\Sigma_1,\Sigma_2)$ of $\M$, one can build an exact sequence of free $\M$-modules
\begin{equation}
\label{resolution2}
\xymatrix{
\Zb\M[\Sigma_2] 
	\ar [r] ^-{J} 
& \Zb\M[\Sigma_1]
	\ar [r] ^-{}
& \Zb\M
	\ar [r] ^-{}
& \Zb
	\ar [r]
& 0,
}
\end{equation}
where $\Zb\M[\Sigma_1]$ and $\Zb\M[\Sigma_2]$ are the free $\M$-modules over $\Sigma_1$ and $\Sigma_2$, respectively. The differential~$J$, called the \emph{Fox Jacobian} after~\cite{Fox53}, is defined on a directed relation $\alpha:u\dfl v$ by $J(\alpha)\,=\,[u]-[v]$, where~$[\,\cdot\,]$ is the unique derivation of~$\Sigma_1^*$ with values in $\Zb\M[\Sigma_1]$ that extends the canonical inclusion of~$\Sigma_1$ into $\Zb\M[\Sigma_1]$.

In~\cite{Squier87}, Squier proved that, when $(\Sigma_1,\Sigma_2)$ is a convergent presentation, its \emph{critical branchings} form a generating set of the kernel of the Fox Jacobian. A critical branching of $(\Sigma_1,\Sigma_2)$ is an overlapping application of two different directed relations on the same word $u$ of $\Sigma_1^*$, where $u$ has minimal size. For example, the relations $\alpha:xy\dfl v$ and $\beta:yz\dfl w$ generate a critical branching $(\alpha z,x\beta)$ on  $u=xyz$:
\[
\xymatrix @C=3em @R=1.5em {
& vz
\\
xyz 
	\ar@2@/^/ [ur] ^-{\alpha z}
	\ar@2@/_/ [dr] _-{x\beta}
\\
& xw
}	
\]
Convergence of the presentation $(\Sigma_1,\Sigma_2)$ ensures that any critical branching $(f,g)$ can be completed as in the following diagram:
\[
\xymatrix @!C @C=2.5em @R=1.5em {
& v
	\ar@2@/^/ [dr] ^-{f'}
\\
u 
	\ar@2@/^/ [ur] ^-{f}
	\ar@2@/_/ [dr] _-{g}
&& u'
\\
& w
	\ar@2@/_/ [ur] _-{g'}
}	
\]
The boundary of such a branching is defined as the element $J'(f,g) \,=\, [f] - [g] + [f'] - [g']$, where~$[\,\cdot\,]$ extends the canonical inclusion of $\Sigma_2$ into $\Zb\M[\Sigma_2]$. 

Squier proves that the set $\Sigma_3$ of critical branchings of $(\Sigma_1,\Sigma_2)$ and the boundary~$J'$ extend the exact sequence~\eqref{resolution2} by one step:
\begin{equation}
\label{resolution3}
\xymatrix{
\Zb\M[\Sigma_3] 
	\ar [r] ^-{J'} 
& \Zb\M[\Sigma_2] 
	\ar [r] ^-{J} 
& \Zb\M[\Sigma_1]
	\ar [r] ^-{}
& \Zb\M
	\ar [r] ^-{}
& \Zb
	\ar [r]
& 0.
}
\end{equation}
Moreover, when $(\Sigma_1,\Sigma_2)$ is finite, then $\Sigma_3$ is finite, proving that, if a monoid has a finite convergent presentation, then it is of homological type left-$\FP_3$.

Finally, Squier exhibited a finitely generated monoid, with a decidable word problem, but that is not of homological type left-$\FP_3$. This gave a negative answer to the aforementioned open question: there exist finitely generated monoids with a decidable word problem that do not admit a finite convergent presentation (for any possible finite set of generators). 

\subsubsection{From computational to homotopical properties}

In~\cite{Squier94}, Squier links the existence of a finite convergent presentation to a homotopical invariant of monoids, called \emph{finite derivation type} ($\FDT_3$) and that is a natural extension of the properties of being finitely generated ($\FDT_1$) and finitely presented ($\FDT_2$).

To define this invariant, for a monoid $\M$ with a presentation $(\Sigma_1,\Sigma_2)$, Squier constructs a cellular complex $S(\Sigma_1,\Sigma_2)$ with one $0$-cell, whose $1$-cells are the elements of the free monoid $\Sigma_1^*$ and whose $2$-cells are generated by the relations of $\Sigma_2$. More precisely, there is a $2$-cell between every pair of words with shape $wuw'$ and $wvw'$ such that $u=v$ is a relation in $\Sigma_2$. Then, to get $S(\Sigma_1,\Sigma_2)$, one fills with $3$-cells all the squares formed by independent applications of relations, such as the following one, where $(u_1,v_1)$ and $(u_2,v_2)$ are relations in $\Sigma_2$:
\[
\xymatrix @!C @C=3em @R=1.5em {
& {wv_1w'u_2w''}
	\ar@{=}@/^/ [dr] ^{u_2=v_2}
		\ar@3{-} []!<0pt,-25pt>;[dd]!<0pt,25pt> 
\\
{wu_1w'u_2w''} 
	\ar@{=}@/^/ [ur] ^{u_1=v_1}
	\ar@{=}@/_/ [dr] _{u_2=v_2}
&& {wv_1w'v_2w''}
\\
& {wu_1w'v_2w''}
	\ar@{=}@/_/ [ur] _{u_1=v_1}
}	
\]
If $\Sigma_3$ is a set of $3$-cells over $S(\Sigma_1,\Sigma_2)$, then the set $\Sigma_1^*\Sigma_3\Sigma_1^*$ is the set of $3$-cells $u\beta v$, with $\beta$ in~$\Sigma_3$ and~$u$ and~$v$ in $\Sigma_1^*$, and whose boundary is the one of $\beta$ multiplied by $u$ on the left and $v$ on the right. A \emph{homotopy basis} of $(\Sigma_1,\Sigma_2)$ is a set $\Sigma_3$ of $3$-cells such that $\Sigma_1^*\Sigma_3\Sigma_1^*$ makes the complex $S(\Sigma_1,\Sigma_2)$ contractible. A monoid is of \emph{finite derivation type} ($\FDT_3$) if it admits a finite presentation whose associated complex admits a finite homotopy basis or, in other words, whose ``relations among the relations'' are finitely generated.

Squier proves that, given a convergent presentation $(\Sigma_1,\Sigma_2)$, it is sufficient to attach one $3$-cell to each $3$-dimensional sphere corresponding to a critical branching to get a homotopy basis of $(\Sigma_1,\Sigma_2)$. Moreover, if $\Sigma_2$ is finite, the presentation $(\Sigma_1,\Sigma_2)$ has finitely many critical branchings proving that, if a monoid admits a finite convergent presentation, then it is $\FDT_3$. Squier used this result to give another proof that there exist finitely generated monoids with a decidable word problem that do not admit a finite convergent presentation. 

\subsubsection{Refinements of Squier's conditions}

By his results, Squier has opened two different directions, one homological and one homotopical, to explore in the quest for a complete characterisation of the existence of finite convergent presentations in the case of monoids. The corresponding invariants are related: $\FDT_3$ implies left-$\FP_3$, as proved by several authors,~\cite{CremannsOtto94,Pride95,Lafont95jpaa}. The converse implication is false in general, as already noted by Squier in~\cite{Squier94}, yet it is true in the special case of groups,~\cite{CremannsOtto96}, the latter result being based on the Brown-Huebschmann isomorphism between homotopical and homological syzygies,~\cite{BrownHuebschmann82}. 

However, the invariants left-$\FP_3$ and $\FDT_3$ are not complete characterisations of the property to admit a finite convergent presentation: they are necessary, but not sufficient conditions, as already proved by Squier in~\cite{Squier94}. Following this observation, various refinements of both invariants have been explored.

In the homological direction, thanks to the notion of Abelian resolution, one defines the more restrictive conditions left-$\FP_n$, for every natural number $n>3$, and left-$\FP_{\infty}$: a monoid $\M$ has homological type left-$\FP_{\infty}$ when there exists a resolution of the trivial $\M$-module by finitely generated and projective $\M$-modules. In~\cite{Kobayashi91}, a notion of $n$-fold critical branching is used to complete the exact sequence~\eqref{resolution3} into a resolution, obtaining the following implication: if a monoid admits a finite convergent presentation, then it is of homological type left-$\FP_{\infty}$, the converse implication still being false in general. The same results are also known for associative algebras,~\cite{Anick86}, and for groups,~\cite{Cohen92,Brown92,Groves90}. One can obtain similar implications with the properties right-$\FP_{\infty}$ and bi-$\FP_{\infty}$, defined with resolutions by right modules and bimodules, respectively.

In the homotopical direction, the condition $\FDT_3$ has been refined into $\FDT_4$, a property about the existence of a finite presentation with a finite homotopy basis, itself satisfying a homotopical finiteness property,~\cite{Pride05}. The condition $\FDT_4$ is also necessary for a monoid to admit a finite convergent presentation and it is sufficient, but not necessary, for the conditions left/right/bi-$\FP_4$. 

\subsection{Organisation and main results of the article}

\subsubsection{Polygraphic resolutions}

In Section~\ref{Section:PolygraphicResolutions}, we introduce a notion of homotopical resolution that generalises Squier's complex, in order to define the homotopical finiteness conditions $\FDT_n$ and $\FDT_{\infty}$. Squier's complex appears as the first two dimensions of a free $(\infty,1)$-category, \ie, a free $\infty$-category whose cells of dimension $2$ and higher are invertible. Then, homotopy bases and higher homotopy bases generate the higher dimensions of this $(\infty,1)$-category, in such a way that the latter is homotopically equivalent to the starting monoid. Moreover, these resolutions further generalise from monoids to $p$-categories, yielding free $(\infty,p)$-categories.

More explicitly, let $(\Sigma_1,\Sigma_2)$ be a presentation of a monoid $\M$. Such a presentation of a monoid, or more generally of a (small) category, is called a \emph{$(2,1)$-polygraph} in this article. The terminology comes from Burroni's polygraphs,~\cite{Burroni93}, also known as Street's computads,~\cite{Street76,Street87}. From the $(2,1)$-polygraph $(\Sigma_1,\Sigma_2)$, we generate a free $(2,1)$-category $\tck{\Sigma}_2$ by taking all the formal composites of $2$-cells, modulo exchange relations that correspond exactly to the $2$-cells of Squier's complex associated to $(\Sigma_1,\Sigma_2)$. Informally, the $(2,1)$-category $\tck{\Sigma}_2$ is homotopically equivalent to Squier's complex: this allows us to see a homotopy basis as a set $\Sigma_3$ of $3$-cells, attached to $\tck{\Sigma}_2$, such that the quotient $(2,1)$-category $\tck{\Sigma}_2/\Sigma_3$ is homotopically equivalent to the original monoid or, equivalently, such that any parallel $2$-cells of $\tck{\Sigma}_2$ are identified in the quotient by $\Sigma_3$. 

Then, one considers the free $(3,1)$-category $\tck{\Sigma}_3$ generated by the \emph{$(3,1)$-polygraph $(\Sigma_1,\Sigma_2,\Sigma_3)$}. One defines a homotopy basis of $\tck{\Sigma}_3$ as a set of $4$-cells over $\tck{\Sigma}_3$ that relate every parallel $3$-cells. The same idea is used to define homotopy bases in every dimension, yielding our notion of \emph{polygraphic resolution} of the monoid $\M$: this is an \emph{acyclic $(\infty,1)$-polygraph $\Sigma=(\Sigma_n)_{n\geq 1}$} such that $(\Sigma_1,\Sigma_2)$ is a presentation of $\M$, where acyclic means that each $\Sigma_{n+1}$ is a homotopy basis of the free $(n,1)$-category $\tck{\Sigma}_n$, for $n\geq 3$. 

The notion we get is close to Métayer's polygraphic resolutions, introduced in~\cite{Metayer03}: these are $\infty$-polygraphs that produce cofibrant approximations (free objects that are homotopically equivalent to the original one) in the canonical model structure on $\infty$-categories, described in~\cite{LafontMetayerWorytkiewicz10}. Our resolutions, called \emph{$(\infty,p)$-polygraphs}, have the same good homotopical properties, with respect to the canonical model structure on $(\infty,p)$-categories, obtained by Ara and Métayer,~\cite{AraMetayer11}.

\smallskip
\begin{quote}
\textbf{Theorem~\ref{TheoremCofibrantApproximation}.}
\begin{em}
Let $\Sigma$ be a polygraphic resolution of a $p$-category $\Cr$. The canonical projection $\tck{\Sigma}\pfl \Cr$ is a cofibrant approximation of $\Cr$ in the canonical model structure on $(\infty,p)$-categories.
\end{em}
\end{quote}

\smallskip
\noindent
We say that a monoid and, more generally, a $p$-category is of \emph{finite $\infty$-derivation type} ($\FDT_{\infty}$) when it admits a polygraphic resolution with finitely many cells in every dimension. This generalises to higher categories and in every dimension the two previously known homotopical finiteness conditions, $\FDT_3$ introduced by Squier for monoids and its refinement $\FDT_4$.

\subsubsection{Normalisation strategies for polygraphs}

In Section~\ref{sectionStrategies}, we give a constructive characterisation of the acyclicity of an  $(\infty,1)$-polygraph. Let $\Sigma$ be a polygraphic resolution of a monoid $\M$. In particular, if $\pi:\Sigma_1^*\pfl\M$ denotes the canonical projection, one can choose a (non-functorial) section $\iota$ of $\pi$. Moreover, the $2$-cells of~$\Sigma_2$ are generating relations for $\M$: for every element $u$ of $\Sigma_1^*$, one can choose a $2$-cell $\sigma_u:u\dfl\iota\pi(u)$ in $\tck{\Sigma}_2$. Then, we use that $\Sigma_3$ is a homotopy basis: for every $2$-cell $f:u\dfl v$ of~$\tck{\Sigma}_2$, one can choose a $3$-cell $\sigma_f$ in $\tck{\Sigma}_3$ with shape 
\[
\xymatrix @!C @C=1em {
u 
	\ar@2 @/^3ex/ [rr] ^{f} _{}="src"
	\ar@2 @/_/ [dr] _-{\sigma_u}
&& v
	\ar@2 @/^/ [dl] ^-{\sigma_v}
\\
& {\iota\pi(u)} = {\iota\pi(v)}
		\ar@3 "src"!<0pt,-10pt>;"2,2"!<0pt,15pt> ^{\sigma_f}
}
\]
From the acyclicity of $\Sigma$, we deduce that similar choices can be made in every dimension, in a coherent way: if $\pi:\tck{\Sigma}\pfl\M$ and $\iota:\M\ifl\tck{\Sigma}$ are now seen as (weak) $\infty$-functors, then $\sigma$ is a (weak) natural isomorphism from the identity of~$\tck{\Sigma}$ to the composite (weak) $\infty$-functor $\iota\pi$.

We call $\sigma$ a \emph{normalisation strategy}. It generalises to every dimension the notion of strategy appearing in rewriting theory: a canonical way, among all the computations generated by the directed relations, to reduce a word into a normal form. An $(\infty,1)$-polygraph with a normalisation strategy is \emph{normalising}.

\smallskip
\begin{quote}
\textbf{Theorem~\ref{MainTheorem1}.}
\begin{em}
An $(n,1)$-polygraph is acyclic if, and only if, it is normalising.
\end{em}
\end{quote}

\smallskip
\noindent
Moreover, a normalisation strategy can always be assumed to commute with the monoid product in a sensible way: it can always ``reduce'' a word by starting on the left or on the right. This leads to the \emph{left-normalising} and \emph{right-normalising} properties for $(\infty,1)$-polygraphs, which are also equivalent to acyclicity.

\subsubsection{Polygraphic resolutions from convergent presentations}

In Section~\ref{Section:Convergent2Polygraphs}, we use normalisation strategies to build, by induction on the dimension, an explicit polygraphic resolution from a convergent presentation. Given a convergent $(2,1)$-polygraph $\Sigma$, the first dimensions of the polygraphic resolution $\crit_{\infty}(\Sigma)$ we get are similar to the ones of Squier's complex: generators in dimension $1$, generating relations in dimension $2$, critical branchings in dimension $3$. Then, we build the $4$-cells from the critical triple branchings, \ie, the minimal overlappings of three $2$-cells on the same $1$-cell: for such a $(f,g,h)$, we use a normalisation strategy $\sigma$ to build the corresponding $4$-cell
\[
\xymatrix @C=2em @!C {
& {v} 
	\ar@2 @/^3ex/ [drr] ^-{\sigma_v}
		\ar@3 []!<7.5pt,-10pt>;[d]!<7.5pt,10pt> ^{A}
&&&&&& {v}
	\ar@2 @/^3ex/ [drr] ^-{\sigma_v}
		\ar@3 []!<10pt,-20pt>;[dd]!<10pt,20pt> ^{C}
\\
{u}
	\ar@2 @/^/ [ur] ^-{f}
	\ar@2 [r] |-{g}
	\ar@2 @/_/ [dr] _-{h}
& {w} 
		\ar@3 []!<7.5pt,-10pt>;[d]!<7.5pt,10pt> ^{B}
	\ar@2 [rr] |-{\sigma_w} 
&& {\rep{u}}
& \strut
	\ar@4 [r] ^-{}
&& {u}
	\ar@2 @/^/ [ur] ^-{f}
	\ar@2 @/_/ [dr] _-{h}
&&& {\rep{u}}
\\
& {x}
	\ar@2 @/_3ex/ [urr] _-{\sigma_x}
&&&&&& {x}
	\ar@2 @/_3ex/ [urr] _-{\sigma_x}
}
\]
where $A$, $B$ and $C$ are $3$-cells built by using the critical branchings and the normalisation strategy $\sigma$. In higher dimensions, we proceed similarly to build the $(n+1)$-cells of the resolution from the critical $n$-fold branchings.

\smallskip
\begin{quote}
\textbf{Theorem~\ref{MainTheorem2.0}.} 
\begin{em}
If $\Sigma$ is a convergent presentation of a category $\C$, then the $(\infty,1)$-polygraph $\crit_{\infty}(\Sigma)$ is a polygraphic resolution of $\C$.
\end{em}
\end{quote}

\smallskip
\noindent
Since a finite convergent $(2,1)$-polygraph has finitely many $n$-fold critical branchings, a category with a finite convergent presentation is $\FDT_{\infty}$.

\subsubsection{Abelianisation of polygraphic resolutions}

In Section~\ref{SectionAbelianisation}, we relate the homotopical finiteness condition $\FDT_{\infty}$ to a new homological finiteness condition, called $\FP_{\infty}$. For that, from a polygraphic resolution $\Sigma$ of a category $\C$, we deduce a free Abelian resolution
\[
\xymatrix{
\,\cdots\,
	\ar [r] ^-{\delta_{n+1}}
& \fnat{\C}{\Sigma_n}
	\ar [r] ^-{\delta_n}
& \fnat{\C}{\Sigma_{n-1}}
	\ar [r] ^-{\delta_{n-1}}
& \,\cdots\,
	\ar [r] ^-{\delta_2}
& \fnat{\C}{\Sigma_1}
	\ar [r] ^-{\delta_1}
& \fnat{\C}{\Sigma_0}
	\ar [r] ^-{\epsilon}
& \Zb
	\ar [r]
& 0
}
\]
in the category of \emph{natural systems on $\C$}, a generalisation of bimodules due to Baues, see~\cite{Baues91}. This complex, denoted by $\fnat{\C}{\Sigma}$, is called the \emph{Reidemeister-Fox-Squier complex of $\Sigma$} and extends Squier's exact sequence~\eqref{resolution3}. The acyclicity of $\fnat{\C}{\Sigma}$ is proved by using a contracting homotopy induced by a normalisation strategy. 

\smallskip
\begin{quote}
\textbf{Theorem~\ref{Theorem:AbelianResolution}.}
\begin{em}
If $\Sigma$ is a polygraphic resolution of a category $\C$, then the Reidemeister-Fox-Squier complex $\fnat{\C}{\Sigma}$ is a free resolution of the constant natural system $\Zb$ on $\C$.
\end{em}
\end{quote}

\smallskip
\noindent
We define the homological properties $\FP_n$, for any $n$, and $\FP_{\infty}$ as the refined versions of left/right/bi-$\FP_n$ and left/right/bi-$\FP_{\infty}$ with natural systems instead of left/right/bi-modules. We get that, for categories, the property $\FDT_n$ implies $\FP_n$ and, as a consequence, that a category with a finite convergent presentation is of homological type $\FP_{\infty}$.

Finally, we relate the homological $2$-syzygies of a presentation $\Sigma$ to its \emph{identities among relations}, defined by the authors in~\cite{GuiraudMalbos10smf}.

\smallskip
\begin{quote}
\textbf{Theorem~\ref{Theorem:IsomorphismPiH2}.}
\begin{em}
For every $2$-polygraph $\Sigma$, the natural systems of homological $2$-syzygies and of identities among relations of $\Sigma$ are isomorphic.
\end{em}
\end{quote}

\smallskip
\noindent
As a consequence, for finitely presented categories, the homological finiteness condition $\FP_3$ is equivalent to the homotopical finiteness condition $\FDTAB$, characterising the existence of a finite homotopy basis of an Abelianised version of a presentation of the category, see Theorem~\ref{Theorem:TDFAB<=>FP_3}.

\subsubsection{Examples}

Throughout this article, we apply our constructions to the example of the \emph{reduced standard presentation} of a category, yielding, at the end, an Abelian resolution that is similar to the bar construction. In Section~\ref{SectionExamples}, we give two more examples: the monoid with one non-unit and idempotent element and the subcategory of the simplicial category whose morphisms are the monotone surjections only. They give rise to resolutions where the higher-dimensional cells have the shapes of associahedra and permutohedra, respectively.

\subsection{Acknowledgements}

The authors wish to thank François Métayer, Timothy Porter and the anonymous referee for their comments and suggestions that helped to produce an improved version of this work.

\newpage
\section{Polygraphic resolutions}
\label{Section:PolygraphicResolutions}

Throughout this section, we denote by $n$ either a natural number or $\infty$.

\subsection{Higher-dimensional categories}

If $\Cr$ is an $n$-category (we always consider strict, globular $n$-categories), we denote by $\Cr_k$ the set (and the $k$-category) of $k$-cells of~$\Cr$. If $f$ is a $k$-cell of $\Cr$, then $s_i(f)$ and $t_i(f)$ respectively denote the $i$-source and $i$-target of $f$; we drop the suffix~$i$ when $i=k-1$. The source and target maps satisfy the \emph{globular relations}: 
\[
s_i\circ s_{i+1} \:=\: s_i\circ t_{i+1} 
\qquad\text{and}\qquad
t_i\circ s_{i+1} \:=\: t_i\circ t_{i+1}.
\]
We respectively denote by $f:u\fl v$, $\;f:u\dfl v\;$ or $\;f:u\tfl v\;$ a $1$-cell, a $2$-cell or a $3$-cell $f$ with source~$u$ and target~$v$. 

If $f$ and $g$ are $i$-composable $k$-cells, that is when $t_i(f)=s_i(g)$, we denote by $f\star_i g$ their $i$-composite; we simply use $fg$ when $i=0$. The compositions satisfy the \emph{exchange relations} given, for every $i\neq j$ and every possible cells $f$, $g$, $h$ and $k$, by: 
\[
(f \star_i g) \star_j (h \star_i k) \:=\: (f \star_j h) \star_i (g \star_j k).
\]
If $f$ is a $k$-cell, we denote by $1_f$ its identity $(k+1)$-cell. When $1_f$ is composed with cells of dimension $k+1$ or higher, we simply denote it by~$f$. 

\subsubsection{\pdf{(n,p)}-categories}

A $k$-cell $f$ of an $n$-category~$\Cr$, with $i$-source $u$ and $i$-target $v$, is \emph{$i$-invertible} when there exists a (necessarily unique) $k$-cell $g$ in $\Cr$, with $i$-source $v$ and $i$-target $u$ in $\Cr$, called the \emph{$i$-inverse of $f$}, that satisfies 
\[
f\star_i g \:=\: 1_u 
\qquad\text{and}\qquad
g\star_i f \:=\: 1_v.
\]
When $i=k-1$, we just say that $f$ is \emph{invertible} and we denote by $f^-$ its \emph{inverse}. As in higher-dimensional groupoids, if a $k$-cell $f$ is invertible and if its $i$-source $u$ and $i$-target $v$ are invertible, then $f$ is $(i-1)$-invertible, with $(i-1)$-inverse given by 
\[
v^- \star_{i-1} f^- \star_{i-1} u^-.
\]
For a natural number $p\leq n$, or for $p=n=\infty$, an \emph{$(n,p)$-category} is an $n$-category whose $k$-cells are invertible for every $k>p$. When $n<\infty$, this is a $p$-category enriched in $(n-p)$-groupoids and, when $n=\infty$, a $p$-category enriched in $\infty$-groupoids. In particular, an $(n,n)$-category is an $n$-category, an $(n,0)$-category is an $n$-groupoid and, when $n<\infty$, an $(n,n-1)$-category is a track $(n-1)$-category, as defined in~\cite{GuiraudMalbos09} after Baues,~\cite{Baues91,BauesMinian01}. If $n<\infty$, any $(n,p)$-category can be seen as an $(\infty,p)$-category with only identity $k$-cells for $k>n$.

\subsubsection{Spheres and asphericity}

Let $\Cr$ be an $n$-category. A \emph{$0$-sphere of $\Cr$} is a pair $\gamma=(f,g)$ of $0$-cells of $\Cr$ and, for $1\leq k\leq n$, a \emph{$k$-sphere of $\Cr$} is a pair $\gamma=(f,g)$ of parallel $k$-cells of $\Cr$, \ie, with $s(f)=s(g)$ and $t(f)=t(g)$; we call $f$ the \emph{source} of $\gamma$ and~$g$ its \emph{target}. If $f$ is a $k$-cell of $\Cr$, for $1\leq k\leq n$, the \emph{boundary of $f$} is the $(k-1)$-sphere $(s(f),t(f))$. If $n<\infty$, the $n$-category $\Cr$ is \emph{aspherical} when the source and the target of each $n$-sphere of $\Cr$ coincide, \ie, when every $n$-sphere of $\Cr$ has shape $(f,f)$ for some $(n-1)$-cell $f$ of $\Cr$. 

\subsubsection{The canonical model structure on $(\infty,p)$-categories}

In~\cite{AraMetayer11}, Ara and Métayer have proved that the canonical model structure on $\infty$-categories from~\cite{LafontMetayerWorytkiewicz10} transfers to $(\infty,p)$-categories through the adjunction
\[
\xymatrix @C=5em @!C{
(\infty,p)\Cat
	\ar@/^3ex/ [r] ^{U}
	\ar@{} [r] |{\top}
& \infty\Cat
	\ar@/^3ex/ [l] ^{\tck{(\;\cdot\;)}}
}
\]
where $U$ is the forgetful functor and its left adjoint adds to an $\infty$-category all the missing inverses for cells of dimension $p+1$ and above. The proof in~\cite{AraMetayer11} is detailed for the specific case $p=0$, \ie, for $\infty$-groupoids, but it works equally well in the general case. Here we are interested in cofibrant replacements in the model structure on $(\infty,p)$-categories, so let us examine the classes of weak equivalences and cofibrations. 

From~\cite{LafontMetayerWorytkiewicz10}, we recall that an $\infty$-functor $F:\Cr\fl\Dr$ is a \emph{weak equivalence} in the model structure on $\infty$-categories if, and only if, it satisfies the following properties:
\begin{itemize}
\item for every $0$-cell $x$ of $\Dr$, there exists a $0$-cell $\rep{x}$ of $\Cr$ such that $F(\rep{x})$ is $\omega$-equivalent to $x$,

\item for every $0$-cells $x$ and $y$ of $\Cr$ and every $1$-cell $u:F(x)\fl F(y)$ of $\Dr$, there exists a $1$-cell $\rep{u}:x\fl y$ in $\Cr$ such that $F(\rep{u})$ is $\omega$-equivalent to $u$,

\item for every parallel $n$-cells $u$ and $v$ of $\Cr$, with $n\geq 1$, and every $(n+1)$-cell $f:u\fl v$ of $\Dr$, there exists an $(n+1)$-cell $\rep{f}:u\fl v$ of $\Cr$ such that $F(\rep{f})$ is $\omega$-equivalent to $f$.
\end{itemize}
The $\omega$-equivalence relation is defined together with the notion of reversible cells, by mutual coinduction:
\begin{itemize}
\item two $n$-cells $u$ and $v$ of an $\omega$-category $\Cr$ are \emph{$\omega$-equivalent} when there exists a reversible $(n+1)$-cell $f:u\fl v$ in $\Cr$,
\item an $(n+1)$-cell $f:u\fl v$ of an $\omega$-category $\Cr$ is \emph{reversible} when there exists an $(n+1)$-cell $g:v\fl u$ in $\Cr$ such that $g\star_n f$ and $f\star_n g$ are $\omega$-equivalent to $1_x$ and $1_y$, respectively.
\end{itemize}
From the result of~\cite{AraMetayer11}, the weak equivalences for $(\infty,p)$-categories are the images through the forgetful functor $U$ of the weak equivalences for $\infty$-categories, \ie, the $\infty$-functors between $(\infty,p)$-categories that are weak equivalences for $\infty$-categories.

In the canonical model structure on $\infty$-categories, the cofibrations are the retracts of transfinite compositions of pushouts of the $\infty$-functors
\[
i_n \::\: \dr \Er_n \:\fl\: \Er_n,
\]
for $n\geq 0$, where $\Er_n$ is the $n$-globe and $\dr\Er_n$ its boundary: 
\begin{itemize}
\item the $n$-globe $\Er_n$ is the $n$-category with exactly one $n$-cell together with its distinct $k$-source and $k$-target for every $0\leq k<n$,
\item the boundary $\dr\Er_n$ of the $n$-globe is the same $n$-category as $\Er_n$ but with the $n$-cell removed.
\end{itemize}
By the result of~\cite{AraMetayer11}, we get that the cofibrations for $(\infty,p)$-categories are the retracts of transfinite compositions of pushouts of the $\infty$-functors
\[
\tck{i}_n \::\: \dr\tck{\Er_n} \:\fl\: \tck{\Er_n},
\]
for $n\geq 0$, where $\tck{\Er_n}$ and $\dr\tck{\Er_n}$ are obtained from $\Er_n$ and $\dr\Er_n$ by formal adjunction of inverses for every $k$-cell, with $1<k\leq n$.

\subsection{Polygraphs}

\subsubsection{Cellular extensions}

Let us assume that $n<\infty$ and let $\Cr$ be an $n$-category. A \emph{cellular extension of $\Cr$} is a set $\Gamma$ equipped with a map $\dr$ from $\Gamma$ to the set of $n$-spheres of $\Cr$. By considering all the formal compositions of elements of~$\Gamma$, seen as $(n+1)$-cells with source and target in~$\Cr$, one builds the \emph{free $(n+1)$-category generated by~$\Gamma$ over $\Cr$}, denoted by $\Cr[\Gamma]$. The \emph{size} of an $(n+1)$-cell $f$ of $\Cr[\Gamma]$ is the number of $(n+1)$-cells of $\Gamma$ it contains. 

The \emph{quotient of $\Cr$ by $\Gamma$}, denoted by $\Cr/\Gamma$, is the $n$-category one gets from $\Cr$ by identification of the $n$-cells $s(\gamma)$ and $t(\gamma)$, for every $n$-sphere $\gamma$ of $\Gamma$. 
If $\Cr$ is an $(n,p)$-category and $\Gamma$ is a cellular extension of $\Cr$, then the \emph{free $(n+1,p)$-category generated by~$\Gamma$ over $\Cr$} is denoted by~$\Cr(\Gamma)$ and defined as follows: 
\[
\Cr(\Gamma) \:=\: \Cr \left[ \Gamma,\; \Gamma^- \right] \big/ \; \text{Inv}(\Gamma) 
\]
where $\Gamma^-$ contains the same $(n+1)$-cells as $\Gamma$, with source and target reversed, and $\text{Inv}(\Gamma)$ is the cellular extension made of two $(n+2)$-cells 
\[
\gamma\star_{n+1}\gamma^- \:\fl\: 1_{f}
\qquad\text{and}\qquad
\gamma^-\star_{n+1}\gamma \:\fl\: 1_{g}
\]
for each $(n+1)$-cell $\gamma$ from $f$ to $g$ in $\Gamma$. 

\subsubsection{Homotopy bases}

Let $\Cr$ be an $(n,p)$-category, for $p<n<\infty$. A \emph{homotopy basis of $\Cr$} is a cellular extension $\Gamma$ of $\Cr$ such that the $(n,p)$-category $\Cr/\Gamma$ is aspherical, \ie, such that, for every $n$-sphere $\gamma$ of $\Cr$, there exists an $(n+1)$-cell with boundary~$\gamma$ in the $(n+1,p)$-category~$\Cr(\Gamma)$. For example, the $n$-spheres of $\Cr$ form a cellular extension which is a homotopy basis of $\Cr$.


\subsubsection{$(n,p)$-polygraphs}

An \emph{$n$-polygraph} is a data $\Sigma$ made of a set $\Sigma_0$ and, for every $0\leq k<n$, a cellular extension $\Sigma_{k+1}$ of the free $k$-category
\[
\Sigma_k^* \:=\: \Sigma_0[\Sigma_1]\cdots[\Sigma_k].
\]
For $p\leq n$, an \emph{$(n,p)$-polygraph} is a data $\Sigma$ made of:
\begin{itemize}
\item a $p$-polygraph $(\Sigma_0,\dots,\Sigma_p)$,
\item for every $p \leq k < n$, a cellular extension $\Sigma_{k+1}$ of the free $(k,p)$-category 
\[
\tck{\Sigma}_k \:=\: \Sigma^*_p(\Sigma_{p+1})\cdots(\Sigma_{k}).
\]
\end{itemize}
Note that $(n,n)$-polygraphs coincide with $n$-polygraphs, so that any notion defined on $(n,p)$-polygraphs also covers the case of $n$-polygraphs. 

For an $(n,p)$-polygraph $\Sigma$, an element of $\Sigma_k$ is a called \emph{$k$-cell of $\Sigma$} and $\Sigma$ is \emph{finite} when it has finitely many cells in every dimension. An $(n,p)$-polygraph $\Sigma$ is \emph{aspherical} when the free $(n,p)$-category $\tck{\Sigma}$ is aspherical. An $(n,p)$-polygraph $\Sigma$ is \emph{acyclic} when, for every $p<k<n$, the cellular extension $\Sigma_{k+1}$ is a homotopy basis of the $(k,p)$-category $\tck{\Sigma}_k$.

\begin{remark}
An $(n,p)$-polygraph yields a diagram which is similar to the one given in the original definition of $n$-polygraphs by Burroni,~\cite{Burroni93}, drawn for the case $n<\infty$ as follows:
\[
\xymatrix@W=3em{
\Sigma_0^*
& (\cdots)
	\ar@<.25ex> [l]
	\ar@<-.25ex> [l]
& \Sigma_p^*
	\ar@<.25ex> [l]
	\ar@<-.25ex> [l]
& \tck{\Sigma}_{p+1}
	\ar@<.25ex> [l]
	\ar@<-.25ex> [l]
& (\cdots)
	\ar@<.25ex> [l]
	\ar@<-.25ex> [l]
& \tck{\Sigma}_{n-1}
	\ar@<.25ex> [l]
	\ar@<-.25ex> [l]
\\
\Sigma_0
	\ar@{=} [u]
& (\cdots)
	\ar@<.25ex> [ul]
	\ar@<-.25ex> [ul]
& \Sigma_p
	\ar@<.25ex> [ul]
	\ar@<-.25ex> [ul]
	\ar@{ >->} [u]
& \Sigma_{p+1}
	\ar@<.25ex> [ul]
	\ar@<-.25ex> [ul]
	\ar@{ >->} [u]
& (\cdots)
	\ar@<.25ex> [ul]
	\ar@<-.25ex> [ul]
& \Sigma_{n-1}
	\ar@<.25ex> [ul]
	\ar@<-.25ex> [ul]
	\ar@{ >->} [u]
& \Sigma_{n}
	\ar@<.25ex> [ul]
	\ar@<-.25ex> [ul]
}
\]
This diagram contains the source and target attachment maps of generating $(k+1)$-cells on composite $k$-cells, their extension to composite $(k+1)$-cells and the inclusion of generating $k$-cells into composite $k$-cells.
\end{remark}

\begin{proposition}
\label{PropositionPolygraphesCofibrants}
Every free $(\infty,p)$-category on an $(\infty,p)$-polygraph is a cofibrant object for the canonical model structure on $(\infty,p)$-categories.
\end{proposition}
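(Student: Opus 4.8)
The plan is to prove that $\tck{\Sigma}$ is cofibrant by showing directly that the unique $\infty$-functor from the initial $(\infty,p)$-category to $\tck{\Sigma}$ is a cofibration. By the characterisation recalled above, the cofibrations are exactly the retracts of transfinite compositions of pushouts of the generating $\infty$-functors $\tck{i}_k:\dr\tck{\Er_k}\fl\tck{\Er_k}$, and this class is closed under coproducts, pushouts, transfinite composition and retracts, being the left class of a weak factorisation system. So it suffices to exhibit $\tck{\Sigma}$ as a cell complex glued from the $\tck{\Er_k}$ along their boundaries, starting from the initial object.

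First I would use that, by the very definition of an $(\infty,p)$-polygraph, the free $(\infty,p)$-category $\tck{\Sigma}$ is the colimit of the tower
\[
\emptyset \fl \Sigma_0 \fl \cdots \fl \Sigma_p^* \fl \tck{\Sigma}_{p+1} \fl \cdots \fl \tck{\Sigma}_k \fl \cdots,
\]
where each stage is obtained from the preceding one by the cellular extension $\Sigma_k$, using the free $k$-category construction $[\Sigma_k]$ for $k\leq p$ and the free $(k,p)$-category construction $(\Sigma_k)$ for $k>p$. When $n<\infty$ the tower stabilises after the $n$-th stage; when $n=\infty$ it is a genuine $\omega$-indexed transfinite composition.

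Then the heart of the argument is to identify each map $\tck{\Sigma}_{k-1}\fl\tck{\Sigma}_k$ as a pushout of generating cofibrations. Each generating $k$-cell $\gamma\in\Sigma_k$ carries its boundary $\dr\gamma=(s(\gamma),t(\gamma))$, a $(k-1)$-sphere of $\tck{\Sigma}_{k-1}$, which is precisely the datum of an $\infty$-functor $\dr\tck{\Er_k}\fl\tck{\Sigma}_{k-1}$. I would check that the pushout of $\tck{i}_k$ along this functor adjoins to $\tck{\Sigma}_{k-1}$ exactly one $k$-cell attached along $\dr\gamma$ — together with its formal inverse and the invertibility cells $\mathrm{Inv}(\gamma)$ when $k>p$ — that is, the one-cell cellular extension. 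Attaching all of $\Sigma_k$ simultaneously is then the pushout along the coproduct $\coprod_{\gamma\in\Sigma_k}\tck{i}_k$ of generating cofibrations. Composing these stages over all $k$ realises the map from the initial object to $\tck{\Sigma}$ as a transfinite composition of pushouts of generating cofibrations, whence $\tck{\Sigma}$ is cofibrant.

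The main obstacle is the identification in the previous step: one must verify that the globe $\tck{\Er_k}$, obtained from $\Er_k$ by the formal adjunction of inverses, together with its inclusion $\tck{i}_k$, yields under pushout precisely the free $(k,p)$-category construction $(\gamma)$ and not merely the free $k$-category one. Concretely, this amounts to matching the strict adjunction of inverses defining $\tck{\Er_k}$ with the quotient $\tck{\Sigma}_{k-1}[\gamma,\gamma^-]/\mathrm{Inv}(\gamma)$ that defines $(\gamma)$, which is most cleanly done by checking that both constructions satisfy the same universal property over $\tck{\Sigma}_{k-1}$ in $(\infty,p)\Cat$. Once this compatibility is secured, the remaining bookkeeping of coproducts and transfinite composition is routine. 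Alternatively, one can argue more abstractly that the left adjoint $\tck{(\;\cdot\;)}$ of the transfer adjunction of~\cite{AraMetayer11} is left Quillen, hence sends the generating cofibrations $i_k$ to the $\tck{i}_k$ and preserves cofibrant objects, and deduce the statement from the corresponding fact for $\infty$-polygraphs.
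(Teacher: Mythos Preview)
Your proposal is correct and follows essentially the same approach as the paper: exhibit $\emptyset\to\tck{\Sigma}$ as a countable composition of inclusions, and identify each stage $\tck{\Sigma}_{k-1}\to\tck{\Sigma}_k$ as a pushout of a coproduct of the generating cofibrations $\tck{i}_k$ by interpreting each boundary sphere as an $\infty$-functor out of $\dr\tck{\Er_k}$. The paper separates the cases $k\leq p$ (where $\tck{\Er_k}=\Er_k$ and the extension is $\Cr[\Gamma]$) from $k>p$ (where inverses are adjoined and the extension is $\Cr(\Gamma)$), which you handle implicitly; your alternative via the left Quillen functor $\tck{(\,\cdot\,)}$ is not used in the paper but is a legitimate shortcut.
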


\begin{proof}
Let $\Sigma$ be an $(\infty,p)$-polygraph. The unique $\infty$-functor from the initial $(\infty,p)$-category $\emptyset$ to~$\tck{\Sigma}$ is obtained as the following countable composition of inclusions:
\[
\emptyset
	\:\ifl\: \Sigma_0
	\:\ifl\: \Sigma_1^*
	\:\ifl\: \cdots
	\:\ifl\: \Sigma_p^*
	\:\ifl\: \tck{\Sigma}_{p+1}
	\:\ifl\: \cdots
	\:\ifl\: \tck{\Sigma}_n
	\:\ifl\: \cdots
\]
The generating cofibration $\tck{i}_0$ is the inclusion of $\dr\tck{\Er}_0=\emptyset$ into the singleton $\tck{\Er}_0$. Thus, for any set $X$, seen as a $0$-polygraph, the inclusion $\emptyset\ifl X$ is equal to 
\[
\xymatrix @C=5em {
\emptyset \:\simeq\: \bigsqcup_{x\in X} \dr\tck{\Er}_0
	\ar [r] ^-{\bigsqcup_{x\in X}\tck{i}_0} 
& {\bigsqcup_{x\in X} \tck{\Er}_0 \:\simeq\: X.}
}
\]
Then, for $0<k\leq p$, the inclusion of $\Sigma^*_{k-1}$ into $\Sigma^*_k$ is a particular case of an inclusion $\iota:\Cr\ifl\Cr[\Gamma]$ for $\Cr$ a $(k-1)$-category and $\Gamma$ a cellular extension of $\Cr$. By seeing each $(k-1)$-sphere $\gamma$ of $\Gamma$ as an $\infty$-functor from $\dr\tck{\Er_k}$ to $\Cr$, the inclusion $\iota$ is given by the following pushout:
\[
\xymatrix @C=4em @R=2.5em @!R {
{\bigsqcup_{\gamma\in\Gamma} \dr\tck{\Er_k}}
	\ar [r] ^-{\bigsqcup_{\gamma\in\Gamma} \tck{i_k}}
	\ar [d] _-{\Gamma}	
& {\bigsqcup_{\gamma\in\Gamma} \tck{\Er_k}}
	\ar [d]
\\
\Cr
	\ar@{ >->} [r] _-{\iota}
& \Cr[\Gamma].
}
\]
Finally, the inclusion of $\Sigma^*_p$ into $\tck{\Sigma}_{p+1}$ and, for $n>p$, the inclusion of $\tck{\Sigma}_n$ into $\tck{\Sigma}_{n+1}$ are particular cases of an inclusion $\iota:\Cr\ifl\Cr(\Gamma)$, for $\Cr$ an $(n,p)$-category and $\Gamma$ a cellular extension of $\Cr$. By seeing each $n$-sphere $\gamma$ of $\Gamma$ as an $\infty$-functor from $\dr\tck{\Er_{n+1}}$ to $\Cr$, the inclusion $\iota$ is given by the following pushout:
\[
\xymatrix @C=4em @R=2.5em @!R {
{\bigsqcup_{\gamma\in\Gamma} \dr\tck{\Er_{n+1}}}
	\ar [r] ^-{\bigsqcup_{\gamma\in\Gamma} \tck{i_{n+1}}}
	\ar [d] _-{\Gamma}	
& {\bigsqcup_{\gamma\in\Gamma} \tck{\Er_{n+1}}}
	\ar [d]
\\
\Cr
	\ar@{ >->} [r] _-{\iota}
& \Cr(\Gamma).
}
\]
As a conclusion, we get that the inclusion $\emptyset\ifl\tck{\Sigma}$ is a countable composition of pushouts of the generating cofibrations $(\tck{i}_n)_{n\geq 0}$ and, as such, it is a cofibration.
\end{proof}

\subsection{Resolutions by \pdf{(n,p)}-polygraphs}
\label{subsectionTDFn}

\subsubsection{Polygraphic presentations}

If $p<n$, given an $(n,p)$-polygraph $\Sigma$, the \emph{$p$-category $\cl{\Sigma}$ presented by $\Sigma$} is defined by
\[
\cl{\Sigma} \:=\: \Sigma_p^*/\Sigma_{p+1}.
\]
We usually denote by $\cl{f}$ the image of a $p$-cell of $\Sigma^*_p$ through the canonical projection $\Sigma_p^*\twoheadrightarrow\cl{\Sigma}$. If $f$ is a $k$-cell of $\tck{\Sigma}$, with $p< k\leq n$, we also denote by $\cl{f}$ the common image in $\cl{\Sigma}$ of the $p$-cells $s_p(f)$ and $t_p(f)$ by the canonical projection. An $(m,p)$-polygraph $\Sigma$ and an $(n,p)$-polygraph $\Upsilon$ are \emph{Tietze-equivalent} when the $p$-categories $\cl{\Sigma}$ and $\cl{\Upsilon}$ they present are isomorphic.

\begin{example}
Every category $\C$ admits a presentation, called the \emph{standard presentation of $\C$}, defined as the $2$-polygraph whose cells are the following ones: 
\begin{itemize}
\item one $0$-cell for each $0$-cell of $\C$,

\item one $1$-cell $\rep{u}:x\fl y$ for every $1$-cell $u:x\fl y$ of $\C$,

\item one $2$-cell $\mu_{u,v}:\rep{u}\rep{v}\dfl\rep{uv}$ for every $1$-cells $u:x\fl y$ and $v:y\fl z$ of $\C$ and one $2$-cell $\eta_x:1_x\dfl\rep{1}_x$ for every $0$-cell $x$ of $\C$:
\[
\xymatrix@!C{
& y
	\ar @/^/ [dr] ^{\rep{v}}
\\
x 
	\ar @/^/ [ur] ^{\rep{u}}
	\ar @/_/ [rr] _{\rep{uv}} ^{}="tgt"
&& z
	\ar@2 "1,2"!<0pt,-15pt>;"tgt"!<0pt,10pt> ^{\mu_{u,v}}
}
\qquad\qquad\qquad
\raisebox{-1.75em}{
\xymatrix@C=3em @!C{
x 
	\ar@/^3ex/ [r] ^-{1_x} ^-{}="src"
	\ar@/_3ex/ [r] _-{\rep{1}_x} _-{}="tgt"
& x.
\ar@2 "src"!<0pt,-10pt>;"tgt"!<0pt,10pt> ^-{\eta_x}	
}
}
\]
\end{itemize}
In the free category generated by the $1$-cells of the standard presentation of $\C$, we get, for every $0$-cell~$x$, the identity $1_x$ of $x$ and the generating $1$-cell $\rep{1}_x$ associated to the identity of $x$ in $\C$. By removing this last superfluous generator, together with the corresponding $2$-cell $\eta_x$, we get another presentation of $\C$, namely the $2$-polygraph called the \emph{reduced standard presentation of $\C$}, with the following cells:
\begin{itemize}
\item one $0$-cell for each $0$-cell of $\C$,

\item one $1$-cell $\rep{u}:x\fl y$ for every non-identity $1$-cell $u:x\fl y$ of $\C$,

\item one $2$-cell 
\[
\xymatrix@!C{
& y
	\ar @/^/ [dr] ^{\rep{v}}
\\
x 
	\ar @/^/ [ur] ^{\rep{u}}
	\ar @/_/ [rr] _{\rep{uv}} ^{}="tgt"
&& z
	\ar@2 "1,2"!<0pt,-15pt>;"tgt"!<0pt,10pt> ^{\mu_{u,v}}
}
\]
for every non-identity $1$-cells $u:x\fl y$ and $v:y\fl z$ of $\C$ such that $uv$ is not an identity,

\item one $2$-cell 
\[
\xymatrix{
& y
	\ar @/^/ [dr] ^{\rep{v}}
\\
x 
	\ar @/^/ [ur] ^{\rep{u}}
	\ar@{-} @/_/ [rr] _{1_x} ^{}="tgt"
&& x
	\ar@2 "1,2"!<0pt,-15pt>;"tgt"!<0pt,10pt> ^{\mu_{u,v}}
}
\]
for every non-identity $1$-cells $u:x\fl y$ and $v:y\fl x$ of $\C$ such that $uv=1_x$.
\end{itemize}
\end{example}

\subsubsection{Polygraphic resolutions}
\label{TrackPresentation}

Let $\Cr$ be a $p$-category. A \emph{polygraphic resolution of $\Cr$} is an acyclic $(\infty,p)$-polygraph $\Sigma$ such that the $p$-category $\cl{\Sigma}$ is isomorphic to $\Cr$. If $p<n<\infty$, a \emph{partial polygraphic resolution of length $n$ of $\Cr$} is an acyclic $(n,p)$-polygraph $\Sigma$ such that $\cl{\Sigma}$ is isomorphic to $\Cr$. Explicitly, the first dimensions of a polygraphic resolution $\Sigma$ of $\Cr$ are given as follows:
\begin{itemize}
\item For $k<p$, the $k$-cells of $\Sigma$ are the ones of $\Cr$. In particular, polygraphic resolutions concern the cofibrant $p$-categories only, \ie, the $p$-categories that are free up to dimension $p-1$, which is always the case for $p=1$.
\item The $p$-cells of $\Sigma$ are generators for the ones of $\Cr$, \ie, the $p$-category $\Cr$ is a quotient of the free $p$-category $\Sigma_p^*$.
\item The $(p+1)$-cells of $\Sigma$ are relations, \ie, the $(p+1)$-polygraph $\Sigma_{p+1}$ is a presentation of $\Cr$.
\item The $(p+2)$-cells of $\Sigma$ form a homotopy basis of $\tck{\Sigma}_{p+1}$, \ie, they are generators of the relations between relations of the presentation $\Sigma_{p+1}$ of $\Cr$.
\end{itemize}

\noindent
As previously mentioned, Métayer introduced a notion of polygraphic resolution of a $p$-category~$\Cr$, with $0\leq p\leq\infty$, in~\cite{Metayer03}: this is an $\infty$-polygraph $\Sigma$ such that the free $\infty$-category $\Sigma^*$ is a cofibrant replacement of $\Cr$ in the canonical model structure on $\infty$-categories. The notion we use here is similar, using $(\infty,p)$-polygraphs to produce cofibrant approximations of $p$-categories in the canonical model structure on $(\infty,p)$-categories:

\begin{theorem}
\label{TheoremCofibrantApproximation} 
Let $\Sigma$ be a polygraphic resolution of a $p$-category $\Cr$. The canonical projection $\tck{\Sigma}\!\!\pfl\!\!\Cr$ is a cofibrant approximation of $\Cr$ in the canonical model structure on $(\infty,p)$-categories.
\end{theorem}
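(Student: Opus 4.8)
The plan is to use the standard characterisation of a cofibrant approximation as a cofibrant object equipped with a weak equivalence to the target. Since $\tck{\Sigma}$ is the free $(\infty,p)$-category on an $(\infty,p)$-polygraph, Proposition~\ref{PropositionPolygraphesCofibrants} already gives that $\tck{\Sigma}$ is cofibrant, so it remains to prove that the canonical projection $\pi\colon\tck{\Sigma}\pfl\Cr$ is a weak equivalence. By the transfer result of Ara and M\'etayer recalled above, it suffices to check that $U\pi$ is a weak equivalence in the canonical model structure on $\infty$-categories, for which I would verify the three explicit conditions: surjectivity on $0$-cells, and fullness in each higher dimension, all up to $\omega$-equivalence.

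The key simplification is that the target $\Cr$ is a $p$-category, so all its cells of dimension $>p$ are identities. Consequently, every image $\pi(\rep{f})$ of dimension $>p$ is automatically an identity, the required $\omega$-equivalences degenerate to strict equalities, and the coinductive relation plays no real role: the three conditions become plain lifting statements. First I would treat the dimensions below~$p$. By definition of a polygraphic resolution, the $k$-cells of $\Sigma$ for $k<p$ are exactly those of $\Cr$ and $\pi$ restricts to the identity on them, so surjectivity on $0$-cells and the lifting of $k$-cells for $k<p$ are immediate. In dimension~$p$, the projection restricts to the quotient $\Sigma_p^*\pfl\cl{\Sigma}\cong\Cr$, which is surjective, so any $p$-cell of $\Cr$ lifts strictly to a $p$-cell of $\tck{\Sigma}$.

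The two remaining families of conditions are where the defining properties of the resolution enter. In dimension $p+1$, one is given parallel $p$-cells $u,v$ of $\tck{\Sigma}$ together with a $(p+1)$-cell of $\Cr$ from $\pi(u)$ to $\pi(v)$; since $\Cr$ is a $p$-category this forces $\pi(u)=\pi(v)$, that is $\cl{u}=\cl{v}$ in $\cl{\Sigma}$. Because $\Sigma_{p+1}$ presents $\Cr$, i.e. $\cl{\Sigma}=\Sigma_p^*/\Sigma_{p+1}$, two parallel $p$-cells with equal image are connected by a $(p+1)$-cell of $\tck{\Sigma}_{p+1}$, which supplies the lift. In each dimension $p+k+1$ with $k\geq 1$, a parallel pair of $(p+k)$-cells $u,v$ of $\tck{\Sigma}$ automatically satisfies $\pi(u)=\pi(v)$, both being the same iterated identity determined by their common $p$-source; the hypothesis is thus vacuously met and one must only produce a $(p+k+1)$-cell $u\fl v$ in $\tck{\Sigma}$. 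This is exactly the assertion that $\Sigma_{p+k+1}$ is a homotopy basis of $\tck{\Sigma}_{p+k}$, since $\tck{\Sigma}_{p+k+1}=\tck{\Sigma}_{p+k}(\Sigma_{p+k+1})$, and it holds by acyclicity of $\Sigma$.

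I expect the main obstacle to be bookkeeping rather than conceptual: carefully translating each clause of the $\infty$-categorical weak-equivalence characterisation into polygraphic language, checking that parallelism in $\tck{\Sigma}$ indeed forces equality of images in $\Cr$ in every dimension above~$p$, and confirming that the homotopy-basis condition ``every $(p+k)$-sphere is filled in $\tck{\Sigma}_{p+k}(\Sigma_{p+k+1})$'' coincides precisely with the fullness clause required of $\pi$. Once these identifications are in place, the statement follows by matching the definition of a polygraphic resolution, dimension by dimension, against the list of conditions for a weak equivalence.
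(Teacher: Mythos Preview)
Your proposal is correct and follows essentially the same route as the paper's proof: invoke Proposition~\ref{PropositionPolygraphesCofibrants} for cofibrancy, then verify the weak-equivalence conditions dimension by dimension, using that $\Cr$ has only identity cells above dimension~$p$ so that the required $\omega$-equivalences reduce to equalities, and matching the lifting conditions in dimensions $p$, $p+1$, and $>p+1$ respectively to surjectivity of the quotient $\Sigma_p^*\pfl\cl{\Sigma}$, the presentation property, and acyclicity.
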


\begin{proof}
From Proposition~\ref{PropositionPolygraphesCofibrants}, we already know that $\tck{\Sigma}$ is cofibrant. There remains to check that the canonical projection $\tck{\Sigma}\pfl \Cr$ is a weak equivalence. Since, by hypothesis, the $p$-categories $\Cr$ and $\cl{\Sigma}$ are isomorphic, it is sufficient to prove that the canonical projection $\tck{\Sigma}\pfl\cl{\Sigma}$ is a weak equivalence. First, we note that the $\omega$-equivalence relation is reflexive: hence, proving that two $k$-cells of $\cl{\Sigma}$ are equal implies that they are $\omega$-equivalent.

By definition, the $(\infty,p)$-categories $\tck{\Sigma}$ and $\cl{\Sigma}$ have the same cells up to dimension $p-1$. Thus, if $x$ is a $0$-cell of~$\cl{\Sigma}$, we take $\rep{x}=x$. Moreover, if $x$ and $y$ are parallel $k$-cells of $\tck{\Sigma}$, for $0<k<p-1$, and if $u:x\fl y$ is a $(k+1)$-cell of $\cl{\Sigma}$, then we take $\rep{u}=u$.

Now, let $x$ and $y$ be parallel $(p-1)$-cells of $\tck{\Sigma}$ and let $u:x\fl y$ be a $p$-cell of $\cl{\Sigma}$. By definition, the $p$-category $\cl{\Sigma}$ is a quotient of the $p$-category underlying $\tck{\Sigma}$. Hence, there exists a $p$-cell $\rep{u}$ in $\tck{\Sigma}$ sent to~$u$ by the canonical projection.

Then, let $u$ and $v$ be parallel $p$-cells of $\tck{\Sigma}$ and let $f:\cl{u}\fl\cl{v}$ be a $(p+1)$-cell of $\cl{\Sigma}$. Since $\cl{\Sigma}$ is a $p$-category, we must have $\cl{u}=\cl{v}$ and $f=1_{\cl{u}}$. By definition of $\cl{\Sigma}$, there exists a $(p+1)$-cell $\rep{f}$ in $\tck{\Sigma}$ from~$u$ to $v$, which is sent to $1_{\cl{u}}$ by the canonical projection.

Finally, let $f$ and $g$ be parallel $n$-cells of $\tck{\Sigma}$, for $n>p$. Both $f$ and $g$ must be sent to the same $n$-cell of $\cl{\Sigma}$, so that the only possible $(n+1)$-cell of $\cl{\Sigma}$ between their images is an identity. Since $\Sigma$ is acyclic, there exists an $(n+1)$-cell from $f$ to $g$ in $\tck{\Sigma}$, and this $(n+1)$-cell must be sent to this same identity $(n+1)$-cell of $\cl{\Sigma}$.
\end{proof}

\subsubsection{Polygraphic dimension}

Let us note that every cofibrant $p$-category $\Cr$ admits a presentation, \ie, a partial polygraphic resolution of length $p+1$. Indeed, we can take:
\begin{itemize}
\item for $k<p$, any choice of generating $k$-cells of $\Cr$,
\item the same $p$-cells as $\Cr$,
\item one $(p+1)$-cell from $u$ to $v$, when $u$ and $v$ are $p$-cells of the free $p$-category $\Cr^*$ that are identified by the projection $\Cr^*\twoheadrightarrow\Cr$, \ie, that yield the same $p$-cell of $\Cr$ by composition.
\end{itemize}
Furthermore, every partial polygraphic resolution $\Sigma$ of length $n$ of a $p$-category $\Cr$ can be extended into a partial polygraphic resolution of length $n+1$ of $\Cr$ by adjunction of the homotopy basis of the $n$-spheres of $\tck{\Sigma}$. By iterating this process, we can extend any partial polygraphic resolution into a polygraphic resolution of the same $p$-category. Applied to the generic presentation of a cofibrant $p$-category, we get that any cofibrant $p$-category admits a polygraphic resolution.

If $\Cr$ is a cofibrant $p$-category, the \emph{polygraphic dimension of $\Cr$} is the element $\poldim{\Cr}$ of $\Nb\amalg\ens{\infty}$ defined as follows: if there exists a natural number $n$ such that $\Cr$ admits an aspherical partial polygraphic resolution of length $n$, then $\poldim{\Cr}$ is the smallest of those natural numbers; otherwise $\poldim{\Cr}=\infty$.

\subsubsection{Higher-dimensional finite derivation type}

For $n\geq p$, a $p$-category is of \emph{finite $n$-derivation type} ($\FDT_n$) when it admits a finite partial polygraphic resolution of length $n$. A $p$-category is of \emph{finite $\infty$-derivation type} ($\FDT_{\infty}$) when it admits a finite polygraphic resolution, \ie, when it is~$\FDT_n$ for every $n\geq p$. By extension, for $n<p$, a $p$-category is of \emph{finite $n$-derivation type} when it admits finitely many generating $n$-cells.

In particular, a $p$-category is $\FDT_p$ when it is finitely generated, it is $\FDT_{p+1}$ when it is finitely presented and it is $\FDT_{p+2}$ when it has finite derivation type, a condition introduced by the authors in~\cite{GuiraudMalbos09}. When $p=1$ and for monoids, seen as categories with one $0$-cell, the property $\FDT_3$ corresponds to the finite derivation type condition originally defined by Squier,~\cite{Squier87}, while the property $\FDT_4$ was introduced in~\cite{Pride05}. 

Let us note that $FDT_{n+1}$ is harder to fulfil than $FDT_n$, because of the finiteness condition on $(n+1)$-cells, leading to the following chain of implications:
\[
\FDT_{\infty} 
	\:\Rightarrow\: (\cdots) 
	\:\Rightarrow\: \FDT_{p+2} \:\Rightarrow\: \FDT_{p+1} \:\Rightarrow\: \FDT_p 
	\:\Rightarrow\: (\cdots) 
	\:\Rightarrow\: \FDT_0.
\]   

\section{Normalisation strategies for polygraphs}
\label{sectionStrategies}

\subsection{Strategies in rewriting theory}
\label{Subsection:StrategiesInRewriting}

A rewriting system specifies a set of rules that describe valid replacements of subformulas by other ones,~\cite{Thue14,Newman42}. On some formulas, the rewriting rules may produce conflicts, when two or more rules can be applied. For this reason, to transform a rewriting system into a genuine computation algorithm, one specifies a way to apply the rules in a deterministic way by a \emph{strategy}.

For example, in a word rewriting system, formulas are elements of a free monoid. There are two canonical strategies to reduce words where several rewriting rules apply: the leftmost one and the rightmost one, using first the rewriting rule that can be applied on the leftmost or the rightmost subformula:
\[
\xymatrix@C=4em@W=0pt@M=0pt{
\strut 
	\ar@{-} [r] _-{u}
	\ar@{-}@/^5ex/ [rr] ^-{u'} _-{}="tgt1"
& \strut
	\ar@{-} [r] |-{v}
	\ar@{-}@/_5ex/ [rr] _-{v'} ^-{}="tgt2"
& \strut 
	\ar@{-} [r] ^-{w}
& \strut
\ar@2 "1,2";"tgt1"!<0pt,-5pt> ^-{\text{left}} 
\ar@2 "1,3";"tgt2"!<0pt,5pt> ^-{\text{right}} 
}
\]
In term rewriting, formulas are morphisms with target the terminal object in a free Lawvere algebraic theory,~\cite{Lawvere63}. Formulas can be represented by trees and rewriting rules replace subtrees by other subtrees. There exist many possible strategies for term rewriting systems. Among them, outermost and innermost strategies are families of strategies that first use rules that apply closer to the root or closer to the leaves of the term, respectively:
\[
\begin{tikzpicture}
\draw (1,0) -- (2,2) -- (0,2) -- cycle ;;
\draw[fill = lightgray] (1,0) -- (1.75,1.5) -- (0.25,1.5) -- cycle ;;
\draw[fill = gray] (0.75,1) -- (1.25,2) -- (0.25,2) -- cycle ;;
\draw (0.25,1.5) -- (1.75,1.5) ;;
\end{tikzpicture}
\raisebox{40pt}{
\xymatrix@C=4em{
\strut 
& \strut
	\ar@3 [l] ^-{\text{inner}}
}
}
\begin{tikzpicture}
\draw (1,0) -- (2,2) -- (0,2) -- cycle ;;
\draw[fill = lightgray] (1,0) -- (1.75,1.5) -- (0.25,1.5) -- cycle ;;
\draw[fill = lightgray] (0.75,1) -- (1.25,2) -- (0.25,2) -- cycle ;;
\draw (0.25,1.5) -- (1.75,1.5) ;;
\end{tikzpicture}
\raisebox{20pt}{
\xymatrix@C=4em{
\strut
	\ar@3 [r] ^-{\text{outer}}
& \strut
}
}
\begin{tikzpicture}
\draw (1,0) -- (2,2) -- (0,2) -- cycle ;;
\draw[fill = lightgray] (0.75,1) -- (1.25,2) -- (0.25,2) -- cycle ;;
\draw[fill = gray] (1,0) -- (1.75,1.5) -- (0.25,1.5) -- cycle ;;
\draw (0.5,1.5) -- (0.75,1) -- (1,1.5) -- cycle ;;
\end{tikzpicture}
\]
In programming languages based on rewriting mechanisms, such as Caml,~\cite{Caml}, and Haskell,~\cite{Haskell}, strategies are implicitly used by compilers to transform rewriting systems into deterministic algorithms. In that setting, innermost strategies include the call-by-value evaluation, while outermost strategies contain the call-by-need evaluation. Some programming languages, like Tom,~\cite{Tom}, include a dedicated grammar to explicitly construct user-defined strategies.

Several models have been introduced to study the computational properties of strategies. In abstract rewriting, a strategy is defined as a subgraph of the ambient abstract rewriting system. This definition allows the introduction of some properties: for example, a normalisation strategy is a strategy that reaches normal forms,~\cite{Terese03}. Strategies in functional programming languages are usually classified by corresponding notions of strategies in the $\lambda$-calculus,~\cite{Lévy78}. This has led to the axiomatic setting of standardisation theory, where strategies are seen as standardisation systems of rewriting paths,~\cite{Melliès02}.

In this work, we introduce a notion of normalisation strategy for higher-dimensional rewriting systems that, in turn, induces a notion of normal forms in every dimension, together with a homotopically coherent reduction of every cell to its normal form.

\subsection{Normalisation strategies}

Before a formal definition of normalisation strategy, let us give the idea underlying this notion. If $\Sigma$ is an $(\infty,p)$-polygraph, the $p$-category $\cl{\Sigma}$ it presents can be seen as an $(\infty,p)$-category with identity cells only in dimensions $p+1$ and higher. This way, the canonical projection $\pi:\Sigma_p^*\pfl\cl{\Sigma}$ can be extended into an $(\infty,p)$-functor $\pi:\tck{\Sigma}\pfl\cl{\Sigma}$. Given a (non-functorial) section $\iota:\cl{\Sigma}\ifl\Sigma_p^*$ of the canonical projection, a normalisation strategy corresponds to an extension of this section into a $(\infty,p)$-functor $\iota:\cl{\Sigma}\ifl\tck{\Sigma}$, in a suitably weak sense, that satisfies $\pi\iota=\id_{\cl{\Sigma}}$ and $\iota\pi\simeq\id_{\tck{\Sigma}}$, with an explicitly chosen natural isomorphism witnessing this last fact: it follows that $\Sigma$ is a polygraphic resolution of $\cl{\Sigma}$. Let us fix $n$ and $p$ with $0\leq p\leq n\leq\infty$.

\subsubsection{Sections} 

Let $\Sigma$ be an $(n,p)$-polygraph. A \emph{section of $\Sigma$} is a choice of a representative $p$-cell $\rep{u}:x\fl y$ in $\tck{\Sigma}$, for every  $p$-cell $u:x\fl y$ of $\cl{\Sigma}$, such that 
\[
\rep{1_x} \:=\: 1_x 
\]
holds for every $(p-1)$-cell $x$ of $\cl{\Sigma}$. Such an assignment $u\mapsto\rep{u}$ is not assumed to be functorial with respect to the compositions: in general, such a property can only be required for a $(p,p)$-polygraph, \ie, when $\cl{\Sigma}$ is a free $p$-category.

Since, by hypothesis, the assignment $u\mapsto\rep{u}$ is compatible with the quotient map, it extends to a mapping of each $p$-cell $u$ in $\Sigma^*$ to a parallel $p$-cell in $\Sigma^*$, still denoted by $\rep{u}$, in such a way that the equality $\cl{u}=\cl{v}$ holds in $\cl{\Sigma}$ if, and only if, we have $\rep{u}=\rep{v}$ in $\Sigma^*$. Thereafter, we assume that every $(n,p)$-polygraph comes with an implicitly chosen section.

\subsubsection{Normalisation strategies}
\label{Subsubsection:NormalisationStrategies}

Let $\Sigma$ be an $(n,p)$-polygraph. A \emph{normalisation strategy for $\Sigma$} is a mapping $\sigma$ of every $k$-cell $f$ of $\tck{\Sigma}$, with $p\leq k<n$, to a $(k+1)$-cell 
\[
\xymatrix{
f
	\ar[r] ^-{\sigma_f}
& {\rep{f}}
}
\]
where, for $k>p$, the notation $\rep{f}$ stands for the $k$-cell $\rep{f} = \sigma_{s(f)} \star_{k-1} \sigma_{t(f)}^-$, such that the following properties are satisfied:
\begin{itemize}
\item for every $k$-cell $f$, with $p\leq k<n$, 
\[
\sigma_{\rep{f}} \:=\: 1_{\rep{f}}
\]
\item for every pair $(f,g)$ of $i$-composable $k$-cells, with $p\leq i<k<n$, 
\[
\sigma_{f\star_i g} \:=\: \sigma_f \star_i\sigma_g \;.
\]
\end{itemize}
An $(n,p)$-polygraph is \emph{normalising} when it admits a normalisation strategy. This property is independent of the chosen section. Indeed, let us consider an $(n,p)$-polygraph $\Sigma$ with two sections $f\mapsto\rep{f}$ and $f\mapsto\tilde{f}$ of $\Sigma$ and let us assume that $\sigma$ is a normalisation strategy for $\Sigma$, equipped with the section $f\mapsto\rep{f}$. Then, one checks that we get a normalisation strategy $\tau$ for the other section by defining~$\tau_f$ as the following composite:
\[
\xymatrix@C=3em{
f
	\ar [r] ^-{\sigma_f}
& {\rep{f}}
	\ar [r] ^-{(\sigma_{\tilde{f}})^-}
& {\tilde{f}} \,.
}
\] 

\begin{lemma}
Let $\Sigma$ be an $(n,p)$-polygraph and let $\sigma$ be a normalisation strategy for $\Sigma$. 
\begin{enumerate}[\bf i)]
\item For every $k$-cell $f$, with $p-1\leq k<n-1$, we have
\[
\sigma_{1_f} \:=\: 1_{1_f}.
\]
\item For every $k$-cell $f$, with $p\leq k<n-1$, we have
\[
\sigma_{\sigma_f} \:=\: 1_{\sigma_f}.
\]
\item For every $k$-cell $f$, with $p<k<n$, we have
\[
\sigma_{f^-} \:=\: f^- \star_{k-1} \sigma_f^- \star_{k-1} {\rep{f}}^-.
\]
\end{enumerate}
\end{lemma}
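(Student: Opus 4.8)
The plan is to deduce all three identities directly from the two defining axioms of a normalisation strategy, $\sigma_{\rep{g}} = 1_{\rep{g}}$ and $\sigma_{g\star_i h}=\sigma_g\star_i\sigma_h$, together with the elementary laws for invertible cells recalled at the start of the section. The unifying observation is that $\sigma$ takes the value of an identity on every cell that is already a normal form; hence in each case the real work is to recognise the cell at hand as some $\rep{g}$, after which the first axiom finishes the argument.

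For \textbf{i)} I would compute $\rep{1_f}$. When $k=p-1$, the cell $1_f$ is a $p$-cell and $\rep{1_f}=1_f$ by the normalisation condition imposed on the chosen section. When $k\ge p$, the cell $1_f$ is a $(k+1)$-cell with $s(1_f)=t(1_f)=f$, so the defining formula for $\rep{}$ on cells above dimension $p$ gives $\rep{1_f}=\sigma_f\star_k\sigma_f^-=1_f$, using the inversion law $\sigma_f\star_k\sigma_f^-=1_{s_k(\sigma_f)}$. In both cases $\rep{1_f}=1_f$, and applying the axiom $\sigma_{\rep{g}}=1_{\rep{g}}$ with $g=1_f$ yields $\sigma_{1_f}=1_{1_f}$. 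For \textbf{ii)} I would similarly show $\rep{\sigma_f}=\sigma_f$: the $(k+1)$-cell $\sigma_f$ has $s(\sigma_f)=f$ and $t(\sigma_f)=\rep{f}$, so $\rep{\sigma_f}=\sigma_f\star_k\sigma_{\rep{f}}^-$, and the axiom $\sigma_{\rep{f}}=1_{\rep{f}}$ collapses the second factor to an identity, leaving $\rep{\sigma_f}=\sigma_f\star_k 1_{\rep{f}}=\sigma_f$. Applying $\sigma_{\rep{g}}=1_{\rep{g}}$ with $g=\sigma_f$ then gives $\sigma_{\sigma_f}=1_{\sigma_f}$. Neither of these requires induction; each is a two-line consequence of the axioms (with i) feeding into the treatment of iii) below).

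For \textbf{iii)} the idea is to recognise $\sigma_{f^-}$ as the $(k-1)$-inverse of $\sigma_f$. Since $p\le k-1<k<n$, the composition axiom applies to the two inversion identities $f\star_{k-1}f^-=1_{s(f)}$ and $f^-\star_{k-1}f=1_{t(f)}$; rewriting the right-hand sides with part i) gives $\sigma_f\star_{k-1}\sigma_{f^-}=1_{1_{s(f)}}$ and $\sigma_{f^-}\star_{k-1}\sigma_f=1_{1_{t(f)}}$. These two equalities state exactly that $\sigma_{f^-}$ is a two-sided $(k-1)$-inverse of the $(k+1)$-cell $\sigma_f$. By uniqueness of inverses, $\sigma_{f^-}$ must therefore coincide with the explicit $(k-1)$-inverse of $\sigma_f$ furnished by the formula for higher inverses recalled earlier, applied to the cell $\sigma_f$ (whose $k$-source is $f$, whose $k$-target is $\rep{f}$, and whose $k$-inverse is $\sigma_f^-$); unfolding that formula, together with the identification $\rep{f^-}=\rep{f}^-$, produces the stated composite.

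The only genuinely delicate point is the source/target bookkeeping in iii). One must verify that $\rep{f^-}$ and $\rep{f}^-$ coincide (which follows from the definition of $\rep{}$ and the contravariance of inversion), keep careful track of which whisker sits on which side when the inversion formula is unfolded, and check that the resulting $(k+1)$-cell has $k$-source $f^-$ and $k$-target $\rep{f^-}$ so that it is genuinely parallel to $\sigma_{f^-}$. Once these identifications are secured, the equality is forced by uniqueness of inverses, and no further computation is needed.
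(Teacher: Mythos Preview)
Your proposal is correct and follows essentially the same approach as the paper: for i) and ii) you show the cell in question equals its own normal form and then apply the axiom $\sigma_{\rep{g}}=1_{\rep{g}}$, and for iii) you use the compatibility of $\sigma$ with $\star_{k-1}$ together with i) to identify $\sigma_{f^-}$ as the $(k-1)$-inverse of $\sigma_f$ and then read off the explicit formula. The paper only checks one of the two inversion identities in iii) (which suffices, since $\sigma_f$ is already known to be $(k-1)$-invertible and one-sided inverses then coincide with two-sided ones), and it does not spell out the verification $\rep{f^-}=\rep{f}^-$; your extra care on both points is harmless and arguably clearer.
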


\begin{proof}
For {\bf i)}, if $x$ is a $(p-1)$-cell, we have $\rep{1_x}=1_x$ by definition. If $f$ is a $k$-cell, with $p\leq k<n-1$, then we have, by definition of $\rep{1_f}$:
\[
\rep{1_f} \:=\: \sigma_{s(1_f)} \star_k \sigma_{t(1_f)}^- \:=\: \sigma_f \star_k \sigma_f^- \:=\: 1_f.
\]
In either case, if $f$ is a $k$-cell, with $p-1\leq k\leq n$, we get $\sigma_{1_f}=1_{1_f}$ by definition of $\sigma$. For {\bf ii)}, if $f$ is a $k$-cell, with $p\leq k<n-1$, then the definition of $\rep{\sigma_f}$ gives:
\[
\rep{\sigma_f} \:=\: \sigma_{s(\sigma_f)} \star_k \sigma_{t(\sigma_f)}^-
	\:=\: \sigma_f \star_k \sigma_{\rep{f}}^-
	\:=\: \sigma_f \star_k 1_{f}^-
	\:=\: \sigma_f.
\]
As a consequence, we get $\sigma_{\sigma_f}=1_{\sigma_f}$. Finally, for {\bf iii)}, if $f$ is a $k$-cell, with $p<k<n$, we have:
\[
\sigma_{f} \star_{k-1} \sigma_{f^-}
	\:=\: \sigma_{f\star_{k-1} f^-}
	\:=\: \sigma_{1_{s(f)}} 
	\:=\: 1_{1_{s(f)}}.
\]
Thus, $\sigma_{f^-}$ is the $(k-1)$-inverse of $\sigma_f$, yielding:
\[
\sigma_{f^-}
\:=\:
	s(\sigma_f) ^- \star_{k-1} \sigma_f^- \star_{k-1} t(\sigma_f) ^- \\
\:=\:
	f^- \star_{k-1} \sigma_f^- \star_{k-1} \rep{f}^-.
	\qedhere
\]
\end{proof}

\subsection{The case of \pdf{(n,1)}-polygraphs}
\label{Subsection:StrategiesForLow-dimensional}

Let $\Sigma$ be an $(n,1)$-polygraph. In the lower dimensions, a normalisation strategy $\sigma$ for $\Sigma$ specifies the following assignments:
\begin{itemize}
\item For every $1$-cell $u$ of $\tck{\Sigma}$, a $2$-cell 
\[
\xymatrix@C=3em{
u 
	\ar@2 [r] ^-{\sigma_u} 
& {\rep{u}}
}
\]
of $\tck{\Sigma}$ that satisfies $\sigma_{\rep{u}}=1_{\rep{u}}$ and thus, in particular, $\sigma_{1_{x}}=1_{1_{x}}$ for every $0$-cell $x$ of $\Sigma$.
\item For every $2$-cell $f:u\dfl v$ of $\tck{\Sigma}$, a $3$-cell 
\[
\xymatrix{
u
	\ar@2 @/^/ [rr] ^-{f} _-{}="src"
	\ar@2 @/_/ [dr] _-{\sigma_u} 
&& v
\\
& {\rep{u}}
	\ar@2 @/_/ [ur] _-{\sigma_v^-}	
	\ar@3 "src"!<0pt,-10pt>;[]!<0pt,15pt> ^-{\sigma_f}	
}
\]
of $\tck{\Sigma}$ that satisfies $\sigma_{\rep{f}}=1_{\rep{f}}$ and the following relations:
\begin{itemize}
\item If $u$ is a $1$-cell of $\tck{\Sigma}$, then $\sigma_{1_u}=1_{1_u}$: 
\[
\raisebox{3ex}{
\xymatrix{
u
	\ar@2 @/^/ [rr] ^-{1_u} _-{}="src"
	\ar@2 @/_/ [dr] _-{\sigma_u} 
&& u
\\
& {\rep{u}}
	\ar@2 @/_/ [ur] _{\sigma_u^-}	
	\ar@3 "src"!<0pt,-10pt>;[]!<0pt,15pt> ^-{\sigma_{1_u}}	
}
}
\qquad=\qquad
\xymatrix{
u 
	\ar@2 @/^4ex/ [rr] ^-{1_u} _-{}="s"
	\ar@2 @/_4ex/ [rr] _-{1_u} ^-{}="t"
	\ar@3 "s"!<0pt,-10pt>;"t"!<0pt,10pt> ^-{1_{1_u}}
&& u
}
\] 
\item If $f:u\dfl v$ and $g:v\dfl w$ are $2$-cells in $\tck{\Sigma}$, then $\sigma_{f\star_1 g} = \sigma_f \star_1 \sigma_g$: 
\[
\raisebox{3ex}{
\xymatrix{
u
	\ar@2 @/^/ [rr] ^-{f\star_1 g} _-{}="src"
	\ar@2 @/_/ [dr] _-{\sigma_u} 
&& w
\\
& {\rep{u}}
	\ar@2 @/_/ [ur] _-{\sigma_w^-}	
	\ar@3 "src"!<0pt,-10pt>;[]!<0pt,15pt> ^-{\sigma_{f\star_1 g}}	
}
}
\qquad=\qquad
\raisebox{3ex}{
\xymatrix{
u
	\ar@2 @/^/ [rr] ^-{f} _-{}="src1"
	\ar@2 @/_/ [dr] _-{\sigma_u} 
&& v
	\ar@2 @/^/ [rr] ^-{g} _-{}="src2"
	\ar@2 [dr] |-{\sigma_v}
&& w
\\
& {\rep{u}}
	\ar@2 [ur] |-{\sigma_v^-}	
	\ar@3 "src1"!<0pt,-10pt>;[]!<0pt,15pt> _-{\sigma_{f}}
	\ar@{=} [rr] ^-{}="tgt"
	\ar@{} "1,3";"tgt" |(0.66){\copyright}
&& {\rep{u}}
	\ar@2 @/_/ [ur] _-{\sigma_w^-}	
	\ar@3 "src2"!<0pt,-10pt>;[]!<0pt,15pt> ^-{\sigma_{g}}		
}
}
\]
\item If $f:u\dfl v$ is a $2$-cell in $\tck{\Sigma}$, then $\rep{f}^-=\sigma_v\star_1 \sigma_u^-$ and $\sigma_{f^-} = f^-\star_1 \sigma_f^- \star_1 \rep{f}^-$: 
\[
\raisebox{3ex}{
\xymatrix{
v
	\ar@2 @/^/ [rr] ^-{f^-} _-{}="src"
	\ar@2 @/_/ [dr] _-{\sigma_v} 
&& u
\\
& {\rep{u}}
	\ar@2 @/_/ [ur] _{\sigma_u^-}	
	\ar@3 "src"!<0pt,-10pt>;[]!<0pt,15pt> ^-{\sigma_{f^-}}	
}
}
\qquad=\qquad
\raisebox{5ex}{
\xymatrix{
&& {\rep{u}}
	\ar@2 @/^/ [dr] ^-{\sigma_v^-}	
\\
v 
	\ar@2 [r] ^-{f^-} 
& u
	\ar@2 @/^/ [ur] ^{\sigma_u}	
	\ar@2 @/_/ [rr] _-{f} ^-{}="tgt"
&& v
	\ar@3 "1,3"!<0pt,-15pt>;"tgt"!<0pt,10pt> ^-{\sigma_f^-}
	\ar@2 [r] ^-{\sigma_v}
& {\rep{u}}
	\ar@2 [r] ^-{\sigma_u^-}
& u
}
}
\]
\end{itemize}
\item For every $3$-cell $A:f\tfl g:u\dfl v$ of $\tck{\Sigma}$, a $4$-cell
\[
\xymatrix{
u 
	\ar@2 @/^4ex/ [rr] ^-{f} _{}="src"
	\ar@2 @/_4ex/ [rr] _-{g} ^{}="tgt"
&& v
\ar@3 "src"!<0pt,-10pt>;"tgt"!<0pt,10pt> ^-{A}
}
\qquad
\xymatrix{ \strut \ar@4 [r] ^*+\txt{$\sigma_A$} & }
\qquad
\xymatrix{
u
	\ar@2 @/^6ex/ [rr] ^-{f} _(0.505){}="src"
	\ar@2 @/_6ex/ [rr] _-{g} ^(0.505){}="tgt"
	\ar@2 [r] |-{\sigma_u}
& {\rep{u}}
	\ar@2 [r] |-{\sigma_v^-}
	\ar@3 "src"!<0pt,-7.5pt>;[]!<0pt,10pt> ^-{\sigma_f}
	\ar@3 []!<0pt,-10pt>;"tgt"!<0pt,7.5pt> ^-{\sigma_g^-}
& v
}
\]
of $\tck{\Sigma}$ with $\sigma_{\rep{A}}=1_{\rep{A}}$ and such that the following relations hold:
\begin{itemize}
\item If $f$ is a $2$-cell of $\tck{\Sigma}$, then $\sigma_{1_f}=1_{1_f}$: 
\[
\xymatrix{
u 
	\ar@2 @/^4ex/ [rr] ^-{f} _{}="src"
	\ar@2 @/_4ex/ [rr] _-{f} ^{}="tgt"
&& v
\ar@3 "src"!<0pt,-10pt>;"tgt"!<0pt,10pt> ^-{1_f}
}
\qquad
\xymatrix{ \strut \ar@4 [r] ^*+\txt{$1_{1_f }$} & }
\qquad
\xymatrix{
u
	\ar@2 @/^6ex/ [rr] ^-{f} _(0.505){}="src"
	\ar@2 @/_6ex/ [rr] _-{f} ^(0.505){}="tgt"
	\ar@2 [r] |-{\sigma_u}
& {\rep{u}}
	\ar@2 [r] |-{\sigma_v^-}
	\ar@3 "src"!<0pt,-7.5pt>;[]!<0pt,10pt> ^-{\sigma_f}
	\ar@3 []!<0pt,-10pt>;"tgt"!<0pt,7.5pt> ^-{\sigma_f^-}
& v
}
\]

\item If $A:f\tfl f':u\dfl v$ and $B:g\tfl g':v\dfl w$ are $3$-cells of $\tck{\Sigma}$, then $\sigma_{A\star_1 B} = \sigma_A\star_1\sigma_B$: 
\[
\xymatrix{
u 
	\ar@2 @/^4ex/ [rr] ^-{f} _{}="src1"
	\ar@2 @/_4ex/ [rr] _-{f'} ^{}="tgt1"
	\ar@3 "src1"!<0pt,-10pt>;"tgt1"!<0pt,10pt> ^-{A}
&& v
	\ar@2 @/^4ex/ [rr] ^-{g} _{}="src2"
	\ar@2 @/_4ex/ [rr] _-{g'} ^{}="tgt2"
	\ar@3 "src2"!<0pt,-10pt>;"tgt2"!<0pt,10pt> ^-{B}
&& w
}
\quad
\xymatrix{ \strut \ar@4 [rr] ^*+\txt{$\sigma_A \star_1 \sigma_B$} && }
\quad
\xymatrix{
u
	\ar@2 @/^6ex/ [rr] ^-{f} _(0.505){}="src1"
	\ar@2 @/_6ex/ [rr] _-{f'} ^(0.505){}="tgt1"
	\ar@2 [r] |-{\sigma_u}
& {\rep{u}}
	\ar@2 [r] |-{\sigma_v^-}
	\ar@3 "src1"!<0pt,-7.5pt>;[]!<0pt,10pt> ^-{\sigma_f}
	\ar@3 []!<0pt,-10pt>;"tgt1"!<0pt,7.5pt> ^-{\sigma_{f'}^-}
& v
	\ar@2 @/^6ex/ [rr] ^-{g} _(0.488){}="src2"
	\ar@2 @/_6ex/ [rr] _-{g'} ^(0.488){}="tgt2"
	\ar@2 [r] |-{\sigma_v}
& {\rep{u}}
	\ar@2 [r] |-{\sigma_w^-}
	\ar@3 "src2"!<0pt,-7.5pt>;[]!<0pt,10pt> ^-{\sigma_g}
	\ar@3 []!<0pt,-10pt>;"tgt2"!<0pt,7.5pt> ^-{\sigma_{g'}^-}
& w
}
\]

\item If $A:f\tfl g:u\dfl v$ and $B:g\tfl h:u\dfl v$ are $3$-cells of $\tck{\Sigma}$, then $\sigma_{A\star_2 B} = \sigma_A \star_2 \sigma_B$: 
\[
\xymatrix{
u 
	\ar@2 @/^6ex/ [rr] ^-{f} _{}="src1"
	\ar@2 [rr] |-{g} ^{}="tgt1" _{}="src2"
	\ar@2 @/_6ex/ [rr] _-{h} ^{}="tgt2"
	\ar@3 "src1"!<0pt,-7.5pt>;"tgt1"!<0pt,10pt> ^-{A}
	\ar@3 "src2"!<0pt,-10pt>;"tgt2"!<0pt,7.5pt> ^-{B}
&& v
}
\qquad
\xymatrix{ \strut \ar@4 [rr] ^*+\txt{$\sigma_A \star_2 \sigma_B$} && }
\qquad
\xymatrix{
u
	\ar@2 @/^6ex/ [rr] ^-{f} _(0.505){}="src"
	\ar@2 @/_6ex/ [rr] _-{h} ^(0.505){}="tgt"
	\ar@2 [r] |-{\sigma_u}
& {\rep{u}}
	\ar@2 [r] |-{\sigma_v^-}
	\ar@3 "src"!<0pt,-7.5pt>;[]!<0pt,10pt> ^-{\sigma_f}
	\ar@3 []!<0pt,-10pt>;"tgt"!<0pt,7.5pt> ^-{\sigma_h^-}
& v
}
\]

\item If $A:f\tfl g:u\dfl v$ is a $3$-cell of $\tck{\Sigma}$, then $\rep{A}=\sigma_f\star_2\sigma_g^-$ and $\sigma_{A^-}=A^-\star_2\sigma_A^-\star_2\rep{A}^-$:
\[
\xymatrix{
u 
	\ar@2 @/^9ex/ [rr] ^-{g} _{}="src1"
	\ar@2 @/^3ex/ [rr] |(0.4){f} ^{}="tgt1" _{}="src2"
	\ar@2 @/_3ex/ [rr] |(0.4){g} ^{}="tgt2" _{}="src3"
	\ar@2 @/_9ex/ [rr] _-{f} ^{}="tgt3"
&& v
\ar@3 "src1"!<0pt,-7.5pt>;"tgt1"!<0pt,7.5pt> ^-{A^-}
\ar@3 "src2"!<0pt,-7.5pt>;"tgt2"!<0pt,7.5pt> ^-{\rep{A}}
\ar@3 "src3"!<0pt,-7.5pt>;"tgt3"!<0pt,7.5pt> ^-{\rep{A}^-}
}
\qquad
\xymatrix{ 
\strut 
	\ar@4 [rrr] ^*+\txt{$A^-\star_2 \sigma_A^- \star_2 \rep{A}^-$} 
&&& \strut
}
\qquad
\xymatrix{
u 
	\ar@2 @/^9ex/ [rr] ^-{g} _{}="src1"
	\ar@2 @/^3ex/ [rr] |(0.4){f} ^{}="tgt1" _{}="src2"
	\ar@2 @/_3ex/ [rr] |(0.4){g} ^{}="tgt2" _{}="src3"
	\ar@2 @/_9ex/ [rr] _-{f} ^{}="tgt3"
&& v
\ar@3 "src1"!<0pt,-7.5pt>;"tgt1"!<0pt,7.5pt> ^-{A^-}
\ar@3 "src2"!<0pt,-7.5pt>;"tgt2"!<0pt,7.5pt> ^-{A}
\ar@3 "src3"!<0pt,-7.5pt>;"tgt3"!<0pt,7.5pt> ^-{\rep{A}^-}
}
\]
\end{itemize}
\end{itemize}

\begin{lemma}
\label{lemma : decomposition strategies}
Let $\Sigma$ be an $(n,1)$-polygraph. Normalisation strategies for $\Sigma$ are in bijective correspondence with data made of:
\begin{itemize}
\item a family with one $2$-cell
\[
\sigma_u \::\: u \:\dfl\: \rep{u}
\]
for every $1$-cell $u$ of $\tck{\Sigma}$ such that $\rep{u}\neq u$,
\item a family with one $(k+1)$-cell
\[
\sigma_{u\phi v} \::\: u\phi v \:\fl\: \rep{u\phi v}
\]
for every $1<k<n$, every $k$-cell $\phi$ of $\Sigma$ and every pair $(u,v)$ of $1$-cells of $\tck{\Sigma}$ such that the composite $k$-cell $u\phi v$ is defined.
\end{itemize}
\end{lemma}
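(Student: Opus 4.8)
The plan is to exhibit mutually inverse \emph{restriction} and \emph{extension} maps between normalisation strategies for $\Sigma$ and the reduced data of the statement, the real content lying in the extension and in its well-definedness. The key structural ingredient, which I would establish first, is a decomposition of the cells of the free $(n,1)$-category: using the exchange relations, every $0$-composition by a $1$-cell can be pushed onto individual generators, so that every $k$-cell $f$ of $\tck{\Sigma}$, for $1<k<n$, can be written as a composite \emph{for the compositions $\star_1,\dots,\star_{k-1}$ only} of $k$-cells each of which is an identity $1_g$, a whiskered generator $u\phi v$ with $\phi$ in $\Sigma_k$ and $u,v$ $1$-cells of $\tck{\Sigma}$, or an inverse $(u\phi v)^-$. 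I would prove this by induction on the dimension and on the size, the inductive step using the interchange law to move every $\star_0$-whiskering inward onto the generators it surrounds; both directions of the bijection rest on it.

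For the restriction map, a strategy $\sigma$ forgets all of its values except the $\sigma_u$ with $\rep{u}\neq u$ and the $\sigma_{u\phi v}$; the discarded $1$-cell values carry no information, since $\sigma_{\rep{u}}=1_{\rep{u}}$ forces $\sigma_u=1_u$ whenever $\rep{u}=u$. To see that this map is injective I would show that $\sigma$ is \emph{determined} by the retained data: by induction on dimension and on size, the axiom $\sigma_{f\star_i g}=\sigma_f\star_i\sigma_g$ for $1\leq i<k$, the relation $\sigma_{1_g}=1_{1_g}$, and the formula $\sigma_{f^-}=f^-\star_{k-1}\sigma_f^-\star_{k-1}\rep{f}^-$ of the preceding lemma express the value of $\sigma$ on any cell, once a decomposition as above is fixed, as an explicit $\star_{\geq 1}$-composite of the generating values. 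Hence two strategies with the same reduced data coincide on every cell.

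For the extension map, I would read the same formulas as a \emph{definition}: given reduced data, set $\sigma_u=1_u$ when $\rep{u}=u$, and define $\sigma_f$ on a higher cell by choosing a decomposition into whiskered generators and applying the rules for $\star_{\geq 1}$-composition, identities and inverses, the target $\rep{f}=\sigma_{s(f)}\star_{k-1}\sigma_{t(f)}^-$ being already determined by the values constructed in dimension $k-1$. The main obstacle is to prove that this is independent of the chosen decomposition, \ie, that the assignment descends to the quotient defining $\tck{\Sigma}$. This reduces to compatibility with each family of defining relations: associativity and the unit laws are immediate; the interchange relations among $\star_1,\dots,\star_{k-1}$ hold because the corresponding $\star_{\geq 1}$-composites of $(k+1)$-cells themselves satisfy interchange in $\tck{\Sigma}$; the inverse relations hold by the formula for $\sigma_{f^-}$; and the delicate case is the interchange relations involving $\star_0$, which relate the various whiskerings of a single generator. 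It is exactly here that one uses that $\sigma$ is \emph{not} required to be functorial for $\star_0$, so that these relations impose no constraint beyond the freely chosen values $\sigma_{u\phi v}$.

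Finally I would check that the extension is a genuine normalisation strategy. Functoriality for $\star_1,\dots,\star_{k-1}$ and the relations $\sigma_{1_g}=1_{1_g}$ hold by construction; the remaining axiom $\sigma_{\rep{f}}=1_{\rep{f}}$ I would prove by induction on dimension, the base case being the convention $\sigma_u=1_u$ for $\rep{u}=u$ and the inductive step following by applying functoriality and the inverse formula to $\rep{f}=\sigma_{s(f)}\star_{k-1}\sigma_{t(f)}^-$, using the lower-dimensional instance $\sigma_{\sigma_g}=1_{\sigma_g}$ of part {\bf ii)} of the preceding lemma. Since restriction followed by extension returns $\sigma$ (by the determination argument) and extension followed by restriction returns the data (a whiskered generator being its own trivial decomposition), the two maps are mutually inverse, which proves the claimed bijection.
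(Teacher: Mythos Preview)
Your approach matches the paper's, whose proof is a single short paragraph: by induction on the size of cells of $\tck{\Sigma}$, a normalisation strategy is already forced on normal forms, identities, inverses and $\star_i$-composites for $i\geq 1$, so (using the exchange relations to rewrite any cell in terms of these and of whiskered generators) it is entirely and uniquely determined by the data of the statement. The paper does not spell out the extension direction or its well-definedness; you attempt this in considerably more detail, but otherwise follow the same line.

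One point where your argument is too quick is your treatment of ``the interchange relations involving $\star_0$''. These do more than relate whiskerings of a \emph{single} generator. After pushing every $\star_0$-composition onto the generators, the $\star_0$--$\star_1$ exchange leaves behind a Peiffer-type relation identifying two different $\star_1$-composites of \emph{four} whiskered generators---two distinct whiskerings of one generator and two of another---and this \emph{is} a constraint that the extended $\sigma$ must satisfy, namely $\sigma_{u\phi v}\star_1\sigma_{u'\psi v'}=\sigma_{u''\psi v''}\star_1\sigma_{u'''\phi v'''}$ for the appropriate rewhiskerings. Your claim that the non-functoriality of $\sigma$ for $\star_0$ makes these relations impose no constraint is not justified: non-functoriality only means you are not \emph{given} a formula for $\sigma$ on $\star_0$-composites, not that the induced relations among $\star_1$-decompositions become vacuous. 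The paper's own proof sidesteps this point by arguing only that the data \emph{determine} $\sigma$, leaving the converse (that arbitrary data extend) implicit.
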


\begin{proof}
We proceed by induction on the size of cells of $\tck{\Sigma}$. We already know that a normalisation strategy~$\sigma$ has fixed values on normal forms, identities, inverses and $i$-composites for $i\geq 1$. As a consequence, using the exchange relations, we get that the values of $\sigma$ are entirely and uniquely determined by its values on $1$-cells that are not normal forms and, for every $k\geq 2$, on $k$-cells with shape $u\phi v$, where~$\phi$ is a $k$-cell of $\Sigma$ and $u$ and $v$ are $1$-cells of~$\tck{\Sigma}$.
\end{proof}

\subsubsection{From normalisation strategies to natural transformations} 

Let $\sigma$ be a normalisation strategy for an $(n,1)$-polygraph $\Sigma$. We define, for every $1$-cell $u$ of $\tck{\Sigma}$, the $1$-cell $u^*$ as $u$ and, by induction on the dimension, for every $k$-cell $f$ in $\tck{\Sigma}$, with $1< k\leq n$, the $k$-cell $f^*$ in~$\tck{\Sigma}$ is given by
\[
f^* \:=\: ( (f\star_1\sigma_{{t_1(f)}^*}) \star_2 \cdots ) \star_{k-1} \sigma^*_{t_{k-1}(f)^*} \;.
\]
For example, for a $2$-cell $f:u\dfl v$, the $2$-cell $f^*$ is
\[
\xymatrix{
u
	\ar@2 [r] ^-{f}
& v
	\ar@2 [r] ^-{\sigma_v}
& {\rep{u}}
}
\]
and, for a $3$-cell $A:f\tfl g:u\dfl v$, the $3$-cell $A^*$ is
\[
\xymatrix{
&& v
	\ar@2 @/^/ [drr] ^-{\sigma_v}
\\
u
	\ar@2 @/^5ex/ [urr] ^-{f} _-{}="s"
	\ar@2 @/_1ex/ [urr] _-{g} ^-{}="t"
	\ar@2 @/_/ [rrrr] _-{\sigma_u} ^-{}="tgt"
&&&& {\rep{u}} \,.
	\ar@3 "s"!<2.5pt,-5pt>;"t"!<-2.5pt,5pt> ^-{A}
	\ar@3 "1,3"!<0pt,-15pt>;"tgt"!<0pt,10pt> ^-{\sigma_{g^*}}
}
\]
One checks that, for any $k$-cell $f$, with $k>1$, the $k$-cell $f^*$ has source $s(f)^*$ and target $\rep{t(f)}^*$. Moreover, we have $(\rep{f})^*=\rep{f^*}$, which implies $\sigma_{f^*}=\sigma_f^*$. 

Since every $k$-cell of $\tck{\Sigma}$ is invertible for $k\geq 2$, one can recover $\sigma$ from $\sigma^*$, in a unique way, so that the normalisation strategy $\sigma$ is uniquely and entirely determined by the values 
\[
\sigma^*_u \:=\: \sigma_u \::\: u \:\dfl\: \rep{u}
\]
for every $1$-cell $u$ with $u\neq \rep{u}$ and 
\[
\sigma^*_{u\phi v} \::\: (u\phi v)^* \:\fl\: \rep{u\phi v}^*
\]
for every $1<k<n$, every $k$-cell $\phi$ of $\Sigma$ and every $1$-cells $u$ and $v$ of $\tck{\Sigma}$ such that the $k$-cell $u\phi v$ is defined. In the lowest dimensions, the natural transformation form $\sigma^*$ of the strategy $\sigma$ consists of the following data:
\begin{itemize}
\item For every $1$-cell $u$ of $\tck{\Sigma}$, a $2$-cell $\sigma^*_u=\sigma_u$ from $u$ to $\rep{u}$.
\item For every $2$-cell $f:u\dfl v$ of $\tck{\Sigma}$, a $3$-cell $\sigma^*_f$ of $\tck{\Sigma}$ corresponding to a (weak) naturality condition: 
\[
\xymatrix@R=1em{
& v
	\ar@2 @/^/ [dr] ^-{\sigma_v}	
\\
u
	\ar@2 @/^/ [ur] ^{f}
	\ar@2 @/_/ [dr] _-{\sigma_u} 
&& {\rep{u}}
\\
& {\rep{u}}
	\ar@{=} @/_/ [ur] 
\ar@3 "1,2"!<0pt,-15pt>;"3,2"!<0pt,15pt> ^-{\sigma^*_f}	
}
\]
To simplify subsequent diagrams, we draw the $3$-cell $\sigma^*_f$ in a more compact shape, as follows:
\[
\xymatrix{
& v
	\ar@2 @/^/ [dr] ^-{\sigma_v}	
\\
u
	\ar@2 @/^/ [ur] ^{f}
	\ar@2 @/_/ [rr] _-{\sigma_u} ^-{}="tgt"
&& {\rep{u}}
	\ar@3 "1,2"!<0pt,-15pt>;"tgt"!<0pt,10pt> ^-{\sigma^*_f}	
}
\]
This $3$-cell must satisfy the following relations:
\begin{itemize}
\item if $u$ is a $1$-cell of $\tck{\Sigma}$, then $\sigma^*_{1_u}=1_{\sigma_u}$ holds:
\[
\raisebox{4.5ex}{
\xymatrix{
& u
	\ar@2 @/^/ [dr] ^-{\sigma_u}
\\
u
	\ar@2 @/^/ [ur] ^{1_u}
	\ar@2 @/_/ [rr] _-{\sigma_u} ^-{}="tgt"
&& {\rep{u}}
	\ar@3 "1,2"!<0pt,-15pt>;"tgt"!<0pt,10pt> ^-{\sigma^*_{1_u}} 
}
}
\qquad=\qquad
\xymatrix@!C{
u 
	\ar@2 @/^4ex/ [rr] ^-{\sigma_u} _{}="s"
	\ar@2 @/_4ex/ [rr] _-{\sigma_u} ^{}="t"
	\ar@3 "s"!<0pt,-10pt>;"t"!<0pt,10pt> ^-{1_{\sigma_u}}
&& {\rep{u}}
}
\] 
\item if $f:u\dfl v$ and $g:v\dfl w$ are $2$-cells of $\tck{\Sigma}$, then $\sigma^*_{f\star_1 g} = (f\star_1\sigma^*_g) \star_2 \sigma^*_f$ holds: 
\[
\raisebox{4.5ex}{
\xymatrix{
& w
	\ar@2 @/^/ [dr] ^-{\sigma_w}	
\\
u
	\ar@2 @/^/ [ur] ^{f\star_1 g}
	\ar@2 @/_/ [rr] _-{\sigma_u} ^-{}="tgt"
&& {\rep{u}}
	\ar@3 "1,2"!<0pt,-15pt>;"tgt"!<0pt,10pt> ^-{\sigma^*_{f\star_1 g}} 
}
}
\qquad=\qquad
\raisebox{10ex}{
\xymatrix{
&& w
	\ar@2 @/^/ [ddr] ^-{\sigma_w}
\\
& v 
	\ar@2 @/^/ [ur] ^-{g}
	\ar@2 [drr] |-{\sigma_v} ^-{}="tgt1" 
\\
u 
	\ar@2 @/^/ [ur] ^{f} 
	\ar@2 @/_/ [rrr] _-{\sigma_u} ^(0.325){}="tgt2"
&&& {\rep{u}}
	\ar@3 "1,3"!<0pt,-15pt>;"tgt1"!<0pt,15pt> _-{\sigma^*_g}
	\ar@3 "2,2"!<0pt,-15pt>;"tgt2"!<0pt,10pt> ^-{\sigma^*_f}
}
}
\]
\item if $f:u\dfl v$ is a $2$-cell of $\tck{\Sigma}$, then $\sigma^*_{f^-} = f^-\star_1 (\sigma^*_f)^-$ holds: 
\[
\raisebox{4.5ex}{
\xymatrix{
& u
	\ar@2 @/^/ [dr] ^-{\sigma_u}	
\\
v
	\ar@2 @/^/ [ur] ^-{f^-}
	\ar@2 @/_/ [rr] _-{\sigma_v} ^-{}="tgt"
&& {\rep{u}}
	\ar@3 "1,2"!<0pt,-15pt>;"tgt"!<0pt,10pt> ^-{\sigma^*_{f^-}} 
}
}
\qquad=\qquad
\raisebox{3.5ex}{
\xymatrix{
& u 
	\ar@2 @/^/ [rr] ^-{\sigma_u} _-{}="src"
	\ar@2 [dr] |-{f} _{}="tgt"
&& {\rep{u}}
\\
v
	\ar@2 @/^/ [ur] ^{f^-}
	\ar@{=} @/_/ [rr]
	\ar@{} ;"tgt" |-{\copyright}
&& v
	\ar@2 @/_2ex/ [ur] _{\sigma_v}
	\ar@3 "src"!<0pt,-10pt>;[]!<0pt,15pt> ^-{(\sigma^*_f)^-}
}
}
\]
\end{itemize}
\item For every $3$-cell $A:f\tfl g:u\dfl v$ of $\tck{\Sigma}$, a $4$-cell
\[
\xymatrix{
&& v
	\ar@2 @/^/ [drr] ^-{\sigma_v}
\\
u
	\ar@2 @/^5ex/ [urr] ^-{f} _-{}="s"
	\ar@2 @/_1ex/ [urr] _-{g} ^-{}="t"
	\ar@2 @/_/ [rrrr] _-{\sigma_u} ^-{}="tgt"
&&&& {\rep{u}}
	\ar@3 "s"!<2.5pt,-5pt>;"t"!<-2.5pt,5pt> ^-{A}
	\ar@3 "1,3"!<0pt,-15pt>;"tgt"!<0pt,10pt> ^-{\sigma^*_g}
}
\qquad
\raisebox{-1.9em}{\xymatrix{ \strut \ar@4 [r] ^*+\txt{$\sigma^*_A$} & }}
\qquad
\xymatrix{
& v
	\ar@2 @/^/ [dr] ^-{\sigma_v}	
\\
u
	\ar@2 @/^/ [ur] ^{f}
	\ar@2 @/_/ [rr] _-{\sigma_u} ^-{}="tgt"
&& {\rep{u}}
	\ar@3 "1,2"!<0pt,-15pt>;"tgt"!<0pt,10pt> ^-{\sigma^*_f}
}
\]
of $\tck{\Sigma}$ such that the following relations are satisfied:
\begin{itemize}
\item if $f$ is a $2$-cell of $\tck{\Sigma}$, then $\sigma^*_{1_f}=1_{\sigma^*_f}$ holds: 
\[
\raisebox{4.5ex}{
\xymatrix{
& v
	\ar@2 @/^/ [dr] ^-{\sigma_v}	
\\
u
	\ar@2 @/^/ [ur] ^{f}
	\ar@2  @/_/ [rr] _-{\sigma_u} ^-{}="tgt"
&& {\rep{u}}
	\ar@3 "1,2"!<0pt,-15pt>;"tgt"!<0pt,10pt> ^-{\sigma^*_f}	
}
}
\qquad
\xymatrix{ \strut \ar@4 [r] ^*+\txt{$1_{\sigma^*_f }$} & }
\qquad
\raisebox{4.5ex}{
\xymatrix{
& v
	\ar@2 @/^/ [dr] ^-{\sigma_v}	
\\
u
	\ar@2 @/^/ [ur] ^{f}
	\ar@2 @/_/ [rr] _-{\sigma_u} ^-{}="tgt"
&& {\rep{u}}
	\ar@3 "1,2"!<0pt,-15pt>;"tgt"!<0pt,10pt> ^-{\sigma^*_f}	
}
}
\]

\item if $A:f\tfl f':u\dfl v$ and $B:g\tfl g':v\dfl w$ are $3$-cells of $\tck{\Sigma}$, then  $\sigma^*_{A\star_1 B} = (f\star_1\sigma^*_B) \star_2 \sigma^*_A$ holds: 
\[
\!\!\!\!\!\!\!\!
\xymatrix{
&&&& w 
	\ar@2 @/^/ [dd] ^-{\sigma_w}
\\
&& v
	\ar@2[drr] |(0.4){\sigma_v} ^(0.7){}="tgt2"
	\ar@2@/^5ex/ [urr] ^-{g} _-{}="s2"
	\ar@2@/_1ex/ [urr] |(0.3){g'} ^-{}="t2" _(0.67){}="src2"
\\
u
	\ar@2@/^5ex/ [urr] ^-{f} _-{}="s"
	\ar@2@/_1ex/ [urr] _-{f'} ^-{}="t"
	\ar@2 @/_/ [rrrr] _-{\sigma_u} ^(0.4965){}="tgt"
&&&& {\rep{u}}
	\ar@3 "s"!<2.5pt,-5pt>;"t"!<-2.5pt,5pt> ^-{A}
	\ar@3 "s2"!<2.5pt,-5pt>;"t2"!<-2.5pt,5pt> ^-{B}
	\ar@3 "2,3"!<0pt,-15pt>;"tgt"!<0pt,10pt> ^-{\sigma^*_{f'}}
	\ar@3 "src2"!<0pt,-10pt>;"tgt2"!<0pt,10pt> ^-{\sigma^*_{g'}}	
}
\:
\raisebox{-3em}{
\xymatrix{ \strut \ar@4 [rr] ^-*+\txt{$(f\star_1 \sigma^*_B)$\\ $\star_2 \sigma^*_A$} && }
}
\:
\xymatrix{
&&&& w 
	\ar@2 @/^/ [dd] ^-{\sigma_w}
\\
&& v
	\ar@2 [drr] |(0.4){\sigma_v} ^(0.7){}="tgt2"
	\ar@2 @/^/ [urr] ^-{g} ^-{}="t2" _(0.733){}="src2"
\\
u
	\ar@2 @/^/ [urr] ^-{f} 
	\ar@2 @/_/ [rrrr] _-{\sigma_u} ^(0.4965){}="tgt"
&&&& {\rep{u}}
	\ar@3 "2,3"!<0pt,-15pt>;"tgt"!<0pt,10pt> ^-{\sigma^*_{f}}
	\ar@3 "src2"!<0pt,-10pt>;"tgt2"!<0pt,10pt> ^-{\sigma^*_{g}}	
}
\]

\item if $A:f\tfl g:u\dfl v$ and $B:g\tfl h:u\dfl v$ are $3$-cells of $\tck{\Sigma}$, \\
then $\sigma^*_{A\star_2 B} = ((A \star_1 \sigma_v) \star_2 \sigma^*_B) \star_3 \sigma^*_A$ holds: 
\begin{align*}
\xymatrix{
&& v
	\ar@2 @/^/ [drr] ^-{\sigma_v}
\\
u
	\ar@2@/^7ex/ [urr] ^-{f} _{}="s1"
	\ar@2@/^3ex/ [urr] |(0.3){g} ^{}="t1" _{}="s2"
	\ar@2@/_1ex/ [urr] _-{h} ^-{}="t2"
	\ar@2 @/_/ [rrrr] _-{\sigma_u} ^-{}="tgt"
&&&& {\rep{u}}
	\ar@3 "s1"!<2.5pt,-5pt>;"t1"!<-2.5pt,5pt> ^-{A}
	\ar@3 "s2"!<2.5pt,-5pt>;"t2"!<-2.5pt,5pt> ^-{B}
	\ar@3 "1,3"!<0pt,-15pt>;"tgt"!<0pt,10pt> ^-{\sigma^*_h}
}
\quad
&
\raisebox{-1.9em}{
\xymatrix{ \strut \ar@4 [rr] ^-*+\txt{ $(A \star_1 \sigma^*_v)$ \\ $\star_2 \sigma^*_B$ } && }
}
\quad
\xymatrix{
&& v
	\ar@2 @/^/ [drr] ^-{\sigma_v}
\\
u
	\ar@2@/^5ex/ [urr] ^-{f} _-{}="s"
	\ar@2@/_1ex/ [urr] _-{g} ^-{}="t"
	\ar@2 @/_/ [rrrr] _-{\sigma_u} ^-{}="tgt"
&&&& {\rep{u}}
	\ar@3 "s"!<2.5pt,-5pt>;"t"!<-2.5pt,5pt> ^-{A}
	\ar@3 "1,3"!<0pt,-15pt>;"tgt"!<0pt,10pt> ^-{\sigma^*_g}
}
\\ 
&
\raisebox{-1.9em}{
\xymatrix{ \strut \ar@4 [rr] ^*+\txt{$\sigma^*_A$} && }
}
\quad
\xymatrix{
& v
	\ar@2 @/^/ [dr] ^-{\sigma_v}	
\\
u
	\ar@2 @/^/ [ur] ^{f}
	\ar@2 @/_/ [rr] _-{\sigma_u} ^-{}="tgt"
&& {\rep{u}}
	\ar@3 "1,2"!<0pt,-15pt>;"tgt"!<0pt,10pt> ^-{\sigma^*_f}
}
\end{align*}

\item if $A:f\tfl g:u\dfl v$ is a $3$-cell of $\tck{\Sigma}$, then $\sigma^*_{A^-}=(A^-\star_1\sigma^*_v) \star_2 (\sigma^*_A)^-$ holds:
\[
\xymatrix{
&& v
	\ar@2 @/^/ [drr] ^-{\sigma_v}
\\
u
	\ar@2@/^5ex/ [urr] ^-{g} _-{}="s"
	\ar@2@/_1ex/ [urr] _-{f} ^-{}="t"
	\ar@2 @/_/ [rrrr] _-{\sigma_u} ^-{}="tgt"
&&&& {\rep{u}}
	\ar@3 "s"!<2.5pt,-5pt>;"t"!<-2.5pt,5pt> ^-{A^-}
	\ar@3 "1,3"!<0pt,-15pt>;"tgt"!<0pt,10pt> ^-{\sigma^*_f}
}
\qquad
\raisebox{-1.9em}{\xymatrix{ \strut \ar@4 [rr] ^-*+\txt{$(A^-\star_1\sigma^*_v)$ \\ $\star_2 (\sigma^*_A)^-$} && }}
\qquad
\xymatrix{
& v
	\ar@2 @/^/ [dr] ^-{\sigma_v}	
\\
u
	\ar@2 @/^/ [ur] ^{g}
	\ar@2 @/_/ [rr] _-{\sigma_u} ^-{}="tgt"
&& {\rep{u}}
	\ar@3 "1,2"!<0pt,-15pt>;"tgt"!<0pt,10pt> ^-{\sigma^*_g}
}
\]
\end{itemize}
\end{itemize}

\subsubsection{Left and right normalisation strategies} 

Let $\Sigma$ be an $(n,1)$-polygraph. A normalisation strategy~$\sigma$ for $\Sigma$ is a \emph{left} one when it satisfies the following properties:
\begin{itemize}
\item For every pair $(u,v)$ of $0$-composable $1$-cells of $\tck{\Sigma}$, we have $\sigma_{uv}=\sigma_u v\star_1 \sigma_{\rep{u}v}$, \ie,
\[
\xymatrix{
uv
	\ar@2 @/^/ [rr] ^-{\sigma_{uv}} _(.35){}="src"
	\ar@2 @/_/ [dr] _-{\sigma_u v} 
&& {\rep{uv}}
\\
& {\rep{u} v}
	\ar@2 @/_/ [ur] _-{\sigma_{\rep{u}v}}	
	\ar@{} "src"; ^{\copyright}	
}
\]
\item For every pair $(f,g)$ of $0$-composable $k$-cells of $\tck{\Sigma}$, with $2\leq k\leq p$, $t_1(f)=u'$ and $s_1(g)=v$, we have
\[
\sigma_{fg} \:=\: \sigma_f v \star_1 \sigma_{u' g}.
\]
In particular, when $f:u\dfl u'$ and $g:v\dfl v'$ are $0$-composable $2$-cells of $\tck{\Sigma}$:
\[
\raisebox{4ex}{
\xymatrix{
uv 
	\ar@2 @/^/ [rr] ^-{fg} _-{}="src"
	\ar@2 @/_/ [dr] _-{\sigma_{uv}}
&& u'v'
\\
& {\rep{uv}}
	\ar@2 @/_/ [ur] _-{\sigma_{u'v'}^-}
	\ar@3 "src"!<0pt,-10pt>;[]!<0pt,15pt> ^-{\sigma_{fg}}
}
}
\: = \:
\raisebox{8.5ex}{
\xymatrix{
&&& u'v
	\ar@2 @/^3ex/ [drrr] ^{u'g} _(0.521){}="src2"
	\ar@2  [dr] |-{\sigma_{u'v}}
\\
uv 
	\ar@2 @/^3ex/ [urrr] ^-{fv} _(0.475){}="src1"
	\ar@2 [rr] |-{\sigma_u v} ^(0.7){}="tgt1"
	\ar@2 @/_7.5ex/ [rrrr] _-{\sigma_{uv}} ^-{}="tgt4"
&& {\rep{u}v} 
	\ar@2 [ur] |-{\sigma_{u'}^- v}
	\ar@2 [rr] |-{\sigma_{\rep{u}v}} ^-{}="tgt3"
&& {\rep{uv}}
	\ar@2 [rr] _-{\sigma_{u'v'}^-}	^(0.3){}="tgt2"
&& u'v'
	\ar@3 "src1"!<0pt,-10pt>;"tgt1"!<0pt,10pt> ^-{\sigma_f v}
	\ar@3 "src2"!<0pt,-10pt>;"tgt2"!<0pt,10pt> _-{\sigma_{u'g}}
	\ar@{} "1,4";"tgt3" |(0.55){\copyright}
	\ar@{} "2,3";"tgt4" |-{\copyright}
}
}
\]
\end{itemize}
In a symmetric way, a normalisation strategy $\sigma$ is a \emph{right} one when it satisfies:
\[
\sigma_{uv} \:=\: u\sigma_v\star_1 \sigma_{u\rep{v}}
\qquad\text{and}\qquad
\sigma_{fg} \:=\: u\sigma_g \star_1 \sigma_{f v'}.
\]
An $(n,1)$-polygraph is \emph{left} (resp. \emph{right}) \emph{normalising} when it admits a left (resp. right) normalisation strategy.

\begin{lemma}
\label{Lemma : relations leftmost}
Let $\Sigma$ be an $(n,1)$-polygraph, let $f$ be a $k$-cell of $\tck{\Sigma}$, for $1<k<n$, with $1$-source~$u$ and $1$-target~$v$, and let $w$ and $w'$ be $1$-cells of $\tck{\Sigma}$ such that $wfw'$ is defined. Then, if $\sigma$ is a left normalisation strategy for $\Sigma$, we have:
\[
\sigma_{wfw'} \:=\: \sigma_{w}uw' \star_1 \sigma_{\rep{w}f} w' \star_1 \sigma_{w}^- vw'
\qquad
\text{and}
\qquad
\sigma_{wfw'}^* \:=\: \sigma_w uw' \star_1 \sigma^*_{\rep{w}f} w' \star_1 \sigma_{\rep{wu}w'}.
\]
Symmetrically, if $\sigma$ is a right normalisation strategy, then we have:
\[
\sigma_{wfw'} \:=\: wu\sigma_{w'} \star_1 w\sigma_{f\rep{w}'} \star_1 wv\sigma_{w'}^-
\qquad
\text{and}
\qquad
\sigma^*_{wfw'} \:=\: wu\sigma_{w'} \star_1 w\sigma^*_{f\rep{w}'} \star_1 \sigma_{w\rep{uw}'}.
\]
\end{lemma}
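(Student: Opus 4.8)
The plan is to prove the two formulas for a left normalisation strategy; the two remaining formulas, for a right strategy, then follow by the evident left–right symmetry of the definitions, i.e. by running the same argument after reversing the $\star_0$-composition. Throughout I will use the defining relations of a left strategy, $\sigma_{uv}=\sigma_u v\star_1\sigma_{\rep{u}v}$ and, more generally, $\sigma_{fg}=\sigma_f\,s_1(g)\star_1\sigma_{t_1(f)\,g}$ for $0$-composable cells, together with the compatibility of $\sigma$ with $\star_i$ for $i\geq 1$ and the identities $\sigma_{1_g}=1_{1_g}$ and $\sigma_{\sigma_g}=1_{\sigma_g}$ recorded in parts~i) and~ii) of the preceding Lemma.

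First I would isolate two elementary consequences of the left property. The \emph{right-whiskering} identity $\sigma_{fw'}=\sigma_f w'$ is obtained by applying the left rule to the $0$-composite $f\star_0 1_{w'}$: one has $s_1(1_{w'})=w'$, while $t_1(f)\star_0 1_{w'}$ is an identity cell, so its $\sigma$-image is trivial by part~i), leaving $\sigma_{fw'}=\sigma_f w'$. The \emph{left-whiskering} identity $\sigma_{wf}=\sigma_w u\star_1\sigma_{\rep{w}f}\star_1\sigma_w^- v$ is the crux, and I would get it by computing the cell $\sigma_{\sigma_w\star_0 f}$ in two ways. Applying the left rule directly to the $0$-composite $\sigma_w\star_0 f$ (with left factor $\sigma_w$) gives $\sigma_{\sigma_w\star_0 f}=\sigma_{\sigma_w}\,u\star_1\sigma_{\rep{w}f}$, and since $\sigma_{\sigma_w}=1_{\sigma_w}$ by part~ii) the first factor is the identity on $\sigma_w u$. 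On the other hand, rewriting the horizontal composite by interchange as $\sigma_w\star_0 f=wf\star_1\sigma_w v$ and using compatibility with $\star_1$ gives $\sigma_{\sigma_w\star_0 f}=\sigma_{wf}\star_1\sigma_{\sigma_w v}=\sigma_{wf}\star_1 1_{\sigma_w v}$, the last equality by right-whiskering and part~ii). Equating the two expressions and cancelling $\sigma_w v$ by post-composing with $\sigma_w^- v$ yields the left-whiskering identity. Formula~1 is then immediate: right-whiskering gives $\sigma_{wfw'}=\sigma_{wf}\,w'$, and substituting the left-whiskering identity while distributing $\star_0 w'$ over $\star_1$ produces exactly $\sigma_w uw'\star_1\sigma_{\rep{w}f}w'\star_1\sigma_w^- vw'$.

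To obtain formula~2 I would translate formula~1 through the starred construction, using $\sigma^*_g=\sigma_g\star_1 1_{\sigma_{t_1(g)}}$ for a $2$-cell $g$ (which follows from $\sigma^*_g=\sigma_{g^*}$ with $g^*=g\star_1\sigma_{t_1(g)}$, compatibility with $\star_1$, and part~ii)). Thus $\sigma^*_{wfw'}=\sigma_{wfw'}\star_1 1_{\sigma_{wvw'}}$; substituting formula~1 and simplifying the trailing factor by the left rule $\sigma_{wvw'}=\sigma_w vw'\star_1\sigma_{\rep{w}vw'}$ gives $\sigma_w^- vw'\star_1\sigma_{wvw'}=\sigma_{\rep{w}vw'}$, so that $\sigma^*_{wfw'}=\sigma_w uw'\star_1\sigma_{\rep{w}f}w'\star_1\sigma_{\rep{w}vw'}$. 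Finally I would recognise the last two factors as $\sigma^*_{\rep{w}f}w'\star_1\sigma_{\rep{wu}w'}$: indeed $\sigma^*_{\rep{w}f}=\sigma_{\rep{w}f}\star_1 1_{\sigma_{\rep{w}v}}$, and the left rule applied to $(\rep{w}v)w'$ gives $\sigma_{\rep{w}vw'}=\sigma_{\rep{w}v}w'\star_1\sigma_{\rep{wu}w'}$, where $\rep{\rep{w}v}=\rep{wv}=\rep{wu}$ since $f$, being a cell from $u$ to $v$, forces $\cl{u}=\cl{v}$. This is formula~2.

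The main obstacle is the bookkeeping in the left-whiskering step: the two interchange decompositions of $\sigma_w\star_0 f$ must be carried out correctly in every dimension $k$, lifting $\sigma_w$ to the identity $k$-cell on it and tracking which factors are genuine cells and which are identities, so that the cancellation of $\sigma_w v$ remains valid. A secondary difficulty is formula~2 for $k>2$, where the starred cell $g^*$ is defined by an iterate over $\star_1,\dots,\star_{k-1}$; here I expect the cleanest route is an induction on $k$ mirroring the recursion defining $g^*$, pushing the same left-rule regrouping of boundary factors through each stage.
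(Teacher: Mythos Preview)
Your proposal is correct and follows essentially the same route as the paper. For formula~1 the steps coincide verbatim: the paper also first derives $\sigma_{fw'}=\sigma_f w'$ from the left rule with the trivial right factor $1_{w'}$, then obtains the left-whiskering identity by computing $\sigma_{\sigma_w f}$ in two ways (once via the exchange relation and $\star_1$-compatibility, once via the left rule together with $\sigma_{\sigma_w}=1_{\sigma_w}$), and finally combines the two. For formula~2 the paper likewise unfolds $\sigma^*_{wfw'}$ through the starred construction, inserts formula~1, and regroups the trailing $2$-cells using the left rule $\sigma_{wvw'}=\sigma_w vw'\star_1\sigma_{\rep{w}vw'}$ together with $\rep{wv}=\rep{wu}$; the only difference is that the paper invokes the general identity $\sigma^*_g=\sigma_{g^*}$ recorded just before the lemma to treat all $k$ at once, whereas you spell out $k=2$ and sketch an induction on $k$ for the higher case, which amounts to the same argument.
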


\begin{proof}
In the case of a left normalisation strategy, the proof for right normalisation strategies being symmetric, we have:
\[
\sigma_{fw'} 
	\:=\: \sigma_{f}w'\star_1 \sigma_{1_{wvw'}} 
	\:=\: \sigma_{f}w' \star_1 1_{1_{wvw'}} 
	\:=\: \sigma_{f}w'.
\]
Then, we use the exchange relation to get:
\[
\sigma_{\sigma_w f} 
	\:=\: \sigma_{wf \star_1 \sigma_w v} 
	\:=\: \sigma_{wf} \star_1 \sigma_{\sigma_w v} 
	\:=\: \sigma_{wf} \star_1 1_{\sigma_w} v 
	\:=\: \sigma_{wf} \star_1 \sigma_w v.
\]
Moreover, the definition of left normalisation strategy implies:
\[
\sigma_{\sigma_w f}
	\:=\: \sigma_{\sigma_w} u \star_1 \sigma_{\rep{w}f} 
	\:=\: \sigma_{w} u \star_1 \sigma_{\rep{w}f}.
\]
From the last two computations, we deduce:
\[
\sigma_{w f} \:=\: \sigma_{w} u \star_1 \sigma_{\rep{w}f} \star_1 \sigma^-_w v.
\]
Combining all the results, we get the required equality:
\[
\sigma_{w f w'} 
	\:=\: \sigma_{w f} w' 
	\:=\: \sigma_{w} u w' \star_1 \sigma_{\rep{w}f} w' \star_1 \sigma^-_w v w'.
\]
For $\sigma^*_{wfw'}$, we proceed as follows:
\begin{align*}
\sigma_{wfw'}^* 
	\:&=\: \sigma_{(wfw')^*} \\
	\:&=\: \sigma_{wf^* w'} \star_1 \sigma_{wvw'} \\
	\:&=\: \sigma_{w}uw' \star_1 \sigma_{\rep{w}f^*} w' \star_1 \sigma_{w}^- vw' \star_1 \sigma_{wvw'} \\
	\:&=\: \sigma_{w}uw' \star_1 \sigma_{\rep{w}f^*} w' \star_1 \sigma_{\rep{w}v} w' \star_1 \sigma_{\rep{wu}w'} \\
	\:&=\: \sigma^*_{w} uw' \star_1 \sigma^*_{\rep{w}f w'} \star_1 \sigma^*_{\rep{wu} w'}.
	\qedhere
\end{align*}
\end{proof}

\begin{corollary}
\label{lemma : decomposition leftmost strategy}
Let $\Sigma$ be an $(n,1)$-polygraph. Left (resp. right) normalisation strategies on $\Sigma$ are in bijective correspondence with the families
\[
\sigma_{\rep{u}\phi} \::\: \rep{u}\phi \:\fl\: \rep{u\phi} 
\qquad\text{(} \:\text{resp. } \:
\sigma_{\phi\rep{u}} \::\: \phi\rep{u} \:\fl\: \rep{\phi u} 
\: \text{)}
\]
and with the families
\[
\sigma^*_{\rep{u}\phi} \::\: (\rep{u}\phi)^* \:\fl\: \rep{\phi u}^*
\qquad\text{(} \:\text{resp. } \:
\sigma^*_{\phi\rep{u}} \::\: (\phi\rep{u})^* \:\fl\: \rep{\phi u}^*
\: \text{)}
\]
of $(k+1)$-cells, indexed by $k$-cells $\phi$ of $\Sigma$, for $1\leq k<n$, and by $1$-cells $u$ of $\cl{\Sigma}$ such that the composite $k$-cell $\rep{u}\phi$ (resp. $\phi\rep{u}$) exists.
\end{corollary}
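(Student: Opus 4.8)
The plan is to obtain this as a direct consequence of Lemma~\ref{lemma : decomposition strategies} and Lemma~\ref{Lemma : relations leftmost}: the left-normalising hypothesis collapses the data that determines a general normalisation strategy down to the subfamily indexed by reduced cells $\rep{u}\phi$. Throughout I treat the left case, the right case being symmetric.

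By Lemma~\ref{lemma : decomposition strategies}, an arbitrary normalisation strategy $\sigma$ is entirely and uniquely determined by its values $\sigma_u$ on $1$-cells $u$ with $\rep{u}\neq u$, together with its values $\sigma_{u\phi v}$ on whiskered generators, for $\phi$ a $k$-cell of $\Sigma$ with $2\leq k<n$ and $u,v$ composable $1$-cells of $\tck{\Sigma}$. For a \emph{left} strategy, Lemma~\ref{Lemma : relations leftmost}, applied to the whiskered cell $u\phi v$, expresses $\sigma_{u\phi v}$ as a $\star_1$-composite of suitably whiskered copies of $\sigma_u$, $\sigma_u^-$ and of the single reduced cell $\sigma_{\rep{u}\phi}$; moreover the right factor $v$ enters only as a right $\star_0$-whisker and contributes no new datum. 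Thus, for left strategies, all the higher values $\sigma_{u\phi v}$ are recovered from the $1$-cell values and the family $\bigl(\sigma_{\rep{u}\phi}\bigr)_{k\geq 2}$.

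It remains to reduce the $1$-cell values themselves, which I do by induction on the length of the underlying word. Writing a non-identity $1$-cell as $wa$ with $a$ a generating $1$-cell, the defining relation of a left strategy gives $\sigma_{wa}=\sigma_w\,a\star_1\sigma_{\rep{w}a}$, in which $\sigma_{\rep{w}a}$ is a reduced cell of the required shape (the case $k=1$, with $\phi=a$ and $u=\cl{w}$) and $\sigma_w$ has a strictly shorter underlying word; the base case uses $\rep{1}=1$. Hence every $\sigma_u$ is a $\star_1$-composite of reduced cells $\sigma_{\rep{u'}a}$, and combined with the previous paragraph this shows that a left strategy is uniquely determined by the family $\bigl(\sigma_{\rep{u}\phi}\bigr)_{1\leq k<n}$; this gives the injectivity of the restriction map from left strategies to such families.

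For surjectivity I would run the construction in reverse: starting from a family $\sigma_{\rep{u}\phi}$, define the $1$-cell values by the left-greedy recursion above, define the higher values by the formula for $\sigma_{u\phi v}$ furnished by Lemma~\ref{Lemma : relations leftmost}, and appeal to Lemma~\ref{lemma : decomposition strategies} to extend these uniquely to a normalisation strategy $\sigma$. The main point to check is that $\sigma$ is genuinely left-normalising, i.e.\ that the two relations defining a left strategy hold for arbitrary composable pairs and not merely for the generating instances used in the definition; I expect this to be the only real obstacle, and it is handled by induction on word length, reducing $\sigma_{uv}=\sigma_u v\star_1\sigma_{\rep{u}v}$ and $\sigma_{fg}=\sigma_f v\star_1\sigma_{u'g}$ to their generating cases through repeated use of the exchange relations and of the identities already recorded in Lemma~\ref{Lemma : relations leftmost}; one must also confirm that the recursion returns the identity on normal forms, so that the extension is compatible with $\sigma_{\rep{f}}=1_{\rep{f}}$. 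Finally, the statement for the starred families is formal: since every $k$-cell of $\tck{\Sigma}$ is invertible for $k\geq 2$, the assignment $\sigma\leftrightarrow\sigma^*$ is a bijection, and the second formula of Lemma~\ref{Lemma : relations leftmost} expresses each $\sigma^*_{\rep{u}\phi}$ in terms of the cells $\sigma_{\rep{u}\phi}$ and conversely, so the reduced family and its starred version carry exactly the same information.
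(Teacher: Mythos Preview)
Your proposal is correct and follows essentially the same approach as the paper: reduce to Lemma~\ref{lemma : decomposition strategies} for the general decomposition, use the left-strategy identity $\sigma_{uv}=\sigma_u v\star_1\sigma_{\rep{u}v}$ inductively on the length of $1$-cells to cut down to the family $(\sigma_{\rep{u}x})$, and invoke Lemma~\ref{Lemma : relations leftmost} to reduce the higher-dimensional data $\sigma_{u\phi v}$ to $\sigma_{\rep{u}\phi}$. The paper's proof is terser and leaves the surjectivity direction and the $\sigma^*$ correspondence implicit; your explicit treatment of those points is a reasonable elaboration but not a different argument.
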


\begin{proof}
Let us assume, for example, that $\sigma$ is a left normalisation strategy. The property satisfied by $\sigma$ on $1$-cells of $\tck{\Sigma}$ gives, by induction on the size of $1$-cells, that the values of $\sigma$ on $1$-cells of $\tck{\Sigma}$ are determined by the $2$-cells $\sigma_{\rep{u}x}$, for $x$ a $1$-cell of $\Sigma$ and $u$ a $1$-cell of $\cl{\Sigma}$ such that $\rep{u}x$ is defined. Then, Lemma~\ref{lemma : decomposition strategies} tells us that the values of $\sigma$ on higher-dimensional cells of $\tck{\Sigma}$ are determined by the values of $\sigma$  or of $\sigma^*$ on $k$-cells $u\phi v$ of $\tck{\Sigma}$, where $\phi$ is a $k$-cell of $\Sigma$ and $u$ and $v$ are $1$-cells of $\tck{\Sigma}$. We use Lemma~\ref{Lemma : relations leftmost} to conclude.
\end{proof}

\begin{theorem}
\label{MainTheorem1}
Let $\Sigma$ be an $(n,1)$-polygraph. The following assertions are equivalent:
\begin{enumerate}[\bf i)]
\item $\Sigma$ is acyclic,
\item $\Sigma$ is normalising,
\item $\Sigma$ is left normalising,
\item $\Sigma$ is right normalising.
\end{enumerate}
\end{theorem}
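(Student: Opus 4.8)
The plan is to establish the cycle
\[
\text{iii)} \:\Rightarrow\: \text{ii)} \:\Rightarrow\: \text{i)} \:\Rightarrow\: \text{iii)}
\]
together with the symmetric pair i) $\Rightarrow$ iv) and iv) $\Rightarrow$ ii), which together force all four properties to coincide. Two of these implications are immediate from the definitions: a left (resp. right) normalisation strategy is in particular a normalisation strategy, so iii) $\Rightarrow$ ii) and iv) $\Rightarrow$ ii) require nothing.

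First I would prove ii) $\Rightarrow$ i). Let $\sigma$ be a normalisation strategy and fix $k$ with $1<k<n$; by definition of acyclicity it suffices to show that every $k$-sphere $(f,g)$ of $\tck{\Sigma}_k$ is filled by a $(k+1)$-cell of $\tck{\Sigma}_{k+1}$. Since the normal form $\rep{f}=\sigma_{s(f)}\star_{k-1}\sigma_{t(f)}^-$ depends only on the $(k-1)$-boundary of $f$, and $f$, $g$ are parallel, we have $\rep{f}=\rep{g}$. As $k\geq 2$, the $(k+1)$-cells $\sigma_f$ and $\sigma_g$ are invertible, so
\[
\sigma_f\star_k\sigma_g^- \::\: f\:\fl\: g
\]
is a $(k+1)$-cell with boundary $(f,g)$. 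Hence $\Sigma_{k+1}$ is a homotopy basis of $\tck{\Sigma}_k$ for every such $k$, and $\Sigma$ is acyclic.

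The core of the argument is i) $\Rightarrow$ iii), the implication i) $\Rightarrow$ iv) being symmetric. Assume $\Sigma$ is acyclic. By Corollary~\ref{lemma : decomposition leftmost strategy}, a left normalisation strategy is the same as a free choice of $(k+1)$-cells $\sigma_{\rep{u}\phi}\colon\rep{u}\phi\fl\rep{u\phi}$, indexed by the generating $k$-cells $\phi$ of $\Sigma$, for $1\leq k<n$, and the $1$-cells $u$ of $\cl{\Sigma}$ with $\rep{u}\phi$ defined. I would build this family by induction on $k$. Once $\sigma$ is fixed below dimension $k$, the target $\rep{u\phi}$ is the normal form of the $k$-cell $\rep{u}\phi$ and is therefore parallel to it, so $(\rep{u}\phi,\rep{u\phi})$ is a genuine $k$-sphere of $\tck{\Sigma}_k$. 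For $k\geq 2$, acyclicity — $\Sigma_{k+1}$ being a homotopy basis of $\tck{\Sigma}_k$ — supplies a $(k+1)$-cell filling this sphere, which I take as $\sigma_{\rep{u}\phi}$. For the base case $k=1$, the required $2$-cell $\sigma_{\rep{u}x}\colon\rep{u}x\fl\rep{ux}$ exists because $\Sigma_2$ presents $\cl{\Sigma}$: the parallel $1$-cells $\rep{u}x$ and $\rep{ux}$ share the same image in $\cl{\Sigma}=\Sigma_1^*/\Sigma_2$, hence are joined by a (necessarily invertible) $2$-cell of $\tck{\Sigma}_2$. Corollary~\ref{lemma : decomposition leftmost strategy} then assembles these choices into a left normalisation strategy, so $\Sigma$ is left normalising; replacing $\rep{u}\phi$ by $\phi\rep{u}$ throughout yields the right-normalising case.

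The hard part will be the bookkeeping inside the inductive step of i) $\Rightarrow$ iii): before acyclicity can be invoked one must check that the boundary prescribed by Corollary~\ref{lemma : decomposition leftmost strategy} really is a $k$-sphere, which rests on the structural facts — established in the preceding lemmas on $\rep{(\,\cdot\,)}$ and on the natural-transformation form $\sigma^*$ — that parallel cells have equal normal forms and that $\sigma^*$ is compatible with the compositions. Once the decomposition corollary is available, however, no coherence identity has to be verified by hand: the corollary guarantees that any family of fillers with the correct sources and targets determines a bona fide (left) normalisation strategy, so the entire content of i) $\Rightarrow$ iii) reduces to the sphere-filling furnished by acyclicity and, in dimension $1$, by the presentation $\Sigma_2$.
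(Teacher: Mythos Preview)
Your proof is correct and follows the same route as the paper's: the implication ii) $\Rightarrow$ i) is obtained exactly as you do, via the filler $\sigma_f\star_k\sigma_g^-$, and the converse i) $\Rightarrow$ iii)/iv) is reduced, through Corollary~\ref{lemma : decomposition leftmost strategy}, to choosing the generating cells $\sigma_{\rep{u}\phi}$ (resp. $\sigma_{\phi\rep{u}}$), which exist in dimension $1$ by the presentation and in higher dimensions by the homotopy-basis property. The only cosmetic difference is that the paper spells out the right-normalising case while you spell out the left one.
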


\begin{proof}
Let us assume that there exists a normalisation strategy $\sigma$ for $\Sigma$. We consider a $k$-cell $f$ in $\tck{\Sigma}$, for some $1< k <n$. By definition of a normalisation strategy, the $(k+1)$-cell $\sigma_f$ has source $f$ and target~$\rep{f}$. Thus, if $g$ is a $k$-cell which is parallel to $f$, the $(k+1)$-cell $\sigma_f\star_k\sigma_g^-$ of $\tck{\Sigma}$ has source $f$ and target $g$, proving that $\Sigma_{k+1}$ forms a homotopy basis of $\tck{\Sigma}_k$. Hence $\Sigma$ is acyclic.

Conversely, let us assume that $\Sigma$ is acyclic and let us define a right normalisation strategy $\sigma$, the case of a left one being symmetric. By definition of the category $\cl{\Sigma}$, we can choose a $2$-cell 
\[
\sigma_{x\rep{u}} : x\rep{u} \dfl \rep{xu} 
\]
for every $1$-cell $x$ in $\Sigma$ and every $1$-cell $u$ in $\cl{\Sigma}$ such that $x\rep{u}$ is defined. Then, for $1< k<n$, we use the fact that $\Sigma_{k+1}$ is a homotopy basis of $\tck{\Sigma}_k$ to choose an arbitrary $(k+1)$-cell
\[
\sigma_{\phi\rep{u}} \::\: \phi\rep{u} \:\longrightarrow\: \rep{\phi u}
\]
for every $k$-cell $\phi$ in $\Sigma$ and every $1$-cell $u$ in $\cl{\Sigma}$ with $\phi\rep{u}$ is defined. We use Corollary~\ref{lemma : decomposition leftmost strategy} to conclude.
\end{proof}

\begin{corollary}
Let $\C$ be a category and let $n$ be a non-zero natural number. Then $\C$ is $\FDT_n$ if, and only if, there exists a finite, (left, right) normalising $(n,1)$-polygraph presenting~$\C$.
\end{corollary}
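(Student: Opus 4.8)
The plan is to read the statement straight off the definition of $\FDT_n$ together with Theorem~\ref{MainTheorem1}, since all the genuine content has already been established there. First I would unfold the definition of the homotopical finiteness condition. Here $\C$ is a category, hence a $1$-category, so we are in the case $p=1$, and for $n\geq 2$ the property $\FDT_n$ means precisely that $\C$ admits a finite partial polygraphic resolution of length $n$, that is, a finite acyclic $(n,1)$-polygraph $\Sigma$ with $\cl{\Sigma}$ isomorphic to $\C$. Rephrased, $\C$ is $\FDT_n$ if, and only if, there exists a finite acyclic $(n,1)$-polygraph presenting $\C$. The key observation at this stage is that the two conditions ``$\Sigma$ is finite'' and ``$\Sigma$ presents $\C$'', the latter meaning $\cl{\Sigma}\cong\C$, are properties of the underlying polygraph $\Sigma$ alone, wholly independent of whether $\Sigma$ is acyclic, normalising, left normalising or right normalising.

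The decisive step is then to invoke Theorem~\ref{MainTheorem1}, which asserts that for one fixed $(n,1)$-polygraph the four conditions of being acyclic, normalising, left normalising and right normalising are pairwise equivalent. Because these are properties of one and the same polygraph, I may substitute any of the three normalising variants for ``acyclic'' in the characterisation above without altering finiteness or the presented category. Concretely, among the finite $(n,1)$-polygraphs presenting $\C$, the subclass of acyclic ones, the subclass of normalising ones, the subclass of left normalising ones and the subclass of right normalising ones all coincide; hence any one of these subclasses is nonempty exactly when all of them are. This delivers the three asserted equivalences simultaneously: $\C$ is $\FDT_n$ if, and only if, it admits a finite normalising (respectively left normalising, respectively right normalising) $(n,1)$-polygraph presenting $\C$.

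I do not anticipate any real obstacle, as the corollary is a direct translation of Theorem~\ref{MainTheorem1} through the definition of $\FDT_n$ and requires no new construction; the one point worth a remark is the degenerate case $n=1$. There $\FDT_1$ is by convention finite generation, and both acyclicity and the normalising conditions are vacuous, since a normalisation strategy would assign a $(k+1)$-cell only to the $k$-cells with $1\leq k<1$, of which there are none. Consequently the equivalence holds trivially for $n=1$, reducing to the existence of a finite set of generating $1$-cells, and the substantive content of the statement lies entirely in the range $n\geq 2$ treated above.
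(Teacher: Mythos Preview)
Your proposal is correct and matches the paper's approach exactly: the paper states this corollary without proof, treating it as an immediate consequence of Theorem~\ref{MainTheorem1} and the definition of $\FDT_n$, which is precisely what you unfold. Your careful handling of the degenerate case $n=1$ is a nice bonus that the paper does not bother to spell out.
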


\section{Polygraphic resolutions from convergent presentations}
\label{Section:Convergent2Polygraphs}
 
\subsection{Convergent \pdf{2}-polygraphs}
\label{subsectionRewriting}

Let us recall notions and results from rewriting theory for $2$-polygraphs,~\cite{Guiraud06jpaa,GuiraudMalbos09,Mimram10}. Let~$\Sigma$ be a fixed $2$-polygraph.

\subsubsection{Rewriting and normal forms}

A \emph{rewriting step of $\Sigma$} is a $2$-cell of the free $2$-category $\Sigma^*$ with shape
\[
\xymatrix@C=4em{
{y}
	\ar [r] ^-{w} 
& {x}
	\ar@/^3ex/ [r] ^-{u} ^{}="src"
	\ar@/_3ex/ [r] _-{v} ^{}="tgt"
	\ar@2 "src"!<0pt,-10pt>;"tgt"!<0pt,10pt> ^-{\phi}
& {x'}
	\ar [r] ^-{w'}
& {y'}
}
\]
where $\phi:u\dfl v$ is a $2$-cell of $\Sigma$ and $w$ and $w'$ are $1$-cells of $\Sigma^*$. A \emph{rewriting sequence of $\Sigma$} is a finite or infinite sequence 
\[
\xymatrix@C=2.5em{
{u_1}
	\ar@2 [r] ^-{f_1}
& {u_2}
	\ar@2 [r] ^-{f_2}
& (\cdots)
	\ar@2 [r] ^-{f_{n-1}}
& {u_n}
	\ar@2 [r] ^-{f_n}
& (\cdots)
}
\]
of rewriting steps. If $\Sigma$ has a non-empty rewriting sequence from $u$ to $v$, we say that \emph{$u$ rewrites into~$v$}. Let us note that every $2$-cell $f$ of $\Sigma^*$ decomposes into a finite rewriting sequence of $\Sigma$, this decomposition being unique up to exchange relations. 

A $1$-cell $u$ of $\Sigma^*$ is a \emph{normal form} when $\Sigma$ has no rewriting step with source $u$. A \emph{normal form of $u$} is a $1$-cell $v$ that is a normal form and such that $u$ rewrites into $v$. A $1$-cell is \emph{reducible} if it is not a normal form.

\subsubsection{Branchings}

A \emph{branching of $\Sigma$} is a pair $(f,g)$ of $2$-cells of $\Sigma^*$ with a common source, as in the diagram
\[
\xymatrix @R=1em @C=3em {
& {v}
\\
{u}
	\ar@2@/^/ [ur] ^-{f}
	\ar@2@/_/ [dr] _-{g}
\\
& {w}
}
\]
The $1$-cell $u$ is the \emph{source} of this branching and the pair $(v,w)$ is its \emph{target}, written $(f,g):u\dfl(v,w)$. We do not distinguish the branchings $(f,g)$ and $(g,f)$. 

A branching $(f,g)$ is \emph{local} when $f$ and $g$ are rewriting steps. Local branchings belong to one of the three following families:
\begin{itemize}
\item \emph{aspherical} branchings have shape
\[
\xymatrix @R=1em @C=3em {
& {v}
\\
{u}
	\ar@2@/^/ [ur] ^-{f}
	\ar@2@/_/ [dr] _-{f}
\\
& {v}
}
\]
with $f:u\dfl v$ a rewriting step of $\Sigma$,

\item \emph{Peiffer} branchings have shape
\[
\xymatrix @R=1em @C=3em {
& {u'v}
\\
{uv}
	\ar@2@/^/ [ur] ^-{fv}
	\ar@2@/_/ [dr] _-{ug}
\\
& {uv'}
}
\]
with $f:u\dfl v$ and $g:u'\dfl v'$ rewriting steps of $\Sigma$,

\item \emph{overlapping} branchings are the remaining local branchings.
\end{itemize}
The terms ``aspherical'' and ``Peiffer'' come from the corresponding notions for spherical diagrams in Cayley complexes associated to presentations of groups,~\cite{LyndonSchupp77}, while the term ``critical'' comes from rewriting theory,~\cite{BookOtto93,BaaderNipkow98}. 

Local branchings are compared by the order $\preccurlyeq$ generated by the relations
\[
(f,g) \:\preccurlyeq\: \big( u f v, u g v)
\]
given for any local branching $(f,g)$ and any $1$-cells $u$ and $v$ of $\Sigma^*$ such that $ufv$ exists (and, thus, so does $ugv$). An overlapping local branching that is minimal for the order $\preccurlyeq$ is called a \emph{critical branching}. 

A branching $(f,g)$ is \emph{confluent} when there exists a pair $(f',g')$ of $2$-cells of~$\Sigma^*$ with the following shape: 
\[
\xymatrix @R=1em {
& v
	\ar@2@/^/ [dr] ^-{f'}
\\
u
	\ar@2@/^/ [ur] ^-{f}
	\ar@2@/_/ [dr] _-{g}
&& u'
\\
& w
	\ar@2@/_/ [ur] _-{g'}
}
\]

\subsubsection{Termination, confluence and convergence}

We say that $\Sigma$ \emph{terminates} when it has no infinite rewriting sequence. In that case, every $1$-cell has at least one normal form. Moreover, \emph{Noetherian induction} allows definitions and proofs of properties of $1$-cells by induction on the maximum size of the $2$-cells leading to normal forms. 

We say that $\Sigma$ is \emph{confluent} (resp. \emph{locally confluent}) when all of its branchings (resp. local branching) are confluent. In a confluent $2$-polygraph, every $1$-cell has at most one normal form. A fundamental result of rewriting theory is that local confluence is equivalent to confluence of critical branchings. For terminating $2$-polygraphs, Newman's Lemma ensures that local confluence and confluence are equivalent properties,~\cite{Newman42}.  

We say that $\Sigma$ is \emph{convergent} when it terminates and it is confluent. In that case, every $1$-cell $u$ has a unique normal form. Such a $\Sigma$ is called a \emph{convergent presentation of $\cl{\Sigma}$} and has a canonical section sending $u$ to its normal form $\rep{u}$, so that $\rep{u}=\rep{v}$ holds in $\Sigma^*$ if, and only if, we have $\cl{u}=\cl{v}$ in $\cl{\Sigma}$. As a consequence, a finite and convergent $2$-polygraph $\Sigma$ yields generators for the $1$-cells of the category~$\cl{\Sigma}$, together with a decision procedure for the corresponding word problem (finiteness is used to effectively check that a given $1$-cell is a normal form).

\subsubsection{Reduced \pdf{2}-polygraphs}
\label{DefinitionReducedPolygraph}
A $2$-polygraph $\Sigma$ is \emph{reduced} when, for every $2$-cell $\phi:u\dfl v$ in $\Sigma$, the $1$-cell $u$ is a normal form for $\Sigma_2\setminus\ens{\phi}$ and $v$ is a normal form for $\Sigma_2$. Let us note that, in that case, for every $1$-cell $u$ of $\Sigma^*$, there exist finitely many rewriting steps with source~$u$ in $\Sigma^*$: indeed, we have exactly one such $2$-cell for every decomposition $u=vwv'$ such that $w$ is the source of a $2$-cell of $\Sigma$ and the number of decompositions $u=vwv'$ is finite in a free category.

\begin{lemma}
For every (finite) convergent $2$-polygraph, there exists a (finite) Tietze-equivalent, reduced and  convergent $2$-polygraph.
\end{lemma}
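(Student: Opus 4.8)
The plan is to transform the given convergent $2$-polygraph $\Sigma$ into a reduced one in three successive passes, each preserving convergence, the presented category $\cl{\Sigma}$, and finiteness (no pass ever creates a cell). Throughout I keep the same $0$- and $1$-cells, so the underlying free category $\Sigma_1^*$ is fixed and Tietze-equivalence reduces to showing that the congruence induced on $\Sigma_1^*$ is unchanged. By convergence this congruence is exactly ``having the same normal form'', so at each pass it will suffice to preserve the set of normal forms; and once a pass produces a terminating system, confluence will be equivalent to uniqueness of normal forms.

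\emph{Pass 1 (reduce the right-hand sides).} I replace each $2$-cell $\phi\colon u\dfl v$ by a $2$-cell $u\dfl\rep{v}$, where $\rep{v}$ is the unique normal form of $v$. Since $v$ and $\rep{v}$ have the same normal form, the imposed relation is unchanged, so $\cl{\Sigma}$ is preserved; the sources are untouched, so the reducible $1$-cells and the normal forms coincide with those of $\Sigma$. Termination is inherited because every new rewriting step $wuw'\dfl w\rep{v}w'$ factors as a nonempty $\Sigma$-rewriting sequence (first apply $\phi$, then reduce $v$ to $\rep{v}$), so an infinite new sequence would give an infinite $\Sigma$-sequence. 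Confluence then follows from termination and the equality of normal-form sets: two normal forms reachable from a common source are also $\Sigma$-reachable and are $\Sigma$-normal forms, hence equal by confluence of $\Sigma$. Call the result $\Sigma^{(1)}$; every right-hand side is now a normal form.

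\emph{Pass 2 (remove duplicated rules).} If two distinct $2$-cells of $\Sigma^{(1)}$ share a source $u$, their targets are normal forms of $u$, hence equal, so these $2$-cells are parallel; deleting all but one does not change the quotient $\Sigma_1^*/\Sigma_2$ nor the reducible $1$-cells, preserving both convergence and $\cl{\Sigma}$. Call the result $\Sigma^{(2)}$; now distinct $2$-cells have distinct sources. \emph{Pass 3 (reduce the left-hand sides).} I keep exactly those $\phi\colon u\dfl\rep{v}$ for which $u$ is a normal form for $\Sigma^{(2)}_2\setminus\{\phi\}$, and call the result $\Sigma'$. The reduced condition is then immediate: each retained right-hand side stays a normal form (deleting rules cannot create reductions), and each retained source is irreducible by the other retained rules, since sources are distinct so any other rule reducing $u$ would have its source as a \emph{proper} factor of $u$, which is excluded. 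Termination of $\Sigma'$ holds because $\Sigma'_2\subseteq\Sigma^{(2)}_2$, and confluence follows exactly as in Pass 1 once I show the normal-form set is unchanged. The only nontrivial inclusion is that every $1$-cell reducible in $\Sigma^{(2)}$ is still reducible in $\Sigma'$: given such a $w$, I choose, among all $\Sigma^{(2)}$-sources occurring as factors of $w$, one of minimal length $u_0$; by minimality $u_0$ contains no other $\Sigma^{(2)}$-source as a proper factor, so the rule with source $u_0$ is retained in $\Sigma'$ and still reduces $w$.

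The main obstacle is Pass 3: showing that discarding every rule with a reducible left-hand side loses no reductions, for which the minimal-length-factor argument is the crux. This is precisely where Pass 2 is needed, since the argument collapses if two retained rules could share a source (such a source would be reducible ``by another rule'' without possessing a strictly shorter source). Everything else — Tietze-invariance via the fixed generators and the normal-form criterion, termination by simulation, confluence from termination plus equal normal-form sets, and preservation of finiteness — is then routine.
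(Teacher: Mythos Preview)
Your proposal is correct and follows essentially the same three-pass approach as the paper's proof (normalise right-hand sides, remove duplicates, drop rules with reducible sources), with the same justifications; you have simply filled in the details that the paper leaves to the reader. The one minor wrinkle is your sentence explaining why retained sources are irreducible by other retained rules: the cleanest argument is simply that a retained $\phi$ has, by definition, its source in normal form for $\Sigma^{(2)}_2\setminus\{\phi\}$, hence a fortiori for the subset $\Sigma'_2\setminus\{\phi\}$.
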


\begin{proof}
Let $\Sigma$ be a (finite) convergent $2$-polygraph $\Sigma$. We successively transform $\Sigma$ as follows. First, we replace every $2$-cell $\phi:u\dfl v$ in $\Sigma$ with $\phi':u\dfl\rep{u}$. Then, if there exist several $2$-cells in $\Sigma$ with the same source, we drop all of them but one. Finally, we drop all the remaining $2$-cells whose source is reducible by another $2$-cell. After each step, we check that the (finite) $2$-polygraph we get is convergent and that it is Tietze-equivalent to the former one. Moreover, by construction, the result is a reduced $2$-polygraph.
\end{proof}

\begin{remark}
This result was proved by Métivier for term rewriting systems,~\cite{Metivier83}, and by Squier for word rewriting systems,~\cite{Squier87}. The proof works for any type of rewriting systems, including $n$-polygraphs for any $n$. 
\end{remark} 

\begin{example}
Let $\C$ be a category and let $\N\C$ be the reduced standard polygraphic presentation of~$\C$, \ie, the $2$-polygraph with the following cells: 
\begin{itemize}
\item one $0$-cell for each $0$-cell of $\C$,

\item one $1$-cell $\rep{u}:x\fl y$ for every non-identity $1$-cell $u:x\fl y$ of $\C$,

\item one $2$-cell $\mu_{u,v}:\rep{u}\rep{v}\dfl \rep{uv}$ for every non-identity $1$-cells $u:x\fl y$ and $v:y\fl z$ of $\C$ such that $uv$ is not an identity,

\item one $2$-cell $\mu_{u,v}:\rep{u}\rep{v}\dfl 1_x$ for every non-identity $1$-cells $u:x\fl y$ and $v:y\fl x$ of $\C$ such that $uv=1_x$.
\end{itemize}

\noindent
The $2$-polygraph $\N\C$ is reduced. Let us prove that it is convergent. For termination, one checks that each $2$-cell $\mu_{u,v}$ of $\N\C$ has source of size~$2$ and target of size~$1$ or~$0$. As a consequence, for every non-identity $2$-cell $f:u\dfl v$ of the free $2$-category $\N\C^*$, the size of~$u$ is strictly greater than the size of~$v$.

For confluence, we check that $\N\C$ has one critical branching for every triple $(u,v,w)$ of non-identity composable $1$-cells in $\C$:
\[
\big(\: \mu_{u,v} \rep{w} \:,\: \rep{u}\mu_{v,w} \:\big).
\]
Each of these critical branchings is confluent, with four possible cases, depending on whether $uv$ or $vw$ is an identity or not:
\begin{itemize}
\item if neither $uv$ nor $vw$ is an identity:
\[
\xymatrix @!C @C=3em @R=1em {
& {\rep{uv}\rep{w}}
	\ar@2 @/^/ [dr] ^{\gamma_{uv,w}}
\\
{\rep{u}\rep{v}\rep{w}}
	\ar@2 @/^/ [ur] ^{\gamma_{u,v}\rep{w}}
	\ar@2 @/_/ [dr] _{\rep{u}\gamma_{v,w}}
&& {\rep{uvw}}
\\
& {\rep{u}\rep{vw}}
	\ar@2 @/_/ [ur] _{\gamma_{u,vw}}
}
\]
\item if $uv$ is an identity, but not $vw$:
\[
\xymatrix @!C @C=4em @R=1em {
& {\rep{w}}
\\
{\rep{u}\rep{v}\rep{w}}
	\ar@2 @/^/ [ur] ^{\gamma_{u,v}\rep{w}}
	\ar@2 @/_/ [dr] _{\rep{u}\gamma_{v,w}}
\\
& {\rep{u}\rep{vw}}
	\ar@2 @/_3ex/ [uu] _{\gamma_{u,vw}}
}
\]
\item if $uv$ is not an identity, but $vw$ is:
\[
\xymatrix @!C @C=4em @R=1em {
& {\rep{uv}\rep{w}}
	\ar@2 @/^3ex/ [dd] ^{\gamma_{uv,w}}
\\
{\rep{u}\rep{v}\rep{w}}
	\ar@2 @/^/ [ur] ^{\gamma_{u,v}\rep{w}}
	\ar@2 @/_/ [dr] _{\rep{u}\gamma_{v,w}}
\\
& {\rep{u}}
}
\]
\item if $uv$ and $vw$ are identities, and thus $u=uvw=w$:
\[
\xymatrix @!C @C=5em @R=1em {
{\rep{u}\rep{v}\rep{w}}
	\ar@2 @/^4ex/ [r] ^{\gamma_{u,v}\rep{w}} _{}="src"
	\ar@2 @/_4ex/ [r] _{\rep{u}\gamma_{v,w}} ^{}="tgt"
& {\rep{u}=\rep{w}}
}
\]
\end{itemize}
As a conclusion, the reduced standard presentation $\N\C$ of the category $\C$ is a reduced convergent presentation of $\C$.
\end{example}

\subsection{Normalisation strategies for convergent \pdf{2}-polygraphs}

\subsubsection{The order relation on branchings}

Let $\Sigma$ be a reduced $2$-polygraph and let $u$ be a $1$-cell in $\Sigma^*$. We define the relation $\preceq$ on rewriting steps with source $u$ as follows. If $\phi$ and $\psi$ are $2$-cells of~$\Sigma$ and if $f=v\phi v'$ and $g=w\psi w'$ have source~$u$, then we write $f \preceq g$ when $v$ is smaller than~$w$, \ie, informally, when the part of $u$ on which $f$ acts is more at the left than the part on which $g$ acts. By convention, we denote branchings of $\Sigma$ in increasing order, \ie, $(f,g)$ when $f\preceq g$, which is always possible thanks to the following result.

\begin{lemma}
Let $\Sigma$ be a reduced $2$-polygraph and $u$ be a $1$-cell of $\Sigma^*$. Then the relation~$\preceq$ induces a total ordering on the rewriting steps of $\Sigma$ with source $u$.
\end{lemma}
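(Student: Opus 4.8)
The plan is to parametrise each rewriting step with source $u$ by the position at which its relation is applied, and then to prove that this parametrisation is injective, the injectivity being exactly where reducedness is used. First I would recall that a rewriting step with source $u$ is precisely a $2$-cell $v\phi v'$ where $\phi:a\dfl b$ is a $2$-cell of $\Sigma$ and $u=vav'$ is a decomposition in the free category $\Sigma^*$, with $a=s(\phi)$. Such a step is thus given by the pair consisting of its left prefix $v$ together with the applied relation $\phi$. Every prefix $v$ arising this way is a prefix of the one fixed $1$-cell $u$, so any two such prefixes are comparable by size; the relation $\preceq$ compares two steps by the sizes $\abs{v}$ of their prefixes. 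It follows immediately that $\preceq$ is reflexive, transitive and total, these three properties being inherited from the linear order of $\Nb$ through the assignment sending a step $v\phi v'$ to $\abs{v}$.

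The only nontrivial point, and the main obstacle, is antisymmetry: I must show that two rewriting steps $f=v\phi v'$ and $g=w\psi w'$ with source $u$ and with $\abs{v}=\abs{w}$ are necessarily equal, i.e. that the assignment $v\phi v'\mapsto\abs{v}$ is injective. This is where the reducedness of $\Sigma$ enters. If $\abs{v}=\abs{w}$, then the sources $a=s(\phi)$ and $b=s(\psi)$ both occur in $u$ starting at the same position, so one is a prefix of the other; assuming without loss of generality that $\abs{a}\leq\abs{b}$, the $1$-cell $a$ is a factor of $b$, say $b=paq$ for some $1$-cells $p,q$. Suppose $\phi\neq\psi$. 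Then $\phi$ belongs to $\Sigma_2\setminus\ens{\psi}$ and, since $a=s(\phi)$ is a factor of $b$, the relation $\phi$ provides a rewriting step with source $b$; this contradicts the reducedness hypothesis, which requires $b=s(\psi)$ to be a normal form for $\Sigma_2\setminus\ens{\psi}$. Hence $\phi=\psi$, so $a=b$; as $v$ and $w$ are prefixes of $u$ of equal size, we get $v=w$ and then $v'=w'$, whence $f=g$.

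Combining the two paragraphs, the map sending a rewriting step with source $u$ to the size of its prefix is injective, so $\preceq$ is the pullback along this injection of the total order of $\Nb$ and is therefore a total ordering on the rewriting steps of $\Sigma$ with source $u$. I would note in passing that the degenerate case $\abs{a}=\abs{b}$ of the antisymmetry argument already records the fact that reducedness forces distinct $2$-cells of $\Sigma$ to have distinct sources, which is what makes the pair $(v,\phi)$ redundant and reduces the whole step to its starting position $\abs{v}$.
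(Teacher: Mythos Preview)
Your proof is correct and follows essentially the same approach as the paper's: both arguments observe that reflexivity, transitivity and totality are immediate from the map $v\phi v'\mapsto\abs{v}$ into $\Nb$, and both reduce antisymmetry to showing that two rewriting steps with the same prefix length must coincide, using reducedness to rule out the case where one redex is a proper factor of the other and then to force $\phi=\psi$ when the sources agree. One small wording slip: having established that $a$ and $b$ start at the same position in $u$, you correctly say one is a \emph{prefix} of the other, but then write $b=paq$; in fact $p$ is necessarily an identity here, so $b=aq$, though your factor-based argument still goes through unchanged.
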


\begin{proof}
From its definition, we already know that the relation $\preceq$ is reflexive, transitive and total. For antisymmetry, we assume that $f=v\phi v'$ and $g=w\psi w'$ are rewriting steps with source $u$, such that $f\preceq g$ and $g\preceq f$, \ie, such that $v$ and $w$ have the same size. Then, using the fact that $\Sigma_1^*$ is free, we have $v=w$ and either $s(\phi)=s(\psi)$ or $s(\phi)=s(\psi)a$ or $s(\phi)a=s(\psi)$: the latter two cases cannot occur, because $\Sigma$ is reduced and, from that same hypothesis we get, in the first case, that $\phi=\psi$, hence that $f=g$. 
\end{proof}

\subsubsection{The leftmost and rightmost normalisation strategies} 

Let $\Sigma$ be a reduced $2$-polygraph and let~$u$ be a reducible $1$-cell of $\Sigma^*$ that is not a normal form. The \emph{leftmost} and the \emph{rightmost} rewriting steps on~$u$ are denoted by $\lambda_u$ and $\rho_u$ and defined as the minimum and the maximum elements for $\preceq$ of the (finite, non-empty) set of rewriting steps of $\Sigma$ with source $u$. We note that, if $u$ and $v$ are reducible and composable $1$-cells of $\Sigma^*$, then we have:
\[
\lambda_{uv} \:=\: \lambda_{u}v
\qquad\text{and}\qquad
\rho_{uv} \:=\: u\rho_v.
\]
When $\Sigma$ terminates, the \emph{leftmost normalisation strategy of $\Sigma$} is the normalisation strategy $\sigma$ defined by Noetherian induction as follows. On normal forms, it is given by
\[
\sigma_{\rep{u}} \:=\: 1_{\rep{u}}
\]
and, on reducible $1$-cells, by 
\[
\sigma_u \:=\: \lambda_u \star_1 \sigma_{t(\lambda_u)}.
\]
One defines the \emph{rightmost normal form of $\Sigma$} in a similar way by replacing the leftmost rewriting step by the rightmost one in the case of reducible $1$-cells.

\begin{lemma}
The leftmost (resp. rightmost) normalisation strategy $\sigma$ is a left (resp. right) normalisation strategy for $\Sigma$, seen as a $(2,1)$-polygraph, with the property that, for every $1$-cell $u$, the $2$-cell $\sigma_u$ lives in $\Sigma^*\subseteq \tck{\Sigma}$.
\end{lemma}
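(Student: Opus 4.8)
The plan is to verify the three requirements separately: that each $\sigma_u$ is a genuine $2$-cell of $\Sigma^*$, that $\sigma$ is a normalisation strategy, and that it satisfies the extra left-compatibility equation. Since $\Sigma$ is regarded as a $(2,1)$-polygraph, we are in the case $p=1$, $n=2$, so most of the axioms for a (left) normalisation strategy collapse, and only a single nontrivial equation survives.

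First I would record that $\sigma$ is well defined and valued in $\Sigma^*$. As $\Sigma$ terminates, Noetherian induction on $1$-cells is available. On a normal form the value $1_{\rep{u}}$ is an identity $2$-cell, hence lies in $\Sigma^*$; on a reducible $u$ the value $\lambda_u \star_1 \sigma_{t(\lambda_u)}$ is the $1$-composite of the rewriting step $\lambda_u$, which is a $2$-cell of $\Sigma^*$, with $\sigma_{t(\lambda_u)}$, which lies in $\Sigma^*$ by induction. Thus $\sigma_u \in \Sigma^* \subseteq \tck{\Sigma}$ for every $u$, and by construction $\sigma_u : u \dfl \rep{u}$, establishing the stated property.

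Next, the normalisation-strategy axioms. For a $(2,1)$-polygraph the strategy is defined only on $1$-cells, the compatibility with $i$-composites (requiring $p \le i < k < n$) is vacuous, and the single remaining condition is $\sigma_{\rep{u}} = 1_{\rep{u}}$. Convergence makes $\rep{u}$ the unique normal form of $u$, so this holds by the very definition of $\sigma$ on normal forms. Likewise, the second clause in the definition of a left strategy concerns $k$-cells with $2 \le k \le p = 1$ and is empty, so it remains only to prove the equation $\sigma_{uv} = \sigma_u v \star_1 \sigma_{\rep{u}v}$ for every $0$-composable pair $(u,v)$, both sides being parallel $2$-cells $uv \dfl \rep{uv}$ since $\rep{\rep{u}v} = \rep{uv}$. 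I would do this by Noetherian induction on $u$. If $u$ is a normal form then $\rep{u}=u$ and $\sigma_u = 1_u$, so the right-hand side is $1_{uv} \star_1 \sigma_{uv} = \sigma_{uv}$. If $u$ is reducible, set $u_1 = t(\lambda_u)$, so $\sigma_u = \lambda_u \star_1 \sigma_{u_1}$ and $\rep{u_1} = \rep{u}$; using $\lambda_{uv} = \lambda_u v$ and then the induction hypothesis for $(u_1,v)$ (legitimate since $u$ rewrites to $u_1$ in one step, whence $u_1 < u$ for the termination order), I compute
\[
\sigma_{uv} = \lambda_u v \star_1 \sigma_{u_1 v} = \lambda_u v \star_1 \sigma_{u_1} v \star_1 \sigma_{\rep{u}v} = (\lambda_u \star_1 \sigma_{u_1})\, v \star_1 \sigma_{\rep{u}v} = \sigma_u v \star_1 \sigma_{\rep{u}v}.
\]
The rightmost case is symmetric, using $\rho_{uv} = u\rho_v$.

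The main obstacle is the combinatorial identity $\lambda_{uv} = \lambda_u v$ for reducible $u$, where the reducedness hypothesis is essential. I would argue that no redex of $uv$ lies strictly to the left of $\lambda_u$: a redex lying entirely inside $u$ cannot, by minimality of $\lambda_u$; a redex lying inside $v$ begins only after all of $u$; and a redex $\psi$ straddling the factorisation $u \mid v$ that started to the left of $\lambda_u$ would have its source $s(\psi)$ contain, as a factor, the source of the rule applied by $\lambda_u$, making $s(\psi)$ reducible by a distinct rule and contradicting that $\Sigma$ is reduced. This is the one-sided refinement of the identity noted just before the definition of the strategy (stated there for $u$ and $v$ both reducible), and it is exactly what the inductive step requires.
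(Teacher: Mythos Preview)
Your proof is correct and follows essentially the same route as the paper: Noetherian induction on $u$ for the identity $\sigma_{uv}=\sigma_u v\star_1\sigma_{\rep{u}v}$, using $\lambda_{uv}=\lambda_u v$ in the reducible case, and a separate Noetherian induction to see that $\sigma_u\in\Sigma^*$. Your final paragraph is in fact more careful than the paper, which states $\lambda_{uv}=\lambda_u v$ only when both $u$ and $v$ are reducible but tacitly uses it when merely $u$ is reducible; your reducedness argument fills that small gap.
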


\begin{proof}
Let us assume that $\sigma$ is the leftmost normalisation strategy, the proof in the rightmost case being symmetric. We must prove that, for every composable $1$-cells $u$ and $v$ of $\Sigma^*$, the following relation holds:
\[
\sigma_{uv} \:=\: \sigma_u v\star_1 \sigma_{\rep{u}v}.
\]
We proceed by Noetherian induction on the $1$-cell $u$. If $u$ is a normal form, then $\sigma_u=1_u$ and $\sigma_{\rep{u}v}=\sigma_{uv}$, so that the relation is satisfied. Otherwise, we have, using the definition of $\sigma$ and the properties of~$\lambda$:
\[
\sigma_{uv} 
	\:=\: \lambda_{uv}\star_1\sigma_{t(\lambda_{uv})} 
	\:=\: \lambda_u v \star_1 \sigma_{t(\lambda_u) v}.
\]
We apply the induction hypothesis to $t(\lambda_u)v$ to get:
\[
\sigma_{uv}
	\:=\: \lambda_u v \star_1 \sigma_{t(\lambda_u)}v \star_1 \sigma_{\rep{u}v}
	\:=\: \sigma_u v \star_1 \sigma_{\rep{u}v}.
\]
The fact that $\sigma_u$ is in $\Sigma^*$ is also proved by Noetherian induction on $u$, using the definition of $\sigma$ and the facts that both $1_{\rep{u}}$ and $\lambda_u$ are $2$-cells of $\Sigma^*$.
\end{proof}

\begin{remark}
A reduced and terminating $2$-polygraph can have several left or right strategies, beside the leftmost and the rightmost ones. Indeed, let us consider the reduced and terminating $2$-polygraph $\Sigma$ with one $0$-cell, three $1$-cells $a$, $b$ and $c$ and the following three $2$-cells:
\[
\xymatrix{
aac \ar@2[r]^-{\alpha} & a 
}
\qquad
\xymatrix{
bb \ar@2[r]^-{\beta} & cc
}
\qquad
\xymatrix{
acc \ar@2[r]^-{\gamma} & c.
}
\]
Let us prove that $\Sigma$ admits at least two different left normalisation strategies. For that, we examine the $1$-cell $aabb$ and all the $2$-cells of $\Sigma^*$ from $aabb$ to its normal form $ac$:
\[
\xymatrix{
aabb 
	\ar@2[rr] ^-{aa\beta}
&& aacc
	\ar@2@/^3ex/[rr] ^-{\alpha c}
	\ar@2@/_3ex/[rr] _-{a\gamma}
&& ac	.
}
\]
Thus, if $\sigma$ is a normalisation strategy, the $2$-cell $\sigma_{aabb}$ can be either $aa\beta \star_1 \alpha c$ or $aa\beta \star_1 a\gamma$. Since the $1$-cells $a$, $aa$ and $aab$ are normal forms, assuming that $\sigma$ is a left strategy still leaves us with the same choice. Hence, we can define a left normalisation strategy $\sigma$ for $\Sigma$ as the leftmost normalisation strategy on every $1$-cell of $\Sigma^*$, except for $aabb$ where it is given by
\[
\sigma_{aabb} \:=\: aa\beta \star_1 a\gamma.
\]
Thus, we have a left normalisation strategy for $\Sigma$, distinct from the leftmost normalisation strategy: indeed, the latter would send $aabb$ to $aa\beta\star_1\alpha c$.

Let us note that this phenomenon does not come from the fact that $\Sigma$ is not confluent, since we can add the $2$-cell $\delta:bcc\dfl ccb$ to $\Sigma$ to get a reduced, convergent $2$-polygraph which still has at least two different left normalisation strategies. From $\Sigma$, we can build a symmetric (for $\star_0$) $2$-polygraph that admits at least two different right normalisation strategies.

However, we can ensure that, if $\sigma$ is a left (resp. right) normalisation strategy for a reduced and terminating $2$-polygraph $\Sigma$ with the property that, for every $1$-cell $u$ of $\Sigma^*$, the $2$-cell $\sigma_u$ is in $\Sigma^*$, then this same $2$-cell admits a decomposition
\[
\sigma_u \:=\: \lambda_u \star_1 g_u 
\qquad\qquad
\text{(resp. }\:
\sigma_u \:=\: \rho_u \star_1 g_u 
\:\text{)}
\]
with $g_u$ a $2$-cell of $\Sigma^*$. Indeed, if $\sigma$ is a left strategy, we consider the decomposition $\lambda_u \:=\: v\phi w$. By definition of $\lambda_u$, the $1$-cell $vs(\phi)$ is the source of only one rewriting step of $\Sigma$, namely $v\phi$. Hence, since $\sigma_{vs(\phi)}$ is a $2$-cell of $\Sigma^*$ with source $vs(\phi)$, it admits a decomposition 
\[
\sigma_{vs(\phi)} \:=\: \lambda_{v s(\phi)} \star_1 h_u
\]
with $h_u$ a $2$-cell of $\Sigma^*$. The $2$-cell $g_u$ of $\Sigma^*$ is given by
\[
g_u \:=\: h_u w \star_1 \sigma_{\rep{v s(\phi)}w}
\]
and use the hypothesis on $\sigma$ to get:
\[
\sigma_u 
	\:=\: \sigma_{vs(\phi)}w \star_1 \sigma_{\rep{v s(\phi)}w}
	\:=\: \lambda_{v s(\phi)} w \star_1 g_u 
	\:=\: \lambda_u \star_1 g_u.
\]
The case of a right normalisation strategy is symmetric.
\end{remark}

\begin{example}
Let $\C$ be a category and let $\N\C$ be its reduced standard presentation. A generic $1$-cell of $\N\C$ is a composite $\rep{u}_1\cdots\rep{u}_n$ of non-identity $1$-cells of $\C$. In the case where no partial composition $u_iu_{i+1}\cdots u_{j}$ is an identity in $\C$, the leftmost reduction strategy $\sigma$ of $\C$ is given on $\rep{u}_1\cdots\rep{u}_n$ by:
\[
\sigma_{\rep{u}_1\cdots\rep{u}_n} 
	\:=\: \mu_{u_1,u_2} \rep{u}_3\cdots\rep{u}_n
		\:\star_1\: \mu_{u_1u_2, u_3} \rep{u}_4\cdots \rep{u}_n
		\:\star_1\: \cdots
		\:\star_1\: \mu_{u_1\cdots u_{n-1},u_n}. 
\]
If some partial composition is an identity in $\C$, we get $\sigma_{\rep{u}_1\cdots\rep{u}_n}$ by removing the corresponding $2$-cell $\mu$. For example, in the case $n=4$, $u_1u_2=1$ and $u_3u_4\neq 1$, we have:
\[
\sigma_{\rep{u}_1\rep{u}_2\rep{u}_3\rep{u}_4} 
	\:=\: \mu_{u_1,u_2} \rep{u}_3\rep{u}_4 \:\star_1\: \mu_{u_3,u_4} 
	\:=\: \mu_{u_1,u_2}\mu_{u_3,u_4}.
\]
The rightmost normalisation strategy of $\N\C$ is given in a symmetric way.
\end{example}

\subsection{The acyclic \pdf{(3,1)}-polygraph of generating confluences} 
\label{subsectionStandardisationBases}

We fix a reduced convergent $2$-polygraph $\Sigma$ equipped with its rightmost normalisation strategy~$\sigma$.

\subsubsection{Critical branchings of \pdf{2}-polygraphs}

By case analysis on the source of critical branchings of $\Sigma$, we can conclude that they must have one of the following two shapes
\[
\xymatrix{
\strut 
	\ar[r] _-{u_1}
	\ar@/^6ex/ [rrrr] _-{}="t1"
& \strut
	\ar[rr] |-{u_2} ^-{}="s1" _-{}="s2"
	\ar@/_6ex/ [rr] ^-{}="t2"
&& \strut
	\ar[r] _-{v}
& \strut
\ar@2 "s1"!<0pt,7.5pt>;"t1"!<0pt,-7.5pt> ^-{\phi}
\ar@2 "s2"!<0pt,-7.5pt>;"t2"!<0pt,7.5pt> _-{\psi}
}
\qquad\qquad\qquad
\xymatrix{
\strut 
	\ar[r] _-{u_1}
	\ar@/^6ex/ [rrr] _-{}="t1"
& \strut
	\ar[rr] |-{u_2} _(0.75){}="s2" ^(0.25){}="s1"
	\ar@/_6ex/ [rrr] ^-{}="t2"
&& \strut
	\ar[r] ^-{v}
& \strut
\ar@2 "s1"!<0pt,7.5pt>;"t1"!<0pt,-7.5pt> ^-{\phi}
\ar@2 "s2"!<0pt,-7.5pt>;"t2"!<0pt,7.5pt> ^-{\psi}
}
\]
where $\phi$, $\psi$ are $2$-cells of $\Sigma$. 

We note that, if $\Sigma$ is finite, then it has finitely many critical branchings: indeed, in that case there are finitely many pairs of $2$-cells of $\Sigma$ and, for $\phi$ and $\psi$ fixed, there are finitely many ways to make their sources (which are $1$-cells of a free category) overlap as in one of the two diagrams.

In fact, the $2$-polygraph $\Sigma$ being reduced, the first case cannot occur since, otherwise, the source of~$\phi$ would be reducible by $\psi$. Thus, every critical branching of $\Sigma$ must have shape $(\phi v, u_1 \psi)$. We write the branching in that order since, by definition of $\preceq$, we have $\phi v \preceq u_1 \psi$. 

We also note that the $1$-cells $u_1$, $u_2$ and $v$ are normal forms and cannot be identities. Indeed, they are normal forms since, otherwise, at least one of the sources of $\phi$ and of $\psi$ would be reducible by another $2$-cell, preventing $\Sigma$ from being reduced. If $u_2$ was an identity, then the branching would be Peiffer. Moreover, if $u_1$ (resp. $v$) was an identity, then the source of $\psi$ (resp. $\phi$) would be reducible by~$\phi$ (resp.~$\psi$).

Finally, if we write $u=u_1u_2$, the definitions of $\lambda_{uv}$ and of $\rho_{uv}$ imply that we have: 
\[
\lambda_{uv} \:=\: \phi v
\qquad\qquad\text{and}\qquad\qquad
\rho_{uv} \:=\: u_1\psi.
\] 
From all those observations, we conclude that every critical branching $b$ of $\Sigma$ must have shape 
\[
b \:=\: \big( \: \phi \rep{v}, \:\rho_{u \rep{v}} \:\big)
\]
where $u$ and $v$ are composable $1$-cells of $\Sigma^*$ and where $\phi$ is a $2$-cell of $\Sigma$ with source $u$.

\subsubsection{The basis of generating confluences}

The \emph{basis of generating confluences of $\Sigma$} is the cellular extension $\crit_2(\Sigma)$ of $\tck{\Sigma}$ made of one $3$-cell
\[
\xymatrix @R=1em {
& {\rep{u}\rep{v}}
	\ar@2 @/^/ [dr] ^-{\sigma_{\rep{u}\rep{v}}}
\\
{u\rep{v}} 
	\ar@2 @/^/ [ur] ^-{\phi\rep{v}}
	\ar@2 @/_3ex/ [rr] _-{\sigma_{u\rep{v}}} ^-{}="tgt"
&& {\rep{uv}}
\ar@3 "1,2"!<0pt,-15pt>;"tgt"!<0pt,10pt> ^-{\omega_b}
}
\]
for every critical branching $b=(\phi\rep{v},\rho_{u\rep{v}})$ of $\Sigma$. Alternatively, since, for a $2$-cell $f$, we have defined $f^*$ as the $2$-cell $f\star_1 \sigma_{t(f)}$, the $3$-cell $\omega_b$ can be pictured in the following, more compact way:
\[
\xymatrix@!C{
{u\rep{v}}
	\ar@2 @/^4ex/ [rr] ^-{(\phi\rep{v})^*} _-{}="s"
	\ar@2 @/_4ex/ [rr] _-{\rep{\phi v}^*} ^-{}="t"
&& {\rep{uv}}
\ar@3 "s"!<0pt,-10pt>;"t"!<0pt,10pt> ^-{\omega_b}
}
\]

\begin{lemma}
The rightmost normalisation strategy of $\Sigma$ extends to a right normalisation strategy of~$\crit_2(\Sigma)$.
\end{lemma}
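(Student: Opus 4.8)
The plan is to apply the characterisation of right normalisation strategies furnished by Corollary~\ref{lemma : decomposition leftmost strategy}. Viewing $\crit_2(\Sigma)$ as a $(3,1)$-polygraph, a right normalisation strategy amounts to the choice of the $2$-cells $\sigma_u$ on $1$-cells---already fixed as the rightmost strategy of the underlying $(2,1)$-polygraph, whose values lie in $\Sigma^*$ by the preceding lemma---together with a family of $3$-cells
\[
\sigma^*_{\phi\rep{v}} \::\: (\phi\rep{v})^* \:\tfl\: \rep{\phi v}^*
\]
indexed by the $2$-cells $\phi$ of $\Sigma$ and the $1$-cells $v$ of $\cl{\Sigma}$ such that $\phi\rep{v}$ is defined. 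So the first step is to compute the prescribed boundary: writing $a=s(\phi)$, one has $(\phi\rep{v})^* = \phi\rep{v}\star_1\sigma_{t(\phi)\rep{v}}$ and, unwinding the definition, the equality $\rep{\phi v}^*=\sigma_{a\rep{v}}$. Both are $2$-cells from $a\rep{v}$ to the normal form $\rep{av}$, so what is needed is a coherent witness of the confluence of the local branching $(\phi\rep{v},\rho_{a\rep{v}})$ on $a\rep{v}$.

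Next I would define these $3$-cells by a case analysis on that branching. Because $\Sigma$ is reduced, $a=s(\phi)$ carries $\phi$ as its only rewriting step and $\rep{v}$ is a normal form; hence every other redex of $a\rep{v}$ straddles the boundary and overlaps a proper suffix of $a$, so the branching $(\phi\rep{v},\rho_{a\rep{v}})$ is never Peiffer and is, following the critical-branching classification of the previous subsection, either aspherical or an overlapping branching obtained from a critical one by adjunction of a right context. In the aspherical case $\rho_{a\rep{v}}=\phi\rep{v}$, whence $(\phi\rep{v})^*=\sigma_{a\rep{v}}$ and one sets $\sigma^*_{\phi\rep{v}}=1_{(\phi\rep{v})^*}$. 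In the critical case the branching is itself a critical branching $b$ and one takes $\sigma^*_{\phi\rep{v}}=\omega_b$, which by the very construction of $\crit_2(\Sigma)$ has source $(\phi\rep{v})^*$ and target $\rep{\phi v}^*$. In the remaining case one decomposes $\rep{v}=\rep{w}\rep{w}'$, where $b=(\phi\rep{w},\rho_{a\rep{w}})$ is the underlying critical branching and $\rep{w}'$ is the right context, and glues the generating confluence with its context, $\omega_b\rep{w}'$, to the rightmost-strategy $2$-cells renormalising its endpoints, as dictated by the right-context relations of Lemma~\ref{Lemma : relations leftmost}.

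The main obstacle is exactly this last case: the target of $\omega_b\rep{w}'$ is $\sigma_{a\rep{w}}\rep{w}'$, a $2$-cell ending at $\rep{aw}\rep{w}'$, which need not be a normal form, so $\omega_b\rep{w}'$ alone does not reach $\rep{av}$ and must be completed by further normalisation. I expect to treat this by Noetherian induction on $a\rep{v}$, composing $\omega_b\rep{w}'$ with the inductively available $3$-cell comparing $\sigma_{a\rep{w}}\rep{w}'\star_1\sigma_{\rep{aw}\rep{w}'}$ with $\sigma_{a\rep{v}}$, and checking, via the exchange relations and the identities of Lemma~\ref{lemma : decomposition strategies}, that the resulting composite has source $(\phi\rep{v})^*$ and target $\sigma_{a\rep{v}}$. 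Once these boundaries are verified, the normalisation-strategy axioms hold automatically, since Corollary~\ref{lemma : decomposition leftmost strategy} guarantees that any such family determines a right normalisation strategy; and the property that each $\sigma_u$ lies in $\Sigma^*$ is inherited unchanged from the underlying $(2,1)$-polygraph.
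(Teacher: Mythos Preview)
Your proposal is correct and follows the paper's own argument: invoke Corollary~\ref{lemma : decomposition leftmost strategy}, perform the case analysis on the local branching $(\phi\rep{w},\rho_{s(\phi)\rep{w}})$, dispose of the aspherical and impossible Peiffer cases, and in the overlapping case whisker the generating confluence by the residual right context and complete with renormalising $3$-cells on both endpoints. The paper merely makes that last step explicit, writing $\sigma^*_{\phi\rep{w}}$ as the composite of $\omega_{b_1}\rep{w}_2$ with $(\sigma^*_{\sigma_{\rep{v}\rep{w}_1}\rep{w}_2})^-$ on the source side and $\sigma^*_{\sigma_{u'}\rep{w}_2}$ on the target side---exactly the inductively available cells you anticipate.
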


\begin{proof}
From Corollary~\ref{lemma : decomposition leftmost strategy}, we know that it is sufficient to define a $3$-cell 
\[
\sigma^*_{\phi\rep{w}} \::\: (\phi\rep{w})^* \:\tfl\: \rep{\phi w}^* 
\]
of $\tck{\crit_2(\Sigma)}$ for every $2$-cell $\phi:v\dfl\rep{v}$ of $\Sigma$ and every $1$-cell $w$ in $\cl{\Sigma}$ such that $u=v\rep{w}$ exists. We note that, by definition, we have:
\[
(\phi\rep{w})^* \:=\: \phi\rep{w}\star_1 \sigma_{\rep{v}\rep{w}}
\qquad\text{and}\qquad
\rep{\phi w}^* \:=\: \sigma_u \:=\: \rho_u \star_1 \sigma_{t(\rho_u)}.
\]
Let us proceed by case analysis on the type of the local branching $b=(\phi\rep{w},\rho_u)$.

\begin{itemize}

\item If $b$ is aspherical, then $\rho_u=\phi\rep{w}$. In that case, we define $
\sigma^*_{\phi\rep{w}} \:=\: 1_{(\phi\rep{w})^*}
$.

\item By hypothesis, the branching $b$ cannot be a Peiffer branching. Indeed, the source of $\rho_u$ cannot be entirely contained in the normal form $\rep{w}$.

\item Otherwise, we have $\rep{w}=\rep{w}_1\rep{w}_2$ and $b_1=(\phi\rep{w}_1,\rho_{v\rep{w}_1})$ is a critical branching of $\Sigma$. In that case, we define $\sigma^*_{\phi\rep{w}}$ as the composite $3$-cell
\[
\xymatrix{
& {\rep{v}\rep{w}}
	\ar@2 [dr] |-{\sigma_{\rep{v}\rep{w}_1}\rep{w}_2}
	\ar@3 []!<0pt,-20pt>;[dd]!<0pt,20pt> |-{\omega_{b_1}\rep{w}_2}
	\ar@2 @/^3ex/ [drrrr] ^-{\sigma_{\rep{v}\rep{w}}} _(0.33){}="src1"
\\
u 
	\ar@2@/^/ [ur] ^-{\phi\rep{w}}
	\ar@2@/_/ [dr] _-{\rho_u}
&& {\rep{vw}_1\rep{w}_2}
	\ar@2 [rrr] |-{\sigma_{\rep{vw}_1\rep{w}_2}} 
&&& {\rep{u}}
\\
& {u'\rep{w}_2}
	\ar@2 [ur] |-{\sigma_{u'}\rep{w}_2}
	\ar@2 @/_3ex/ [urrrr] _-{\sigma_{u'\rep{w}_2}} ^(0.328){}="tgt2"
\ar@3 "src1"!<0pt,-10pt>;"2,3"!<0pt,15pt> ^(0.9){(\sigma^*_{\sigma_{\rep{v}\rep{w}_1}\rep{w}_2})^-}
\ar@3 "2,3"!<0pt,-15pt>;"tgt2"!<0pt,10pt> ^-{\sigma^*_{\sigma_{u'}\rep{w}_2}}
}
\]
of $\tck{\crit_2(\Sigma)}$. \qedhere
\end{itemize}
\end{proof}

\begin{proposition}
The $(3,1)$-polygraph $\crit_2(\Sigma)$ is acyclic.
\end{proposition}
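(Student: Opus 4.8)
The plan is to deduce the statement directly from Theorem~\ref{MainTheorem1}, which establishes that an $(n,1)$-polygraph is acyclic exactly when it is right normalising. Since $\crit_2(\Sigma)$ is a $(3,1)$-polygraph, proving its acyclicity amounts to exhibiting a right normalisation strategy on it, and then invoking the equivalence between assertions \textbf{iv)} and \textbf{i)} of that theorem.

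The key observation is that such a strategy is already at hand: the preceding lemma shows that the rightmost normalisation strategy $\sigma$ of $\Sigma$ extends to a right normalisation strategy on $\crit_2(\Sigma)$. Hence $\crit_2(\Sigma)$ is right normalising, and Theorem~\ref{MainTheorem1} yields acyclicity at once. This is really all that is needed, but for completeness I would unfold why right normalising implies acyclic in this dimension, since the conclusion is precisely that the cellular extension $\crit_2(\Sigma)$ is a homotopy basis of $\tck{\Sigma}$.

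For that, given the extended strategy $\sigma$ and any $2$-sphere $(f,g)$ of $\tck{\Sigma}$, that is any pair of parallel $2$-cells $f,g:u\dfl v$, the strategy supplies $3$-cells $\sigma_f:f\tfl\rep{f}$ and $\sigma_g:g\tfl\rep{g}$ of $\tck{\crit_2(\Sigma)}$. Because $f$ and $g$ share both source and target, we have $\rep{f}=\sigma_u\star_1\sigma_v^-=\rep{g}$, so the composite $\sigma_f\star_2\sigma_g^-$ is a $3$-cell of $\tck{\crit_2(\Sigma)}$ with boundary $(f,g)$. As $(f,g)$ is an arbitrary $2$-sphere, $\crit_2(\Sigma)$ is a homotopy basis of $\tck{\Sigma}$, which is exactly the acyclicity of the $(3,1)$-polygraph $\crit_2(\Sigma)$.

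I expect no genuine obstacle to remain: all the substantial work has already been carried out in the preceding lemma, where the generating $3$-cells $\omega_b$ indexed by the critical branchings are assembled, through the case analysis on local branchings, into the coherent family defining the strategy. The present proposition is thus essentially a corollary of that construction together with Theorem~\ref{MainTheorem1}.
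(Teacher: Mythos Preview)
Your proposal is correct and follows the same approach as the paper: the proposition is an immediate consequence of the preceding lemma (which extends the rightmost normalisation strategy to $\crit_2(\Sigma)$) together with Theorem~\ref{MainTheorem1}. The paper does not spell out a separate proof, treating this as a direct corollary; your additional unfolding of the homotopy-basis argument via $\sigma_f\star_2\sigma_g^-$ is exactly the relevant instance of the proof of Theorem~\ref{MainTheorem1}.
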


\begin{remark}
This result is already contained in~\cite{GuiraudMalbos09}, with a different proof and in a more general form: the generating confluences of a convergent $n$-polygraph $\Sigma$ form a homotopy basis of the $(n,n-1)$-category~$\tck{\Sigma}$.
\end{remark}

\begin{corollary}
A category with a finite convergent presentation is $\FDT_3$. 
\end{corollary}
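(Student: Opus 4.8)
The plan is to produce, for a category $\C$ equipped with a finite convergent presentation, a finite partial polygraphic resolution of length $3$, since by definition this is exactly what $\FDT_3$ demands (an acyclic $(3,1)$-polygraph whose presented $1$-category is isomorphic to $\C$). The strategy is a direct assembly of the two preceding results: the reduction Lemma and the Proposition asserting the acyclicity of $\crit_2(\Sigma)$.

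First I would normalise the hypothesis. Starting from a finite convergent $2$-polygraph $\Sigma$ presenting $\C$, the reduction Lemma yields a Tietze-equivalent, finite, reduced and convergent $2$-polygraph; replacing $\Sigma$ by it, I may assume $\Sigma$ is reduced while keeping $\cl{\Sigma}\cong\C$ and preserving finiteness. This step is not cosmetic: the construction of the basis of generating confluences in Section~\ref{subsectionStandardisationBases}, and in particular the shape analysis of critical branchings $b=(\phi\rep{v},\rho_{u\rep{v}})$, is carried out under the standing assumption that the convergent $2$-polygraph is reduced.

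Next I would form $\crit_2(\Sigma)$, the $(3,1)$-polygraph obtained from $\tck{\Sigma}$ by adjoining one $3$-cell $\omega_b$ for each critical branching $b$ of $\Sigma$. By the preceding Proposition, $\crit_2(\Sigma)$ is acyclic. Since its $3$-cells sit in dimension $3>p+1=2$, adjoining them does not alter the presented $1$-category, so $\cl{\crit_2(\Sigma)}=\cl{\Sigma}\cong\C$. Hence $\crit_2(\Sigma)$ is an acyclic $(3,1)$-polygraph presenting $\C$, that is, a partial polygraphic resolution of $\C$ of length $3$.

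Finally I would verify finiteness. The $1$- and $2$-cells of $\crit_2(\Sigma)$ are those of $\Sigma$, hence finite in number, and its $3$-cells are indexed by the critical branchings of $\Sigma$. As observed in Section~\ref{subsectionStandardisationBases}, a finite $2$-polygraph has only finitely many critical branchings, there being finitely many pairs $(\phi,\psi)$ of $2$-cells of $\Sigma$ and, for each fixed pair, finitely many ways to overlap their sources inside a free category. Thus $\crit_2(\Sigma)$ is a finite acyclic $(3,1)$-polygraph presenting $\C$, so $\C$ is $\FDT_3$. The only point requiring care is the propagation of finiteness along the whole chain, namely that the reduction step keeps the presentation finite and that the passage to generating confluences creates only finitely many $3$-cells; both are guaranteed by the results just cited, so beyond invoking them the argument is immediate.
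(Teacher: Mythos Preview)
Your argument is correct and is exactly the intended one: reduce to a finite reduced convergent $2$-polygraph via the reduction Lemma, invoke the Proposition that $\crit_2(\Sigma)$ is acyclic, and use the observation from Section~\ref{subsectionStandardisationBases} that a finite $\Sigma$ has only finitely many critical branchings to conclude that $\crit_2(\Sigma)$ is a finite partial polygraphic resolution of length~$3$. The paper itself gives no explicit proof of this corollary, treating it as immediate from these ingredients, and your write-up faithfully expands that implicit reasoning.
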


\noindent In particular, we recover Squier's result: a monoid with a finite convergent presentation has finite derivation type,~\cite{Squier94}.

\begin{example}
Let $\C$ be a category. We consider the reduced standard presentation $\N\C$ of $\C$, equipped with the rightmost normalisation strategy. The basis of generating confluences of $\N\C$ has one $3$-cell $\mu_{u,v,w}$ for every composable non-identity $1$-cells $u$, $v$ and $w$ of $\C$. The shape of this $3$-cell depends on whether $uv$ or $vw$ is an identity or not:
\begin{itemize}
\item if neither $uv$ nor $vw$ is an identity:
\[
\xymatrix @!C @C=3em @R=1em {
& {\rep{uv}\rep{w}}
	\ar@2 @/^/ [dr] ^{\gamma_{uv,w}}
        \ar@3 []!<0pt,-15pt>;[dd]!<0pt,15pt> ^{\gamma_{u,v,w}}
\\
{\rep{u}\rep{v}\rep{w}}
	\ar@2 @/^/ [ur] ^{\gamma_{u,v}\rep{w}}
	\ar@2 @/_/ [dr] _{\rep{u}\gamma_{v,w}}
&& {\rep{uvw}}
\\
& {\rep{u}\rep{vw}}
	\ar@2 @/_/ [ur] _{\gamma_{u,vw}}
}
\]
\item if $uv$ is an identity, but not $vw$:
\[
\xymatrix @!C @C=4em @R=1em {
& {\rep{w}}
        \ar@3 []!<-5pt,-15pt>;[dd]!<-5pt,15pt> _{\gamma_{u,v,w}}
\\
{\rep{u}\rep{v}\rep{w}}
	\ar@2 @/^/ [ur] ^{\gamma_{u,v}\rep{w}}
	\ar@2 @/_/ [dr] _{\rep{u}\gamma_{v,w}}
\\
& {\rep{u}\rep{vw}}
	\ar@2 @/_3ex/ [uu] _{\gamma_{u,vw}}
}
\]
\item if $uv$ is not an identity, but $vw$ is:
\[
\xymatrix @!C @C=4em @R=1em {
& {\rep{uv}\rep{w}}
	\ar@2 @/^3ex/ [dd] ^{\gamma_{uv,w}}
        \ar@3 []!<-5pt,-15pt>;[dd]!<-5pt,15pt> _{\gamma_{u,v,w}}
\\
{\rep{u}\rep{v}\rep{w}}
	\ar@2 @/^/ [ur] ^{\gamma_{u,v}\rep{w}}
	\ar@2 @/_/ [dr] _{\rep{u}\gamma_{v,w}}
\\
& {\rep{u}}
}
\]
\item if $uv$ and $vw$ are identities, and thus $u=uvw=w$:
\[
\xymatrix @!C @C=5em @R=1em {
{\rep{u}\rep{v}\rep{w}}
	\ar@2 @/^4ex/ [r] ^{\gamma_{u,v}\rep{w}} _{}="src"
	\ar@2 @/_4ex/ [r] _{\rep{u}\gamma_{v,w}} ^{}="tgt"
& {\rep{u}=\rep{w}}
		\ar@3 "src"!<-7.5pt,-10pt>;"tgt"!<-7.5pt,10pt> ^{\gamma_{u,v,w}}
}
\]
\end{itemize}
The basis of generating confluences in the case of the leftmost normalisation strategy has the same $3$-cells, with source and target reversed.
\end{example}

\subsection{The acyclic \pdf{(4,1)}-polygraph of generating triple confluences} 

Let $\Sigma$ be a reduced and convergent $2$-polygraph. 

\subsubsection{Triple branchings of \pdf{2}-polygraphs}

A \emph{triple branching of $\Sigma$} is a triple $(f,g,h)$ of $2$-cells of $\Sigma^*$ with the same source and such that $f\preceq g\preceq h$. The triple branching $(f,g,h)$ is \emph{local} when $f$, $g$ and $h$ are rewriting steps. A local triple branching $(f,g,h)$ is:
\begin{itemize}
\item \emph{aspherical} when either $(f,g)$ or $(g,h)$ is aspherical,
\item \emph{Peiffer} when either $(f,g)$ or $(g,h)$ is Peiffer,
\item \emph{overlapping}, otherwise.
\end{itemize}
Triple branchings are ordered by inclusion, similarly to branchings. A \emph{critical} triple branching is a minimal overlapping triple branching. Such a triple branching can have two different shapes, where~$\phi$,~$\psi$ and~$\chi$ are generating $2$-cells :
\[
\xymatrix{
\strut
	\ar [r] _-{u_1}
	\ar @/^6ex/ [rrr] _-{}="tgt1"
& \strut
	\ar [r] |-{u_2} ^-{}="src1"
	\ar @/_6ex/ [rrr] ^-{}="tgt2"
& \strut
	\ar[r] |-{u_3} _-{} ="src2"
	\ar @/^6ex/ [rrr] _-{}="tgt3"
& \strut
	\ar[r] |-{u_4} ^-{} ="src3"
& \strut
	\ar[r] _-{v}
& \strut
\ar@2 "src1"!<0pt,5pt>;"tgt1"!<0pt,-5pt> ^-{\phi}
\ar@2 "src2"!<0pt,-5pt>;"tgt2"!<0pt,5pt> ^-{\psi}
\ar@2 "src3"!<0pt,5pt>;"tgt3"!<0pt,-5pt> ^-{\chi}
}
\qquad\text{or}\qquad
\xymatrix{
\strut
	\ar [r] _-{u_1}
	\ar @/^6ex/ [rr] _-{}="tgt1"
& 
	\ar [r] |-{u_2}
	\ar @/_6ex/ [rrr] ^{}="tgt2"
& \strut
	\ar[r] ^-{u_3} _{} ="src2"
& \strut
	\ar[r] |-{u_4}
	\ar @/^6ex/ [rr] _-{}="tgt3"
& 
	\ar[r] _-{v}
& \strut
\ar@2 "1,2"!<0pt,5pt>;"tgt1"!<0pt,-5pt> ^-{\phi}
\ar@2 "src2"!<0pt,-5pt>;"tgt2"!<0pt,5pt> ^-{\psi}
\ar@2 "1,5"!<0pt,5pt>;"tgt3"!<0pt,-5pt> ^-{\chi}
}
\]
Those two shapes of critical triple branchings are sufficient for a reduced $2$-polygraph but, in a general situation, the other possible type of critical branchings (with an inclusion of one source into the other one) generates several other possibilities. Either way, if $\Sigma$ is finite, then it has a finite number of critical triple branchings.

For both possible shapes, the corresponding critical triple branching $b$ can be written 
\[
b 
	\:=\: \big( \: c\rep{v}, \: \rho_{u\rep{v}} \: \big) 
	\:=\: \big( \: f\rep{v}, \: \rho_u\rep{v}, \:\rho_{u\rep{v}} \: \big)
\]
where $c=(f,\rho_u)$ is a critical branching of~$\Sigma$ with source $u=u_1u_2u_3u_4$ and where $\rho_u=u_1\psi$. Indeed, we note that~$v$ must be a normal form for $\Sigma$ to be reduced. Moreover, in the first case, we have $f=\phi u_4$ and $\rho_{u\rep{v}}=u_1 u_2\chi$ and, in the second case, we have $f=\phi u_3 u_4$ and $\rho_{u\rep{v}}=u_1u_2u_3\chi$.

\subsubsection{The basis of generating triple confluences}

The \emph{basis of generating triple confluences of $\Sigma$} is the cellular extension $\crit_3(\Sigma)$ of $\tck{\crit_2(\Sigma)}$ made of one $4$-cell 
\[
\xymatrix{
&& {\rep{u}\rep{v}}
	\ar@2 @/^/ [drr] ^-{\sigma_{\rep{u}\rep{v}}}
\\
{u\rep{v}}
	\ar@2 @/^5ex/ [urr] ^-{f^*\rep{v}} _-{}="s"
	\ar@2 @/_1ex/ [urr] _-{\sigma^*_u\rep{v}} ^-{}="t"
	\ar@2 @/_/ [rrrr] _-{\sigma_{u\rep{v}}} ^-{}="tgt"
&&&& {\rep{uv}}
	\ar@3 "s"!<2.5pt,-5pt>;"t"!<-2.5pt,5pt> ^-{\omega_c\rep{v}}
	\ar@3 "1,3"!<0pt,-15pt>;"tgt"!<0pt,10pt> ^-{\sigma^*_{\sigma^*_u\rep{v}}}
}
\qquad
\raisebox{-1.9em}{\xymatrix{ \strut \ar@4 [r] ^*+\txt{$\omega_b$} & }}
\qquad
\xymatrix{
& {\rep{u}\rep{v}}
	\ar@2 @/^/ [dr] ^-{\sigma_{\rep{u}\rep{v}}}	
\\
{u\rep{v}}
	\ar@2 @/^/ [ur] ^-{f^*\rep{v}}
	\ar@2 @/_/ [rr] _-{\sigma_{u\rep{v}}} ^-{}="tgt"
&& {\rep{uv}}
	\ar@3 "1,2"!<0pt,-15pt>;"tgt"!<0pt,10pt> ^-{\sigma^*_{f^*\rep{v}}}
}
\]
for every critical triple branching $b=(f\rep{v},\rho_u\rep{v},\rho_{u\rep{v}})$ of $\Sigma$, where $c=(f,\rho_u)$ is a critical branching of $\Sigma$. Using the notations $(\,\cdot\,)^*$ and $\rep{\,\cdot\,}$ for $2$-cells and $3$-cells, the $4$-cell $\omega_b$ can also be written
\[
\xymatrix@!C{
{u\rep{v}} 
	\ar@2 @/^5ex/ [rr] ^-{(f^*\rep{v})^*} _{}="src1"
	\ar@2 @/_5ex/ [rr] _-{(\rep{fv})^*} ^{}="tgt1"
&& {\rep{uv}}
& \strut 
	\ar@4 [r] ^*+\txt{$\omega_b$}
&& {u\rep{v}}
	\ar@2 @/^5ex/ [rr] ^-{(f^*\rep{v})^*} _{}="src2"
	\ar@2 @/_5ex/ [rr] _-{(\rep{fv})^*} ^{}="tgt2"
&& {\rep{uv}}.
\ar@3 "src1"!<0pt,-10pt>;"tgt1"!<0pt,10pt> |-{(\omega_c\rep{v})^*}
\ar@3 "src2"!<0pt,-10pt>;"tgt2"!<0pt,10pt> |-{\rep{\omega_c v}^*}
}
\]
 
\begin{lemma}
The rightmost normalisation strategy of $\Sigma$ extends to a right normalisation strategy of~$\crit_3(\Sigma)$.
\end{lemma}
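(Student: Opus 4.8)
The plan is to lift, one dimension higher, the argument of the previous lemma. By Corollary~\ref{lemma : decomposition leftmost strategy}, a right normalisation strategy on the $(4,1)$-polygraph $\crit_3(\Sigma)$ is completely determined by its values on the generating cells of each dimension together with $1$-cells of $\cl{\Sigma}$. Since the values on $1$- and $2$-cells are already fixed as the rightmost strategy and its extension to $\crit_2(\Sigma)$, it suffices to define the strategy on the generating $3$-cells, namely to produce, for every generating confluence $\omega_c$ (where $c=(f,\rho_u)$ is a critical branching of source $u$) and every $1$-cell $w$ of $\cl{\Sigma}$ such that the composite $\omega_c\rep{w}$ exists, a $4$-cell $\sigma^*_{\omega_c\rep{w}}$ of $\tck{\crit_3(\Sigma)}$ from the $3$-cell $(\omega_c\rep{w})^*$ to the $3$-cell $\rep{\omega_c w}^*$.

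First I would compare the rightmost reduction $\rho_{u\rep{w}}$ of the source $u\rep{w}$ with the whiskered reduction $\rho_u\rep{w}$. As $\rep{w}$ is a normal form, $\rho_{u\rep{w}}$ is never contained in $\rep{w}$: it either lies in $u$, in which case $\rho_{u\rep{w}}=\rho_u\rep{w}$, or it overlaps the boundary between $u$ and $\rep{w}$ and is then strictly to the right of $\rho_u\rep{w}$. This lets me classify the associated local triple branching $(f\rep{w},\rho_u\rep{w},\rho_{u\rep{w}})$ as aspherical, Peiffer or overlapping, exactly as in the subsection on triple branchings. The Peiffer case cannot occur, for the same reason as in the $\crit_2(\Sigma)$ lemma: no redex can sit entirely inside the normal form $\rep{w}$; and the pair $(f\rep{w},\rho_u\rep{w})$ is never aspherical, being the whisker of the overlapping critical branching $c$.

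In the aspherical case one has $\rho_{u\rep{w}}=\rho_u\rep{w}$, so the $3$-cells $(\omega_c\rep{w})^*$ and $\rep{\omega_c w}^*$ coincide and I would simply set $\sigma^*_{\omega_c\rep{w}}=1_{(\omega_c\rep{w})^*}$, paralleling the identity assignment used one dimension below. In the overlapping case I would write $\rep{w}=\rep{w}_1\rep{w}_2$ so that $b=(f\rep{w}_1,\rho_u\rep{w}_1,\rho_{u\rep{w}_1})$ is a critical triple branching of $\Sigma$. By the very construction of $\crit_3(\Sigma)$, its generating $4$-cell $\omega_b$ has source $(\omega_c\rep{w}_1)^*$ and target $\rep{\omega_c w_1}^*$, which is precisely the needed datum before whiskering by the tail $\rep{w}_2$. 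I would then define $\sigma^*_{\omega_c\rep{w}}$ as the composite of $\omega_b\rep{w}_2$ with the already-defined lower-dimensional strategy cells $\sigma^*$ and $\sigma$ that reduce the remaining factor $\rep{w}_2$, following the same pattern used to build $\sigma^*_{\phi\rep{w}}$ in the $\crit_2(\Sigma)$ lemma.

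The main obstacle will be the source--target bookkeeping in dimension four: I must check that each composite just described genuinely has boundary the $3$-cells $(\omega_c\rep{w})^*$ and $\rep{\omega_c w}^*$. This reduces, via the exchange relations together with the values of $\sigma$ on identities, inverses and $i$-composites recorded in the strategy lemmas and in Lemma~\ref{Lemma : relations leftmost}, to verifying that the whiskered $4$-cell $\omega_b\rep{w}_2$, the cells $\sigma^*$ of the whiskered $3$-cells and the normal-form reductions $\sigma$ paste correctly along their $2$- and $3$-dimensional boundaries. Once this pasting is confirmed in each case, Corollary~\ref{lemma : decomposition leftmost strategy} assembles these values into a well-defined right normalisation strategy on $\crit_3(\Sigma)$ extending the rightmost strategy of $\Sigma$.
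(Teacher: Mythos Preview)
Your proposal is correct and follows essentially the same approach as the paper's own proof: reduce via Corollary~\ref{lemma : decomposition leftmost strategy} to defining $\sigma^*_{\omega_c\rep{w}}$, perform a case analysis on the local triple branching $(f\rep{w},\rho_u\rep{w},\rho_{u\rep{w}})$, set the identity in the aspherical case, rule out the Peiffer case because $\rep{w}$ is a normal form, and in the overlapping case paste the whiskered generating $4$-cell $\omega_b\rep{w}_2$ with the already-constructed lower-dimensional $\sigma^*$-cells. The paper simply displays this last composite explicitly as a diagram rather than describing it in words.
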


\begin{proof}
Let us define a $4$-cell 
\[
\sigma^*_{\omega_c\rep{w}} \::\: (\omega_c\rep{w})^* \:\qfl\: \rep{\omega_c w}^* 
\]
of $\tck{\crit_3(\Sigma)}$ for every $3$-cell $\omega_c$ of $\crit_2(\Sigma)$ and every $1$-cell $w$ in $\cl{\Sigma}$ such that $\omega_c\rep{w}$ exists. Let us denote by $v$ the source of the critical branching $c$ of $\Sigma$ and by $f$ the rewriting step of $\Sigma$ with source $v$ such that the critical branching $c$ is $(f,\rho_v)$. We proceed by case analysis on the type of the local triple branching $b=(f\rep{w},\rho_v\rep{w},\rho_{v\rep{w}})$.

\begin{itemize}

\item If $b$ is aspherical, then $\rho_{v\rep{w}}=\rho_v\rep{w}$. In that case, we define $
\sigma^*_{\omega_b\rep{w}} \:=\: 1_{(\omega_b\rep{w})^*}
$.

\item By hypothesis, the triple branching $b$ cannot be a Peiffer one.

\item Otherwise, we have $\rep{w}=\rep{w}_1\rep{w}_2$ and $b_1=(f\rep{w}_1,\rho_v\rep{w}_1,\rho_{v\rep{w}_1})$ is a critical triple branching of $\Sigma$. We define the $4$-cell $\sigma^*_{\omega_c\rep{w}}$ as the following composite in $\tck{\crit_3(\Sigma)}$:
\[
\xymatrix @R=3em @!C{
&& {v'\rep{w}}
	\ar@2 [drr] |-{\sigma_{v'\rep{w}_1}\rep{w}_2} _{}="src4"
	\ar@2 @/^5ex/ [drrrr] ^{\sigma_{v'\rep{w}}} _(0.482){}="src1"
\\
{v\rep{w}} 
	\ar@2@/^/ [urr] ^{f\rep{w}} _(0.4985){}="src3"
	\ar@2@/_/ [drr] _{\rho_{v\rep{w}_1}\rep{w}_2} ^{}="tgt3"
&&&& {\rep{vw}_1\rep{w}_2}
	\ar@2 [rr] |-{\sigma_{\rep{vw}_1\rep{w}_2}} 
&& {\rep{vw}}
\\
&& {w'\rep{w}_2}
	\ar@2 [urr] |-{\sigma_{w'}\rep{w}_2} ^{}="tgt4"
	\ar@2 @/_5ex/ [urrrr] _{\sigma_{w'\rep{w}_2}} ^(0.482){}="tgt2"
\ar@3 "src1"!<0pt,-10pt>;"2,5"!<0pt,10pt> ^(0.8){(\sigma^*_{\sigma_{v'\rep{w}_1}\rep{w}_2})^-}
\ar@3 "2,5"!<0pt,-15pt>;"tgt2"!<0pt,10pt> ^-{\sigma^*_{\sigma_{w'}\rep{w}_2}}
\ar@3 "src3"!<0pt,-10pt>;"tgt3"!<0pt,10pt> ^(0.8){(\omega_c \rep{w}_1)^*\rep{w}_2} _(0.3){}="src5"
\ar@3 "src4"!<0pt,-10pt>;"tgt4"!<0pt,10pt> _(0.8){\rep{\omega_c w}_1^*\rep{w}_2} _(0.225){}="tgt5"
\ar@4 "src5"!<30pt,0pt>;"tgt5"!<-30pt,0pt> ^*+\txt{$\omega_{b_1}\rep{w}_2$}
}
\]
\end{itemize}
We apply Corollary~\ref{lemma : decomposition leftmost strategy} to extend the family of $4$-cells we have defined to a right normalisation strategy of $\crit_3(\Sigma)$.
\end{proof}

\begin{proposition}
The $(4,1)$-polygraph $\crit_3(\Sigma)$ is acyclic.
\end{proposition}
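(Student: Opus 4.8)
The plan is to reduce the statement directly to the equivalence established in Theorem~\ref{MainTheorem1}, exactly along the lines of the preceding Proposition on $\crit_2(\Sigma)$. Recall that, for the $(4,1)$-polygraph $\crit_3(\Sigma)$, acyclicity amounts to two homotopy-basis conditions: that $\crit_2(\Sigma)$ be a homotopy basis of $\tck{\Sigma}_2$ (the case $k=2$), and that $\crit_3(\Sigma)$ be a homotopy basis of $\tck{\crit_2(\Sigma)}$ (the case $k=3$). Rather than verifying these two conditions separately, I would obtain both simultaneously by exhibiting a single normalisation strategy for the whole $(4,1)$-polygraph.

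The key step is the preceding Lemma, which I would invoke as a black box: it asserts that the rightmost normalisation strategy of $\Sigma$ extends to a right normalisation strategy of $\crit_3(\Sigma)$. In particular, $\crit_3(\Sigma)$ is right normalising in the sense of Section~\ref{Subsection:StrategiesForLow-dimensional}. Applying Theorem~\ref{MainTheorem1} with $n=4$ and $p=1$, the right-normalising property is equivalent to acyclicity, so the conclusion follows at once. Concretely, the forward implication in the proof of that theorem produces, from the strategy $\sigma$, a filling $\sigma_f \star_k \sigma_g^-$ of every sphere $(f,g)$ of parallel $k$-cells of $\tck{\crit_3(\Sigma)}$ for $1 < k < 4$, which is precisely what both homotopy-basis conditions require.

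There is essentially no obstacle remaining at this point, since all the genuine combinatorial work has already been carried out in the preceding Lemma: namely the case analysis on aspherical, Peiffer and properly overlapping triple branchings used to define the $4$-cells $\sigma^*_{\omega_c\rep{w}}$, together with the appeal to Corollary~\ref{lemma : decomposition leftmost strategy} to extend these data to a full strategy. The only point to keep in mind is that Theorem~\ref{MainTheorem1} is stated for $(n,1)$-polygraphs with $n\leq\infty$ arbitrary, hence applies verbatim to the finite value $n=4$; no independent verification of the lower-dimensional condition (acyclicity of the $3$-skeleton $\crit_2(\Sigma)$) is needed, as the strategy restricts to a strategy on $\tck{\Sigma}_2$ and thereby already subsumes the earlier Proposition.
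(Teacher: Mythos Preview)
Your proposal is correct and follows exactly the (implicit) argument of the paper: the proposition is stated without proof because it is an immediate consequence of the preceding Lemma together with Theorem~\ref{MainTheorem1}. Your explanation simply spells out this deduction.
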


\begin{corollary}
A category with a finite convergent presentation is $\FDT_4$. 
\end{corollary}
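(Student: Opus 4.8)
The plan is to exhibit a finite acyclic $(4,1)$-polygraph presenting $\C$, which is exactly a finite partial polygraphic resolution of length $4$ and hence witnesses $\FDT_4$. The $(4,1)$-polygraph $\crit_3(\Sigma)$ constructed above is the natural candidate, so the argument reduces to checking three things: that we may start from a reduced presentation, that $\crit_3(\Sigma)$ is finite, and that it presents $\C$ and is acyclic.

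First I would invoke the Lemma asserting that every finite convergent $2$-polygraph is Tietze-equivalent to a finite, reduced, convergent one: thus, replacing the given presentation, I may assume that $\Sigma$ is a finite reduced convergent $2$-polygraph with $\cl{\Sigma}\cong\C$. Equip $\Sigma$ with its rightmost normalisation strategy and form the $(4,1)$-polygraph $\crit_3(\Sigma)$, whose cells are those of $\Sigma$ in dimensions $0$, $1$, $2$, the generating confluences $\crit_2(\Sigma)$ in dimension $3$, and the generating triple confluences $\crit_3(\Sigma)$ in dimension $4$. Since the presented $1$-category of an $(n,1)$-polygraph depends only on its $1$- and $2$-cells, we have $\cl{\crit_3(\Sigma)}=\Sigma_1^*/\Sigma_2=\cl{\Sigma}\cong\C$, so $\crit_3(\Sigma)$ presents $\C$. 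Acyclicity is precisely the content of the preceding Proposition, which states that the $(4,1)$-polygraph $\crit_3(\Sigma)$ is acyclic.

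It remains to record finiteness. As observed in the discussion of critical branchings, a finite $2$-polygraph has only finitely many critical branchings, so $\crit_2(\Sigma)$ is finite; likewise, the discussion of triple branchings shows that a finite $2$-polygraph has finitely many critical triple branchings, so $\crit_3(\Sigma)$ is finite. Together with the finitely many $0$-, $1$- and $2$-cells of $\Sigma$, this makes $\crit_3(\Sigma)$ a finite $(4,1)$-polygraph. Being finite, acyclic, and presenting $\C$, it is a finite partial polygraphic resolution of length $4$ of $\C$, whence $\C$ is $\FDT_4$. There is no real obstacle here: the entire difficulty is contained in the already-established acyclicity of $\crit_3(\Sigma)$, and the corollary is the routine assembly of that Proposition with the reduction and finiteness observations.
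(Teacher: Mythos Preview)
Your proof is correct and follows exactly the approach the paper intends: the corollary is stated without proof immediately after the Proposition that $\crit_3(\Sigma)$ is acyclic, and your argument is the routine unpacking of that implication---reduce to a finite reduced convergent $\Sigma$ via the earlier Lemma, invoke acyclicity from the Proposition, and use the already-noted finiteness of critical branchings and critical triple branchings to conclude that $\crit_3(\Sigma)$ is a finite partial polygraphic resolution of length~$4$.
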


\begin{example}
In the case of the reduced standard presentation $\N\C$ of a category $\C$, the basis of generating triple confluences (for the rightmost normalisation strategy) has one $4$-cell $\mu_{u,v,w,x}$ for every composable non-identity $1$-cells $u$, $v$, $w$ and $x$ of $\C$. The shape of the $4$-cell $\mu_{u,v,w,x}$ depends on whether $uv$, $vw$, $wx$, $uvw$ and $vwx$ are identities or not. In the case where neither of these $1$-cells is an identity, the corresponding $4$-cell is the following one:
\[
\scalebox{0.9}{
\xymatrix@C=1em{
& {\rep{uv}\rep{w}\rep{x}}
	\ar@2 [rr] ^-{\mu_{uv,w}\rep{x}}
	\ar@{} [dr] |-{\mu_{u,v,w}\rep{x}}
&& {\rep{uvw}\rep{x}}
	\ar@2 [dr] ^-{\mu_{uvw,x}}
&&&&&& {\rep{uv}\rep{w}\rep{x}}
	\ar@2 [rr] ^-{\mu_{uv,w}\rep{x}}
	\ar@2 [dr] |-{\rep{uv}\mu_{w,x}}
&& {\rep{uvw}\rep{x}}
	\ar@2 [dr] ^-{\mu_{uvw,x}}
\\
{\rep{u}\rep{v}\rep{w}\rep{x}}
	\ar@2 [ur] ^-{\mu_{u,v}\rep{w}\rep{x}}
	\ar@2 [rr] |-{\rep{u}\mu_{v,w}\rep{x}}
	\ar@2 [dr] _-{\rep{u}\rep{v}\mu_{w,x}}
&& {\rep{u}\rep{vw}\rep{x}}
	\ar@2 [ur] |-{\mu_{u,vw}\rep{x}}
	\ar@2 [dr] |-{\rep{u}\mu_{vw,x}}
	\ar@{} [rr] |-{\mu_{u,vw,x}}
&& {\rep{uvwx}}
& \strut 
	\ar@4 [rr] ^*+\txt{$\mu_{u,v,w,x}$}
&& \strut
& {\rep{u}\rep{v}\rep{w}\rep{x}}
	\ar@2 [ur] ^-{\mu_{u,v}\rep{w}\rep{x}}
	\ar@2 [dr] _-{\rep{u}\rep{v}\mu_{w,x}}
	\ar@{} [rr] |-{\copyright}
&& {\rep{uv}\rep{wx}}
	\ar@2 [rr] |-{\mu_{uv,wx}}
	\ar@{} [ur] |-{\mu_{uv,w,x}}
	\ar@{} [dr] |-{\mu_{u,v,wx}}
&& {\rep{uvwx}}.
\\
& {\rep{u}\rep{v}\rep{wx}}
	\ar@2 [rr] _-{\rep{u}\mu{v,wx}}
	\ar@{} [ur] |-{\rep{u}\mu_{v,w,x}}
&& {\rep{u}\rep{vwx}}
	\ar@2 [ur] _-{\mu_{u,vwx}}
&&&&&& {\rep{u}\rep{v}\rep{wx}}
	\ar@2 [ur] |-{\mu_{u,v}\rep{wx}}
	\ar@2 [rr] _-{\rep{u}\mu{v,wx}}
&& {\rep{u}\rep{vwx}}
	\ar@2 [ur] _-{\mu_{u,vwx}}
}
}
\]
\end{example}

\subsection{The polygraphic resolution generated by a convergent \pdf{2}-polygraph} 

Let $\Sigma$ be a reduced and convergent $2$-polygraph and let us extend it into an acyclic  $(\infty,1)$-polygraph, \ie, a polygraphic resolution of the category $\cl{\Sigma}$. This $(\infty,1)$-polygraph is denoted by $\crit_{\infty}(\Sigma)$ and its generating $(n+1)$-cells, for $n\geq 2$, are (indexed by) the $n$-fold critical branchings of $\Sigma$. We proceed by induction on $n$, having already seen the base cases, for $n=2$ and $n=3$. The induction case follows the construction of the acyclic $(4,1)$-polygraph $\crit_3(\Sigma)$, so we go faster here.   

\subsubsection{Higher branchings of \pdf{2}-polygraphs}

An \emph{$n$-fold branching of $\Sigma$} is a family $(f_1,\dots,f_n)$ of rewriting steps of $\Sigma$ with the same source and such that $f_1\preceq \cdots\preceq f_n$. We define local, aspherical, Peiffer, overlapping, minimal and critical $n$-fold branchings in a similar way to the cases $n=2$ and $n=3$. As before, we study the possible shapes of an $n$-fold critical branching~$b$ of~$\Sigma$ and we conclude that it must have shape 
\[
b \:=\: \big( \: c\rep{v}, \:\rho_{u\rep{v}} \:\big) 
\]
where $c$ is a critical $(n-1)$-fold branching of~$\Sigma$ with source $u$. Moreover, if $\Sigma$ is finite, then it has finitely many $n$-fold critical branchings.

\subsubsection{The basis of generating \pdf{n}-fold confluences}

The \emph{basis of generating $n$-fold confluences of $\Sigma$} is the cellular extension $\crit_n(\Sigma)$ of $\tck{\crit_{n-1}(\Sigma)}$ made of one $(n+1)$-cell 
\[
\omega_b \::\: \big( \omega_c\rep{v} \big)^* \:\longrightarrow\: \rep{ \omega_c v} ^*
\]
for every critical $n$-fold branching $b=(c\rep{v},\rho_{u\rep{v}})$ of $\Sigma$. 
 
The extension of the right normalisation strategy to $\crit_n(\Sigma)$ is made in the same way as in the case $n=3$. It relies on a Corollary~\ref{lemma : decomposition leftmost strategy} and a case analysis, whose main point is to define an $(n+1)$-cell 
\[
\sigma^*_{\omega_{c} \rep{w}} \::\: 
(\omega_{c} \rep{w})^*
\:\longrightarrow\:
\rep{\omega_{c} w}^*
\]
in $\tck{\crit_n(\Sigma)}$ for every local $n$-fold branching
\[
b \:=\: \big( c\rep{w}, \rho_{v\rep{w}} \big)
\]
of $\Sigma$ such that $\rep{w}=\rep{w}_1\rep{w}_2$ and such that $b_1=(c\rep{w}_1,\rho_{v\rep{w}_1})$ is a critical $n$-fold branching of $\Sigma$. As in the case $n=3$, we define the $(n+1)$-cell $\sigma^*_{\omega_c\rep{w}}$ as the following composite, where $f$ is the first $2$-cell of the critical $n$-fold branching~$c$:
\[
\xymatrix{
& {v'\rep{w}}
	\ar@2 [dr] |-{\sigma_{v'\rep{w}_1}\rep{w}_2}
	\ar@{} [dd] |-{\omega_{b_1}\rep{w}_2}
	\ar@2 @/^3ex/ [drrrr] ^-{\sigma_{v'\rep{w}}} _(0.327){}="src1"
\\
{v\rep{w}} 
	\ar@2@/^/ [ur] ^-{f\rep{w}}
	\ar@2@/_/ [dr] _-{\rho_{v\rep{w}_1}\rep{w}_2}
&& {\rep{vw}_1\rep{w}_2}
	\ar@2 [rrr] |-{\sigma_{\rep{vw}_1\rep{w}_2}} 
&&& {\rep{vw}}
\\
& {w'\rep{w}_2}
	\ar@2 [ur] |-{\sigma_{w'}\rep{w}_2}
	\ar@2 @/_3ex/ [urrrr] _-{\sigma_{w'\rep{w}_2}} ^(0.325){}="tgt2"
\ar@3 "src1"!<0pt,-10pt>;"2,3"!<0pt,15pt> ^(0.9){(\sigma^*_{\sigma_{v'\rep{w}_1}\rep{w}_2})^-}
\ar@3 "2,3"!<0pt,-15pt>;"tgt2"!<0pt,10pt> ^-{\sigma^*_{\sigma_{w'}\rep{w}_2}}
}
\]
As a conclusion of this construction, we get that the $(n+1,1)$-polygraph $\crit_n(\Sigma)$ is acyclic. 

\begin{theorem}
\label{MainTheorem2.0}
Every convergent $2$-polygraph $\Sigma$ extends to a Tietze-equivalent, acyclic $(\infty,1)$-poly\-graph $\crit_{\infty}(\Sigma)$, whose generating $n$-cells, for every $n\geq 3$, are (indexed by) the critical $(n-1)$-fold branchings of $\Sigma$.
\end{theorem}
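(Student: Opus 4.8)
The plan is to assemble the constructions of this section into a single statement, so the first thing I would do is dispose of the easy hypotheses and isolate the one nontrivial property, acyclicity. Since every convergent $2$-polygraph is Tietze-equivalent to a reduced one (by the reduction lemma proved above), and since the construction of $\crit_\infty(\Sigma)$ and of its strategy only makes use of a reduced convergent presentation, I would first reduce to the case where $\Sigma$ is reduced and convergent; the resulting $(\infty,1)$-polygraph is then a resolution of the same category $\C=\cl{\Sigma}$. In that case the cells of $\crit_\infty(\Sigma)$ in dimensions $\geq 3$ have already been produced explicitly. The Tietze-equivalence is then immediate: by construction $\crit_\infty(\Sigma)$ has the same $0$-, $1$- and $2$-cells as $\Sigma$ and only adds generating cells in dimension $3$ and above, so $\cl{\crit_\infty(\Sigma)}=\Sigma_1^*/\Sigma_2=\cl{\Sigma}$, and the two polygraphs present $\C$.

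For acyclicity I would argue level by level, exploiting that this property is checked one dimension at a time. By definition, $\crit_\infty(\Sigma)$ is acyclic when, for every $k>1$, the cellular extension of $(k+1)$-cells is a homotopy basis of $\tck{\crit_\infty(\Sigma)}_k$. These $(k+1)$-cells are exactly the generating $k$-fold confluences, that is, the top cells of the $(k+1,1)$-polygraph $\crit_k(\Sigma)$; moreover the $k$-category $\tck{\crit_\infty(\Sigma)}_k$ depends only on cells of dimension at most $k$ and therefore coincides with $\tck{\crit_k(\Sigma)}_k$. The crucial input, already obtained at the end of the construction, is that each $(k+1,1)$-polygraph $\crit_k(\Sigma)$ is acyclic, so in particular its top cells form a homotopy basis of $\tck{\crit_k(\Sigma)}_k$. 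Combining these two observations for every $k>1$ yields the acyclicity of $\crit_\infty(\Sigma)$, and the indexing of the generating $n$-cells by critical $(n-1)$-fold branchings holds by the very definition of $\crit_\infty(\Sigma)$.

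Alternatively, and more in keeping with the thread of the construction, one may observe that the right normalisation strategies built at the successive stages are compatible: a new stage only assigns values on the confluence cells introduced at the previous stage, so each strategy extends the former one. Their union is then a right normalisation strategy for the whole $(\infty,1)$-polygraph $\crit_\infty(\Sigma)$, and acyclicity follows at once from Theorem~\ref{MainTheorem1}. The point requiring the most care — and the one I regard as the genuine obstacle behind the otherwise routine assembly — is the coherence and well-foundedness of this inductive extension: at each stage the strategy is determined, through Corollary~\ref{lemma : decomposition leftmost strategy}, by its values $\sigma^*_{\omega_c\rep{w}}$ on the confluence cells, so one must check that the case analysis (aspherical, Peiffer, overlapping) is exhaustive and that the overlapping case refers only to strictly lower $n$-fold confluences $\omega_{b_1}\rep{w}_2$, which is what makes the induction terminate and guarantees that the glued strategy is well defined in every dimension.
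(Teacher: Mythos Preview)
Your proposal is correct and matches the paper's approach: the paper states the theorem as the conclusion of the preceding inductive construction, with no separate proof. Your ``alternative'' via the compatible tower of right normalisation strategies is precisely how the paper proceeds---at each stage it extends the rightmost strategy to $\crit_n(\Sigma)$ via the case analysis on local $n$-fold branchings and then invokes the equivalence between normalising and acyclic (Theorem~\ref{MainTheorem1})---so both of your routes collapse to the same argument, and your identification of the genuine obstacle (well-definedness of the inductive extension of $\sigma^*_{\omega_c\rep{w}}$) is on the mark.
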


\noindent
As a consequence, we get:

\begin{corollary}
\label{MainTheorem2}
A category with a finite convergent presentation is $\FDT_{\infty}$. Moreover, if $\C$ is a category with a convergent presentation with no critical $n$-fold branching, for some $n\geq 2$, then $\poldim{\C} \leq n$.
\end{corollary}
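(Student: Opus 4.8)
The plan is to obtain both statements as consequences of Theorem~\ref{MainTheorem2.0}, the only extra ingredient being the finiteness count for $n$-fold critical branchings recorded above. For the first assertion, I would begin with a finite convergent presentation of $\C$ and apply the reduction lemma to replace it by a finite, reduced, convergent, Tietze-equivalent $2$-polygraph $\Sigma$, so that $\cl{\Sigma}\cong\C$. Theorem~\ref{MainTheorem2.0} then yields the acyclic $(\infty,1)$-polygraph $\crit_{\infty}(\Sigma)$, which is Tietze-equivalent to $\Sigma$ and hence a polygraphic resolution of $\C$. It remains only to observe that this resolution is finite: its cells in dimensions $0$, $1$ and $2$ are those of the finite polygraph $\Sigma$, while for $n\geq 3$ its $n$-cells are indexed by the critical $(n-1)$-fold branchings of $\Sigma$, which are finitely many in each dimension because $\Sigma$ is finite. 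A finite polygraphic resolution is precisely what $\FDT_{\infty}$ requires.

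For the second assertion, the first step is to note that the absence of critical $n$-fold branchings propagates upwards: since every critical $m$-fold branching has the shape $\big(c\rep{v},\rho_{u\rep{v}}\big)$ with $c$ a critical $(m-1)$-fold branching, having no critical $n$-fold branching forces, by induction, no critical $m$-fold branching for any $m\geq n$. In particular $\crit_{n}(\Sigma)$ has no $(n+1)$-cells, so that $\crit_{n-1}(\Sigma)$ already carries all the cells of $\crit_{\infty}(\Sigma)$ and is an $(n,1)$-polygraph presenting $\C$. The key point is then to read asphericity off from acyclicity in top degree: acyclicity of $\crit_{n}(\Sigma)$ asserts, for $k=n$, that the cellular extension of $(n+1)$-cells is a homotopy basis of $\tck{\crit_{n-1}(\Sigma)}$; but this extension is empty, and since the quotient by the empty extension is the ambient category itself, being a homotopy basis means exactly that $\tck{\crit_{n-1}(\Sigma)}$ is aspherical. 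Hence $\crit_{n-1}(\Sigma)$ is an acyclic, aspherical $(n,1)$-polygraph presenting $\C$, \ie\ an aspherical partial polygraphic resolution of length $n$, whence $\poldim{\C}\leq n$.

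I expect the main obstacle to lie in this top-degree argument: one must check carefully that the definition of acyclicity, applied with an \emph{empty} top cellular extension, collapses to exactly the asphericity condition demanded by the definition of polygraphic dimension, rather than to something weaker. A subsidiary verification is the upward propagation of the vanishing of critical branchings, which is what guarantees that $\crit_{n-1}(\Sigma)$ is a genuine complete resolution and not a partial one still requiring further extension.
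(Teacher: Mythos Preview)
Your proposal is correct and is precisely the argument the paper leaves implicit (it offers no proof beyond the phrase ``As a consequence, we get''). Your unpacking of both assertions---reduce to a Tietze-equivalent reduced convergent polygraph, invoke Theorem~\ref{MainTheorem2.0}, count critical $(n-1)$-fold branchings for finiteness, and for the second part use the shape $b=(c\rep{v},\rho_{u\rep{v}})$ to propagate the vanishing of critical branchings upward and then read off asphericity from the fact that an empty cellular extension is a homotopy basis exactly when the underlying category is already aspherical---is exactly the intended reasoning.
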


\begin{example}
If $\N\C$ is the reduced standard presentation of a category $\C$, we have already built the dimensions $3$ and $4$ of the $(\infty,1)$-polygraph $\crit_{\infty}(\N\C)$, called the \emph{reduced standard polygraphic resolution of $\C$}. 

More generally, the $(\infty,1)$-polygraph $\crit_{\infty}(\N\C)$ has, for every natural number $n\geq 2$, one $n$-cell $\mu_{u_1,\dots,u_n}$ for every family $(u_1,\dots,u_n)$ of non-identity composable $1$-cells of $\C$. The shape of this cell depends on the partial compositions of the $1$-cells $u_1$, \dots, $u_n$. 

In the case where no such partial composition is an identity, we get an $n$-cell with the same shape as an $n$-simplex, representing all the possible ways to transform $\rep{u}_1\cdots\rep{u}_n$ into $\rep{u_1\cdots u_n}$, all the homotopies between these different ways, all homotopies between these homotopies, and so on. Indeed, we have seen that each $2$-cell $\mu_{u,v}:\rep{u}\rep{v}\dfl\rep{uv}$ is represented by a triangle, each $3$-cell $\mu_{u,v,w}$ is pictured as a tetrahedron and each $4$-cell $\mu_{u,v,w,x}$ as a $4$-simplex. More generally, the source and the target of this $n$-cell are $(n-1)$-composites of the following $(n-1)$-cells:
\[
d_i(u_1,\dots,u_n) \:=\: 
\begin{cases}
\rep{u}_1\mu_{u_2,\dots,u_n} 
	& \text{if } i=0 \\
\mu_{u_1,\dots,u_i u_{i+1},\dots,u_n}
	& \text{if } 1\leq i\leq n-1 \\
\mu_{u_1,\dots,u_{n-1}}\rep{u}_n
	& \text{if } i=n.
\end{cases}
\]
More precisely, the $n$-cell $\mu_{u_1,\dots,u_n}$ has the same shape as an $n$-oriental, the higher-categorical equivalent of an $n$-simplex, see~\cite{Street87}.
\end{example}

\section{Abelianisation of polygraphic resolutions}
\label{SectionAbelianisation}

Let us fix a category $\C$.

\subsection{Resolutions of finite type}
\label{FiniteHomologyType} 

\subsubsection{Modules over a category,~\cite{Mitchell72}}

A \emph{$\C$-module} is a functor from $\C$ to the category of Abelian groups $\Ab$. The $\C$-modules and natural transformations between them form an Abelian category with enough projectives, denoted by $\mod(\C)$. Equivalently, $\mod(\C)$ can be described as the category of additive functors from $\Zb\C$ to $\Ab$, where $\Zb\C$ is the free $\Zb$-category over $\C$: its $0$-cells are the ones of $\C$ and each hom-set $\Zb\C(x,y)$ is the free Abelian group generated by $\C(x,y)$. 

A $\C$-module $M$ is \emph{free} when it is a coproduct of representable functors $\Zb\C(x,-)$ and it is \emph{finitely generated} if there exists an epimorphism of $\C$-modules $F \pfl M$, with $F$ free.

Let $M$ be a $\op{\C}$-module and $N$ be a $\C$-module. The \emph{tensor product of $M$ and $N$ over $\C$} is the Abelian group $M\tens_{\C} N$ defined as the coend 
\[
M\tens_{\C}N \:=\: \int^{x\in\C_0} M(x)\:\tens_{\Zb}\:N(x).
\]
In a more explicit way, we have:
\[
M\tens_{\C}N 
	\:=\: \left( \bigoplus_{x\in \C_0} M(x)\tens_{\Zb} N(x) \right) \:\Big/\: Q 
\]
where $Q$ is the subgroup of $\underset{x\in\C_0}{\bigoplus}M(x)\tens_{\Zb} N(x)$ generated by the elements 
\[
M(u)(a)\tens b - a \tens N(u)(b),
\]
for any possible $1$-cell $u:x\fl y$ of $\C$ and any elements $a$ of $M(y)$ and $b$ of $N(x)$.

\subsubsection{Modules of finite homological type}

A $\C$-module $M$ is of \emph{homological type $\FP_n$}, for a natural number~$n$, when there exists a partial resolution of $M$ of length $n$ by projective, finitely generated $\C$-modules:
\[
\xymatrix{
P_n \ar[r] 
& P_{n-1} \ar[r] 
& \:\cdots\: \ar[r] 
& P_0 \ar[r] 
& M \ar[r]
& 0.
}
\]
A $\C$-module $M$  is of \emph{homological type $\FP_{\infty}$} when there exists a resolution of $M$ by projective, finitely generated $\C$-modules:
\[
\xymatrix{
& \:\cdots\: \ar[r]
& P_n \ar[r] 
& P_{n-1} \ar[r] 
& \:\cdots\: \ar[r] 
& P_0 \ar[r] 
& M \ar[r]
& 0.
}
\]
As a generalisation of Schanuel's lemma, we have, given two exact sequences 
\[
\xymatrix{
0\ar[r]
& P_{n+1} \ar[r] 
& P_n \ar[r] 
& \:\cdots\: \ar[r] 
& P_0 \ar[r] 
& M \ar[r]
& 0
}
\]
and 
\[
\xymatrix{
0\ar[r]
& P'_{n+1} \ar[r] 
& P'_n \ar[r] 
& \:\cdots\: \ar[r] 
& P'_0 \ar[r] 
& M \ar[r]
& 0
}
\]
of projective $\C$-modules, with $P_i$ and $P'_i$ finitely generated for every $0\leq i\leq n$, then $P_n$ is finitely generated if and only if $P'_n$ is finitely generated. This yields the following characterisation of the property~$\FP_n$:

\begin{lemma}
\label{lemme_pl_n}
Let $M$ be a $\C$-module and let $n$ be a natural number. The following assertions are equivalent:  

\begin{enumerate}[\bf i)]
\item The $\C$-module $M$ is of homological type $\FP_n$.
\item There exists a free, finitely generated partial resolution of $M$ of length $n$:
\[
\xymatrix{
F_n \ar[r] 
& F_{n-1} \ar[r] 
& \:\cdots\: \ar[r] 
& F_0 \ar[r] 
& M \ar[r]
& 0.
}
\]
\item The $\C$-module $M$ is finitely generated and, for every $0\leq k<n$ and every projective, finitely generated partial resolution of $M$ of length $k$
\[
\xymatrix{
P_k \ar[r] ^-{d_k} 
& P_{k-1} \ar[r] 
& \,\cdots\, \ar[r] 
& P_0 \ar[r] 
& M \ar[r]
& 0,
}
\]
the $\C$-module $\ker d_k$ is finitely generated.  
\end{enumerate}
\end{lemma}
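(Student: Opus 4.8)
The plan is to establish the cycle of implications \textbf{ii)} $\Rightarrow$ \textbf{i)} $\Rightarrow$ \textbf{iii)} $\Rightarrow$ \textbf{ii)}. The first implication is immediate: a free $\C$-module is projective, so a free, finitely generated partial resolution of length $n$ is in particular a projective, finitely generated one, which is exactly \textbf{i)}.

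For \textbf{i)} $\Rightarrow$ \textbf{iii)}, I would first note that if $M$ is of type $\FP_n$ then it is finitely generated, being a quotient of the finitely generated module $P_0$. Now fix $0 \le k < n$ together with an arbitrary projective, finitely generated partial resolution $Q_k \to Q_{k-1} \to \cdots \to Q_0 \to M \to 0$ with differential $d_k \colon Q_k \to Q_{k-1}$; the goal is to show that $\ker d_k$ is finitely generated. The key observation is that, for the given $\FP_n$-resolution $P_n \to \cdots \to P_0 \to M \to 0$, exactness at $P_k$ (which holds since $k \le n-1$) identifies its $k$-th syzygy $\ker(P_k \to P_{k-1})$ with the image of the map $P_{k+1} \to P_k$, hence exhibits it as a quotient of the finitely generated module $P_{k+1}$; so this syzygy is finitely generated. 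I would then splice the two partial resolutions into the exact sequences $0 \to \ker(P_k \to P_{k-1}) \to P_k \to \cdots \to P_0 \to M \to 0$ and $0 \to \ker d_k \to Q_k \to \cdots \to Q_0 \to M \to 0$, whose terms in degrees $0,\dots,k$ are all projective and finitely generated, and invoke the generalisation of Schanuel's lemma recalled above: it forces the two $k$-th syzygies to agree up to finitely generated projective summands, so finite generation of the first transfers to $\ker d_k$.

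For \textbf{iii)} $\Rightarrow$ \textbf{ii)}, I would build a free, finitely generated partial resolution of length $n$ by induction on the dimension. Since $M$ is finitely generated, there is an epimorphism $\epsilon \colon F_0 \pfl M$ with $F_0$ free and finitely generated. Assume inductively that a free, finitely generated partial resolution $F_k \to \cdots \to F_0 \to M \to 0$ of some length $k < n$, with differential $d_k \colon F_k \to F_{k-1}$, has been constructed. Being free, each $F_i$ is projective, so assertion \textbf{iii)} applies and gives that $\ker d_k$ is finitely generated; choosing an epimorphism $F_{k+1} \pfl \ker d_k$ with $F_{k+1}$ free and finitely generated and composing with the inclusion $\ker d_k \hookrightarrow F_k$ yields a differential $d_{k+1} \colon F_{k+1} \to F_k$ extending the resolution by one dimension. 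Iterating until the length reaches $n$ produces the required resolution, establishing \textbf{ii)}.

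The routine parts are \textbf{ii)} $\Rightarrow$ \textbf{i)} and the inductive synthesis in \textbf{iii)} $\Rightarrow$ \textbf{ii)}, which only use that a finitely generated module admits an epimorphism from a finitely generated free one. The delicate step is \textbf{i)} $\Rightarrow$ \textbf{iii)}: the syzygies being compared are in general not projective, so the comparison cannot be read off termwise and must be routed through the stable form of the generalised Schanuel lemma, with careful bookkeeping of the alternating projective summands contributed by the two resolutions; the constraint $k < n$ is precisely what guarantees the presence of the extra term $P_{k+1}$ needed to see the reference syzygy as finitely generated.
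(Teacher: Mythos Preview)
Your proposal is correct and matches the paper's intended argument: the paper does not spell out a proof but simply records the generalised Schanuel lemma and states that it ``yields'' the characterisation, and your cycle \textbf{ii)}~$\Rightarrow$~\textbf{i)}~$\Rightarrow$~\textbf{iii)}~$\Rightarrow$~\textbf{ii)} is exactly the standard way of unpacking that claim. Your identification of the delicate step (routing the syzygy comparison through Schanuel, using $k<n$ to get finite generation of the reference syzygy via $P_{k+1}$) is precisely the content the paper leaves implicit.
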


\begin{lemma}\label{lemmaLanFPn}
Let $\D$ be a category, let $F: \C \fl \D$ be a functor and let $\Lan_F : \Mod(\C) \fl \Mod(\D)$ be the additive left Kan extension along $F$. If $M$ is a $\C$-module of homological type $\FP_n$ then $\Lan_F(M)$ is a $\D$-module of homological type $\FP_n$.
\end{lemma}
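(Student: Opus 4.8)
The plan is to run the argument through the adjunction
\[ \Lan_F \;\dashv\; F^*, \qquad F^* \colon \Mod(\D) \fl \Mod(\C), \quad N \longmapsto N \circ F. \]
Since limits and colimits of modules are computed pointwise and $F^*$ only reindexes, $F^*$ is exact, so in particular it preserves epimorphisms. Three consequences for $\Lan_F$ drive everything. First, $\Lan_F$ is right exact, being a left adjoint. Second, $\Lan_F$ preserves projective objects, because its right adjoint $F^*$ is exact. Third, $\Lan_F$ preserves finite generation: if $P \pfl M$ is an epimorphism with $P$ free and finitely generated, then right exactness yields an epimorphism $\Lan_F P \pfl \Lan_F M$. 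Finally, a Yoneda computation using the adjunction,
\[ \Hom_{\Mod(\D)}\!\big( \Lan_F \Zb\C(x,-),\, N \big) \;\cong\; N(Fx) \;\cong\; \Hom_{\Mod(\D)}\!\big( \Zb\D(Fx,-),\, N \big), \]
natural in $N$, identifies $\Lan_F \Zb\C(x,-)$ with the representable $\Zb\D(Fx,-)$. Hence $\Lan_F$ sends (finitely generated) free $\C$-modules to (finitely generated) free $\D$-modules.

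Next I would feed these properties into Lemma~\ref{lemme_pl_n}. As $M$ is of type $\FP_n$, assertion~(ii) of that lemma provides a free, finitely generated partial resolution of length $n$,
\[ F_n \fl F_{n-1} \fl \cdots \fl F_0 \fl M \fl 0. \]
Applying $\Lan_F$ termwise gives a complex
\[ \Lan_F F_n \fl \cdots \fl \Lan_F F_0 \fl \Lan_F M \fl 0 \]
of finitely generated free $\D$-modules. If this complex is exact, it is a free, finitely generated partial resolution of $\Lan_F M$ of length $n$, and Lemma~\ref{lemme_pl_n}~(ii) applied in $\Mod(\D)$ yields at once that $\Lan_F M$ is of type $\FP_n$.

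The main obstacle is precisely the exactness of this image complex. Right exactness of $\Lan_F$ only gives exactness at the two rightmost spots $\Lan_F F_0$ and $\Lan_F M$, which already disposes of the cases $n=0$ and $n=1$. For the interior spots I would cut the resolution into short exact sequences $0 \fl K_k \fl F_k \fl K_{k-1} \fl 0$ (with $K_{-1}=M$) and apply $\Lan_F$ to each: a diagram chase then reduces exactness at $\Lan_F F_k$ to the injectivity of the comparison maps $\Lan_F K_k \fl \Lan_F F_k$, equivalently to the vanishing of the kernels that measure the failure of $\Lan_F$ to be left exact on the syzygy inclusions. These kernels are the values on the syzygies of $M$ of the left derived functors of $\Lan_F$, and this is the crux of the proof: they all vanish exactly when $\Lan_F$ is exact, for instance when the comma categories $F \downarrow d$ are filtered, so that the defining colimits are exact. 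Securing this exactness (equivalently, a flatness condition on $F$) is the real content; granted it, the syzygy inclusions stay monic, the image complex is a genuine resolution of $\Lan_F M$, and the conclusion follows as above.
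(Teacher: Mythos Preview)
Your overall strategy matches the paper's: pick a finitely generated projective partial resolution $P_\ast\to M$ of length~$n$, apply $\Lan_F$ termwise, observe that each $\Lan_F P_i$ is again finitely generated projective (your identification $\Lan_F\,\Zb\C(x,-)\cong\Zb\D(Fx,-)$ is exactly the right reason), and present $\Lan_F P_\ast\to\Lan_F M$ as the desired partial resolution. The paper's proof is precisely this, in four sentences; its decisive line is ``the functor $\Lan_F$ is right-exact: it follows that $\Lan_F(P_\ast)\to\Lan_F(M)$ is a projective, finitely generated partial resolution of length~$n$.''

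The gap you isolate is real, and it is also present in the paper's own argument. Right exactness secures exactness only of $\Lan_F P_1\to\Lan_F P_0\to\Lan_F M\to 0$; at $\Lan_F P_k$ for $k\geq 1$ the homology of the image complex is $L_k\Lan_F(M)$, which has no reason to vanish for an arbitrary functor~$F$. Your syzygy analysis is correct: the cokernel at level~$1$ is exactly $L_1\Lan_F(M)$, and nothing in the hypotheses makes it finitely generated, let alone zero. The paper does not address this point at all---it simply asserts the conclusion from right exactness---so rather than having missed an idea, you have been more careful than the published proof and located a genuine lacuna in it. Your proposed remedy, namely exactness of $\Lan_F$ (equivalently, flatness of each $\op{\C}$-module $\Zb\D(F-,y)$), is precisely what is needed to close it; for the specific functors $\pi$, $p_1$, $p_2$, $q_1$, $q_2$ used downstream in Proposition~\ref{Proposition:implicationsPL}, that additional verification is what would actually complete the argument.
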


\begin{proof}
Let us assume that $M$ is a $\C$-module of type $\FP_n$. Then there exists a projective, finitely generated partial resolution $P_\ast \fl M$ of length $n$. If $x$ is a $0$-cell in $\D$, then we have: 
\[
\Lan_F(M)(x) \:=\: \Zb\D(F,x) \tens_{\C} M.
\]
Since each $\C$-module $P_i$ is finitely generated and projective, then so is the $\D$-module $\Lan_F(P_i)$. Moreover, the functor $\Lan_F$ is right-exact: it follows that $\Lan_F(P_\ast) \fl \Lan_F(M)$ is a projective, finitely generated partial resolution of length $n$. This proves that $\Lan_F(M)$ is of type $\FP_n$. 
\end{proof}

\subsection{Categories of finite homological type}

\subsubsection{Natural systems of Abelian groups}
\label{NaturalSystems}

The \emph{category of factorisations of $\C$} is the category, denoted by $F\C$, whose $0$-cells are the $1$-cells of $\C$ and whose $1$-cells from $w$ to $w'$ are pairs $(u,v)$ of $1$-cells of $\C$ such that the following diagram commutes in $\C$:  
\[
\xymatrix @R=2em @C=2em{   
& \cdot
	\ar [r] ^-{w} _-{}="src"
& \cdot
	\ar [dr] ^-{v}
\\
\cdot
	\ar [ur] ^-{u}
	\ar [rrr] _-{w'} ^-{}="tgt"
&&& \cdot 
		\ar@{} "src";"tgt" |-{\sm\copyright}
}
\] 
In such a situation, the triple $(u,w,v)$ is called a \emph{factorisation of $w'$}. Composition in $F\C$ is defined by pasting, \ie, if $(u,v):w\fl w'$ and $(u',v'):w'\fl w''$ are $1$-cells of $F\C$, then the composite $(u,v)(u',v')$ is $(u'u,vv')$:
\[
\xymatrix @R=2em @C=2em{   
&& \cdot
	\ar [r] ^-{w} _-{}="src1"
& \cdot
	\ar [dr] ^-{v}
\\
& \cdot
	\ar [ur] ^-{u}
	\ar [rrr] |-{w'} ^-{}="tgt1" _-{}="src2"
&&& \cdot 
	\ar [dr] ^-{v'}
\\
\cdot
	\ar [ur] ^-{u'}
	\ar [rrrrr] _-{w''} ^-{}="tgt2"
&&&&& \cdot
		\ar@{} "src1";"tgt1" |-{\sm\copyright}
		\ar@{} "src2";"tgt2" |-{\sm\copyright}
}
\]
The identity of $w$ is $(1_{s(w)},1_{t(w)})$:
\[
\xymatrix @R=2em @C=2em{   
& \cdot
	\ar [r] ^-{w} _-{}="src"
& \cdot
	\ar [dr] ^-{1_{t(w)}}
\\
\cdot
	\ar [ur] ^-{1_{s(w)}}
	\ar [rrr] _-{w'} ^-{}="tgt"
&&& \cdot 
		\ar@{} "src";"tgt" |-{\sm\copyright}
}
\] 
A \emph{natural system (of Abelian groups) on $\C$} is an $F\C$-module $D$, \ie, a functor $D:F\C\fl\Ab$. As in~\cite{BauesWirsching85}, we denote by $D_w$ the Abelian group which is the image of $w$ by $D$. If there is no confusion, we denote by $uav$ the image of $a\in D_w$ through the morphism of groups $D(u,v) : D_w \fl D_{w'}$. The category of natural systems on $\C$ is denoted by $\Nat(\C)$.

\subsubsection{Free natural systems}

Given a family $X$ of $1$-cells of $\C$, we denote by $\fnat{\C}{X}$ the \emph{free natural
  system on $\C$ generated by $X$}, which is defined by 
\[
\fnat{\C}{X}
	\:=\: \bigoplus_{x\in X} F\C(x,-).
\]
In particular, if $\Sigma$ is an $(n,1)$-polygraph such that $\cl{\Sigma}\simeq\C$, we consider:
\begin{itemize}
\item The free natural system $\fnat{\C}{\Sigma_0}$ generated by the $1$-cells $1_x$, for $x\in\Sigma_0$: if $w$ is a $1$-cell in $\C$, then $\fnat{\C}{\Sigma_0}_w$ is the free Abelian group generated by the pairs $(u,v)$ of $1$-cells of $\C$ such that $uv=w$. 
\item For every $1\leq k\leq p$, the free natural system $\fnat{\C}{\Sigma_k}$ generated by a copy of the $1$-cell $\cl{\phi}$ for each $k$-cell $\phi$ of~$\Sigma_k$: if $w$ is a $1$-cell in $\C$, then $\fnat{\C}{\Sigma_k}_w$ is the free Abelian group generated by the triples $(u,\phi,v)$, thereafter denoted by $u[\phi]v$, made of a $k$-cell $\phi$ of $\Sigma_k$ and $1$-cells $u$, $v$ of $\C$ such that $u\cl{\phi}v=w$. 
\end{itemize}

\subsubsection{Categories of finite homological type}

The property for a category $\C$ to be of homological type $\FP_n$ is defined according to a category of modules over one of the categories in the following diagram  
\[
\xymatrix@R=1em{ 
&& {\op{\C}} 
	\ar@{>->} @/^/ ^-{q_1} [dr]
\\ 
F\C
	\ar@{->>} [r] ^-{\pi} 
& {\op{\C} \times \C} 
	\ar@{->>} @/^/ [ur] ^-{p_1}
	\ar@{->>} @/_/ [dr] _-{p_2}
&& {\tck{\C}}
\\
&& {\C} 
	\ar@{>->} @/_/ [ur] _-{q_2}
}
\]
where $\tck{\C}$ is the groupoid generated by $\C$, $\pi$ is the projection $u\mapsto(s(u),t(u))$, $p_1$ and $p_2$ are the projections of the cartesian product, $q_1$ and $q_2$ are the injections $\op{u}\mapsto u^-$ and $u\mapsto u$. 

Let us denote by~$\Zb$ the constant natural system on $\C$ given, for any $1$-cell $u$ of $\C$, by
\[
\Zb_u \:=\: \Zb 
\qquad\text{and}\qquad 
\Zb(u,1) \:=\: \Zb(1,u) \:=\: \id_{\Zb}.
\]
Let $\Zb\C$ denote the $\op{\C}\times\C$-module $\Zb\C$ whose component at $(x,y)$ is the free $\Zb$-module $\Zb\C(x,y)$ generated by $\C(x,y)$. 

A category $\C$ is of \emph{homological type}
\begin{enumerate}[\bf i)]
\item \emph{$\FP_n$} when the constant natural system $\Zb$ is of type $\FP_n$,
\item \emph{bi-$\FP_n$} when the $\op{\C}\times\C$-module $\Zb\C$ is of type $\FP_n$,  
\item \emph{left-$\FP_n$} when the constant $\C$-module $\Zb$ is of type $\FP_n$,  
\item \emph{right-$\FP_n$} when the constant $\op{\C}$-module $\Zb$ is of type $\FP_n$,  
\item \emph{top-$\FP_n$} when the constant $\tck{\C}$-module $\Zb$ is of type $\FP_n$. 
\end{enumerate}

\begin{proposition}
\label{Proposition:implicationsPL}
For categories, we have the following implications:
\[
\xymatrix@R=1em{ 
&& {\text{right-$\FP_n$}} 
	\ar@2 @/^/ [dr]
\\ 
{\FP_n}
	\ar@2 [r] 
& {\text{bi-$\FP_n$}} 
	\ar@2 @/^/ [ur] 
	\ar@2 @/_/ [dr] 
&& {\text{top-$\FP_n$.}}
\\
&& {\text{left-$\FP_n$}} 
	\ar@2 @/_/ [ur]
}
\]
\end{proposition}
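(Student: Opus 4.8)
The plan is to derive every implication from Lemma~\ref{lemmaLanFPn}, applied to the five structural functors $\pi$, $p_1$, $p_2$, $q_1$ and $q_2$ appearing in the diagram. Each arrow to be established corresponds to one such functor $F$, and says that if the source module is of type $\FP_n$ then so is the target. Since the additive left Kan extension $\Lan_F$ preserves type $\FP_n$ by Lemma~\ref{lemmaLanFPn}, it suffices to identify $\Lan_F$ of the relevant constant module with the constant module over the target category. Concretely, I would establish the five isomorphisms
\[
\Lan_{\pi}(\Zb) \cong \Zb\C, \qquad \Lan_{p_1}(\Zb\C)\cong\Zb, \qquad \Lan_{p_2}(\Zb\C)\cong\Zb, \qquad \Lan_{q_1}(\Zb)\cong\Zb, \qquad \Lan_{q_2}(\Zb)\cong\Zb,
\]
where on the left $\Zb$ is the constant natural system on $\C$ and on the right it denotes the constant modules over $\op{\C}$, over $\C$ and over $\tck{\C}$ respectively.

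The crucial computation is $\Lan_{\pi}(\Zb)\cong\Zb\C$. I would evaluate it at an object $(x,y)$ of $\op{\C}\times\C$ as the colimit of the constant functor $\Zb$ over the comma category $\pi\downarrow(x,y)$, which is $\Zb\bigl[\pi_0(\pi\downarrow(x,y))\bigr]$. The objects of this comma category identify with factorisations $(f,w,g)$, where $w$ is a $1$-cell and $f\colon x\fl s(w)$, $g\colon t(w)\fl y$, and a morphism amounts to a morphism $(u,v)\colon w\fl w'$ of $F\C$ compatible with $f,g$. The point is that the composite $fwg\in\C(x,y)$ is invariant along such morphisms, and that every object admits a morphism to the canonical factorisation $(1_x,h,1_y)$ of $h=fwg$ (namely $(f,g)$ itself); hence $\pi_0(\pi\downarrow(x,y))\cong\C(x,y)$ and the colimit is the free Abelian group $\Zb\C(x,y)$. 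A routine check that the transition maps match the bimodule structure of $\Zb\C$ then gives the isomorphism.

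The remaining four computations are lighter. For $p_2$, I would compute $\Lan_{p_2}(\Zb\C)(y)$ as the colimit of the groups $\Zb\C(x,x')$ over $p_2\downarrow y$; using the morphisms of this comma category, each generator $c$ is successively identified with the class of $1_y$, so the colimit is $\Zb$ and the transition maps are identities, i.e. the constant $\C$-module; the case of $p_1$ is symmetric through $\op{\C}$. For $q_2$ (and symmetrically $q_1$), since the diagram is the constant functor $\Zb$, the value at $\xi$ is $\Zb\bigl[\pi_0(q_2\downarrow\xi)\bigr]$; using that $\tck{\C}$ is obtained from $\C$ by formally inverting its $1$-cells, every object $(x,\phi)$ of $q_2\downarrow\xi$ can be connected to $(\xi,1_\xi)$ by peeling off one $\C$-arrow of the zig-zag $\phi$ at a time, so the comma category is connected and the extension is the constant system $\Zb$.

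The main obstacle is the factorisation analysis underlying $\Lan_{\pi}(\Zb)\cong\Zb\C$; the other identifications reduce to showing a comma category is connected or that a colimit collapses to $\Zb$. Once the five isomorphisms are in hand, the proposition follows by reading off the diagram: apply Lemma~\ref{lemmaLanFPn} along $\pi$ for $\FP_n\Rightarrow\text{bi-}\FP_n$, along $p_1$ and $p_2$ for $\text{bi-}\FP_n\Rightarrow\text{right-}\FP_n$ and $\text{bi-}\FP_n\Rightarrow\text{left-}\FP_n$, and along $q_1$ and $q_2$ for $\text{right-}\FP_n\Rightarrow\text{top-}\FP_n$ and $\text{left-}\FP_n\Rightarrow\text{top-}\FP_n$.
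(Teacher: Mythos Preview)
Your proposal is correct and follows exactly the paper's route: apply Lemma~\ref{lemmaLanFPn} along each of $\pi,p_1,p_2,q_1,q_2$ after identifying the relevant left Kan extensions. You are in fact slightly more careful than the paper's own proof, which records $\Lan_{p_i}(\Zb)=\Zb$ where the implication bi-$\FP_n\Rightarrow$ left/right-$\FP_n$ actually requires $\Lan_{p_i}(\Zb\C)\cong\Zb$ (since bi-$\FP_n$ is a property of the bimodule $\Zb\C$, not of the constant system); your formulation is the correct one.
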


\begin{proof}
We have the following left Kan extensions:
\[
{\rm Lan}_{\pi}(\Zb) \:=\: \Zb\C 
\qquad\qquad
{\rm Lan}_{p_i}(\Zb) \:=\: \Zb
\qquad\qquad
{\rm Lan}_{q_i}(\Zb) \:=\: \Zb. 
\]
Hence the implications are consequences of Lemma~\ref{lemmaLanFPn}. 
\end{proof}

\subsubsection{Converse implications}

The converse of the implications bi-$\FP_n$ $\dfl$ left/right-$\FP_n$ and left/right-$\FP_n$ $\dfl$ top-$\FP_n$ of Proposition~\ref{Proposition:implicationsPL} does not hold in general. Indeed, Cohen constructed a right-$\FP_{\infty}$ monoid which is not left-$\FP_{1}$: thus, the properties top-$\FP_n$, left-$\FP_n$ and right-$\FP_n$ are not equivalent in general,~\cite{Cohen92}. Moreover, monoids with a finite convergent presentation are of types left-$\FP_{\infty}$ and right-$\FP_{\infty}$,~\cite{Squier87,Anick86,Kobayashi91}, but there exists a finitely presented monoid, of types left-$FP_{\infty}$ and right-$FP_{\infty}$, which does not satisfy the homological finiteness condition~$\mathrm{FHT}$, introduced by Pride and Wang,~\cite{Kobayashi01}; since the properties $\mathrm{FHT}$ and bi-$\FP_3$ are equivalent,~\cite{Kobayashi03}, it follows that the properties left-$\FP_n$ and right-$\FP_n$ do not imply the property bi-$\FP_n$ in general. We conjecture that the converse of the first implication is not true either, but this is still an open problem. However, in the special case of groupoids, all the implications are equivalences.

\begin{proposition}
For groupoids, the properties $\FP_n$, bi-$\FP_n$, left-$\FP_n$, right-$\FP_n$ and top-$\FP_n$ are equivalent.
\end{proposition}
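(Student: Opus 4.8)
The plan is to reduce all five conditions to left-$\FP_n$ by exploiting two features special to groupoids: the groupoid generated by $\C$ is $\C$ itself, and the category of factorisations $F\C$ is equivalent to $\C$. I would then close the loop using the implications already recorded in Proposition~\ref{Proposition:implicationsPL}, so that the remaining conditions get squeezed between two that are provably equivalent.

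First I would dispose of the three one-sided conditions. Since $\C$ is a groupoid, $\tck{\C}=\C$, the inclusion $q_2:\C\fl\tck{\C}$ is the identity, and $q_1:\op{\C}\fl\tck{\C}$, $\op{u}\mapsto u^-$, is the canonical isomorphism $\op{\C}\cong\C$ given by inversion. These isomorphisms of categories identify the respective module categories and carry constant modules to constant modules, so top-$\FP_n$, left-$\FP_n$ and right-$\FP_n$ coincide at once, without any homological input.

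The core step is to prove $\FP_n\Leftrightarrow$ left-$\FP_n$, and here I would show that the target functor $T:F\C\fl\C$, sending a factorisation $w$ to $t(w)$ and a morphism $(u,v)$ to $v$, is an equivalence of categories when $\C$ is a groupoid. Essential surjectivity is immediate from $T(1_x)=x$; full faithfulness amounts to checking that for $w:a\fl b$, $w':a'\fl b'$ and any $v:b\fl b'$ there is a unique $(u,v):w\fl w'$ in $F\C$, namely $u=w'v^-w^-$, which is the unique solution of $uwv=w'$ in the groupoid. Granting this, restriction along $T$ is an equivalence $T^{*}:\Mod(\C)\fl\Nat(\C)$ of Abelian categories that sends the constant $\C$-module $\Zb$ to the constant natural system $\Zb$; since an equivalence preserves and reflects finite generation, projectivity and exactness, it transports finitely generated projective resolutions, giving $\FP_n\Leftrightarrow$ left-$\FP_n$.

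To finish I would invoke Proposition~\ref{Proposition:implicationsPL}, which already yields the implications $\FP_n\Rightarrow$ bi-$\FP_n\Rightarrow$ left-$\FP_n$ and bi-$\FP_n\Rightarrow$ right-$\FP_n\Rightarrow$ top-$\FP_n$. Combined with the two equivalences established above, bi-$\FP_n$ sits between $\FP_n$ and left-$\FP_n$, which are equivalent, so it joins the list and all five properties become equivalent. The main obstacle is the core step: recognising and verifying that $F\C\simeq\C$ for groupoids via the target functor, and then being careful that restriction along an equivalence genuinely preserves the homological type $\FP_n$, i.e.\ that it matches up representable (free) modules and hence the notions of finite generation and projectivity, rather than being merely an abstract equivalence of Abelian categories.
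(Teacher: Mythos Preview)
Your proposal is correct, and the overall logical architecture---closing the loop via Proposition~\ref{Proposition:implicationsPL} once one extra implication is established---matches the paper. The difference lies in how you prove $\FP_n \Leftrightarrow$ left-$\FP_n$. The paper proceeds more concretely: given a $\G$-module $M$, it builds the natural system $\tilde{M}$ with $\tilde{M}_w = M(t(w))$ and checks by hand, via the bijection $\G(x,y) \cong F\G(1_x,1_y)$, that $\tilde{M}$ is projective and finitely generated whenever $M$ is. Your argument recognises this construction for what it is: restriction along the target functor $T:F\C\to\C$, and you observe that $T$ is in fact an \emph{equivalence} of categories when $\C$ is a groupoid. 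The paper's bijection is precisely the full faithfulness of $T$ at the objects $1_x$, so your route subsumes the paper's computation and explains why it works. What you gain is a cleaner conceptual picture (and the preservation of freeness, finite generation, projectivity and exactness all follow at once from the induced equivalence of module categories); what the paper's approach buys is that it avoids invoking the general fact that restriction along an equivalence sends representables to representables, which you rightly flag as the point needing care. Your additional observation that $\op{\C}\cong\C$ identifies left-, right- and top-$\FP_n$ directly is a minor streamlining over the paper, which only notes top $=$ left and lets the chain of implications handle right-$\FP_n$.
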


\begin{proof}
If $\G$ is a groupoid, then the groupoid $\tck{\G}$ generated by $\G$ is $\G$ itself. Hence, the properties left-$\FP_n$ and top-$\FP_n$ are equivalent. As a consequence, it is sufficient to prove that left-$\FP_n$ implies $\FP_n$ to conclude the proof. 

Given a $\G$-module $M$, we denote by $\tilde{M}$ the natural system of $\G$ defined, on a $1$-cell $w:x\fl y$ of~$\G$, by $\widetilde{M}_w = M(y)$ and, on a factorisation $w'=uwv$ with $w:x\fl y$ and $w':x'\fl y'$ in $\G$, by $\widetilde{M}(u,v)=M(v):M(y)\fl M(y')$. Let us assume that $M$ is a  projective $\G$-module, \ie, there exists a family $X$ of $1$-cells of $\G$ and a surjective morphism of groups
\[
\xymatrix @!C @C=2.5em {
{\bigoplus_{x\in X} \Zb \G(x,y)}
	\ar@{->>} []!<5pt,0pt>;[r]!<-5pt,0pt> ^-{\pi_y}
& M(y)
}
\]
that is natural in $y$. Let us denote by $L$ the natural system on $\G$ defined, on a $1$-cell $u$ with target $y$, by
\[
L_u \:=\: \bigoplus_{x\in X} \Zb F\G(1_x, 1_y).
\]
For fixed $0$-cells $x$ and $y$ of $\G$, the set $F\G(1_x,1_y)$ is the one of pairs $(u,v)$ of $1$-cells of $\G$ such that $u1_x v=1_y$. Because $\G$ is a groupoid, this is exactly the set of pairs $(v^-,v)$ where $v:x\fl y$ is a $1$-cell of~$\G$. Hence, we have a bijection between $\G(x,y)$ and $F\G(1_x,1_y)$ that is natural in $y$. This gives, for any $1$-cell~$u$ with target $y$ in $\G$, a surjective morphism of groups
\[
\xymatrix @!C @C=4em {
L_u \:=\: \bigoplus_{x\in X} \Zb \G(x,y) 
	\ar@{->>} []!<5pt,0pt>;[r]!<-5pt,0pt> ^-{\pi_{t(u)}}
& M(y) \:=\: \tilde{M}(u). 
}
\]
Moreover, one checks that this is natural in $u$, yielding an epimorphism $\pi_{t(-)}:L\pfl \tilde{M}$ of natural systems on $\G$, proving that $M$ is projective. Moreover, by construction, if $M$ is finitely generated, then we can choose the family $X$ to be finite, so that $\tilde{M}$ is finitely generated too. 

As a conclusion, from a partial resolution $P_* \pfl \Zb$ of length $n$ by projective and finitely generated $\G$-modules, we can build a partial resolution $\tilde{P}_* \pfl \Zb$ of length $n$ by projective and finitely generated natural systems on $\G$. Thus, the property left-$\FP_n$ implies the property $\FP_n$ for groupoids.
\end{proof}

\subsubsection{Finite homological type and homology}

The cohomology of categories with values in natural systems was defined in~\cite{Wells79} and~\cite{BauesWirsching85}. One can also define the homology of a category $\C$ with values in a \emph{contravariant natural system} $D$ on $\C$, that is an $\op{(F\C)}$-module, and relate its finiteness properties to the fact that $\C$ is $\FP_n$. Note that this is independent of the rest of this article.

We consider the nerve $N_*(\C)$ of $\C$, with boundary maps $d_i:N_n(\C)\fl N_{n-1}(\C)$, for $0\leq i\leq n$. For $s=(u_1,\dots,u_n)$ in $N_n(\C)$, we denote by $\cl{s}$ the composite $1$-cell $u_1\cdots u_n$ of $\C$. For every natural number $n$, the $n$-th chain group $C_n(\C,D)$ is defined as the Abelian group   
\[
C_n(\C,D) \:=\: \bigoplus_{s\in N_n(\C)} D_{\cl{s}}.
\]
We denote by $\iota_s$ the embedding of $D_{\cl{s}}$ into~$C_n(\C,D)$. The boundary map $d : C_n(\C,D) \fl C_{n-1}(\C,D)$ is defined, on the component $D_{\cl{s}}$ of $C_n(\C,D)$, by:
\[
d\iota_s 
	\:=\: \iota_{d_0(s)} u_{1*} 
		\:+\: \sum_{i=1}^{n-1} (-1)^i \iota_{d_i(s)} 
		\:+\: (-1)^n \iota_{d_n(s)} u_n^* \;, 
\] 
with $s=(u_1,\dots,u_n)$ and where $u_{1*}$ and $u_n^*$ respectively denote $D(u_1,1)$ and $D(1,u_n)$. The homology of $\C$ with coefficients in $D$ is defined as the homology of the complex $(C_*(\C,D),d_*)$:
\[
\ho_*(\C,D) \:=\: \ho_*(C_*(\C,D),d_*).
\]
We denote by $\Tor_*^{F\C}(D,-)$ the left derived functor from the functor $D\tens_{F\C}-$. One proves that there is an isomorphism which is natural in $D$:
\[
\ho_*(\C,D) \:\simeq\: \Tor_*^{F\C}(D,\Zb).
\]
As a consequence, using Lemma~\ref{lemme_pl_n}, we get:

\begin{proposition}
If a category $\C$ is of homological type $\FP_n$, for a natural number $n$, then the Abelian group $\ho_k(\C,\Zb)$ is finitely generated for every $0\leq k\leq n$. 
\end{proposition}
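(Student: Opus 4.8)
The plan is to unwind the definitions so that the statement reduces to the finite generation of the homology groups of an explicit complex of finitely generated Abelian groups. By definition, $\C$ being of homological type $\FP_n$ means that the constant natural system $\Zb$ on $\C$, regarded as an $F\C$-module, is of homological type $\FP_n$. By Lemma~\ref{lemme_pl_n}, I may therefore fix a free, finitely generated partial resolution of length $n$
\[
F_n \fl F_{n-1} \fl \cdots \fl F_0 \fl \Zb \fl 0
\]
in the category $\Nat(\C)$ of natural systems on $\C$, where each $F_k$ is a finite direct sum of representable natural systems $F\C(x,-)$. Using the natural isomorphism $\ho_\ast(\C,D) \simeq \Tor_\ast^{F\C}(D,\Zb)$ recalled above, specialised to the contravariant constant natural system $D=\Zb$, the groups to be controlled become $\ho_k(\C,\Zb) \simeq \Tor_k^{F\C}(\Zb,\Zb)$.

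Next I would compute these $\Tor$ groups from the chosen resolution. Since $\Tor_\ast^{F\C}(\Zb,-)$ is by definition the left derived functor of $\Zb\tens_{F\C}-$, and $F_\ast$ is a partial projective resolution of its second argument, the groups $\Tor_k^{F\C}(\Zb,\Zb)$ are computed as the homology of the complex $\Zb\tens_{F\C}F_\ast$ in the degrees where $F_\ast$ agrees with a full resolution. The key finiteness input is the density computation $\Zb\tens_{F\C}F\C(x,-) \simeq \Zb_x = \Zb$, which follows directly from the coend description of the tensor product: tensoring a representable $F\C$-module against the contravariant constant system returns a single copy of $\Zb$. Hence, for each $0\le k\le n$, the free finitely generated module $F_k$ gives a finitely generated free Abelian group $\Zb\tens_{F\C}F_k$, so that $\Zb\tens_{F\C}F_\ast$ is a complex of finitely generated Abelian groups in all degrees up to $n$.

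Finally, I would extend $F_\ast$ to a genuine projective resolution $\tilde{F}_\ast \fl \Zb$ with $\tilde{F}_k = F_k$ for $k\le n$, so that $\ho_k(\C,\Zb) \simeq \Tor_k^{F\C}(\Zb,\Zb)$ is the $k$-th homology of $\Zb\tens_{F\C}\tilde{F}_\ast$. For every $0\le k\le n$ this homology group is a subquotient of $\Zb\tens_{F\C}F_k$: when $k<n$ the relevant cycles and boundaries involve only $F_{k-1}$, $F_k$ and $F_{k+1}$, and in the top degree $k=n$ the group is the quotient of $\ker\bigl(\Zb\tens_{F\C}F_n \fl \Zb\tens_{F\C}F_{n-1}\bigr)$ by the image coming from $\tilde{F}_{n+1}$. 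Since a subquotient of a finitely generated Abelian group is finitely generated, each $\ho_k(\C,\Zb)$ is finitely generated for $0\le k\le n$. The only delicate point is the top degree $n$: there the length-$n$ partial resolution does not pin down $\Tor_n$ as a concrete quotient, but it still exhibits it as a subquotient of the finitely generated group $\Zb\tens_{F\C}F_n$, and finite generation is exactly what is required.
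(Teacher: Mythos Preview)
Your argument is correct and follows exactly the route the paper has in mind: the paper merely records the proposition as a direct consequence of the isomorphism $\ho_*(\C,D)\simeq\Tor_*^{F\C}(D,\Zb)$ together with Lemma~\ref{lemme_pl_n}, and you have spelled out precisely those details (the co-Yoneda computation $\Zb\tens_{F\C}F\C(x,-)\simeq\Zb$ and the subquotient argument, including the top-degree case). There is nothing to add.
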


\subsection{The Reidemeister-Fox-Squier complex}

Let $\Sigma$ be an $(n,1)$-polygraph. The mapping of every $1$-cell $x$ of $\Sigma$ to the element $[x]$ of $\fnat{\cl{\Sigma}}{\Sigma_1}_{\cl{x}}$ is extended to associate to every $1$-cell $u$ of $\Sigma_1^*$ the element $[u]$ of $\fnat{\cl{\Sigma}}{\Sigma_1}_{\cl{u}}$, defined by induction on the size of $u$ as follows:
\[
[1_x] \:=\: 0 
\qquad\text{and}\qquad
[uv] \:=\: [u]\cl{v} + \cl{u}[v].
\]
This is well-defined since the given relations are compatible with the associativity and identity relations of~$\Sigma_1^*$. Note that the mapping $[\cdot]$ is a special case of the notion of derivation of the category $\Sigma_1^*$ into the natural system $\fnat{\cl{\Sigma}}{\Sigma_1}$ on $\cl{\Sigma}$, seen as a natural system on $\Sigma_1^*$ by composition with the canonical projection $\Sigma_1^*\pfl\cl{\Sigma}$, see~\cite{BauesWirsching85}.
 
Then, for $1<k\leq n$, the mapping of every $k$-cell $\alpha$ of $\Sigma$ to the element $[\alpha]$ of $\fnat{\cl{\Sigma}}{\Sigma_k}_{\cl{\alpha}}$ is extended to associate to every $k$-cell $f$ of $\tck{\Sigma}_k$ the element $[f]$ of $\fnat{\cl{\Sigma}}{\Sigma_k}_{\cl{f}}$, defined by induction on the size of $f$ as follows:
\[
[1_u] \:=\: 0
\qquad\qquad [f^-]\:=\:-[f]
\qquad\qquad [f\star_i g] \:= 
	\begin{cases}
	[f]\cl{g} + \cl{g}[f] &\text{if } i=0 \\
	[f] + [g] &\text{otherwise.}
	\end{cases}
\]
Here, to check that this is well-defined, we also have to show that this definition is compatible with exchange relations, for every $0\leq i<j\leq k$: 
\[
[(f\star_i g) \star_j (h \star_i  k)] \:=\: 
[(f\star_j h) \star_i (g \star_j k)] \:=
\begin{cases}
[f]\cl{g} + \cl{f}[g] + [h]\cl{k} + \cl{h}[k] 
	&\text{if } i=0 
\\
[f] + [g] + [h] + [k] 
	&\text{otherwise.}
\end{cases}
\]

\subsubsection{The Reidemeister-Fox-Squier complex}
\label{ReidemesterFoxSquierComplex}

Let $\Sigma$ be an $(n,1)$-polygraph. For $1\leq k\leq n$, the \emph{$k$-th Reidemeister-Fox-Squier boundary map of $\Sigma$} is the morphism of natural systems 
\[
\delta_k \::\: \fnat{\cl{\Sigma}}{\Sigma_k} \:\longrightarrow\: \fnat{\cl{\Sigma}}{\Sigma_{k-1}}
\]
defined, on the generator $[\alpha]$ corresponding to a $k$-cell $\alpha$ of $\Sigma$, by: 
\[
\delta_k[\alpha] \:=
\begin{cases}
(\cl{\alpha},1) - (1,\cl{\alpha}) 
	&\text{if } k=1
\\
[s(\alpha)] - [t(\alpha)]
	&\text{otherwise.}
\end{cases}
\]
The \emph{augmentation map of $\Sigma$} is the morphism of natural systems $\epsilon : \fnat{\cl{\Sigma}}{\Sigma_0} \fl \Zb$ defined, for every pair $(u,v)$ of composable $1$-cells of $\cl{\Sigma}$, by:
\[ 
\epsilon(u,v) \:=\: 1.
\]
By induction on the size of cells of $\tck{\Sigma}$, one proves that, for every $k$-cell $f$ in $\tck{\Sigma}$, with $k\geq 1$, the following holds:
\[
\delta_k[f] \:=
\begin{cases}
(\cl{f},1) - (1,\cl{f}) 
	&\text{if } k=1
\\
[s(f)] - [t(f)]
	&\text{otherwise.}
\end{cases}
\]
As a consequence, we have $\epsilon\delta_1=0$ and $\delta_k\delta_{k+1}=0$, for every $1\leq k<n$. Thus, we get the following chain complex of natural systems on $\cl{\Sigma}$, called the \emph{Reidemeister-Fox-Squier complex of $\Sigma$} and denoted by~$\fnat{\cl{\Sigma}}{\Sigma}$:
\[
\xymatrix{
\fnat{\cl{\Sigma}}{\Sigma_n}
	\ar [r] ^-{\delta_n}
& \fnat{\cl{\Sigma}}{\Sigma_{n-1}}
	\ar [r] ^-{\delta_{n-1}}
& \,\cdots\,
	\ar [r] ^-{\delta_2}
& \fnat{\cl{\Sigma}}{\Sigma_1}
	\ar [r] ^-{\delta_1}
& \fnat{\cl{\Sigma}}{\Sigma_0}
	\ar [r] ^-{\epsilon}
& \Zb
	\ar [r]
& 0.
}
\]
The terminology is due to the fact that this complex is inspired by constructions by Reidemeister, Fox and Squier, see~\cite{Reidemeister49, Fox53, Squier87}. In particular, the \emph{Fox Jacobian} used by Squier is the boundary map $\delta_2$, sending every $2$-cell $f:u\dfl v$, \ie, every relation $u=v$ of the presentation $\Sigma_2$, to the element $[u]-[v]$ of $\fnat{\cl{\Sigma}}{\Sigma_1}_{\cl{u}}$.

\subsection{Abelianisation of polygraphic resolutions}

Let us fix a partial polygraphic resolution $\Sigma$ of length $n\geq 1$ of the category $\C$.

\subsubsection{Contracting homotopies}

Since $\Sigma$ is acyclic, it admits a left normalisation strategy $\sigma$. We denote by $\sigma_k$, for $-1\leq k\leq p$, the following families of morphisms of groups, indexed by a $1$-cell~$w$ of~$\C$: 
\[
\defmap{ (\sigma_{-1})_w }
	{ \Zb }{ \fnat{\C}{\Sigma_0}_w }
	{ 1 }{ (1,w) }
\qquad\qquad
\defmap{ (\sigma_0)_w }
	{ \fnat{\C}{\Sigma_0}_w }{ \fnat{\C}{\Sigma_1}_w }
	{ (u,v) }{ [\rep{u}]v }
\]
\[
\defmap{ (\sigma_k)_w }
	{ \fnat{\c}{\Sigma_k}_w }{ \fnat{\c}{\Sigma_{k+1}}_w }
	{ u[x]v }{ [\sigma_{\rep{u}x}] v }
\]

\begin{lemma}
For every $k\in\ens{1,\dots,n-1}$, every $k$-cell $f$ of $\tck{\Sigma}$ and every $1$-cells $u$ and $v$ of $\C$ such that $u\cl{f}v$ exists, we have:
\[
\sigma_k (u[f]v) \:=\: [\sigma_{\rep{u} f}]v.
\]
\end{lemma}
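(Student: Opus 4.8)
The plan is to argue by induction on the size of the $k$-cell $f$, i.e.\ on the number of generating $k$-cells occurring in $f$. Recall that $\sigma_k$ is a morphism of natural systems, hence $\Zb$-linear, and that it is prescribed by its values $\sigma_k(u[x]v)=[\sigma_{\rep u x}]v$ on the generators $u[x]v$ of $\fnat{\C}{\Sigma_k}$, with $x$ a $k$-cell of $\Sigma$. Since the element $[f]$ attached to an arbitrary $k$-cell $f$ of $\tck{\Sigma}$ is built from these generators through the derivation relations recalled in~\ref{ReidemesterFoxSquierComplex}, it suffices to check that the candidate rule $u[f]v\mapsto[\sigma_{\rep u f}]v$ is compatible with each of those relations; the fact, already established, that $[f]$ is well defined then forces the two sides to agree for every $f$. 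Throughout I use the identities $\rep{\rep u}=\rep u$ and $\rep{\rep u\,w}=\rep{uw}$ satisfied by normal forms of $1$-cells, together with the functoriality of the $0$-whiskering $g\mapsto\rep u\star_0 g$ with respect to the compositions $\star_i$, $i\geq 1$.

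The base and the easy inductive clauses are immediate. If $f$ has size $0$ then $f=1_g$ for some $(k-1)$-cell $g$, so $[f]=0$; since $\rep u\star_0 1_g=1_{\rep u g}$ and $\sigma_{1_{\rep u g}}=1_{1_{\rep u g}}$ by item {\bf i)} of the lemma on normalisation strategies, both sides vanish. If $f=g\star_i h$ with $1\leq i<k$, then $[f]=[g]+[h]$ and, by linearity and the induction hypothesis, the left-hand side equals $[\sigma_{\rep u g}]v+[\sigma_{\rep u h}]v$; since $\rep u\star_0(g\star_i h)=(\rep u g)\star_i(\rep u h)$ and $\sigma_{(\rep u g)\star_i(\rep u h)}=\sigma_{\rep u g}\star_i\sigma_{\rep u h}$, taking $[\,\cdot\,]$ and using $[\alpha\star_i\beta]=[\alpha]+[\beta]$ for $i\geq 1$ reproduces the same expression. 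If $f=h^-$ (so $k\geq 2$), then $[f]=-[h]$ and the induction hypothesis gives $-[\sigma_{\rep u h}]v$ on the left; on the right $\rep u\star_0 h^-=(\rep u h)^-$, and since $\sigma$ is multiplicative under $\star_{k-1}$ while $(\rep u h)\star_{k-1}(\rep u h)^-$ is an identity, one gets $[\sigma_{\rep u h}]+[\sigma_{(\rep u h)^-}]=0$, whence $[\sigma_{(\rep u h)^-}]=-[\sigma_{\rep u h}]$, as required.

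The real work is the $0$-composite case, together with its degenerate instance of a single whiskered generator, where the left-normalising hypothesis enters. For $f=g\star_0 h$ one has $[f]=[g]\cl h+\cl g[h]$, so by the induction hypothesis the left-hand side is $[\sigma_{\rep u g}]\cl h v+[\sigma_{\rep{u\cl g}h}]v$. To compute the right-hand side I would write $\rep u\star_0(g\star_0 h)$ as the $\star_1$-composite of the whiskered cells $(\rep u g)\star_0 s_1(h)$ and $(\rep u\,t_1(g))\star_0 h$, distribute $\sigma$ over this $\star_1$, and expand $\sigma$ on each whiskered factor by Lemma~\ref{Lemma : relations leftmost} (for $k=1$ one uses instead the defining equation $\sigma_{ab}=\sigma_a b\star_1\sigma_{\rep a b}$ of a left strategy on $1$-cells, applied iteratively). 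The decisive observation is that the two boundary correction terms produced by Lemma~\ref{Lemma : relations leftmost} are of the form $\sigma_w$ for a $1$-cell $w$, i.e.\ $2$-cells; for $k\geq 2$ they enter the $\star_1$-composite as iterated identity $(k+1)$-cells, hence contribute $0$ to $[\,\cdot\,]$ computed in $\fnat{\C}{\Sigma_{k+1}}$. What survives, after using $\rep{\rep u}=\rep u$ and $\cl{t_1(g)}=\cl g$, is exactly $[\sigma_{\rep u g}]\cl h+[\sigma_{\rep{u\cl g}h}]$, and multiplying by $v$ yields the claimed equality.

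The main obstacle is precisely this last step: matching the Leibniz rule $[g\star_0 h]=[g]\cl h+\cl g[h]$ governing the derivation with the behaviour of the normalisation strategy on horizontal composites. Unlike the $\star_i$ with $i\geq 1$, the strategy is not multiplicative for $\star_0$, and it is the left-normalising property that controls the horizontal interaction; the fact that the resulting correction terms live one dimension too low to be detected by $[\,\cdot\,]$ in $\fnat{\C}{\Sigma_{k+1}}$ is what makes the computation collapse to the desired formula. Care must also be taken with the bookkeeping of the section, through the repeated use of $\rep{\rep u}=\rep u$ and $\rep{\rep u\,w}=\rep{uw}$, in order to identify the indices $\rep{u\cl g}$ occurring on both sides.
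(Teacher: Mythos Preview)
Your proposal is correct and follows the same overall strategy as the paper: induction on the size of $f$, with cases for identities, inverses, $\star_i$-composites for $i\geq 1$, and $\star_0$-composites. The only difference is organisational, in the $\star_0$ case. The paper applies the defining identity $\sigma_{FG}=\sigma_F\,s_1(G)\star_1\sigma_{t_1(F)G}$ of a left normalisation strategy on $k$-cells directly to $F=\rep{u}g$, $G=h$, and then silently replaces the resulting term $\sigma_{\rep{u}\,t_1(g)\,h}$ by $\sigma_{\rep{u\cl g}\,h}$ under $[\,\cdot\,]$. You instead decompose $\rep{u}(g\star_0 h)$ via the exchange relation, apply the $\star_1$-multiplicativity of $\sigma$, and then invoke Lemma~\ref{Lemma : relations leftmost} on each whiskered factor; this produces the same two terms and makes explicit why the passage from $\rep{u}\,t_1(g)$ to $\rep{u\cl g}$ is legitimate (the correction terms are $2$-cells, hence vanish in $\fnat{\C}{\Sigma_{k+1}}$ for $k\geq 2$). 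That explicitness is a genuine improvement over the paper's somewhat elliptic step; otherwise the two arguments coincide.
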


\begin{proof}
We proceed by induction on the size of $f$. If $f=1_w$, for some $(k-1)$-cell $w$ of $\tck{\Sigma}$, then we have, on the one hand,
\[
\sigma_k (u[1_w]v) \:=\: \sigma_k(0) \:=\: 0
\]
and, on the other hand,
\[
[\sigma_{1_{\rep{u}w}}] v \:=\: [1_{1_{\rep{u}w}}] v \:=\: 0.
\]
If $f$ has size $1$, then the result holds by definition of $\sigma_k$. Let us assume that $f=gh$, where $g$ and $h$ are non-identity $k$-cells of $\tck{\Sigma}$. Then we use the induction hypothesis on $g$ and $h$ to get, on the one hand:
\[
\sigma_k (u[gh]v) 
	\:=\: \sigma_k (u[g]\cl{h} v) + \sigma_k(u\cl{g}[h]v)
	\:=\: [\sigma_{\rep{u}g}] \cl{h}v + [\sigma_{\rep{u\cl{g}} h}] v.
\]
On the other hand, since $\sigma$ is a left normalisation strategy, we have:
\[
[\sigma_{\rep{u}gh}] v
	\:=\: \big[ \sigma_{\rep{u}g} s_1(h) \star_1 \sigma_{\rep{u\cl{g}} h} \big] v
	\:=\: [\sigma_{\rep{u}g}] \cl{h}v + [\sigma_{\rep{u\cl{g}} h}] v.
\]
Finally, let us assume that $f=g\star_i h$, where $g$ and $h$ are non-identity $k$-cells of $\tck{\Sigma}$ and $i\geq 1$. Then we get:
\[
\sigma_k (u[g\star_i h]v) 
	\:=\: \sigma_k (u[g] v) + \sigma_k(u[h]v)
	\:=\: [\sigma_{\rep{u} g}] v + [\sigma_{\rep{u} h}] v.
\]
And we also have:
\[
[\sigma_{\rep{u}(g \star_i h)}] v
	\:=\: [\sigma_{\rep{u}g \star_i \rep{u} h}] v
	\:=\: [\sigma_{\rep{u}g} \star_i \sigma_{\rep{u} h}] v
	\:=\: [\sigma_{\rep{u} g}] v + [\sigma_{\rep{u}h}] v.
\qedhere
\]
\end{proof}

\begin{theorem}
\label{Theorem:AbelianResolution}
If $\Sigma$ is a (partial) polygraphic resolution (of length $n$) of a category $\C$, then the Reidemeister-Fox-Squier complex $\fnat{\C}{\Sigma}$ is a free (partial) resolution (of length $n$) of the constant natural system $\Zb$ on $\C$.
\end{theorem}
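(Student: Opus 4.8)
The plan is to prove the two defining properties of a free resolution separately: freeness and exactness. Freeness is immediate, since by construction each term $\fnat{\C}{\Sigma_k}$ is the free natural system generated by $\Sigma_k$, hence free (and projective) in $\Nat(\C)$. It remains to show that the augmented complex
\[
\cdots \fl \fnat{\C}{\Sigma_1} \ofl{\delta_1} \fnat{\C}{\Sigma_0} \ofl{\epsilon} \Zb \fl 0
\]
(stopping at $\fnat{\C}{\Sigma_n}$ in the partial case) is acyclic. Exactness in $\Nat(\C)$ is tested pointwise: a sequence of natural systems is exact if and only if its evaluation at every $1$-cell $w$ of $\C$ is an exact sequence of Abelian groups. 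Hence it suffices to produce, for each $w$, a contracting homotopy of Abelian groups. Since $\Sigma$ is acyclic, Theorem~\ref{MainTheorem1} yields a left normalisation strategy, from which the families $\sigma_{-1},\sigma_0,\dots,\sigma_{n-1}$ introduced above are built; these will serve as the contracting homotopy.

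Concretely, I would verify the homotopy identities
\[
\epsilon\,\sigma_{-1} = \id_{\Zb}, \qquad \sigma_{-1}\,\epsilon + \delta_1\,\sigma_0 = \id, \qquad \sigma_{k-1}\,\delta_k + \delta_{k+1}\,\sigma_k = \id \quad (1\le k<n),
\]
each evaluated at a fixed $w$ and checked on the free generators. The first is immediate from $\sigma_{-1}(1)=(1,w)$ and $\epsilon(1,w)=1$. For the last identity with $k\geq 2$, I evaluate both composites on a generator $u[\phi]v$ with $\phi$ a $k$-cell of $\Sigma_k$. The preceding Lemma gives $\sigma_k(u[\phi]v)=[\sigma_{\rep u\phi}]v$; since $\sigma_{\rep u\phi}$ runs from $\rep u\phi$ to its normal form $\rep{\rep u\phi}=\sigma_{\rep u s(\phi)}\star_{k-1}\sigma_{\rep u t(\phi)}^-$, the derivation rules for $[\,\cdot\,]$ expand $\delta_{k+1}\sigma_k(u[\phi]v)$ as $u[\phi]v-[\sigma_{\rep u s(\phi)}]v+[\sigma_{\rep u t(\phi)}]v$. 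Dually, the same Lemma applied to the $(k-1)$-cells $s(\phi)$ and $t(\phi)$ gives $\sigma_{k-1}\delta_k(u[\phi]v)=[\sigma_{\rep u s(\phi)}]v-[\sigma_{\rep u t(\phi)}]v$, and the two sums telescope to $u[\phi]v$.

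The low-dimensional identities ($k=0$ and $k=1$) I would treat separately, because $\delta_1$ has the special form $\delta_1[x]=(\cl x,1)-(1,\cl x)$. For $k=1$ the computation is the same telescoping, now using the derivation rule $[\rep u\,x]=[\rep u]\cl x+\cl u[x]$ for $1$-cells to produce the cancelling term $[\rep{ux}]v$; for $k=0$ one checks directly that $\sigma_{-1}\epsilon(u,v)+\delta_1\sigma_0(u,v)=(1,w)+(\cl u,v)-(1,w)=(u,v)$.

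The main obstacle is purely the bookkeeping in the top identity: correctly identifying the target of $\sigma_{\rep u\phi}$ as $\rep{\rep u\phi}$ and expanding its bracket through the coherence relations of the strategy (the values $\sigma_{\rep u s(\phi)}$ and $\sigma_{\rep u t(\phi)}$), so that it matches term-by-term the output of $\sigma_{k-1}\delta_k$; the \emph{left} character of the strategy is precisely what makes the preceding Lemma, and hence this matching, available. Once the homotopy identities hold at every $w$, the augmented complex is contractible, hence exact at every object, so it is exact in $\Nat(\C)$; as its terms are free, it is a free resolution of $\Zb$. In the length-$n$ case the homotopy $\sigma_{-1},\dots,\sigma_{n-1}$ yields exactness in degrees $0$ through $n-1$, i.e.\ a free partial resolution of length $n$.
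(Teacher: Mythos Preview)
Your proposal is correct and follows essentially the same approach as the paper: you build the contracting homotopy out of a left normalisation strategy (obtained from acyclicity via Theorem~\ref{MainTheorem1}), invoke the preceding Lemma to identify $\sigma_k(u[\phi]v)=[\sigma_{\rep u\phi}]v$, and verify the three homotopy identities on generators with the same telescoping computations. The paper organises the verification identically (separating the cases $k=0$, $k=1$, and $k\geq 2$), so there is no substantive difference.
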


\begin{proof}
Let us prove that $\sigma_*$ is a contracting homotopy. Each $(\sigma_{-1})_w$ is a section of $\epsilon_w$, hence $\epsilon$ is an epimorphism of natural systems. By linearity of the boundary and section maps, it is sufficient to check the relation 
\[
\delta_1\sigma_0 (u,v) \:=\: (u,v) - (1,uv) \:=\: (u,v) -
\sigma_{-1}\epsilon(u,v), 
\]
on a generator $(u,v)$ of $\fnat{\C}{\Sigma_0}_w$ to get the exactness of $\fnat{\C}{\Sigma}$ at $\fnat{\C}{\Sigma_0}$. Then, on a generator $u[x]v$ of $\fnat{\C}{\Sigma_1}_w$, we compute
\[
\delta_2\sigma_1 (u[x]v) 
	\:=\: \delta_2([\sigma_{\rep{u}x}]) v 
	\:=\: [\rep{u}x] v - [\rep{ux}] v 
	\:=\: [\rep{u}]\cl{x} v + u [x] v - [\rep{ux}] v.
\]
and
\[
\sigma_0\delta_1(u[x]v) 
	\:=\: \sigma_0(u\cl{x},v) - \sigma_0(u,\cl{x}v) 
	\:=\: [\rep{ux}] v - [\rep{u}] \cl{x} v.
\]
Hence, using the linearity of the boundary and section maps, we get $\delta_2 \sigma_1 + \sigma_0\delta_1=1_{\fnat{\C}{\Sigma_1}}$, proving exactness at $\fnat{\C}{\Sigma_1}$. Finally, for $k\in\ens{2,\dots,p-1}$ and a generator $u[\alpha]v$ of $\fnat{\C}{\Sigma_k}_w$, we have:
\begin{align*}
\delta_{k+1}\sigma_k (u[\alpha]v) 
	\:&=\: \delta_{k+1} [\sigma_{\rep{u}\alpha}] v \\
	\:&=\: [\rep{u}\alpha] v - [\sigma_{\rep{u}s(\alpha)} \star_{k-1} \sigma_{\rep{u}t(\alpha)} ^- ] v \\
	\:&=\: u[\alpha]v - [\sigma_{\rep{u}s(\alpha)}]v + [\sigma_{\rep{u}t(\alpha)}]v \\
	\:&=\: u[\alpha]v - \sigma_{k-1}(u[s\alpha]v) + \sigma_{k-1}(u [t\alpha]v) \\
	\:&=\: u[\alpha]v - \sigma_{k-1}\delta_k(u[\alpha]v).
\end{align*}
Thus, by linearity of the boundary and section maps, we get $\delta_{k+1} \sigma_k + \sigma_{k-1}\delta_k=1_{\fnat{\C}{\Sigma_k}}$, proving exactness at $\fnat{\C}{\Sigma_k}$ and concluding the proof.
\end{proof}

\noindent
By construction, if $\Sigma$ is a finite $(n,1)$-polygraph, then each $\fnat{\cl{\Sigma}}{\Sigma_k}$ is finitely generated. In particular, every category with a finite number of $0$-cells is of homological type $\FP_0$ and every category that is finitely generated (resp. presented) is of homological type $\FP_1$ (resp. $\FP_2$). More generally, we have the following result, generalising the fact that a finite derivation type monoid is of homological type~$\FP_3$, see~\cite{CremannsOtto94,Pride95}:

\begin{corollary}
\label{Theorem:FDTimpliesFP}
For categories, the property $\FDT_n$ implies the property $\FP_n$, for every $0\leq n\leq\infty$.
\end{corollary}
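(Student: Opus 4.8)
The plan is to assemble the corollary from two results already in hand: the abelianisation Theorem~\ref{Theorem:AbelianResolution}, which turns a polygraphic resolution into a free resolution of natural systems, and the homological characterisation of the property $\FP_n$ in Lemma~\ref{lemme_pl_n}; the only genuinely new input is the bookkeeping observation that finiteness of a polygraph passes to its Reidemeister-Fox-Squier complex. First I would treat the main range $1\leq n\leq\infty$. Assume $\C$ is $\FDT_n$ and unfold the definition: $\C$ admits a finite partial polygraphic resolution $\Sigma$ of length $n$ (a finite polygraphic resolution when $n=\infty$). A polygraphic resolution is in particular acyclic, so Theorem~\ref{Theorem:AbelianResolution} applies directly and gives that the complex $\fnat{\C}{\Sigma}$ is a free (partial) resolution of length $n$ of the constant natural system $\Zb$ on $\C$, regarded as an $F\C$-module.

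Next I would record finiteness. Each term of the complex is the free natural system $\fnat{\C}{\Sigma_k}=\bigoplus_{\phi\in\Sigma_k}F\C(\cl{\phi},-)$, which is finitely generated exactly because $\Sigma_k$ is finite; this is precisely the remark stated just before the corollary. Hence $\Zb$ admits a free, finitely generated (partial) resolution of length $n$. For finite $n$ the implication \textbf{ii)}~$\Rightarrow$~\textbf{i)} of Lemma~\ref{lemme_pl_n} then yields that $\Zb$ is of homological type $\FP_n$, that is, that $\C$ is $\FP_n$. For $n=\infty$ the whole complex $\fnat{\C}{\Sigma}$ is already a free, degreewise finitely generated resolution of $\Zb$, which is by definition $\FP_\infty$.

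Finally I would dispatch the boundary case $n=0$ on its own, since $\FDT_0$ is defined by having finitely many generating $0$-cells rather than through a length-$0$ resolution: here one simply invokes the remark that a category with finitely many $0$-cells is of homological type $\FP_0$. I do not expect a real obstacle in this corollary, as all the substantive work has been front-loaded into Theorem~\ref{Theorem:AbelianResolution} (whose proof constructs the contracting homotopy $\sigma_\ast$ from a left normalisation strategy) and into Lemma~\ref{lemme_pl_n}. The only step demanding care is the matching of conventions, namely checking that the hypothesis ``finite partial polygraphic resolution of length $n$'' supplies exactly the data required by Theorem~\ref{Theorem:AbelianResolution}, and that finite generation of $F\C$-modules is the correct notion to feed into the equivalence of Lemma~\ref{lemme_pl_n}.
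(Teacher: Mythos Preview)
Your proposal is correct and follows essentially the same route as the paper: the paper offers no separate argument for this corollary beyond the sentence immediately preceding it (finite $\Sigma_k$ makes $\fnat{\cl{\Sigma}}{\Sigma_k}$ finitely generated) together with Theorem~\ref{Theorem:AbelianResolution}, which is exactly what you spell out, with the added explicit reference to Lemma~\ref{lemme_pl_n} and the separate treatment of $n=0$.
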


\noindent
Finally, as a consequence of Theorem~\ref{MainTheorem2.0}, we get:

\begin{corollary}
If a category admits a finite and convergent presentation, then it is of homological type~$\FP_{\infty}$.
\end{corollary}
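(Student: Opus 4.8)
The plan is to assemble this statement directly from the results already established upstream, since the substantive work has all been carried out in the preceding sections. Let $\C$ be a category admitting a finite convergent presentation by a $2$-polygraph $\Sigma$. First I would invoke Theorem~\ref{MainTheorem2.0} to extend $\Sigma$ into the acyclic $(\infty,1)$-polygraph $\crit_{\infty}(\Sigma)$, which is a polygraphic resolution of $\C$ and whose generating $(n+1)$-cells, for $n\geq 2$, are indexed by the critical $n$-fold branchings of $\Sigma$.

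The crucial observation is the finiteness bookkeeping. Since $\Sigma$ is finite, it has only finitely many critical $n$-fold branchings for every $n$ (this was noted during the construction of each $\crit_n(\Sigma)$), so $\crit_{\infty}(\Sigma)$ has finitely many cells in every dimension. Hence $\C$ is $\FDT_{\infty}$, which is exactly Corollary~\ref{MainTheorem2}. I would then conclude via Corollary~\ref{Theorem:FDTimpliesFP}, which states that $\FDT_n$ implies $\FP_n$ for every $0\leq n\leq\infty$; applying it with $n=\infty$ gives that $\C$ is of homological type $\FP_{\infty}$. Unwinding this last implication, the finite resolution $\crit_{\infty}(\Sigma)$ abelianises through Theorem~\ref{Theorem:AbelianResolution} into the Reidemeister-Fox-Squier complex $\fnat{\C}{\crit_{\infty}(\Sigma)}$, a free resolution of the constant natural system $\Zb$ on $\C$; because each term $\fnat{\C}{\Sigma_k}$ is free on the finite set of $k$-cells, this resolution is finitely generated in every dimension, which is precisely the $\FP_{\infty}$ property.

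The main obstacle is essentially absent: no new computation is required, only the verification that finiteness propagates from the presentation to every dimension of critical branchings. The one point deserving care is that this finiteness must hold uniformly for all $n$ at once, rather than dimension by dimension. This is guaranteed by the inductive description of $\crit_{\infty}(\Sigma)$, in which the cells of dimension $n+1$ depend only on the finitely many critical $n$-fold branchings of the fixed finite $\Sigma$, and the number of such overlaps is bounded by the finitely many pairs of generating $2$-cells together with the finitely many ways their sources can overlap in a free category. Thus the finiteness of $\Sigma$ alone controls every stage of the resolution, and the conclusion follows with no further work.
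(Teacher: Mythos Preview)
Your proposal is correct and follows essentially the same route as the paper: the paper states this corollary simply ``as a consequence of Theorem~\ref{MainTheorem2.0}'', implicitly chaining Corollary~\ref{MainTheorem2} (finite convergent $\Rightarrow$ $\FDT_{\infty}$) with Corollary~\ref{Theorem:FDTimpliesFP} ($\FDT_{\infty} \Rightarrow \FP_{\infty}$), exactly as you do. One small point worth making explicit is that the construction of $\crit_{\infty}(\Sigma)$ in Section~\ref{Section:Convergent2Polygraphs} is carried out for \emph{reduced} convergent $2$-polygraphs, so you should first invoke the lemma in~\ref{DefinitionReducedPolygraph} replacing $\Sigma$ by a Tietze-equivalent finite reduced convergent $2$-polygraph; this costs nothing but keeps the argument clean.
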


\begin{example}
Let $\C$ be a category. The Reidemeister-Fox-Squier complex corresponding to the reduced standard polygraphic resolution $\crit_{\infty}(\N\C)$ of $\C$ is (isomorphic to) the following one
\[
\xymatrix{
& \,\cdots\,
	\ar [r] ^-{\delta_{n+1}}
& \fnat{\C}{\C_{n}}
	\ar [r] ^-{\delta_{n}}
& \fnat{\C}{\C_{n-1}}
	\ar [r] ^-{\delta_{n-1}}		
& \,\cdots\,
	\ar [r] ^-{\delta_2}
& \fnat{\C}{\C_1}
	\ar [r] ^-{\delta_1}
& \fnat{\C}{\C_0}
	\ar [r] ^-{\epsilon}
& \Zb
	\ar [r]
& 0
}
\]
where $\C_n$ is the set of composable non-identity $1$-cells $u_1$, \dots, $u_n$ of $\C$. The differential map is given, on a generator $[u_1,\dots,u_n]$, by:
\begin{align*}
\delta[u_1,\dots,u_n] 
	\:&=\: \sum_{i=0}^n (-1)^{n-i} [d_i(u_1,\dots,u_n)] \\
	\:&=\: (-1)^n u_1[u_2,\dots, u_n] 
		+ \sum_{i=1}^{n-1} (-1)^{n-i} [u_1,\dots,u_i u_{i+1},\dots,u_n] 
		+ [u_1,\dots,u_{n-1}] u_n \,. 
\end{align*}
\end{example}

\subsection{Homological syzygies and cohomological dimension}

\subsubsection{Homological syzygies}

For every $k$ in $\ens{1,\dots,p+1}$, the kernel of $\delta_k$ is denoted by $h_k(\Sigma)$ and called \emph{the natural system of homological $k$-syzygies of $\Sigma$}. The kernel of $\epsilon$ is denoted by $h_0(\cl{\Sigma})$ and called \emph{the augmentation ideal of $\cl{\Sigma}$}. 

When $\Sigma_0$ is finite, then the natural system $h_0(\cl{\Sigma})$ is finitely generated if, and only if, the category~$\cl{\Sigma}$ has homological type $\FP_1$. If $\Sigma$ is a generating $1$-polygraph for a category $\C$, one checks that~$h_0(\C)$ is generated by the set $\ens{(x,1)-(1,x)\;|\;x\in \Sigma_1}$. Thus a category has homological type $\FP_1$ if, and only if, it is finitely generated.

From Theorem~\ref{Theorem:AbelianResolution}, we get a characterisation of the homological properties $\FP_n$ in terms of polygraphic resolutions:

\begin{proposition}
Let $\C$ be a category and $\Sigma$ be a partial finite polygraphic resolution of $\C$ of length~$n$. If the natural system $\h_n(\Sigma)$ of homological $n$-syzygies of~$\Sigma$ is finitely generated, then $\C$ is of homological type $\FP_{n+1}$. 
\end{proposition}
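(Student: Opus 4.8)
The plan is to extend the free, finitely generated partial resolution provided by the Reidemeister-Fox-Squier complex by a single term, using the hypothesis that $\h_n(\Sigma)$ is finitely generated, and then to invoke the characterisation of $\FP_{n+1}$ given by Lemma~\ref{lemme_pl_n}. Since $\Sigma$ is a \emph{finite} partial polygraphic resolution of length $n$, each natural system $\fnat{\C}{\Sigma_k}$ is free and finitely generated, and Theorem~\ref{Theorem:AbelianResolution} tells us that the complex
\[
\fnat{\C}{\Sigma_n} \ofl{\delta_n} \fnat{\C}{\Sigma_{n-1}} \ofl{\delta_{n-1}} \cdots \ofl{\delta_1} \fnat{\C}{\Sigma_0} \ofl{\epsilon} \Zb \fl 0
\]
is a free partial resolution of length $n$ of the constant natural system $\Zb$; in particular it is exact at each $\fnat{\C}{\Sigma_k}$ for $0\leq k\leq n-1$ and at $\Zb$, although it need not be exact at the leftmost term $\fnat{\C}{\Sigma_n}$.

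First I would identify $\ker\delta_n$ with the natural system $\h_n(\Sigma)$ of homological $n$-syzygies, which is precisely its definition. Using the hypothesis, I would then choose a finite family $X$ of $1$-cells of $\C$ together with an epimorphism of natural systems $\fnat{\C}{X}\pfl \h_n(\Sigma)$; composing it with the inclusion $\h_n(\Sigma)\ifl \fnat{\C}{\Sigma_n}$ yields a morphism
\[
\delta_{n+1} \::\: \fnat{\C}{X} \:\fl\: \fnat{\C}{\Sigma_n}
\]
whose image is exactly $\ker\delta_n$. Splicing $\delta_{n+1}$ onto the front of the complex above therefore produces a sequence that is still exact at every $\fnat{\C}{\Sigma_k}$ for $0\leq k\leq n-1$ and at $\Zb$, and is now also exact at $\fnat{\C}{\Sigma_n}$, since $\im\delta_{n+1}=\ker\delta_n$.

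This new sequence is a free, finitely generated partial resolution of $\Zb$ of length $n+1$, because $\fnat{\C}{X}$ is free and finitely generated ($X$ being finite) and all the other terms already are. By the implication (ii)$\Rightarrow$(i) in Lemma~\ref{lemme_pl_n}, the constant natural system $\Zb$ on $\C$ is of homological type $\FP_{n+1}$, that is, $\C$ is of homological type $\FP_{n+1}$. The argument is essentially formal once Theorem~\ref{Theorem:AbelianResolution} is available; the only points needing care are the bookkeeping that turns ``$\h_n(\Sigma)$ is finitely generated'' into a finitely generated \emph{free} cover admissible in Lemma~\ref{lemme_pl_n}, and the verification that splicing preserves exactness precisely at $\fnat{\C}{\Sigma_n}$ while leaving the remaining nodes untouched, both of which follow directly from the definitions of finite generation and freeness for natural systems.
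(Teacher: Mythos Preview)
Your argument is correct and is precisely the one the paper has in mind: the paper does not spell out a proof but simply says the proposition follows from Theorem~\ref{Theorem:AbelianResolution}, and what you have written is the natural unpacking of that remark, combining the finitely generated free partial resolution coming from the Reidemeister--Fox--Squier complex with a finite free cover of $\ker\delta_n=\h_n(\Sigma)$ and invoking Lemma~\ref{lemme_pl_n}(ii).
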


\noindent
Theorem~\ref{Theorem:AbelianResolution} also gives a description of homological $n$-syzygies in terms of critical $n$-fold branchings of a convergent presentation:

\begin{proposition}
\label{Proposition:SyzygyGenerators}
Let $\C$ be a category with a convergent presentation $\Sigma$. Then, for every $n\geq 2$, the natural system $\h_n(\Sigma)$ of homological $n$-syzygies of $\Sigma$ is generated by the elements
\[
\delta_{n+1}[\omega_{b}] 
	\:=\: [(\omega_c\rep{v})^*] \:-\: [\rep{\omega_c v}^*] 
\]
where $b=(c\rep{v},\rho_{u\rep{v}})$ ranges over the critical $n$-fold branchings of $\Sigma$.
\end{proposition}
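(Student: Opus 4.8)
The plan is to identify $\h_n(\Sigma)$ with the image of the next boundary map and then to exploit freeness. Since $\Sigma$ is convergent, Theorem~\ref{MainTheorem2.0} provides the polygraphic resolution $\crit_\infty(\Sigma)$ of $\C$, whose generating $(n+1)$-cells are exactly the cells $\omega_b$ indexed by the critical $n$-fold branchings $b=(c\rep{v},\rho_{u\rep{v}})$ of $\Sigma$. By Theorem~\ref{Theorem:AbelianResolution}, the associated Reidemeister-Fox-Squier complex $\fnat{\C}{\crit_\infty(\Sigma)}$ is a free resolution of the constant natural system $\Zb$ on $\C$, and in particular it is exact in every positive degree.

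Exactness at $\fnat{\C}{\Sigma_n}$ immediately yields
\[
\h_n(\Sigma) \:=\: \ker\delta_n \:=\: \im\delta_{n+1},
\]
so that the task reduces to exhibiting a generating family, as a natural system, of $\im\delta_{n+1}$. Here I would use that $\fnat{\C}{\Sigma_{n+1}}$ is by definition the free natural system on the set of $(n+1)$-cells of $\crit_\infty(\Sigma)$, that is, on the family $(\omega_b)_b$. Since $\delta_{n+1}$ is a morphism of natural systems, naturality gives $\delta_{n+1}(u[\omega_b]v)=u\,(\delta_{n+1}[\omega_b])\,v$ for all composable $1$-cells $u$ and $v$ of $\C$; and since every element of $\fnat{\C}{\Sigma_{n+1}}$ is a $\Zb$-linear combination of generators of the form $u[\omega_b]v$, the image $\im\delta_{n+1}$ is generated, as a natural system on $\C$, precisely by the elements $\delta_{n+1}[\omega_b]$.

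It then remains to compute these generators. As $n+1\geq 3>1$, the definition of the Reidemeister-Fox-Squier boundary map falls into its second case, so
\[
\delta_{n+1}[\omega_b] \:=\: [s(\omega_b)] - [t(\omega_b)].
\]
By the construction of the basis of generating $n$-fold confluences, the cell $\omega_b$ attached to $b=(c\rep{v},\rho_{u\rep{v}})$ has source $(\omega_c\rep{v})^*$ and target $\rep{\omega_c v}^*$; substituting yields $\delta_{n+1}[\omega_b]=[(\omega_c\rep{v})^*]-[\rep{\omega_c v}^*]$, which is the announced formula.

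The argument is short because its substance is carried entirely by Theorems~\ref{MainTheorem2.0} and~\ref{Theorem:AbelianResolution}. The only step that genuinely requires attention — and the one I would expect to be the main, if mild, obstacle — is the passage from exactness to a generating family: one must verify that the image of a morphism of natural systems whose source is free is generated by the images of the free generators, which is exactly where the freeness of $\fnat{\C}{\Sigma_{n+1}}$ and the naturality of $\delta_{n+1}$ enter. No further difficulty is expected.
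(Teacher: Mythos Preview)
Your proof is correct and follows exactly the route the paper indicates: the paper does not give a separate argument but simply presents the proposition as a direct consequence of Theorem~\ref{Theorem:AbelianResolution} (combined with the construction of $\crit_\infty(\Sigma)$ from Theorem~\ref{MainTheorem2.0}). You have merely made explicit the two steps the paper leaves implicit, namely $\h_n(\Sigma)=\ker\delta_n=\im\delta_{n+1}$ by exactness and that the image of a morphism of natural systems out of a free object is generated by the images of the free generators.
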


\subsubsection{Cohomological dimension}

Finally, Theorem~\ref{Theorem:AbelianResolution} gives the following bounds for the cohomological dimension of a category. We recall that the \emph{cohomological dimension} of a category $\C$, is defined, when it exists as the lowest $0\leq n\leq \infty$ such that the constant natural system $\Zb$ on $\C$ admits a projective resolution 
\[
\xymatrix{
0 
	\ar [r] 
& P_n 
	\ar [r]
& \:\cdots\:
	\ar [r]
& P_1
	\ar [r] 
& P_0
	\ar [r]
& \Zb
	\ar [r]
& 0.
}
\]
In that case, the cohomological dimension of $\C$ is denoted by $\cohdim{\C}$. In particular, when $\C$ is free, then we have $\cohdim{\F} \leq 1$, see~\cite{BauesWirsching85}. 

\begin{proposition}
The cohomological dimension of a category $\C$ admits the following upper bounds:
\begin{enumerate}[{\bf i)}]
\item The inequality $\cohdim{\C} \leq \poldim{\C}$ holds.
\item If $\C$ admits an aspherical partial polygraphic resolution of length~$n$, then $\cohdim{\C} \leq n$. 
\item If $\C$ admits a convergent presentation with no critical $n$-fold branching, then $\cohdim{\C} \leq n$. 
\end{enumerate}
\end{proposition}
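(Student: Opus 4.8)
The plan is to derive all three bounds from part \textbf{i)}, whose proof rests on a single idea: an \emph{aspherical} resolution can be extended by an empty cellular extension, after which Theorem~\ref{Theorem:AbelianResolution} produces a finite-length free resolution of the constant natural system $\Zb$. Parts \textbf{ii)} and \textbf{iii)} will then be formal consequences of \textbf{i)} together with the definition of $\poldim{\C}$ and with Corollary~\ref{MainTheorem2}.

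First I would prove \textbf{i)}. Assume $\poldim{\C}$ is finite (otherwise there is nothing to prove) and set $n=\poldim{\C}$, so that $\C$ admits an aspherical partial polygraphic resolution $\Sigma$ of length $n$. Asphericity of $\Sigma$ means that $\tck{\Sigma}=\tck{\Sigma}_n$ has only trivial $n$-spheres, so the empty set is a homotopy basis of $\tck{\Sigma}_n$. Hence adjoining $\Sigma_{n+1}=\emptyset$ turns $\Sigma$ into an acyclic $(n+1,1)$-polygraph, that is, a partial polygraphic resolution of length $n+1$ of $\C$. Applying Theorem~\ref{Theorem:AbelianResolution} to this extended resolution yields a free partial resolution $\fnat{\C}{\Sigma}$ of $\Zb$ of length $n+1$.

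Next I would observe that $\fnat{\C}{\Sigma_{n+1}}=\fnat{\C}{\emptyset}=0$, so the Reidemeister-Fox-Squier complex reads
\[
0 \fl \fnat{\C}{\Sigma_n} \ofl{\delta_n} \fnat{\C}{\Sigma_{n-1}} \fl \cdots \fl \fnat{\C}{\Sigma_0} \ofl{\epsilon} \Zb \fl 0 .
\]
Exactness at $\fnat{\C}{\Sigma_n}$, which is part of the assertion that this is a partial resolution of length $n+1$ (equivalently, the contracting homotopy of Theorem~\ref{Theorem:AbelianResolution} yields $\sigma_{n-1}\delta_n=1$ once $\delta_{n+1}=0$), means exactly that $\delta_n$ is injective. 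Since every $\fnat{\C}{\Sigma_k}$ is a free, hence projective, natural system, the displayed sequence is a projective resolution of $\Zb$ of length $n$, whence $\cohdim{\C}\leq n=\poldim{\C}$. Part \textbf{ii)} is then the very same computation applied to the given aspherical resolution of length $n$; equivalently, such a resolution witnesses $\poldim{\C}\leq n$, so \textbf{ii)} is immediate from \textbf{i)}. For part \textbf{iii)}, Corollary~\ref{MainTheorem2} shows that a convergent presentation with no critical $n$-fold branching yields $\poldim{\C}\leq n$ (the truncation of $\crit_{\infty}(\Sigma)$ at dimension $n$ is an aspherical resolution of length $n$), and combining this with \textbf{i)} gives $\cohdim{\C}\leq\poldim{\C}\leq n$.

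The one genuinely delicate point is the passage to exactness in the top degree. I expect the main obstacle to be justifying cleanly that asphericity --- rather than mere acyclicity --- is what permits the empty homotopy basis and hence the injectivity of $\delta_n$; this is precisely the distinction that makes $\cohdim{\C}$ bounded by $\poldim{\C}$ and not merely by $\poldim{\C}+1$.
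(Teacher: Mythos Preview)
Your argument is correct and is exactly the intended one: the paper states this proposition without proof, as an immediate consequence of Theorem~\ref{Theorem:AbelianResolution}, and your trick of adjoining the empty cellular extension $\Sigma_{n+1}=\emptyset$ to an aspherical resolution of length~$n$ is precisely how one extracts the length-$n$ projective resolution of $\Zb$ from that theorem. The reductions of \textbf{ii)} and \textbf{iii)} to \textbf{i)} via the definition of $\poldim{\C}$ and Corollary~\ref{MainTheorem2} are also the intended ones.
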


\subsection{Homological syzygies and identities among relations}
\label{subsectionIAR}

In~\cite{GuiraudMalbos10smf}, the authors have introduced the natural system of identities among relations of an $n$-polygraph $\Sigma$. If $\Sigma$ is a convergent $2$-polygraph, this natural system on $\cl{\Sigma}$ is generated by the critical branchings of $\Sigma$. In Proposition~\ref{Proposition:SyzygyGenerators}, we have seen that this is also the case of the natural system of homological $2$-syzygies of $\Sigma$. In this section, we prove that, more generally, the natural systems of homological $2$-syzygies and of identities among relations of any $2$-polygraph are isomorphic.

\subsubsection{Natural systems on $n$-categories}

We recall from~\cite{GuiraudMalbos09}, that a \emph{context} of an $n$-category $\Cr$ is an $n$-cell $C$ of the $n$-category $\Cr[x]$, where $x$ is an $(n-1)$-sphere of $C$, and such that $C$ contains exactly one occurrence of the $n$-cell $x$ of $\Cr[x]$. Such a context admits a (generally non-unique) decomposition
\[
C \:=\: 
	f_n \star_{n-1} ( f_{n-1} \star_{n-2} ( \cdots 
	\star_1 f_1 \; x \; g_1 \star_1 
	\cdots ) \star_{n-2} g_{n-1} ) \star_{n-1} g_n, 
\]
where, for every $k$ in $\ens{1,\dots,n}$, $f_k$ and $g_k$ are $k$-cells of $\Cr$. The context $C$ is a \emph{whisker of $\Cr$} if the $n$-cells $f_n$ and $g_n$ are identities. If $f$ is an $n$-cell of $\Cr$ with boundary $x$, one denotes by $C[f]$ the $n$-cell of $\Cr$ obtained by replacing $x$ with $f$ in $C$. 

If $\Gamma$ is a cellular extension of $\Cr$, then every non-identity $(n+1)$-cell $f$ of $\Cr[\Gamma]$ has a decomposition 
\[
f \:=\: C_1[\phi_1] \star_n \cdots \star_n C_k[\phi_k],
\]
with $k\geq 1$ and, for every $i$ in $\ens{1,\dots,k}$, $\phi_i$ in $\Gamma$ and $C_i$ a whisker of $\Cr[\Gamma]$.

The \emph{category of contexts of $\Cr$} is denoted by $\Ct{\Cr}$, its objects are the $n$-cells of $\Cr$ and its morphisms from $f$ to $g$ are the contexts $C$ of $\Cr$ such that $C[f]=g$ holds. When $\C$ is a category, the category $\Ct{\C}$ of contexts of $\C$ is isomorphic to the category $F\C$ of factorisations of $\C$. A \emph{natural system on $\Cr$} is a $\Ct{\Cr}$-module, \ie, a functor $D$ from $\Ct{\Cr}$ to the category $\Ab$ of Abelian groups; we denote by~$D_u$ and by~$D_C$ the images of an $n$-cell~$u$ and of a context~$C$ of $\Cr$ through the functor $D$. 

\subsubsection{Identities among relations}

Let $\Sigma$ be an $n$-polygraph, seen as an $(n,n-1)$-polygraph. An $n$-cell $f$ of $\tck{\Sigma}$ is \emph{closed} when its source and target coincide. The natural system $\Pi(\Sigma)$ on $\cl{\Sigma}$ of \emph{identities among relations of $\Sigma$} is defined as follows:
\begin{itemize}
\item If $u$ is an $(n-1)$-cell of $\cl{\Sigma}$, the Abelian group $\Pi(\Sigma)_u$ is generated by one element $\iar{f}$, for each $n$-cell $f:v\dfl v$ of $\tck{\Sigma}$ such that $\cl{v}=u$, submitted to the following relations:
\begin{itemize}
\item if $f:v\fl v$ and $g:v\fl v$ are $n$-cells of $\tck{\Sigma}$, with $\cl{v}=u$, then 
\[
\iar{f\star_{n-1} g} \:=\: \iar{f}+\iar{g} \;;
\]
\item if $f:v\fl w$ and $g:w\fl v$ are $n$-cells of $\tck{\Sigma}$, with $\cl{v}=\cl{w}=u$, then
\[
\iar{f\star_{n-1} g} \:=\: \iar{g\star_{n-1} f} \;.
\]
\end{itemize}
\item If $g=C[f]$ is a factorisation in $\cl{\Sigma}$, then the morphism $\Pi(\Sigma)_{C} : \Pi(\Sigma)_f \fl \Pi(\Sigma)_g$ of groups is defined by
\[
\Pi(\Sigma)_{C} (\iar{f}) \:=\: \iar{\rep{C}[f]},
\]
where $\rep{C}$ is any representative context for $C$ in $\Sigma^*$. We recall from~\cite{GuiraudMalbos10smf} that the value of $\Pi(\Sigma)$ does not depend on the choice of $\rep{C}$, proving that $\Pi(\Sigma)$ is a natural system on $\cl{\Sigma}$ and allowing one to denote this element of $\Pi(\Sigma)_{g}$ by $C\iar{f}$. 
\end{itemize}
As consequences of the defining relations of each group $\Pi(\Sigma)_u$, we get the following equalities:
\[
\iar{1_u} \:=\: 0
\qquad\qquad
\iar{f^-} \:=\: -\iar{f}
\qquad\qquad
\iar{ g\star_{n-1} f \star_{n-1} g^- } \:=\: \iar{f}
\]
for every $n-1$-cell $u$ and every $n$-cells $f:u\fl u$ and $g:v\fl u$ of $\tck{\Sigma}$. 

\begin{lemma}
Let $\Sigma$ be a $2$-polygraph. For every closed $2$-cell $f$ of $\tck{\Sigma}$, we have $[f]=0$ in~$\fnat{\C}{\Sigma_2}$ if, and only if, $\iar{f}=0$ holds in $\Pi(\Sigma)$.
\end{lemma}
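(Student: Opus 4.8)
The plan is to prove the two implications separately; both compare the abelian Fox--derivative value $[f]\in\fnat{\C}{\Sigma_2}$ with the identity-among-relations class $\iar{f}\in\Pi(\Sigma)$, and the real content is an injectivity statement that will later feed the isomorphism between these two natural systems. First observe that for a closed $2$-cell $f$ one has $\delta_2[f]=[s(f)]-[t(f)]=0$, so $[f]$ already lies in $\h_2(\Sigma)=\ker\delta_2$.

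For the implication $\iar{f}=0\Rightarrow[f]=0$, I would check that the assignment $f\mapsto[f]$ satisfies exactly the defining relations of $\Pi(\Sigma)$. The additivity $[f\star_1 g]=[f]+[g]$ is the very definition of $[\,\cdot\,]$ on $\star_1$-composites; the cyclicity relation $[f\star_1 g]=[g\star_1 f]$ holds because the target group is abelian; compatibility with whiskering, $[u f v]=\cl{u}[f]\cl{v}$, is the $\star_0$ clause of the definition, which is precisely the $F\C$-action on $\fnat{\C}{\Sigma_2}$; and $[1_u]=0$, $[f^-]=-[f]$ hold by definition. Hence $f\mapsto[f]$ descends to a morphism of natural systems $\Phi\colon\Pi(\Sigma)\to\h_2(\Sigma)$ with $\Phi(\iar{f})=[f]$, and in particular $\iar{f}=0$ forces $[f]=\Phi(\iar{f})=0$.

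For the converse $[f]=0\Rightarrow\iar{f}=0$, I would construct a one-sided inverse $\Psi$ on the relevant component and establish $\Psi([f])=\iar{f}$. Fix a section $\iota$ of the projection $\Sigma_1^*\pfl\C$ and, for every $1$-cell $w$ of $\Sigma_1^*$, a $2$-cell $\gamma_w\colon w\dfl\iota\cl{w}$ of $\tck{\Sigma}$ with $\gamma_{\iota\cl{w}}=1$. On a free generator $u[\phi]v$ of $\fnat{\C}{\Sigma_2}$, define $\Psi(u[\phi]v)$ to be the class $\iar{\,\cdot\,}$ of the \emph{closed} $2$-cell obtained by conjugating $\iota u\,\phi\,\iota v$ with the appropriate $\gamma$'s. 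Since these generators are free, $\Psi$ is automatically a homomorphism on each component, so it suffices to prove $\Psi([f])=\iar{f}$ for every closed $f$; then $[f]=0$ yields $\iar{f}=\Psi([f])=0$. To prove this identity I would decompose $f=a_1\star_1\cdots\star_1 a_k$ into whiskered generators $a_i=x_i\phi_i^{\pm}y_i$, insert the trivial pairs $\gamma_{w_i}\star_1\gamma_{w_i}^-$ between consecutive factors, and telescope, using additivity, into $\iar{f}=\sum_i\iar{\hat{a}_i}$ with each $\hat{a}_i=\gamma_{w_{i-1}}^-\star_1 a_i\star_1\gamma_{w_i}$ closed at the common basepoint $\iota\cl{v}$.

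The hard part is the final matching, namely that $\iar{\hat{a}_i}$ equals $\Psi$ applied to the corresponding signed generator. The cell $\hat{a}_i$ is built from the actual word representatives $x_i,y_i$, whereas $\Psi$ sees only their images $\cl{x_i},\cl{y_i}$ and uses the chosen representatives $\iota\cl{x_i},\iota\cl{y_i}$; the two closed cells differ by \emph{correction loops} coming from the connecting $2$-cells $\gamma$, and such loops are generally nontrivial in $\Pi(\Sigma)$. The point I expect to require the most care is that these corrections cancel globally rather than termwise: the correction attached to the target side of $a_i$ and the one attached to the source side of $a_{i+1}$ sit at the shared intermediate $1$-cell $w_i$ and are mutually inverse, while the two extremal corrections cancel because $f$ is closed, so $w_0=w_k=v$. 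Carrying out this cancellation rests on the cyclicity relation of $\Pi(\Sigma)$ (equivalently $\iar{g\star_1 h\star_1 g^-}=\iar{h}$) together with the interchange law of the free $(2,1)$-category $\tck{\Sigma}$, which supplies the commuting square relating $x\phi y$ to $\iota\cl{x}\,\phi\,\iota\cl{y}$. Establishing this global cancellation---equivalently, that $\Psi([f])$ is independent of the chosen representatives and genuinely retracts $\Phi$ on these classes---is the technical heart of the argument.
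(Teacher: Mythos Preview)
Your forward implication is correct and matches the paper: the defining relations of $\Pi(\Sigma)$ are verified in $\fnat{\C}{\Sigma_2}$, so $\iar{f}=0$ forces $[f]=0$.

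In the converse direction there is a genuine gap. Your telescoping to $\iar{f}=\sum_i\iar{\hat a_i}$ is fine, and you correctly isolate the difficulty: $\hat a_i$ is built from the actual whiskers $x_i,y_i$, while $\Psi$ uses the chosen representatives $\iota\cl{x_i},\iota\cl{y_i}$. But the claimed ``global cancellation'' of the resulting correction loops does not hold with arbitrary $\gamma$'s. If you write the correction for the $i$-th term as a pair of closed loops $M_i,N_i$ (one at each end, obtained by inserting $\gamma_{w_{i-1}}\gamma_{w_{i-1}}^-$ and $\gamma_{w_i}\gamma_{w_i}^-$ and comparing with the $\Psi$-side), then $N_i$ and $M_{i+1}$ do both pass through $w_i$, but they also pass through $\iota\cl{x_i}\,t(\phi_i^{\epsilon_i})\,\iota\cl{y_i}$ and $\iota\cl{x_{i+1}}\,s(\phi_{i+1}^{\epsilon_{i+1}})\,\iota\cl{y_{i+1}}$ respectively, which are different $1$-cells in general (since $x_i\neq x_{i+1}$). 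So $N_i+M_{i+1}$ is the class of a nontrivial quadrilateral loop, not zero, and these residuals do not telescope away. The interchange law gives you the commuting square relating $x_i\phi_i y_i$ to $\iota\cl{x_i}\,\phi_i\,\iota\cl{y_i}$, but it does not make the surrounding $\gamma$-legs match up across consecutive factors.

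The paper repairs exactly this point by choosing the $\gamma$'s to be a \emph{left normalisation strategy} $\sigma$. The multiplicative property $\sigma_{uwv}=\sigma_u wv\star_1\sigma_{\rep u w}v\star_1\sigma_{\rep{uw}v}$ lets one, for each term, strip off $\sigma_{\rep{u_iw_i}v_i}$ and its inverse by cyclicity in $\Pi(\Sigma)$, then use exchange to slide $\sigma_{u_i}$ past $\phi_i^{\epsilon_i}$, obtaining
\[
\iar{\hat a_i}\;=\;\big\lfloor\,\sigma_{\rep u_i w_i}^{-}\star_1 \rep u_i\,\phi_i^{\epsilon_i}\star_1\sigma_{\rep u_i w'_i}\,\big\rfloor\,\cl v_i,
\]
which depends only on $(\cl u_i,\phi_i,\epsilon_i,\cl v_i)$. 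In other words, with this specific choice the matching is \emph{termwise}, not merely global, and this is precisely your $\epsilon_i\Psi(\cl u_i[\phi_i]\cl v_i)$. The paper then finishes via freeness of $\fnat{\C}{\Sigma_2}$: the equation $\sum_i\epsilon_i\cl u_i[\phi_i]\cl v_i=0$ in a free module yields a sign-reversing involution $\tau$ matching the data, whence $\iar{\hat a_{\tau(i)}}=-\iar{\hat a_i}$ and $\iar{f}=0$. Your outline becomes a proof once you replace the arbitrary $\gamma$'s by a left (or right) normalisation strategy and observe that the termwise reduction above makes your $\Psi$ well-defined with $\Psi([f])=\iar f$.
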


\begin{proof}
To prove that $\iar{f}=0$ implies $[f]=0$, we check that the relations defining $\Pi(\Sigma)$ are also satisfied in $\fnat{\C}{\Sigma_2}$. The first relation is given by the definition of the map $[\cdot]$. The second relation is checked as follows:
\[
[f\star_1 g] \:=\: [f]+[g] \:=\: [g]+[f] \:=\: [g\star_1 f].
\]
Conversely, let us consider a $2$-cell $f:w\dfl w$ in $\tck{\Sigma}$ such that $[f]=0$ holds. We decompose $f$ into:
\[
f \:=\: u_1 \phi_1^{\epsilon_1} v_1 \star_1 \cdots \star_1 u_p \phi_p^{\epsilon_p} v_p
\]
where $\phi_i$ is a $2$-cell of $\Sigma$, $u_i$ and $v_i$ are $1$-cells of $\tck{\Sigma}$, $\epsilon_i$ is an element of $\ens{-,+}$. Then we get:
\[
0 \:=\: [f] \:=\: \sum_{i=1}^p \epsilon_i \cl{u}_i [\phi_i] \cl{v}_i
\]
Since $\fnat{\C}{\Sigma_2}$ is freely generated, as an $F\C$-module, by the elements $[\phi]$ of $\fnat{\C}{\Sigma_2}_{\cl{\phi}}$, for $\phi$ a $2$-cell of $\Sigma$, this implies the existence of a self-inverse permutation $\tau$ of $\ens{1,\dots,p}$ such that the following relations are satisfied: 
\[
\phi_i  \:=\: \phi_{\tau(i)}
\qquad 
\cl{u}_i \:=\: \cl{u}_{\tau(i)}
\qquad
\cl{v}_i \:=\: \cl{v}_{\tau(i)}
\qquad
\epsilon_i \:=\: -\epsilon_{\tau(i)}. 
\]
Let us denote, for every $1\leq i\leq p$, the source and target of $\phi_i^{\epsilon_i}$ by $w_i$ and $w'_i$ respectively. They satisfy $\cl{w}_i=\cl{w}'_i$. We also fix a section $\rep{\;\cdot\;}$ and a left strategy $\sigma$ for the $2$-polygraph $\Sigma$. In particular, the section satisfies $\rep{u}=\rep{v}$ for every $1$-cells $u$ and $v$ such that $\cl{u}=\cl{v}$.

For every $1\leq i\leq p$, we denote by $f_i$ the following $2$-cell of $\tck{\Sigma}$:
\[
f_i \:=\: \sigma^-_{u_i w_i v_i} \star_1 u_i\phi_i^{\epsilon_i}v_i \star_1 \sigma_{u_i w'_i v_i}.
\]
Using the facts that $w$ is equal to both $u_1w_1v_1$ and $u_p w'_p v_p$ and that $u_iw'_iv_i$ is equal to $u_{i+1} w_{i+1} v_{i+1}$ for every $1\leq i<p$, we can write the $2$-cell $f$ of the $(2,1)$-category $\tck{\Sigma}$ as the following composite:
\[
f \:=\: \sigma_w \star_1 f_1 \star_1 f_2 \star_1 \cdots \star_1 f_p \star_1 \sigma^-_w.
\]
As a consequence, we get:
\[
\iar{f} 
	\:=\: \iar{\sigma^-_w\star_1 f\star_1 \sigma_w} 
	\:=\: \sum_{i=1}^p \iar{f_i}.
\]
In order to conclude this proof, it is sufficient to check that, for every $1\leq i\leq p$, we have the equality $\iar{f_{\tau(i)}} = -\iar{f_i}$. Let us fix an $i$ in $\ens{1,\dots,p}$ and let us compute $\iar{f_i}$. Since $\sigma$ is a left normalisation strategy, we have
\[
\sigma_{u_i w_i v_i} \:=\: \sigma_{u_i} w_i v_i \star_1 \sigma_{\rep{u}_i w_i} v_i \star_1 \sigma_{\rep{u_i w}_i v_i}
\]
and, using the fact that $\rep{u_i w}_i=\rep{u_i w}'_i$, 
\[
\sigma_{u_i w'_i v_i} \:=\: \sigma_{u_i} w'_i v_i \star_1 \sigma_{\rep{u}_i w'_i} v_i \star_1 \sigma_{\rep{u_i w}_i v_i}.
\]
This gives:
\[
\iar{f_i} 
	\:=\: \iar{ 
		\sigma^-_{\rep{u_i w_i}v_i} \star_1 \sigma^-_{\rep{u}_i w_i} v_i \star_1 \sigma^-_{u_i} w_i v_i
		\star_1 u_i\phi_i^{\epsilon_i}v_i 
		\star_1 \sigma_{u_i} w'_i v_i \star_1 \sigma_{\rep{u}_i w'_i} v_i \star_1 \sigma_{\rep{u_i w_i}v_i}
		}.
\]
We can remove the final $2$-cell $\sigma_{\rep{u_i w_i}v_i}$ and its inverse at the beginning of  the composition, to get:
\[
\iar{f_i} 
	\:=\: \iar{ 
		\sigma^-_{\rep{u}_i w_i} v_i \star_1 \sigma^-_{u_i} w_i v_i
		\star_1 u_i\phi_i^{\epsilon_i}v_i 
		\star_1 \sigma_{u_i} w'_i v_i \star_1 \sigma_{\rep{u}_i w'_i} v_i
		}.
\]
Then, we use exchange relations to get:
\[
\iar{f_i}
	\:=\: \iar{ 
		\sigma^-_{\rep{u}_i w_i}
		\star_1 \rep{u}_i\phi_i^{\epsilon_i} 
		\star_1 \sigma_{\rep{u}_i w'_i}
	} \cl{v}_i.
\]
Now, let us compute $\iar{f_{\tau(i)}}$. We already know that $\phi_{\tau(i)}=\phi_i$ and $\epsilon_{\tau(i)}=-\epsilon_i$. As a consequence, we get $w_{\tau(i)}=w'_i$ and $w'_{\tau(i)}=w_i$. Moreover, we have $\rep{u}_{\tau(i)}=\rep{u}_{i}$, so that we have:
\begin{align*}
\iar{f_{\tau(i)}} 
	\:=\: &\iar{ 
		\sigma^-_{\rep{u_i w_i}v_{\tau(i)}}
			\star_1 \sigma^-_{\rep{u}_i w'_i} v_{\tau(i)} 
			\star_1 \sigma^-_{u_i} w'_i v_{\tau(i)} \\
		&\star_1 u_{\tau(i)} \phi_i^{-\epsilon_i} v_{\tau(i)} 
		\star_1 \sigma_{u_i} w_i v_{\tau(i)} 
			\star_1 \sigma_{\rep{u}_i w_i} v_{\tau(i)} 
			\star_1 \sigma_{\rep{u_i w_i} v_{\tau(i)}}
		}.
\end{align*}
We remove the $2$-cell $\sigma_{\rep{u_i w_i} v_{\tau(i)}}$ and its inverse and, then, we use exchange relations and $\cl{v}_{\tau(i)}=\cl{v}_i$, in order to get:
\[
\iar{f_{\tau(i)}} 
	\:=\: \iar{
		\sigma^-_{\rep{u}_i w'_i}  
		\star_1 \rep{u}_i\phi_i^{-\epsilon_i} 
		\star_1 \sigma_{\rep{u}_i w_i} 
	} \cl{v}_i
	\:=\: -\iar{f_i}.
\] 
This implies $\iar{f}=0$, thus concluding the proof.
\end{proof}

\begin{lemma}
Let $\Sigma$ be a $2$-polygraph. For every element $a$ in $h_2(\Sigma)$, there exists a closed $2$-cell $f$ in~$\tck{\Sigma}$ such that $a=[f]$ holds.
\end{lemma}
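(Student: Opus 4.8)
The plan is to exhibit, for a given $a\in\h_2(\Sigma)$, an explicit closed $2$-cell realizing it. Since $a$ is an element of the natural system $\h_2(\Sigma)$, it lies in a single component $\h_2(\Sigma)_w\subseteq\fnat\C{\Sigma_2}_w$ for some $1$-cell $w$ of $\C=\cl\Sigma$; fix this $w$, and recall that $\fnat\C{\Sigma_2}_w$ is free on the generators $u[\phi]v$ with $\phi$ a $2$-cell of $\Sigma$ and $u\cl\phi v=w$. First I would write $a=\sum_{i=1}^p\epsilon_i\,u_i[\phi_i]v_i$ with each $\epsilon_i\in\{-,+\}$, repeating generators so that all coefficients are $\pm1$. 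Lifting each $u_i,v_i$ through the fixed section to $1$-cells $\rep{u_i},\rep{v_i}$ of $\Sigma_1^*$, I set $g_i=\rep{u_i}\phi_i^{\epsilon_i}\rep{v_i}$, a $2$-cell of $\tck\Sigma$ from $P_i$ to $Q_i$ with $\cl{P_i}=\cl{Q_i}=w$ and $[g_i]=\epsilon_i\,u_i[\phi_i]v_i$.

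Since every $(2,1)$-polygraph is acyclic (the homotopy-basis condition being vacuous for $n=2$, $p=1$), it is normalising, so I may equip $\Sigma$ with a left normalisation strategy $\sigma$, furnishing reductions $\sigma_{P_i}:P_i\dfl\rep w$ and $\sigma_{Q_i}:Q_i\dfl\rep w$. The candidate is the concatenation of the loops $f_i=\sigma_{P_i}^-\star_1 g_i\star_1\sigma_{Q_i}:\rep w\dfl\rep w$, namely $f=f_1\star_1\cdots\star_1 f_p$, which is closed by construction. Using that $[\,\cdot\,]$ is additive under $\star_1$ together with $[g_i]=\epsilon_i u_i[\phi_i]v_i$, I compute
\[
[f]\:=\:a\:+\:\sum_{i=1}^p\big([\sigma_{Q_i}]-[\sigma_{P_i}]\big),
\]
so it remains to prove that the correction term vanishes.

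This is the crux. The key observation is that each $[\sigma_R]$ is the value of the contracting homotopy on $[R]$: applying the lemma computing $\sigma_k(u[f]v)=[\sigma_{\rep u f}]v$, with $u,v$ taken to be identities and $\rep{1}=1$, gives $[\sigma_R]=\sigma_1([R])$ for every $1$-cell $R$ of $\Sigma_1^*$ with $\cl R=w$, where $\sigma_1:\fnat\C{\Sigma_1}_w\to\fnat\C{\Sigma_2}_w$ is a morphism of groups. On the other hand, the boundary formula $\delta_2[g_i]=[P_i]-[Q_i]$ yields $[Q_i]-[P_i]=-\delta_2\big(\epsilon_i u_i[\phi_i]v_i\big)$, whence $\sum_i\big([Q_i]-[P_i]\big)=-\delta_2(a)=0$ because $a\in\ker\delta_2$. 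By $\Zb$-linearity of $\sigma_1$, the correction term equals $\sigma_1\big(\sum_i([Q_i]-[P_i])\big)=\sigma_1(0)=0$, so $[f]=a$ with $f$ closed (the case $a=0$ being covered by $f=1_{\rep w}$).

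The main obstacle is precisely this last step: the naive idea of conjugating each generating cell into a loop at $\rep w$ leaves behind a residue $\sum_i([\sigma_{Q_i}]-[\sigma_{P_i}])$ of normalisation $2$-cells, and it is not evident that it cancels. The way through is to recognise this residue as the image, under the contracting homotopy $\sigma_1$, of the Fox–Squier boundary $\delta_2(a)$, which is zero exactly because $a$ is a homological $2$-syzygy. Everything else — the decomposition of $a$, the additivity of $[\,\cdot\,]$ under the compositions, and the closedness of $f$ — is routine bookkeeping.
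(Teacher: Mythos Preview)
Your proof is correct, but it takes a different route from the paper's. The paper adjoins a homotopy basis $\Sigma_3$ to $\tck{\Sigma}$, invokes the exactness of the Reidemeister--Fox--Squier complex (Theorem~\ref{Theorem:AbelianResolution}) to write $a=\delta_3(b)$ for some $b\in\fnat{\cl\Sigma}{\Sigma_3}_w$, realises $b$ as $[A]$ for an explicit $3$-cell $A$ of $\tck{\Sigma_3}$, and then takes $f=s(A)\star_1 t(A)^-$. Your argument stays entirely in dimension~$2$: you lift the generators of $a$ to $2$-cells, conjugate each into a loop at $\rep w$ using a left normalisation strategy, and then observe that the residue $\sum_i\big([\sigma_{Q_i}]-[\sigma_{P_i}]\big)$ is precisely $\sigma_1(-\delta_2(a))=0$. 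This is a cleaner unwinding of the same contracting-homotopy mechanism, avoiding the detour through $3$-cells and the appeal to the full resolution theorem; the paper's version, by contrast, is shorter once that theorem is available and makes the link to homotopical syzygies more visible.
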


\begin{proof}
Let $w$ be the $1$-cell of $\cl{\Sigma}$ such that $a$ belongs to $\fnat{\cl{\Sigma}}{\Sigma_2}_w$ and let $\Sigma_3$ be a homotopy basis of the $(2,1)$-category~$\tck{\Sigma}$. Since $\delta_2(a)=0$, Theorem~\ref{Theorem:AbelianResolution} implies the existence of an element $b$ in $\fnat{\cl{\Sigma}}{\Sigma_3}_w$ such that $a=\delta_3(b)$ holds. By definition of $\fnat{\cl{\Sigma}}{\Sigma_3}_w$, we can write
\[
b \:=\: \sum_{i=1}^p \epsilon_i u_i [\alpha_i] v_i 
\]
with, for every $1\leq i\leq p$, $\alpha_i$ in $\Sigma_3$, $u_i$ and $v_i$ in $\cl{\Sigma}$ and $\epsilon_i$ in $\ens{-,+}$ such that $u_i\cl{\alpha}_iv_i=w$ holds. We fix a section $\rep{\;\cdot\;}$ of $\Sigma$ and we choose $2$-cells
\[
g_i \::\: \rep{w} \:\dfl\: \rep{u}_i s_1(\alpha_i^{\epsilon_i}) \rep{v}_i
\qquad\text{and}\qquad
h_i \::\: \rep{u}_i t_1(\alpha_i^{\epsilon_i}) \rep{v}_i \:\dfl\: \rep{w}.
\]
Let $A$ be the following $3$-cell of $\tck{\Sigma}_3$:
\[
A 
	\:=\: \big( g_1\star_1 \rep{u}_1\alpha_1^{\epsilon_1} \rep{v}_1 \star_1 h_1 \big)
			\star_1 \cdots
			\star_1 \big( g_k\star_1 \rep{u}_k\alpha_k^{\epsilon_k} \rep{v}_k \star_1 h_k \big). 		
\]
By definition of $[\cdot]$ on $3$-cells, we have 
\[
[A] 
	\:=\: \sum_{i=1}^p \big[ g_i\star_1 \rep{u}_i\alpha_i^{\epsilon_i}\rep{v}_i \star_1 h_i \big]
	\:=\: \sum_{i=1}^p \big( [1_{g_i}] + \epsilon_i u_i[\alpha_i] v_i + [1_{h_i}] \big)
	\:=\: b.
\]
Finally, we get:
\[
a \:=\: \delta_3[A] \:=\: [s(A)]-[t(A)] \:=\: [s(A)\star_1 t(A)^-]. 
\]
Hence $f=s(A)\star_1 t(A)^-$ is a closed $2$-cell of $\tck{\Sigma}$ that satisfies $a=[f]$.
\end{proof}

\begin{theorem}
\label{Theorem:IsomorphismPiH2}
For every $2$-polygraph $\Sigma$, the natural systems of homological $2$-syzygies and of identities among relations of $\Sigma$ are isomorphic.
\end{theorem}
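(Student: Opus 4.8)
The plan is to exhibit an explicit morphism of natural systems
\[
\Phi \colon \Pi(\Sigma) \:\longrightarrow\: h_2(\Sigma)
\]
sending each generator $\iar{f}$, for $f\colon v\dfl v$ a closed $2$-cell of $\tck{\Sigma}$, to the element $[f]$, and then to read off its bijectivity from the two preceding lemmas. The starting observation is that the assignment lands where it should: if $f\colon v\dfl v$ is closed, then $\delta_2[f] = [s(f)] - [t(f)] = [v]-[v] = 0$, so $[f]$ belongs to $h_2(\Sigma)_{\cl{v}}$, the component of the kernel of $\delta_2$ indexed by $\cl{v}$, which is also the component in which $\iar{f}$ lives.

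First I would check that $\Phi$ is well defined, that is, that the defining relations of $\Pi(\Sigma)$ are sent to valid relations in $h_2(\Sigma)$. The additivity relation is matched by the inductive definition of $[\cdot]$, which gives $[f\star_1 g] = [f]+[g]$, and the commutation relation is matched since $[f\star_1 g] = [f]+[g] = [g]+[f] = [g\star_1 f]$. Next I would prove that $\Phi$ is a morphism of natural systems. For a context $C=(a,b)$ of $\cl{\Sigma}$ with representative whisker $\rep{C} = \rep{a}\,x\,\rep{b}$ in $\Sigma^*$, the formula $[\rep{a}\,f\,\rep{b}] = a[f]b$ — a direct consequence of the rule $[\,\cdot\,\star_0\cdot\,]$ together with $[1_u]=0$ — yields $\Phi(C\iar{f}) = [\rep{C}[f]] = a[f]b = C\cdot\Phi(\iar{f})$, which is exactly the naturality square; independence of the choice of $\rep{C}$ is already guaranteed by the well-definedness of $\Pi(\Sigma)$ recalled above.

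The core of the argument is then to extract bijectivity from the two lemmas. For this I would first record that every element of $\Pi(\Sigma)_u$ can be written as a single generator $\iar{f}$ for one closed $2$-cell $f$: the relation $\iar{f^-}=-\iar{f}$ absorbs signs, the additivity relation collapses a sum of closed $2$-cells based at the same $1$-cell into one $\star_1$-composite, and the conjugation relation $\iar{g\star_1 f\star_1 g^-}=\iar{f}$ transports generators based at parallel $1$-cells $v_1,v_2$ with $\cl{v_1}=\cl{v_2}$ to a common base, using that such $v_1$ and $v_2$ are joined by an invertible $2$-cell of $\tck{\Sigma}$. Granting this normal form, surjectivity of $\Phi$ follows at once from the second lemma, which produces, for every $a\in h_2(\Sigma)_u$, a closed $2$-cell $f$ with $a=[f]=\Phi(\iar{f})$; and injectivity follows at once from the first lemma, since $\Phi(\iar{f})=[f]=0$ forces $\iar{f}=0$. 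Together with naturality this makes each component $\Phi_u$ an isomorphism, hence $\Phi$ an isomorphism of natural systems.

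The main obstacle is not in this final assembly, which is essentially formal once the two lemmas are available, but in the content they package — most delicately the hard implication $[f]=0 \Rightarrow \iar{f}=0$ of the first lemma, whose proof matches the cancelling pairs of rewriting steps in $f$ by a self-inverse permutation $\tau$ and rewrites each conjugate $f_i$ through the left normalisation strategy. At the level of the theorem itself, the only genuinely new verifications are the reduction of an arbitrary element of $\Pi(\Sigma)_u$ to a single generator and the naturality check, both of which are routine.
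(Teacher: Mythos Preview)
Your proposal is correct and follows essentially the same route as the paper. You define the same morphism $\Phi(\iar{f})=[f]$, check well-definedness and naturality in the same way, and then deduce bijectivity from the two preceding lemmas; the only cosmetic difference is that the paper packages the inverse as an explicit map $\Psi\colon h_2(\Sigma)\to\Pi(\Sigma)$, $\Psi(a)=\iar{f}$ for any closed $f$ with $a=[f]$, checking its well-definedness via the first lemma, whereas you argue injectivity and surjectivity of $\Phi$ directly after reducing every element of $\Pi(\Sigma)_u$ to a single generator --- the same lemmas are doing the same work in both versions.
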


\begin{proof}
We define a morphism of natural systems $\Phi : \Pi(\Sigma) \fl \h_2(\Sigma)$ by
\[
\Phi \big( \iar{f} \big) \:=\: [f] \;.
\]
This definition is correct, since the defining relations of $\Pi(\Sigma)$ also hold in $\fnat{\C}{\Sigma_2}$, hence in $h_2(\Sigma)$. Let us check that $\Phi$ is a morphism of natural systems. Indeed, we have
\[
\Phi(u\iar{f}v) \:=\: \Phi(\iar{\rep{u}f\rep{v}}) \:=\: [\rep{u}f\rep{v}] \:=\: u[f]v \:=\: u\Phi(\iar{f})v
\]
for every $2$-cell $f:w\dfl w$ in $\tck{\Sigma}$ and $1$-cells $u$, $v$ in $\cl{\Sigma}$ such that $\rep{u}f\rep{v}$ is defined. 

Now, let us define a morphism of natural systems $\Psi : \h_2(\Sigma) \fl \Pi(\Sigma)$. Let $a$ be an element of $\h_2(\Sigma)_w$. Then there exists a closed $2$-cell $f:u\dfl u$ such that $a=[f]$ and $w=\cl{u}$. We define 
\[
\Psi(a) \:=\: \iar{f}.
\]
This definition does not depend on the choice of $f$. Indeed, let us assume that $g:v\dfl v$ is a closed $2$-cell such that $a=[g]$ holds. It follows that $\cl{v}=w=\cl{u}$. Hence, we can choose a $2$-cell $h:u\dfl v$ in $\tck{\Sigma}$. Then we have:
\[
a \:=\: [f] \:=\: [g] \:=\: [h\star_1 g\star_1 h^-].
\]
As a consequence, we get: 
\[
[f\star_1 h^-\star_1 g^-\star_1 h] \:=\: [f] - [h \star_1 g \star_1 h^-] \:=\: 0.
\]
Thus
\[
0 \:=\: \iar{f\star_1 h^- \star_1 g^- \star_1 h} \:=\: \iar{f} - \iar{h\star_1 g\star_1 h^-} \:=\: \iar{f} -\iar{g}.
\]
The relations $\Psi\Phi=1_{\Pi(\Sigma)}$ and $\Phi\Psi=1_{\h_2(\Sigma)}$ are direct consequences of the definitions of $\Phi$ and $\Psi$.
\end{proof}

\subsection{Abelian finite derivation type}
\label{subsectionAbelianTDF}

An $(n,n-1)$-category $\Cr$ is \emph{Abelian} if, for every $(n-1)$-cell $u$ of $\Cr$, the group $\Aut^{\Cr}_u$ of closed $n$-cells of $\Cr$ with source $u$ is Abelian. The Abelian $(n,n-1)$-category generated by $\Cr$ is the $(n,n-1)$-category, denoted by $\ab{\Cr}$ and defined as the quotient of $\Cr$ by the cellular extension that contains one $n$-sphere
\[
f\star_{n-1} g \:\longrightarrow\: g\star_{n-1} f
\]
for every closed $n$-cells $f$ and $g$ of $\Cr$ with the same source.

One says that an $n$-polygraph $\Sigma$ is of \emph{Abelian finite derivation type}, $\FDTAB$ for short, when the Abelian $(n,n-1)$-category $\abtck{\Sigma}$ admits a finite homotopy basis. In this section, we prove that an $n$-polygraph is $\FDTAB$ if, and only if, the natural system~$\Pi(\Sigma)$ of identities among relations of $\Sigma$ is finitely generated.

In~\cite{GuiraudMalbos10smf}, it is proved that, given an $n$-polygraph $\Sigma$, there exists an isomorphism of natural systems on the free $(n-1)$-category $\Sigma^*_{n-1}$:
\[
\Pi(\Sigma)_{\cl{u}} \:\simeq\: \Aut^{\abtck{\Sigma}}_u.
\]
In fact, this property characterises the natural system $\Pi(\Sigma)$ on the $(n-1)$-category $\cl{\Sigma}$ up to isomorphism. In~\cite{GuiraudMalbos10smf}, we also proved the following result.

\begin{lemma}[\cite{GuiraudMalbos10smf}]
\label{LemmaHomotopiesBoucles}
Let $\Cr$ be an $(n,n-1)$-category and let $\Br$ be a family of closed $n$-cells of $\Cr$. The following assertions are equivalent: 
\begin{enumerate}[\bf i)]
\item The cellular extension $\tilde{\Br} = \ens{ \tilde{\beta}:\beta\fl 1_{s\beta}, \:\beta\in\Br }$ of $\C$ is a homotopy basis.
\item Every closed $n$-cell $f$ in $\Cr$ has a decomposition
\begin{equation} \label{HomotopyBaseTDFLemmaEq}
f \:=\: \left( g_1 \star_{n-1} C_1\left[ \beta_1^{\epsilon_1} \right] \star_{n-1} g_1^- \right) 
	\:\star_{n-1}\: \cdots \:\star_{n-1}\: 
	\left( g_p \star_{n-1} C_p \left[ \beta_p^{\epsilon_p} \right] \star_{n-1} g_p^- \right)
\end{equation}
with, for every $1\leq i\leq p$, $\beta_i$ in $\Br$, $\epsilon_i$ in $\ens{-,+}$, $C_i$ a whisker of $\Cr$ and $g_i$ an $n$-cell of~$\Cr$.
\end{enumerate}
\end{lemma}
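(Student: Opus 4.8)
The plan is to establish the two implications separately. The guiding observation is that, because $\Cr$ is an $(n,n-1)$-category, every $n$-cell is invertible, so the closed $n$-cells with a fixed $(n-1)$-source $u$ form a group under $\star_{n-1}$; the right-hand side of~\eqref{HomotopyBaseTDFLemmaEq} then describes exactly the subgroup generated by the conjugates $g\star_{n-1} C[\beta^{\epsilon}]\star_{n-1} g^-$. Both directions reduce to translating between $(n+1)$-cells of $\Cr(\tilde{\Br})$ connecting a closed $n$-cell to an identity and membership in this subgroup.

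For \textbf{(ii)}$\Rightarrow$\textbf{(i)}, I would take an arbitrary $n$-sphere $(f,g)$ of $\Cr$, say $f,g\colon u\fl v$, and form the closed $n$-cell $f\star_{n-1} g^-$. Hypothesis~\textbf{(ii)} decomposes it as $F_1\star_{n-1}\cdots\star_{n-1} F_p$ with $F_i=g_i\star_{n-1} C_i[\beta_i^{\epsilon_i}]\star_{n-1} g_i^-$. Each factor carries an $(n+1)$-cell $B_i\colon F_i\fl 1_u$ in $\Cr(\tilde{\Br})$: for $\epsilon_i=+$ one whiskers $\tilde{\beta}_i\colon\beta_i\fl 1_{s\beta_i}$ by $C_i$ and conjugates by $g_i$; for $\epsilon_i=-$ one first forms the $(n+1)$-cell $\beta_i^-\fl 1_{s\beta_i}$ as the inverse of $\beta_i^-\star_{n-1}\tilde{\beta}_i$ and proceeds identically. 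The horizontal composite $B_1\star_{n-1}\cdots\star_{n-1} B_p$ is then an $(n+1)$-cell $f\star_{n-1} g^-\fl 1_u$, and whiskering it on the right by $g$ yields an $(n+1)$-cell $f\fl g$ in $\Cr(\tilde{\Br})$. Since $(f,g)$ was arbitrary, $\tilde{\Br}$ is a homotopy basis.

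For \textbf{(i)}$\Rightarrow$\textbf{(ii)}, I would take a closed $n$-cell $f\colon u\fl u$ and use that $\tilde{\Br}$ is a homotopy basis to obtain an $(n+1)$-cell $F\colon f\fl 1_u$ in $\Cr(\tilde{\Br})$. Lifting $F$ to the free $(n+1)$-category $\Cr[\tilde{\Br},\tilde{\Br}^-]$ and applying the decomposition of $(n+1)$-cells into whiskered generators recalled earlier, I get $\hat{F}=C_1[\phi_1]\star_n\cdots\star_n C_q[\phi_q]$ with each $\phi_j$ in $\tilde{\Br}\cup\tilde{\Br}^-$ and each $C_j$ a whisker. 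This exhibits a path $f=h_0,h_1,\dots,h_q=1_u$ of $n$-cells, where $h_{j-1}$ and $h_j$ are the source and target of $C_j[\phi_j]$; by the globular relations each $h_j$ is parallel to $f$, hence is a closed $n$-cell with $(n-1)$-source $u$. Writing the underlying context of $C_j$ as $a_j\star_{n-1} W_j\star_{n-1} b_j$ with $W_j$ a whisker of $\Cr$ and $a_j,b_j$ being $n$-cells, a computation of sources and targets shows that $k_j:=h_{j-1}\star_{n-1} h_j^-$ equals $a_j\star_{n-1} W_j[\beta_j^{\epsilon_j}]\star_{n-1} a_j^-$, exactly a factor of the required shape. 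Telescoping the identities $k_j=h_{j-1}\star_{n-1} h_j^-$ and using $h_q=1_u$ gives $f=h_0=k_1\star_{n-1}\cdots\star_{n-1} k_q$, which is the decomposition~\eqref{HomotopyBaseTDFLemmaEq}.

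The source and target computations for whiskered and conjugated cells are routine; the step that will need genuine care is this last passage from a generic $(n+1)$-whisker $C_j$ to a conjugation. One must check that the outer $n$-cell $b_j$ cancels when $h_{j-1}\star_{n-1} h_j^-$ is formed and that the inner part $W_j$ really is a whisker of $\Cr$, so that $k_j$ acquires exactly the form $g\star_{n-1} C[\beta^{\epsilon}]\star_{n-1} g^-$. The conceptual key making the telescoping legitimate is the remark that every intermediate $h_j$ is automatically a closed $n$-cell, being parallel to $f$.
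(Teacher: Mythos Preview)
The paper does not prove this lemma: it is quoted from~\cite{GuiraudMalbos10smf} and stated without argument, so there is no ``paper's own proof'' to compare against here. Your proposal is therefore an independent proof of the cited result, and it is correct. Both implications are handled in the natural way: for \textbf{(ii)}$\Rightarrow$\textbf{(i)} you reduce an arbitrary $n$-sphere $(f,g)$ to the closed cell $f\star_{n-1}g^-$ and build the required $(n+1)$-cell factor by factor; for \textbf{(i)}$\Rightarrow$\textbf{(ii)} you lift an $(n+1)$-cell $F:f\fl 1_u$ to $\Cr[\tilde{\Br},\tilde{\Br}^-]$, decompose it into whiskered generators, and telescope.

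Two small remarks on points you flagged as needing care. First, the lift of $F$ from $\Cr(\tilde{\Br})$ to $\Cr[\tilde{\Br},\tilde{\Br}^-]$ is harmless because the quotient by $\mathrm{Inv}(\tilde{\Br})$ only identifies $(n+1)$-cells, so any lift has the same $n$-source and $n$-target as $F$. Second, the computation you anticipate does go through: writing the whisker $C_j$ of $\Cr[\tilde{\Br},\tilde{\Br}^-]$ as $a_j\star_{n-1} W_j[-]\star_{n-1} b_j$ with $a_j,b_j$ $n$-cells of $\Cr$ and $W_j$ a whisker of $\Cr$, one has $h_{j-1}=a_j\star_{n-1} W_j[s(\phi_j)]\star_{n-1} b_j$ and $h_j=a_j\star_{n-1} W_j[t(\phi_j)]\star_{n-1} b_j$; since one of $s(\phi_j),t(\phi_j)$ is an identity and a whisker sends identities to identities, the $b_j$ and $b_j^-$ cancel in $h_{j-1}\star_{n-1}h_j^-$, leaving exactly $a_j\star_{n-1} W_j[\beta_j^{\epsilon_j}]\star_{n-1} a_j^-$ as required.
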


\begin{proposition}
\label{Proposition:IartfIffFdtab}
An $n$-polygraph $\Sigma$ is $\FDTAB$ if, and only if, the natural system $\Pi(\Sigma)$ on $\cl{\Sigma}$ is finitely generated.
\end{proposition}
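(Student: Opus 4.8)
The plan is to use the two ingredients recalled just above the statement: the isomorphism $\Pi(\Sigma)_{\cl u}\simeq\Aut^{\abtck\Sigma}_u$, under which the generator $\iar f$ corresponds to the class of a closed $n$-cell $f$, and the characterisation of homotopy bases given by Lemma~\ref{LemmaHomotopiesBoucles}. The strategy is to prove that a family $\Br$ of closed $n$-cells of $\abtck\Sigma$ makes $\tilde\Br$ a homotopy basis if, and only if, the family $\{\iar\beta\}_{\beta\in\Br}$ generates the natural system $\Pi(\Sigma)$, and then to match the finiteness conditions on both sides.

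First I would reduce to homotopy bases of the special form $\tilde\Br$. In an $(n,n-1)$-category an $n$-sphere $(f,g)$ with $f,g:u\fl v$ is the same datum as the closed $n$-cell $f\star_{n-1}g^-:u\fl u$, so any cellular extension $\Gamma$ of $\abtck\Sigma$ gives rise to the family $\Br=\ens{s\gamma\star_{n-1}(t\gamma)^-\mid\gamma\in\Gamma}$ of closed $n$-cells, with $\tilde\Br$ a homotopy basis as soon as $\Gamma$ is one, and with $\Br$ no larger than $\Gamma$. Thus $\Sigma$ is $\FDTAB$ if, and only if, there is a finite family $\Br$ of closed $n$-cells such that $\tilde\Br$ is a homotopy basis.

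The central step is then to translate the decomposition~\eqref{HomotopyBaseTDFLemmaEq} of Lemma~\ref{LemmaHomotopiesBoucles} through the map $f\mapsto\iar f$. Using the defining relations of $\Pi(\Sigma)$, namely $\iar{f\star_{n-1}g}=\iar f+\iar g$, $\iar{f^-}=-\iar f$ and $\iar{g\star_{n-1}f\star_{n-1}g^-}=\iar f$, together with the natural-system action $\iar{C[\beta]}=C\iar\beta$ for a whisker $C$, each factor $g_i\star_{n-1}C_i[\beta_i^{\epsilon_i}]\star_{n-1}g_i^-$ is sent to $\epsilon_i\,C_i\iar{\beta_i}$: the conjugators $g_i$ disappear and the whiskers $C_i$ become morphisms of $\Ct{\cl\Sigma}$. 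Hence a closed $n$-cell $f$ admits a decomposition of the form~\eqref{HomotopyBaseTDFLemmaEq} precisely when $\iar f=\sum_{i=1}^{p}\epsilon_i\,C_i\iar{\beta_i}$, that is, when $\iar f$ lies in the sub-natural-system of $\Pi(\Sigma)$ generated by $\{\iar\beta\}_{\beta\in\Br}$. Since, by the isomorphism $\Pi(\Sigma)_{\cl u}\simeq\Aut^{\abtck\Sigma}_u$, every element of $\Pi(\Sigma)$ is of the form $\iar f$ for some closed $n$-cell $f$, Lemma~\ref{LemmaHomotopiesBoucles} yields that $\tilde\Br$ is a homotopy basis if, and only if, $\{\iar\beta\}_{\beta\in\Br}$ generates $\Pi(\Sigma)$ as a natural system.

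Finally I would conclude by matching finiteness. For the forward implication, a finite homotopy basis produces, by the reduction step, a finite family $\Br$ whose classes generate $\Pi(\Sigma)$, so $\Pi(\Sigma)$ is finitely generated. Conversely, given a finite generating family $(a_j)$ of $\Pi(\Sigma)$, I would choose closed $n$-cells $\beta_j$ with $\iar{\beta_j}=a_j$ and set $\Br=\ens{\beta_j}$; the translated lemma then shows that $\tilde\Br$ is a finite homotopy basis, so $\Sigma$ is $\FDTAB$. The only delicate point is the central translation step: one must check that the arbitrary conjugators $g_i$ allowed in~\eqref{HomotopyBaseTDFLemmaEq} act trivially at the level of $\iar{\cdot}$, while the whiskers $C_i$ give exactly the $\Ct{\cl\Sigma}$-action, so that the decomposition in $\abtck\Sigma$ corresponds precisely to generation of the $\Ct{\cl\Sigma}$-module $\Pi(\Sigma)$; everything else is bookkeeping with the defining relations.
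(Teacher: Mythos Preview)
Your proposal is correct and follows essentially the same route as the paper: reduce an arbitrary homotopy basis to one of the form $\tilde\Br$ via $\beta\mapsto s\beta\star_{n-1}(t\beta)^-$ (the paper's $\partial\Br$), then use Lemma~\ref{LemmaHomotopiesBoucles} together with the identities $\iar{g\star_{n-1}f\star_{n-1}g^-}=\iar f$ and $\iar{C[\beta]}=C\iar\beta$ to match decompositions in $\abtck\Sigma$ with generation in $\Pi(\Sigma)$. The one place where you should be a bit more explicit is the reverse implication of your ``precisely when'': passing from an equation $\iar f=\sum\epsilon_i C_i\iar{\beta_i}$ back to an actual decomposition of $f$ in $\abtck\Sigma$ requires choosing lifts $\rep C_i$ of the whiskers and conjugators $g_i$, and then invoking the \emph{injectivity} of $\Pi(\Sigma)_{\cl u}\simeq\Aut^{\abtck\Sigma}_u$ (not just its surjectivity) to conclude that $f$ equals the resulting composite; the paper makes this step explicit.
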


\begin{proof}
Let us assume that the $n$-polygraph $\Sigma$ is $\FDTAB$. Then the Abelian $(n,n-1)$-category $\abtck{\Sigma}$ has a finite homotopy basis $\Br$. Let $\partial\Br$ be the set of closed $n$-cells of $\abtck{\Sigma}$ defined by: 
\[
\partial\Br \:=\: \ens{ \; \partial\beta = s(\beta)\star_{n-1} t(\beta)^- \;, \: \beta\in \Br \;}.
\]
By Lemma~\ref{LemmaHomotopiesBoucles}, any closed $n$-cell $f$ in $\abtck{\Sigma}$ can be written 
\[
f \:=\: \big( g_1\star_{n-1} C_1[\partial\beta_1^{\epsilon_1}] \star_{n-1} g_1^- \big) 
	\star_{n-1}\dots\star_{n-1}
	\big( g_p\star_{n-1} C_p[\partial\beta_p^{\epsilon_p}] \star_{n-1} g_p^- \big),
\]
with, for every $1\leq i\leq p$, $\beta_i$ in $\Br$, $\epsilon_i$ in $\ens{-,+}$, $C_i$ a whisker of $\tck{\Sigma}$ and $g_i$ an $n$-cell of $\tck{\Sigma}$. As a consequence, for any identity among relations $\iar{f} $ in $\Pi(\Sigma)$, we have: 
\[
\iar{f} 
	\:=\: \sum_{i=1}^k \epsilon_i \iar{g_i \star_{n-1} C_i[\partial\beta_i] \star_{n-1} g_i^-}
	\:=\: \sum_{i=1}^k \epsilon_i C_i\iar{\partial\beta_i}.
\]
Thus, the elements of $\iar{\partial\Br}$ form a generating set for the natural system $\Pi(\Sigma)$.  Hence $\Pi(\Sigma)$ is finitely generated. 

Conversely, suppose that the natural system $\Pi(\Sigma)$ is finitely generated. This means that there exists a finite set~$\Br$ of closed $n$-cells of $\abtck{\Sigma}$ such that the following property is satisfied: for every $(n-1)$-cell $u$ of $\cl{\Sigma}$ and every closed $n$-cell $f$ with source $v$ of $\abtck{\Sigma}$ such that $\rep{v}=u$, one can write
\[
\iar{f} \:=\: \sum_{i=1}^p \epsilon_i C_i \iar{\beta_i}
\]
with, for every $1\leq i\leq p$, $\beta_i$ in $\Br$, $C_i$ a whisker of $\cl{\Sigma}$ and $\epsilon_i$ an integer; moreover, each whisker $C_i$ is such that, for every representative $\rep{C}_i$ of $C_i$ in $\abtck{\Sigma}$, $\rep{C}_i[\beta_i]$ is a closed $n$-cell of $\abtck{\Sigma}$ whose source $v_i$ satisfies $\cl{v}_i=v$. We fix, for every~$i$, an $n$-cell $g_i:\rep{v}\dfl v_i$ in $\tck{\Sigma}$. Then, the properties of $\Pi(\Sigma)$ imply: 
\begin{align*}
\iar{f} 
	\:&=\: \sum_{i=1}^p \iar{g_i \star_{n-1} \rep{C}_i[\beta_i^{\epsilon_i}] \star_{n-1} g_i^-} \\
	\:&=\: \iar{ \big( g_1 \star_{n-1} \rep{C}_1[\beta_1^{\epsilon_1}] \star_{n-1} g_1^- \big)
		\star_{n-1} \cdots \star_{n-1}
		\big( g_p \star_{n-1} \rep{C}_p[\beta_p^{\epsilon_p}] \star_{n-1} g_p^- \big) }.
\end{align*}
We use the isomorphism between $\Pi(\Sigma)_u$ and $\Aut^{\abtck{\Sigma}}_v$ and Lemma~\ref{LemmaHomotopiesBoucles} to deduce that the cellular extension $\tilde{\Br}=\ens{\tilde{\beta}:\beta\fl 1_{s\beta}, \beta\in \Br}$ of $\abtck{\Sigma}$ is a homotopy basis. Thus $\Sigma$ is $\FDTAB$.
\end{proof}

\noindent
In~\cite{GuiraudMalbos10smf}, the authors have proved that the property to be finitely generated for $\Pi(\Sigma)$ is Tietze-invariant for finite polygraphs: if $\Sigma$ and $\Upsilon$ are two Tietze-equivalent finite $n$-polygraphs, then the natural system~$\Pi(\Sigma)$ is finitely generated if, and only if, the natural system $\Pi(\Upsilon)$ is finitely generated.

By Proposition~\ref{Proposition:IartfIffFdtab}, we deduce that the property $\FDTAB$ is Tietze-invariant for finite polygraphs, so that one can say that an $n$-category is $\FDTAB$ when it admits a presentation by a finite $(n+1)$-polygraph which is $\FDTAB$. In this way, the following result relates the homological property $\FP_3$ and the homotopical property $\FDTAB$.

\begin{theorem}
\label{Theorem:TDFAB<=>FP_3}
Let $\C$ be a category with a finite presentation $\Sigma$. The following conditions are equivalent:
\begin{enumerate}[{\bf i)}]
\item the category $\C$ is of homological type $\FP_3$,
\item the natural system $\h_2(\Sigma)$ on $\C$ is finitely generated,
\item the natural system $\Pi(\Sigma)$ on $\C$ is finitely generated,
\item the category $\C$ is $\FDTAB$.
\end{enumerate}
\end{theorem}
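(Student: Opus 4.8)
The plan is to deduce all four equivalences from the structural results already proved, organising them as the three links (i)$\Leftrightarrow$(ii), (ii)$\Leftrightarrow$(iii) and (iii)$\Leftrightarrow$(iv); none of them requires new computation, since the substantive work has already been carried out in Theorems~\ref{Theorem:IsomorphismPiH2} and~\ref{Theorem:AbelianResolution}, in Proposition~\ref{Proposition:IartfIffFdtab}, and in Lemma~\ref{lemme_pl_n}.

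The link (ii)$\Leftrightarrow$(iii) is immediate from Theorem~\ref{Theorem:IsomorphismPiH2}: the natural systems $\h_2(\Sigma)$ and $\Pi(\Sigma)$ are isomorphic, and finite generation is invariant under isomorphism of natural systems. The link (iii)$\Leftrightarrow$(iv) follows from Proposition~\ref{Proposition:IartfIffFdtab}, which states that the finite $2$-polygraph $\Sigma$ is $\FDTAB$ if and only if $\Pi(\Sigma)$ is finitely generated; combined with the Tietze-invariance of $\FDTAB$ for finite polygraphs recalled from~\cite{GuiraudMalbos10smf}, by which $\C$ is $\FDTAB$ precisely when its presentation $\Sigma$ is, this says exactly that $\C$ is $\FDTAB$ if and only if $\Pi(\Sigma)$ is finitely generated.

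For the remaining link (i)$\Leftrightarrow$(ii) I would first observe that, viewed as a $(2,1)$-polygraph, the finite presentation $\Sigma$ is acyclic (the acyclicity condition being vacuous in length $2$), hence a finite partial polygraphic resolution of $\C$ of length $2$. Theorem~\ref{Theorem:AbelianResolution} then supplies a free, finitely generated partial resolution
\[
\fnat{\C}{\Sigma_2} \ofl{\delta_2} \fnat{\C}{\Sigma_1} \ofl{\delta_1} \fnat{\C}{\Sigma_0} \ofl{\epsilon} \Zb \fl 0
\]
of the constant natural system $\Zb$, with kernel $\ker\delta_2 = \h_2(\Sigma)$. Since $\Sigma_0$ is finite, $\Zb$ is itself finitely generated. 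If $\C$ is $\FP_3$, then item {\bf iii)} of Lemma~\ref{lemme_pl_n}, applied to this very resolution at $k=2$, forces $\h_2(\Sigma)$ to be finitely generated, giving (i)$\Rightarrow$(ii). Conversely, if $\h_2(\Sigma)$ is finitely generated, splicing a finitely generated free cover of $\h_2(\Sigma)$ onto the above complex yields a finitely generated free partial resolution of $\Zb$ of length $3$, so $\Zb$ is $\FP_3$ by item {\bf ii)} of Lemma~\ref{lemme_pl_n}; this gives (ii)$\Rightarrow$(i).

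The argument is thus essentially an assembly of earlier results, so the only delicate point is the homological link (i)$\Leftrightarrow$(ii): one must make sure the truncated Reidemeister-Fox-Squier complex genuinely qualifies as a finitely generated projective partial resolution, so that the Schanuel-type invariance underlying Lemma~\ref{lemme_pl_n} renders finite generation of $\ker\delta_2$ a resolution-independent certificate for the property $\FP_3$.
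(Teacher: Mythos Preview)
Your proof is correct and follows essentially the same three-link structure as the paper's own proof: (i)$\Leftrightarrow$(ii) via the Reidemeister--Fox--Squier complex and Lemma~\ref{lemme_pl_n}, (ii)$\Leftrightarrow$(iii) via Theorem~\ref{Theorem:IsomorphismPiH2}, and (iii)$\Leftrightarrow$(iv) via Proposition~\ref{Proposition:IartfIffFdtab}. The paper is more terse on the first link (it simply says ``comes from the definition of the property $\FP_3$''), whereas you spell out that $\Sigma$ is a vacuously acyclic $(2,1)$-polygraph and invoke Theorem~\ref{Theorem:AbelianResolution} explicitly, but this is the same argument made fully explicit.
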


\begin{proof}
The equivalence between {\bf i)} and {\bf ii)} comes from the definition of the property $\FP_3$. The equivalence between {\bf ii)} and {\bf iii)} is a consequence of Theorem~\ref{Theorem:IsomorphismPiH2}. The last equivalence is given by Proposition~\ref{Proposition:IartfIffFdtab}.
\end{proof}

\noindent
In Corollary~\ref{Theorem:FDTimpliesFP}, we have seen that $\FDT_3$ implies $\FP_3$. We expect that the reverse implication is false in general, which is equivalent to proving that $\FDTAB$ does not imply $\FDT_3$, since $\FP_3$ is equivalent to $\FDTAB$ for finitely presented categories.

\section{Examples}
\label{SectionExamples}

\subsection{A concrete example of reduced standard polygraphic resolution}

Let us denote by $A$ the monoid with one non-unit element, $a$, and with product given by $aa=a$. The standard presentation of $A$, seen as a category, is the reduced and convergent $2$-polygraph, denoted by $\As$, with one $0$-cell (for $1$), one $1$-cell (for $a$) and one $2$-cell $aa\dfl a$. Here we use diagrammatic notations, where $a$ is denoted by a vertical string $\:\twocell{1}\:$ and the $2$-cell $aa\dfl a$ is pictured as~\twocell{mu}. The $2$-polygraph $\As$ has one critical branching:
\[
\big(\: \twocell{mu}\:\twocell{1} \:, \: \twocell{1}\:\twocell{mu} \:\big) \;.
\]
The corresponding generating confluence (for the rightmost normalisation strategy) is the $3$-cell:
\[
\xymatrix{
{\twocell{(mu *0 1) *1 mu}}  
	\ar@3 [r] ^-{\twocell{alpha}}
& {\twocell{(1 *0 mu) *1 mu}}
}
\]
By extending $\As$ with that $3$-cell, one gets a finite, acyclic $(3,1)$-polygraph, still denoted by $\As$ and which is a partial polygraphic resolution of $A$ of length~$3$. We conclude that the monoid $A$ has the property $\FDT_3$ and, thus, is of homological type $\FP_3$. In particular, the natural system $\h_2(\As)$ of homological $2$-syzygies of $\As$ is generated by the following element:
\[
\delta_3 [\twocell{alpha} ] 
	\:=\: \left[ {\twocell{(mu *0 1) *1 mu}} \right] - \left[ {\twocell{(1 *0 mu) *1 mu}} \right]
	\:=\: \big[\twocell{mu}\:\twocell{1} \big] + \big[\twocell{mu}\big] 
			- \big[\twocell{1}\:\twocell{mu}\big] - \big[\twocell{mu}\big]
	\:=\: \big[\twocell{mu}\big] a - a \big[\twocell{mu}\big].
\]
The $2$-polygraph $\As$ has exactly one critical triple branching: 
\[
b \:=\: \big(\: 
	\twocell{mu}\:\twocell{1}\:\twocell{1} \:,\:
	\twocell{1}\:\twocell{mu}\:\twocell{1} \:,\:
	\twocell{1}\:\twocell{1}\:\twocell{mu} 
\:\big) \;.
\]
This triple critical branching $b$ has shape $(f\rep{a},g\rep{a},\rho_{u\rep{a}})$ with 
\[
f\:=\: \twocell{mu}\:\twocell{1}
\qquad\text{and}\qquad
g\:=\: \twocell{1}\:\twocell{mu} \;.
\]
We compute the $3$-cells $\omega_{f,g}$, $\sigma^*_{f\rep{a}}$ and $\sigma^*_{g\rep{a}}$, using their definitions and the properties of the rightmost normalisation strategy $\sigma$, to get:
\[
\omega_{f,g}\rep{a} \:=\: \twocell{alpha}\:\twocell{1}
\qquad\qquad
\sigma^*_{f\rep{a}} \:=\: \twocell{(mu *0 2) *1 alpha} \star_2 \twocell{(2 *0 mu) *1 alpha}
\qquad\qquad
\sigma^*_{g\rep{a}} \:=\: \twocell{(1 *0 mu *0 1) *1 alpha} \star_2 \twocell{(1 *0 alpha) *1 mu} \;.
\] 
We fill the diagram defining $\omega_b=\twocell{aleph}$, obtaining:
\[
\xymatrix@!C@C=1.5em{
& {\twocell{1}\:\twocell{1}\:\twocell{1}}
	\ar@2 [rr] ^-{\twocell{mu}\:\twocell{1}}
	\ar@{} [dr] |-{\twocell{alpha}\:\twocell{1}}
&& {\twocell{1}\:\twocell{1}}
	\ar@2 [dr] ^-{\twocell{mu}}
&&&&& {\twocell{1}\:\twocell{1}\:\twocell{1}}
	\ar@2 [rr] ^-{\twocell{mu}\:\twocell{1}}
	\ar@2 [dr] |-{\twocell{1}\:\twocell{mu}}
&& {\twocell{1}\:\twocell{1}}
	\ar@2 [dr] ^-{\twocell{mu}}
\\
{\twocell{1}\:\twocell{1}\:\twocell{1}\:\twocell{1}}
	\ar@2 [ur] ^-{\twocell{mu}\:\twocell{1}\:\twocell{1}}
	\ar@2 [rr] |-{\twocell{1}\:\twocell{mu}\:\twocell{1}}
	\ar@2 [dr] _-{\twocell{1}\:\twocell{1}\:\twocell{mu}}
&& {\twocell{1}\:\twocell{1}\:\twocell{1}}
	\ar@2 [ur] |-{\twocell{mu}\:\twocell{1}}
	\ar@2 [dr] |-{\twocell{1}\:\twocell{mu}}
	\ar@{} [rr] |-{\twocell{alpha}}
&& {\twocell{1}}
& \strut 
	\ar@4 [r] ^*+\txt{$\twocell{aleph}$}
& \strut
& {\twocell{1}\:\twocell{1}\:\twocell{1}\:\twocell{1}}
	\ar@2 [ur] ^-{\twocell{mu}\:\twocell{1}\:\twocell{1}}
	\ar@2 [dr] _-{\twocell{1}\:\twocell{1}\:\twocell{mu}}
	\ar@{} [rr] |-{\copyright}
&& {\twocell{1}\:\twocell{1}}
	\ar@2 [rr] |-{\twocell{mu}}
	\ar@{} [ur] |-{\twocell{alpha}}
	\ar@{} [dr] |-{\twocell{alpha}}
&& {\twocell{1}}
\\
& {\twocell{1}\:\twocell{1}\:\twocell{1}}
	\ar@2 [rr] _-{\twocell{1}\:\twocell{mu}}
	\ar@{} [ur] |-{\twocell{1}\:\twocell{alpha}}
&& {\twocell{1}\:\twocell{1}}
	\ar@2 [ur] _-{\twocell{mu}}
&&&&& {\twocell{1}\:\twocell{1}\:\twocell{1}}
	\ar@2 [ur] |-{\twocell{mu}\:\twocell{1}}
	\ar@2 [rr] _-{\twocell{1}\:\twocell{mu}}
&& {\twocell{1}\:\twocell{1}}
	\ar@2 [ur] _-{\twocell{mu}}
}
\]
Contracting one dimension, we see that the $4$-cell \twocell{aleph} is, in fact, Mac Lane's pentagon, or Stasheff's polytope $K_4$:
\[
\xymatrix@R=0em@C=1em{
& {\twocell{(1 *0 mu *0 1) *1 (mu *0 1) *1 mu }}
	\ar@3 [rr] ^-{\twocell{(1 *0 mu *0 1) *1 alpha}} 
& {\strut}
	\ar@4 []!<0pt,-20pt>;[dddd]!<0pt,30pt> |-{\boxed{\twocell{aleph}}}
& {\twocell{(1 *0 mu *0 1) *1 (1 *0 mu) *1 mu}}
	\ar@3 [ddr] ^-{\twocell{(1 *0 alpha) *1 mu}}
\\ 
\strut
\\
{\twocell{(mu *0 2) *1 (mu *0 1) *1 mu}}
	\ar@3 [uur] ^-{\twocell{(alpha *0 1) *1 mu}}
	\ar@3 [ddrr] _-{\twocell{(mu *0 2) *1 alpha}}
&&&& {\twocell{(2 *0 mu) *1 (1 *0 mu) *1 mu}}
\\
&& \strut
\\
&& {\twocell{(mu *0 mu) *1 mu }}
	\ar@3 [uurr] _-{\twocell{(2 *0 mu) *1 alpha}}
}
\]
We get a finite, acyclic $(4,1)$-polygraph $\crit_3(\As)$ which is a partial polygraphic resolution of $A$ of length~$4$, proving that $A$ has the property $\FDT_4$ and, as a consequence, that it is of homological type $\FP_4$. In particular, the natural system $\h_3(\As)$ of homological $3$-syzygies of $\As$ is generated by the following element:
\begin{align*}
\delta_4[\twocell{aleph}] \:
	&=\: \left[\twocell{(alpha *0 1) *1 mu}\right]
		+ \left[\twocell{(1 *0 mu *0 1) *1 alpha}\right]
		+ \left[\twocell{(1 *0 alpha) *1 mu}\right]
		- \left[\twocell{(mu *0 2) *1 alpha}\right]
		- \left[\twocell{(2 *0 mu) *1 alpha}\right] \\
	&=\: \left[\twocell{alpha}\:\twocell{1}\:\right]
		+ \left[\twocell{alpha}\right]
		+ \left[\:\twocell{1}\:\twocell{alpha}\right] 
		- \left[\twocell{alpha}\right] 
		- \left[\twocell{alpha}\right] \\
	&=\: a\left[\twocell{alpha}\right] - \left[\twocell{alpha}\right] + \left[\twocell{alpha}\right] a . 
\end{align*}
Iterating the process, we get a resolution of $A$ by an acyclic $(\infty,1)$-polygraph $\crit_{\infty}(\As)$. For every natural number, $\crit_{\infty}(\As)$ has exactly one $n$-cell, whose shape is Stasheff's polytope $K_n$. For example, in dimension~$5$, the generating $5$-cell $\omega_b$ is associated to the following critical quadruple branching:
\[
b \:=\: \big(\:
	\twocell{mu}\:\twocell{1}\:\twocell{1}\:\twocell{1} \:,\:
	\twocell{1}\:\twocell{mu}\:\twocell{1}\:\twocell{1} \:,\:
	\twocell{1}\:\twocell{1}\:\twocell{mu}\:\twocell{1} \:,\:
	\twocell{1}\:\twocell{1}\:\twocell{1}\:\twocell{mu} 
\:\big) \;.
\]
To compute the source and target of the corresponding $4$-cell $\omega_b$, we use the inductive construction of the rightmost strategy $\sigma$. Alternatively, one can also start from the $2$-dimensional source and target of~$\omega_b$, which are obtained as the $2$-cells associated to the source $aaaaa$ of the critical quadruple branching~$b$ by the leftmost and the rightmost strategies, respectively:
\[
s_2(\omega_b) \:=\: \twocell{(mu *0 3) *1 (mu *0 2) *1 (mu *0 1) *1 mu}
\qquad\qquad\text{and}\qquad\qquad
t_2(\omega_b) \:=\: \twocell{(3 *0 mu) *1 (2 *0 mu) *1 (1 *0 mu) *1 mu} .
\]
Then one computes all the possible $3$-cells from $s_2(\omega_b)$ to $t_2(\omega_b)$ and one fills all the $3$-dimensional spheres with $4$-cells built from the generating $4$-cell~\twocell{aleph}. Either way, we obtain the following composite $4$-cell as the source of $\omega_b$:
\[
\scalebox{0.75}{
\xymatrix@C=10em@R=5em{
& {\twocell{(1 *0 mu *0 2) *1 (1 *0 mu *0 1) *1 (mu *0 1) *1 mu}}
	\ar@3 @/^10ex/ [rr] ^-{\twocell{(1 *0 alpha *0 1) *1 alpha}} 
		_-{}="src0"
	\ar@3 [r] |-{\twocell{(1 *0 alpha *0 1) *1 (mu *0 1) *1 mu}}
& {\twocell{(2 *0 mu *0 1) *1 (1 *0 mu *0 1) *1 (mu *0 1) *1 mu}}
	\ar@3 [r] |-{\twocell{(2 *0 mu *0 1) *1 (1 *0 mu *0 1) *1 alpha}}
	\ar@{} [d] ^-{\boxed{\twocell{(2 *0 mu *0 1) *1 aleph}}}
	\ar@{} "src0"; |-{\copyright}
& {\twocell{(2 *0 mu *0 1) *1 (1 *0 mu *0 1) *1 (1 *0 mu) *1 mu}}
	\ar@3 [d] ^-{\twocell{(2 *0 mu *0 1) *1 (1 *0 alpha) *1 mu}}
\\
{\twocell{(1 *0 mu *0 2) *1 (mu *0 2) *1 (mu *0 1) *1 mu}}
	\ar@3 [ur] ^-{\twocell{(1 *0 mu *0 2) *1 (alpha *0 1) *1 mu}}
		_-{}="src1"
& {\twocell{(mu *0 mu *0 1) *1 (mu *0 1) *1 mu}}
	\ar@3 [ur] |-{\twocell{(2 *0 mu *0 1) *1 (alpha *0 1) *1 mu}}
	\ar@3 [r] |-{\twocell{(mu *0 mu *0 1) *1 alpha}}
	\ar@{} "src1"; |-{\boxed{\twocell{(aleph *0 1) *1 mu}}}
	\ar@{} [d] ^-{\boxed{\twocell{(mu *0 3) *1 aleph}}}
& {\twocell{(2 *0 mu *0 1) *1 (mu *0 mu) *1 mu}}
	\ar@3 [r] |-{\twocell{(2 *0 mu *0 1) *1 (2 *0 mu) *1 alpha}}
	\ar@3 [d] |-{\twocell{(mu *0 alpha) *1 mu}}
	\ar@{} [dr] |-{\copyright}
& {\twocell{(2 *0 mu *0 1) *1 (2 *0 mu) *1 (1 *0 mu) *1 mu}}
	\ar@3 [d] ^-{\twocell{(2 *0 alpha) *1 (1 *0 mu) *1 mu}}
\\
{\twocell{(mu *0 3) *1 (mu *0 2) *1 (mu *0 1) *1 mu}}
	\ar@3 [u] ^-{\twocell{(alpha *0 2) *1 (mu *0 1) *1 mu}}
	\ar@3 [ur] |-{\twocell{(mu *0 3) *1 (alpha *0 1) *1 mu}}
	\ar@3 [r] _-{\twocell{(mu *0 3) *1 (mu *0 2) *1 alpha}}
& {\twocell{(mu *0 3) *1 (mu *0 mu) *1 mu}}
	\ar@3 [r] _-{\twocell{(mu *0 1 *0 mu) *1 alpha}}
& {\twocell{(3 *0 mu) *1 (mu *0 mu) *1 mu}}
	\ar@3 [r] _-{\twocell{(3 *0 mu) *1 (2 *0 mu) *1 alpha}}
& {\twocell{(3 *0 mu) *1 (2 *0 mu) *1 (1 *0 mu) *1 mu}}
}
}
\]
And the following composite $4$-cell is the target of $\omega_b$:
\[
\scalebox{0.75}{
\xymatrix@C=10em@R=5em{
{\twocell{(1 *0 mu *0 2) *1 (1 *0 mu *0 1) *1 (mu *0 1) *1 mu}}
	\ar@3 @/^10ex/ [rr] ^-{\twocell{(1 *0 alpha *0 1) *1 alpha}} 
		_-{}="src0"
	\ar@3 [r] |-{\twocell{(1 *0 mu *0 2) *1 (1 *0 mu *0 1) *1 alpha}}
& {\twocell{(1 *0 mu *0 2) *1 (1 *0 mu *0 1) *1 (1 *0 mu) *1 mu}}
	\ar@3 [r] |-{\twocell{(1 *0 alpha *0 1) *1 (1 *0 mu) *1 mu}}
	\ar@3 [dr] |-{\twocell{(1 *0 mu *0 2) *1 (1 *0 alpha) *1 mu}}
	\ar@{} "src0"; |-{\copyright}
	\ar@{} [d] _-{\boxed{\twocell{(1 *0 mu *0 2) *1 aleph}}}
& {\twocell{(2 *0 mu *0 1) *1 (1 *0 mu *0 1) *1 (1 *0 mu) *1 mu}}
	\ar@3 [dr] ^-{\twocell{(2 *0 mu *0 1) *1 (1 *0 alpha) *1 mu}}
		_-{}="src1"
\\
{\twocell{(1 *0 mu *0 2) *1 (mu *0 2) *1 (mu *0 1) *1 mu}}
	\ar@3 [u] ^-{\twocell{(1 *0 mu *0 2) *1 (alpha *0 1) *1 mu}}
	\ar@3 [r] |-{\twocell{(1 *0 mu *0 2) *1 (mu *0 2) *1 alpha}}
& {\twocell{(1 *0 mu *0 2) *1 (mu *0 mu) *1 mu}}
	\ar@3 [r] |-{\twocell{(1 *0 mu *0 mu) *1 alpha}}
& {\twocell{(1 *0 mu *0 mu) *1 (1 *0 mu) *1 mu}}
	\ar@3 [dr] |-{\twocell{(3 *0 mu) *1 (1 *0 alpha) *1 mu}}
	\ar@{} "src1"; |-{\boxed{\twocell{(1 *0 aleph) *1 mu}}}
	\ar@{} [d] _-{\boxed{\twocell{(3 *0 mu) *1 aleph}}}
& {\twocell{(2 *0 mu *0 1) *1 (2 *0 mu) *1 (1 *0 mu) *1 mu}}
	\ar@3 [d] ^-{\twocell{(2 *0 alpha) *1 (1 *0 mu) *1 mu}}
\\
{\twocell{(mu *0 3) *1 (mu *0 2) *1 (mu *0 1) *1 mu}}
	\ar@3 [u] ^-{\twocell{(alpha *0 2) *1 (mu *0 1) *1 mu}}
	\ar@{} [ur] |-{\copyright}
	\ar@3 [r] _-{\twocell{(mu *0 3) *1 (mu *0 2) *1 alpha}}
& {\twocell{(mu *0 3) *1 (mu *0 mu) *1 mu}}
	\ar@3 [r] _-{\twocell{(mu *0 1 *0 mu) *1 alpha}}
	\ar@3 [u] |-{\twocell{(alpha *0 mu) *1 mu}}
& {\twocell{(3 *0 mu) *1 (mu *0 mu) *1 mu}}
	\ar@3 [r] _-{\twocell{(3 *0 mu) *1 (2 *0 mu) *1 alpha}}
& {\twocell{(3 *0 mu) *1 (2 *0 mu) *1 (1 *0 mu) *1 mu}}
}
}
\]
The corresponding generator of the natural system $h_4(\As)$ of homological $4$-syzygies of $\As$ is:
\begin{align*}
\delta_5[\omega_b] 
	\:&=\: \left[ \twocell{(aleph *0 1) *1 mu} \right]
		+ \left[ \twocell{(2 *0 mu *0 1) *1 aleph} \right]
		+ \left[ \twocell{(mu *0 3) *1 aleph} \right]
		- \left[ \twocell{(1 *0 aleph) *1 mu} \right]
		- \left[ \twocell{(1 *0 mu *0 2) *1 aleph} \right]
		- \left[ \twocell{(3 *0 mu) *1 aleph} \right] \\
	\:&=\: \left[ \twocell{aleph} \right] a - a \left[ \twocell{aleph} \right].
\end{align*}

\subsection{The category \pdf{\Epi}}

We denote by $\Epi$ the subcategory of the simplicial category whose $0$-cells are the natural numbers and whose morphisms from $n$ to $p$ are the monotone surjections from $\ens{0,\dots,n}$ to $\ens{0,\dots,p}$. This category is studied in~\cite{LivernetRichter10}, where it is denoted by $\Delta^{\text{epi}}$. 

The category $\Epi$ admits a presentation by the (infinite) $2$-polygraph $\Sigma$ with the natural numbers as $0$-cells, with $1$-cells
\[
\xymatrix{
n+1
	\ar [rr] ^-{s_i}
&& n
&&
0\leq i\leq n,
}
\]
where $s_i$ represents the map 
\[
s_i (j) \:=\: 
\begin{cases}
j &\text{if } 0\leq j\leq i, \\
j-1 &\text{if } i+1 \leq j\leq n+1,
\end{cases}
\]
and with $2$-cells 
\[
\xymatrix@W=2em@R=1em{
& n+1
	\ar@/^/ [dr] ^-{s_j} 
	\ar@2 []!<0pt,-15pt>;[dd]!<0pt,15pt> ^-{s_{i,j}}
\\
n+2 
	\ar@/^/ [ur] ^-{s_i}
	\ar@/_/ [dr] _-{s_{j+1}}
&& n
&& 0\leq i\leq j\leq n + 1.
\\
& n+1 
	\ar@/_/ [ur] _-{s_i}
}
\]
Let us prove that this $2$-polygraph is convergent. For termination, given a $1$-cell $u=s_{i_1}\dots s_{i_k}$ of $\Sigma^*$, we define the natural number $\nu(u)$ as the number of pairs $(i_p,i_q)$ such that $i_p\leq i_q$, with $1\leq p<q\leq k$. In particular, we have $\nu(s_i s_j) = 1$ and $\nu(s_{j+1} s_i)=0$ when $i\leq j$, \ie, when $s_is_j$ is the source and $s_{j+1}s_i$ is the target of a $2$-cell of $\Sigma$. Moreover, we have $\nu(w u w') > \nu(w v w')$ when $\nu(u)>\nu(v)$ holds. Thus, for every non-identity $2$-cell $f:u\dfl v$ in $\Sigma^*$, the strict inequality $\nu(u)>\nu(v)$ is satisfied, yielding termination of $\Sigma$.

The $2$-polygraph $\Sigma$ has one critical branching $(s_{i,j} s_k, s_i s_{j,k})$ for every possible $0\leq i\leq j\leq k\leq n+2$ and it is confluent, so that we get a partial resolution of $\Epi$ by the acyclic $(3,1)$-polygraph $\crit_2(\Sigma)$ obtained by extending $\Sigma$ with all the $3$-cells filling the confluence diagrams associated to the critical branchings:
\[
\xymatrix@C=5em@R=3em{
& s_{j+1} s_i s_k 
	\ar@2 [r] ^-{s_{j+1} s_{i,k}} _{}="src"
& s_{j+1} s_{k+1} s_i
	\ar@2 @/^/ [dr] ^-{s_{j+1,k+1} s_i}
\\
s_i s_j s_k 
	\ar@2 @/^/ [ur] ^-{s_{i,j} s_k} 
	\ar@2 @/_/ [dr] _-{s_i s_{j,k}}
&&& s_{k+2} s_{j+1} s_i.
\\
& s_i s_{k+1} s_j
	\ar@2 [r] _-{s_{i,k+1} s_j} ^{}="tgt"
& s_{k+2} s_i s_j
	\ar@2 @/_/ [ur] _-{s_{k+2} s_{i,j}}
\ar@3 "src"!<0pt,-25pt>;"tgt"!<0pt,25pt> ^-{s_{i,j,k}}
}
\]
To simplify notations, we draw each $1$-cell $s_i$ as a vertical string $\:\twocell{1}_i\:$, each $2$-cell $s_{i,j}$ as $\twocell{tau}_{i,j}$, so that each $3$-cell $s_{i,j,k}$ has the same shape as the Yang-Baxter relation, or permutohedron of order~$3$:
\[
\xymatrix{
{\twocell{(tau *0 1) *1 (1 *0 tau) *1 (tau *0 1)}}_{i,j,k}
	\ar@3 [rr] ^-{\twocell{yb}_{i,j,k}}
&& {\twocell{(1 *0 tau) *1 (tau *0 1) *1 (1 *0 tau)}_{i,j,k}}
}
\]
Using those notations, the natural system $h_2(\Sigma)$ of homological $2$-syzygies of $\Sigma$ is generated by the elements
\[
\delta_3\left[\twocell{yb}_{i,j,k}\right] 
	\:=\: \left[\twocell{(tau *0 1) *1 (1 *0 tau) *1 (tau *0 1)}_{i,j,k} \right] 
		- \left[\twocell{(1 *0 tau) *1 (tau *0 1) *1 (1 *0 tau)}_{i,j,k} \right] \\
	\:=\: 
\left\{
\begin{array}{c l}
& \big([\twocell{tau}_{i,j}] s_k - s_{k+2} [\twocell{tau}_{i,j}]\big) \vspace{1ex} \\
		+\: & \big(s_{j+1} [\twocell{tau}_{i,k}] - [\twocell{tau}_{i,k+1}] s_j\big)  \vspace{1ex} \\
		+\: & \big([\twocell{tau}_{j+1,k+1}] s_i - s_i [\twocell{tau}_{j,k}]\big) .
\end{array}
\right.
\]
The $2$-polygraph $\Sigma$ has one critical triple branching
\[
\big( \: s_{i,j} s_k s_l, \: s_i s_{j,k} s_l, \: s_i s_j s_{k,l} \: \big)
\]
for every possible $0\leq i\leq j\leq k\leq l\leq n+3$. This yields a partial resolution of $\Epi$ by the acyclic  $(4,1)$-polygraph $\crit_3(\Sigma)$ with one $4$-cell $s_{i,j,k,l}$ for every possible $0\leq i\leq j\leq k\leq l \leq n+3$. In string diagrams, omitting the subscripts, each critical triple branching is written
\[
\big(\: 
	\twocell{tau}\:\twocell{1}\:\twocell{1} \:,\: 
	\twocell{1}\:\twocell{tau}\:\twocell{1} \:,\:
	\twocell{1}\:\twocell{1}\:\twocell{tau}
\:\big).
\]
With the same conventions, the corresponding $4$-cell has the shape of the permutohedron of order~$4$:
\[
\xymatrix @C=4em@R=2em{
& {\twocell{(1 *0 tau *0 1) *1 (tau *0 2) *1 (1 *0 tau *0 1) *1 (2 *0 tau) *1 (1 *0 tau *0 1) *1 (tau *0 2)}}
	\ar@3 [r] ^-{\twocell{(1 *0 tau *0 1) *1 (tau *0 2) *1 (1 *0 yb) *1 (tau *0 2)}}
& {\twocell{(1 *0 tau *0 1) *1 (tau *0 tau) *1 (1 *0 tau *0 1) *1 (tau *0 tau)}}
	\ar@3 [r] ^-{\twocell{(1 *0 tau *0 1) *1 (2 *0 tau) *1 (yb *0 1) *1 (2 *0 tau)}}
	\ar@4 []!<0pt,-50pt>;[dd]!<0pt,50pt> |-{\boxed{\twocell{yb2}}}
& {\twocell{(1 *0 tau *0 1) *1 (2 *0 tau) *1 (1 *0 tau *0 1) *1 (tau *0 2) *1 (1 *0 tau *0 1) *1 (2 *0 tau)}}
	\ar@3 @/^/ [dr] ^-{\twocell{(1 *0 yb) *1 (tau *0 2) *1 (1 *0 tau *0 1) *1 (2 *0 tau)}}
\\
{\twocell{(tau *0 2) *1 (1 *0 tau *0 1) *1 (tau *0 tau) *1 (1 *0 tau *0 1) *1 (tau *0 2)}}
	\ar@3 @/^/ [ur] ^-{\twocell{(yb *0 1) *1 (2 *0 tau) *1 (1 *0 tau *0 1) *1 (tau *0 2)}}
	\ar@3 @/_/ [dr] _-{\twocell{(tau *0 2) *1 (1 *0 tau *0 1) *1 (2 *0 tau) *1 (yb *0 1)}}
&&&& {\twocell{(2 *0 tau) *1 (1 *0 tau *0 1) *1 (tau *0 tau) *1 (1 *0 tau *0 1) *1 (2 *0 tau)}}
\\
& {\twocell{(tau *0 2) *1 (1 *0 tau *0 1) *1 (2 *0 tau) *1 (1 *0 tau *0 1) *1 (tau *0 2) *1 (1 *0 tau *0 1)}}
	\ar@3 [r] _-{\twocell{(tau *0 2) *1 (1 *0 yb) *1 (tau *0 2) *1 (1 *0 tau *0 1)}}
& {\twocell{(tau *0 tau) *1 (1 *0 tau *0 1) *1 (tau *0 tau) *1 (1 *0 tau *0 1)}}
	\ar@3 [r] _-{\twocell{(2 *0 tau) *1 (yb *0 1) *1 (2 *0 tau) *1 (1 *0 tau *0 1)}}
& {\twocell{(2 *0 tau) *1 (1 *0 tau *0 1) *1 (tau *0 2) *1 (1 *0 tau *0 1) *1 (2 *0 tau) *1 (1 *0 tau *0 1)}}
	\ar@3 @/_/ [ur] _-{\twocell{(2 *0 tau) *1 (1 *0 tau *0 1) *1 (tau *0 2) *1 (1 *0 yb)}}
}
\]
As usual, the elements $\delta_4 \left[\twocell{yb2}_{i,j,k,l} \right]$, for $0\leq i\leq j\leq k\leq k\leq n+3$, form a generating set for the natural system $\h_3(\Sigma)$ of homological $3$-syzygies of $\Sigma$. 

More generally, this construction extends to a polygraphic resolution of $\Epi$ where, for every natural number~$n$, the generating $n$-cell has the shape of a permutohedron of order~$n$.

\begin{small}
\renewcommand{\refname}{\textbf{References}}
\bibliographystyle{amsplain}
\bibliography{bibliographie}

\providecommand{\bysame}{\leavevmode\hbox to3em{\hrulefill}\thinspace}
\providecommand{\MR}{\relax\ifhmode\unskip\space\fi MR }
\providecommand{\MRhref}[2]{%
  \href{http://www.ams.org/mathscinet-getitem?mr=#1}{#2}
}
\providecommand{\href}[2]{#2}
\begin{thebibliography}{10}

\bibitem{Anick86}
David~J. Anick, \emph{On the homology of associative algebras}, Transactions of
  the American Mathematical Society \textbf{296} (1986), no.~2, 641--659.

\bibitem{AraMetayer11}
Dimitri Ara and François Métayer, \emph{The {B}rown-{G}olasi\'nski model
  structure on strict {$\infty$}-groupoids revisited}, Homology Homotopy Appl.
  \textbf{13} (2011), no.~1, 121--142.

\bibitem{BaaderNipkow98}
Franz Baader and Tobias Nipkow, \emph{Term rewriting and all that}, Cambridge
  University Press, 1998.

\bibitem{Tom}
Jean-Christophe Bach, Émilie Balland, Paul Brauner, Radu Kopetz, Pierre-Étienne
  Moreau, and Antoine Reilles, \emph{{TOM} manual}, 2009.

\bibitem{Baues91}
Hans-Joachim Baues, \emph{Combinatorial homotopy and {$4$}-dimensional
  complexes}, de Gruyter Expositions in Mathematics, vol.~2, Walter de Gruyter
  \& Co., Berlin, 1991.

\bibitem{BauesMinian01}
Hans-Joachim Baues and Elias~Gabriel Minian, \emph{Track extensions of
  categories and cohomology}, $K$-Theory \textbf{23} (2001), no.~1, 1--13.

\bibitem{BauesWirsching85}
Hans-Joachim Baues and G{\"u}nther Wirsching, \emph{Cohomology of small
  categories}, J. Pure Appl. Algebra \textbf{38} (1985), no.~2-3, 187--211.

\bibitem{BookOtto93}
Ronald~V. Book and Friedrich Otto, \emph{String-rewriting systems}, Texts and
  Monographs in Computer Science, Springer-Verlag, 1993.

\bibitem{Brown92}
Kenneth~S. Brown, \emph{The geometry of rewriting systems: a proof of the
  {A}nick-{G}roves-{S}quier theorem}, Algorithms and classification in
  combinatorial group theory ({B}erkeley, {CA}, 1989), Math. Sci. Res. Inst.
  Publ., vol.~23, Springer, New York, 1992, pp.~137--163.

\bibitem{BrownHuebschmann82}
Ronald Brown and Johannes Huebschmann, \emph{Identities among relations},
  Low-dimensional topology ({B}angor, 1979), London Math. Soc. Lecture Note
  Ser., vol.~48, Cambridge Univ. Press, Cambridge, 1982, pp.~153--202.

\bibitem{Burroni93}
Albert Burroni, \emph{Higher-dimensional word problems with applications to
  equational logic}, Theoretical Computer Science \textbf{115} (1993), no.~1,
  43--62.

\bibitem{Cohen92}
Daniel~E. Cohen, \emph{A monoid which is right-{$\FP_{\infty}$} but not
  left-{$\FP_1$}}, Bull. London Math. Soc. \textbf{24} (1992), no.~4, 340--342.

\bibitem{CremannsOtto94}
Robert Cremanns and Friedrich Otto, \emph{Finite derivation type implies the
  homological finiteness condition {${\rm FP}_3$}}, J. Symbolic Comput.
  \textbf{18} (1994), no.~2, 91--112.

\bibitem{CremannsOtto96}
\bysame, \emph{For groups the property of having finite derivation type is
  equivalent to the homological finiteness condition {${\rm FP}\sb 3$}}, J.
  Symbolic Comput. \textbf{22} (1996), no.~2, 155--177.

\bibitem{Fox53}
Ralph~H. Fox, \emph{Free differential calculus. {I}. {D}erivation in the free
  group ring}, Ann. of Math. (2) \textbf{57} (1953), 547--560.

\bibitem{Groves90}
John R.~J. Groves, \emph{Rewriting systems and homology of groups},
  Groups---Canberra 1989, Lecture Notes in Mathematics, vol. 1456,
  Springer-Verlag, 1990, pp.~114 -- 141.

\bibitem{Guiraud06jpaa}
Yves Guiraud, \emph{Termination orders for 3-dimensional rewriting}, J. Pure
  and Appl. Algebra \textbf{207} (2006), no.~2, 341--371.

\bibitem{GuiraudMalbos09}
Yves Guiraud and Philippe Malbos, \emph{Higher-dimensional categories with
  finite derivation type}, Theory and Applications of Categories \textbf{22}
  (2009), no.~18, 420--478.

\bibitem{GuiraudMalbos10smf}
\bysame, \emph{Identities among relations for higher-dimensional rewriting
  systems}, Séminaires et Congrès \textbf{26} (2011), 145--161.

\bibitem{KapurNarendran85}
Deepak Kapur and Paliath Narendran, \emph{A finite {T}hue system with decidable
  word problem and without equivalent finite canonical system}, Theoret.
  Comput. Sci. \textbf{35} (1985), no.~2-3, 337--344.

\bibitem{Kobayashi91}
Yuji Kobayashi, \emph{Complete rewriting systems and homology of monoid
  algebras}, J. Pure Appl. Algebra \textbf{65} (1990), no.~3, 263--275.

\bibitem{Kobayashi01}
Yuji Kobayashi and Friedrich Otto, \emph{On homotopical and homological
  finiteness conditions for finitely presented monoids}, Internat. J. Algebra
  Comput. \textbf{11} (2001), no.~3, 391--403.

\bibitem{Kobayashi03}
\bysame, \emph{For finitely presented monoids the homological finiteness
  conditions {$\FHT$} and bi-{$\FP_3$} coincide}, J. Algebra \textbf{264}
  (2003), no.~2, 327--341.

\bibitem{Lafont95jpaa}
Yves Lafont, \emph{A new finiteness condition for monoids presented by complete
  rewriting systems (after {C}raig {C}. {S}quier)}, J. Pure Appl. Algebra
  \textbf{98} (1995), no.~3, 229--244.

\bibitem{LafontMetayerWorytkiewicz10}
Yves Lafont, François Métayer, and Krzysztof Worytkiewicz, \emph{A folk model
  structure on omega-cat}, Adv. Math. \textbf{224} (2010), no.~3, 1183--1231.

\bibitem{Lawvere63}
Francis~William Lawvere, \emph{Functorial semantics of algebraic theories},
  Reprints in Theory and Applications of Categories \textbf{5} (2004), 1--121.

\bibitem{Caml}
Xavier Leroy, \emph{The {O}bjective {C}aml system}, 2008, (with Damien Doligez,
  Jacques Garrigue, Didier Rémy and Jérôme Vouillon).

\bibitem{LivernetRichter10}
Muriel Livernet and Birgit Richter, \emph{An interpretation of ${E}_n$-homology
  as functor homology}, Math. Z. \textbf{269} (2011), no.~1--2, 193--219.

\bibitem{Lévy78}
Jean-Jacques Lévy, \emph{Réductions correctes et optimales dans le
  $\lambda$-calcul}, 1978, Thèse de doctorat d'{É}tat, {U}niversité {P}aris~7.

\bibitem{LyndonSchupp77}
Roger~C. Lyndon and Paul~E. Schupp, \emph{Combinatorial group theory}, Classics
  in Mathematics, Springer-Verlag, Berlin, 2001, Reprint of the 1977 edition.

\bibitem{Haskell}
Simon Marlow~(ed.), \emph{Haskell 2010 language report}, 2010.

\bibitem{Pride05}
Stuart McGlashan, Elton Pasku, and Stephen~J. Pride, \emph{Finiteness
  conditions for rewriting systems}, Internat. J. Algebra Comput. \textbf{15}
  (2005), no.~1, 175--205.

\bibitem{Melliès02}
Paul-André Melliès, \emph{A diagrammatic standardization theorem}, Processes,
  Terms and Cycles: Steps on the Road to Infinity (Aart Middeldorp, Vincent van
  Oostrom, Femke van Raamsdonk, and Roel de~Vrijer, eds.), vol. 3838, 2002.

\bibitem{Mimram10}
Samuel Mimram, \emph{Computing critical pairs in 2-dimensional rewriting
  systems}, preprint, 2010.

\bibitem{Mitchell72}
Barry Mitchell, \emph{Rings with several objects}, Advances in Mathematics
  \textbf{8} (1972), 1--161.

\bibitem{Metayer03}
François Métayer, \emph{Resolutions by polygraphs}, Theory and Applications of
  Categories \textbf{11} (2003), 148--184.

\bibitem{Metivier83}
Yves Métivier, \emph{About the rewriting systems produced by the knuth-bendix
  completion algorithm}, Inf. Process. Lett. \textbf{16} (1983), no.~1, 31--34.

\bibitem{Newman42}
Maxwell Herman~Alexander Newman, \emph{On theories with a combinatorial
  definition of "equivalence"}, Annals of Mathematics \textbf{43} (1942),
  no.~2, 223--243.

\bibitem{Pride95}
Stephen~J. Pride, \emph{Low-dimensional homotopy theory for monoids}, Internat.
  J. Algebra Comput. \textbf{5} (1995), no.~6, 631--649.

\bibitem{Reidemeister49}
Kurt Reidemeister, \emph{\"{U}ber {I}dentit\"aten von {R}elationen}, Abh. Math.
  Sem. Univ. Hamburg \textbf{16} (1949), 114--118.

\bibitem{Squier87}
Craig~C. Squier, \emph{Word problems and a homological finiteness condition for
  monoids}, J. Pure Appl. Algebra \textbf{49} (1987), no.~1-2, 201--217.

\bibitem{Squier94}
Craig~C. Squier, Friedrich Otto, and Yuji Kobayashi, \emph{A finiteness
  condition for rewriting systems}, Theoret. Comput. Sci. \textbf{131} (1994),
  no.~2, 271--294.

\bibitem{Street76}
Ross Street, \emph{Limits indexed by category-valued {$2$}-functors}, J. Pure
  Appl. Algebra \textbf{8} (1976), no.~2, 149--181.

\bibitem{Street87}
\bysame, \emph{The algebra of oriented simplexes}, J. Pure Appl. Algebra
  \textbf{49} (1987), no.~3, 283--335.

\bibitem{Terese03}
Terese, \emph{Term rewriting systems}, Cambridge Tracts in Theoretical Computer
  Science, vol.~55, Cambridge University Press, 2003.

\bibitem{Thue14}
Axel Thue, \emph{{Probleme \"uber Ver\"anderungen von Zeichenreihen nach
  gegebenen Regeln.}}, {Kristiania Vidensk. Selsk, Skr.} (1914), no.~10,
  493--524.

\bibitem{Wells79}
Charles Wells, \emph{Extension theories for categories}, preliminary report,
  1979.

\end{thebibliography}
\end{small}

\end{document}